\documentclass{amsart}
\usepackage{array}
\usepackage{tabularx}
\usepackage{amsmath,amssymb}
\usepackage{mathrsfs}
\usepackage{amsthm}
\usepackage{comment}
\usepackage[all]{xy}

\usepackage[
bookmarks=false,
colorlinks=true,
debug=true,
naturalnames=true,
pdfnewwindow=true,
citecolor=blue,
linkcolor=blue,
urlcolor = blue]{hyperref}

\theoremstyle{definition}
\newtheorem{Def}{Definition}[section]
\newtheorem{Thm}[Def]{Theorem}
\newtheorem{Lem}[Def]{Lemma}
\newtheorem{Prop}[Def]{Proposition}
\newtheorem{Cor}[Def]{Corollary}
\newtheorem{Rem}[Def]{Remark}

\newtheorem{theo}{Theorem}

\setcounter{tocdepth}{1}

\newcommand{\C}{\mathbb{C}}
\newcommand{\Cc}{\mathcal{C}}
\newcommand{\Irr}{\mathop{\mathsf{Irr}}\nolimits}
\newcommand{\g}{\mathfrak{g}}
\newcommand{\h}{\mathfrak{h}}
\newcommand{\Z}{\mathbb{Z}}
\newcommand{\kk}{\Bbbk}
\newcommand{\modfg}{\text{-}\mathrm{mod}_{\mathrm{fg}}}
\newcommand{\modfd}{\text{-}\mathrm{mod}_{\mathrm{fd}}}
\newcommand{\std}{\Delta}
\newcommand{\pcstd}{\bar{\nabla}}
\newcommand{\pstd}{\bar{\Delta}}
\newcommand{\Ext}{\mathop{\mathrm{Ext}}\nolimits}
\newcommand{\End}{\mathop{\mathrm{End}}\nolimits}
\newcommand{\Hom}{\mathop{\mathrm{Hom}}\nolimits}

\newcommand{\Ann}{\mathop{\mathrm{Ann}}\nolimits}
\newcommand{\rad}{\mathop{\text{rad}}}

\newcommand{\Ker}{\mathop{\text{Ker}}}
\newcommand{\Ima}{\mathop{\text{Im}}}

\newcommand{\Q}{\mathbb{Q}}
\newcommand{\Stab}{\mathop{\mathrm{Stab}}\nolimits}
\newcommand{\Specm}{\mathop{\mathrm{Specm}}\nolimits}
\newcommand{\KP}{\mathrm{KP}}
\newcommand{\M}{\mathfrak{M}}
\newcommand{\G}{\mathbb{G}}
\newcommand{\hlam}{\hat{\lambda}}
\newcommand{\hnu}{\hat{\nu}}
\newcommand{\cP}{\mathsf{P}}
\newcommand{\cQ}{\mathsf{Q}}
\newcommand{\lP}{\mathcal{P}}
\newcommand{\lQ}{\mathcal{Q}}
\newcommand{\K}{\mathcal{K}}
\newcommand{\T}{\mathbb{T}}
\newcommand{\V}{\mathcal{V}}
\newcommand{\W}{\mathcal{W}}
\newcommand{\hK}{\widehat{\mathcal{K}}}
\newcommand{\N}{\mathfrak{N}}
\newcommand{\LL}{\mathfrak{L}}
\newcommand{\hmu}{\hat{\mu}}
\newcommand{\Mregg}{\M_{0}^{\bullet \, \mathrm{reg}}}
\newcommand{\Mreg}{\M_{0}^{\mathrm{reg}}}
\newcommand{\SG}{\mathfrak{S}}
\newcommand{\pt}{\mathsf{p}}
\newcommand{\R}{\mathcal{R}}
\newcommand{\hW}{\widehat{W}}
\newcommand{\hR}{\widehat{\R}}
\newcommand{\rr}{\mathfrak{r}}
\newcommand{\tilU}{\widetilde{U}_{q}}
\newcommand{\bW}{\mathbb{W}}
\newcommand{\hCc}{\widehat{\Cc}}
\newcommand{\hS}{\widehat{\mathbb{S}}}
\newcommand{\hH}{\widehat{H}}
\newcommand{\Mm}{\mathcal{M}}
\newcommand{\hMm}{\widehat{\mathcal{M}}}
\newcommand{\HH}{\mathbb{H}}
\newcommand{\tZ}{\widetilde{\mathfrak{Z}}}
\newcommand{\Aa}{\mathcal{A}}

\numberwithin{equation}{section}

\newcommand{\arxiv}[1]{\href{http://arxiv.org/abs/#1}{\texttt{arXiv:#1}}}

\title[Affine highest weight categories and 
Schur-Weyl duality]%
{Affine highest weight categories 
and quantum affine Schur-Weyl duality
of Dynkin quiver types}
\author[R.~Fujita]{Ryo Fujita}
\address{Institut de Math\'{e}matiques de Jussieu-Paris Rive Gauche, IMJ-PRG, Universit\'{e} de Paris, B\^{a}timent Sophie Germain, F-75013, Paris, France}
\email{ryo.fujita@imj-prg.fr}
\subjclass[2010]{Primary~17B37, Secondary~17B67}
\keywords{affine highest weight category; quantum loop algebra; quiver Hecke algebra; quantum affine Schur-Weyl duality; graded quiver variety}

\begin{document}

\maketitle

\begin{abstract}
For a Dynkin quiver $Q$ (of type $\mathrm{ADE}$),
we consider 
a central completion of
the convolution algebra of
the equivariant $K$-group
of a certain Steinberg type graded quiver variety.
We observe that 
it is affine quasi-hereditary 
and prove that its category of  finite-dimensional modules
is identified with a block of
Hernandez-Leclerc\rq{}s monoidal category $\Cc_{Q}$
of modules over the quantum loop algebra $U_{q}(L\g)$
via Nakajima\rq{}s homomorphism.
As an application, we show that Kang-Kashiwara-Kim\rq{}s
generalized quantum affine Schur-Weyl duality functor 
gives an equivalence
between the category of finite-dimensional modules  
over the quiver Hecke algebra 
associated with $Q$
and Hernandez-Leclerc\rq{}s category $\Cc_{Q}$,
assuming the simpleness of some poles of 
normalized $R$-matrices 
for type $\mathrm{E}$.
\end{abstract}

%\tableofcontents

\section*{Introduction}  
		
\subsection{}	
For a Dynkin quiver $Q$ (of type $\mathrm{ADE}$),
one can associate the following two interesting monoidal categories
$\Cc_{Q}$ and $\Mm_{Q}$. 		
			
The first one $\Cc_{Q}$ is
a certain monoidal subcategory 
of the category of finite-dimensional modules
over the quantum loop algebra $U_{q}(L\g)$,
where $\g$ is the complex simple Lie algebra
whose Dynkin diagram is the underlying graph of 
the quiver $Q$. 
This category was introduced by Hernandez-Leclerc~\cite{HL15}.
The definition of the category $\Cc_{Q}$ involves 
the Auslander-Reiten quiver of the path algebra of $Q$.
The complexified Grothendieck ring of $\Cc_{Q}$
is known to be isomorphic to
the coordinate algebra $\C[N]$ of
the unipotent group $N$ 
associated with the positive part of $\g$, under which
 the classes of simple modules 
correspond to the dual canonical basis elements.
Note that we have a decomposition 
$\Cc_{Q} = \bigoplus_{\beta \in \cQ^{+}} \Cc_{Q, \beta}$,
which corresponds to 
the weight decomposition 
$\C[N] = \bigoplus_{\beta \in \cQ^{+}} \C[N]_{\beta}$.

The second one $\Mm_{Q}$ is 
the direct sum of the
categories $\Mm_{Q, \beta} =
H_{Q}(\beta) \modfd^{0}$ 
of finite-dimensional modules 
over the quiver Hecke algebra $H_{Q}(\beta)$
on which the center acts nilpotently.
The monoidal structure of $\Mm_{Q}$ is given by an analog of
parabolic inductions. 
The quiver Hecke algebra 
(which is also known as the Khovanov-Lauda-Rouquier algebra)
was introduced by Khovanov-Lauda~\cite{KL09}
and by Rouquier~\cite{Rouquier08}
as an algebraic object which
generalizes the affine Hecke algebra of type $\mathrm{A}$
in the sense that 
it gives a categorification of the dual of the integral form $U_{q}(\g)^{+}_{\Z}$ of the positive half 
of the quantized enveloping algebra
$U_{q}(\g)$.			 
More precisely, the quiver Hecke algebra 
$H_{Q}(\beta)$ is equipped with a $\Z$-grading
and hence the direct sum of the Grothendieck groups 
of the categories of finite-dimensional graded modules
over $H_{Q}(\beta)$ for various $\beta$
becomes a $\Z[q^{\pm 1}]$-algebra,
where $q$ corresponds to the grading shift. 
It is isomorphic to the dual of $U_{q}(\g)^{+}_{\Z}$
with the classes of self-dual simple modules
corresponding to the dual canonical basis elements. 
Our category $\Mm_{Q}$ is obtained 
by forgetting the gradings, which corresponds     
to specializing $q$ to $1$ at the level of
the Grothendieck ring. 
Therefore the complexified Grothendieck ring 
of the monoidal category $\Mm_{Q}$ is
also isomorphic to $\C[N]$. 

Thus, we encounter a natural question, 
originally asked by Hernandez-Leclerc~\cite{HL15}, 
whether there is any functorial relationship between
these two monoidal categories $\Cc_{Q}$ and $\Mm_{Q}$.    
Kang-Kashiwara-Kim~\cite{KKK15} gave an elegant answer to this question
by constructing the 
{\em generalized quantum affine Schur-Weyl duality functor}
$\mathcal{F}_{Q} \colon \Mm_{Q} \to \Cc_{Q}$
for any quiver $Q$ of type $\mathrm{AD}$.
The functor $\mathcal{F}_{Q}$
is a direct sum $\bigoplus_{\beta \in \cQ^{+}} \mathcal{F}_{Q, \beta}$
of functors $\mathcal{F}_{Q, \beta}\colon \Mm_{Q, \beta} \to \Cc_{Q, \beta}$, 
where $\mathcal{F}_{Q, \beta}$ is given by
a certain $(U_{q}(L \g), H_{Q}(\beta))$-bimodule
constructed by using the normalized $R$-matrices
for $\ell$-fundamental modules of $U_{q}(L\g)$.
Here the $\ell$-fundamental modules are quantum loop analogs of the fundamental modules of $\g$,
and the normalized $R$-matrices are certain intertwining operators between tensor product modules.
A normalized $R$-matrix can be seen as a matrix-valued rational function, 
whose singularity is strongly related to the structure of tensor product modules. 
Kang-Kashiwara-Kim's construction also works for type $\mathrm{E}$
if we assume the simpleness of some specific poles of
normalized $R$-matrices.\footnote{After the initial submission of this paper, it was also proved that this assumption is always satisfied for type $\mathrm{E}$ by Oh-Scrimshaw~\cite{OS19} and by the author~\cite{Fujita20} independently.}
Moreover, under this assumption,
it was proved in \cite{KKK15} that
the functor $\mathcal{F}_{Q}$ is an exact monoidal functor which  
induces an isomorphism between the Grothendieck rings
for any quiver $Q$ of type $\mathrm{ADE}$.

When the quiver $Q$ is 
an equioriented quiver of type $\mathrm{A}$,
Kang-Kashiwara-Kim\rq{}s construction coincides with
that of the usual quantum affine Schur-Weyl duality 
between $U_{q}(L\mathfrak{sl}_{n})$ and 
the affine Hecke algebra of type $\mathrm{A}$.
In particular, the functor 
$\mathcal{F}_{Q}$ gives an equivalence of categories 
in this special case thanks to Chari-Pressley~\cite{CP96}.
Thus it is natural to expect that 
the functor $\mathcal{F}_{Q}$ also gives an equivalence 
of categories
for general $Q$ of type $\mathrm{ADE}$. 
The goal of this paper is to verify this expectation.   

\subsection{}
We briefly explain our strategy.
An isomorphism between Grothendieck rings 
does not imply an actual equivalence of categories
because by passing to Grothendieck rings we forget some homological information     
such as extensions among modules.
Note that, in our case, both categories $\Cc_Q$ and $\Mm_Q$ are far from semi-simple.
Therefore, in order to verify that 
the functor $\mathcal{F}_{Q}$ gives an equivalence,
one should ask whether it respects 
the homological properties.      
However, 
the categories 
$\Cc_{Q}$ and $\Mm_{Q}$ 
are not suitable for a homological study
because they have no projective modules.\footnote{
In fact, for any simple module $L$ in $\Cc_Q$ (or $\Mm_Q$), 
we can construct a module of arbitrary length with a simple head $\simeq L$
as a finite-dimensional quotient of the affinization of $L$ (see Remark \ref{Rem:aff}),
which implies that we do not have a projective cover of $L$ in the category $\Cc_Q$ (nor $\Mm_Q$).}
Thus we want to discuss some suitable \lq\lq{}completions\rq\rq{}
$\hCc_{Q}$ and $\hMm_{Q}$
of categories $\Cc_{Q}$ and $\Mm_{Q}$ respectively
to get enough projective modules.

For the category $\Mm_{Q}$, 
the desired completion is easily obtained.
Namely, we just take the central completion $\hH_{Q}(\beta)$ 
of the quiver Hecke algebra along the trivial central character
and define the category $\hMm_{Q, \beta}$ as 
the category of finitely generated $\hH_{Q}(\beta)$-modules.
The homological properties of the category 
$\hMm_{Q, \beta}$ are well-understood,
due to Kato~\cite{Kato14}
and Brundan-Kleshchev-McNamara~\cite{BKM14}.
More precisely, the category $\hMm_{Q, \beta}$ has a structure
of {\em affine highest weight category}, which is
equivalent to saying that the algebra $\hH_{Q}(\beta)$
is an {\em affine quasi-hereditary algebra}.  
The notion of affine highest weight category
was axiomatized by Kleshchev~\cite{Kleshchev15}
as a generalization of the notion of
highest weight category introduced by  
Cline-Parshall-Scott~\cite{CPS88}.
In particular, an affine highest weight category has
{\em standard modules}, which
filter projective modules.  

On the other hand, it is not obvious how to define a suitable completion 
$\hCc_{Q, \beta}$ because a priori we have  
no information about the center of the category $\Cc_{Q, \beta}$.
To remedy this situation, we rely on
Nakajima\rq{}s construction~\cite{Nakajima01}
of $U_{q}(L\g)$-modules using convolution 
algebras of equivariant $K$-groups of quiver varieties.
For us, an advantage  
of this kind of convolution construction is that
the representation ring of the group of equivariance 
automatically gives a large central subalgebra of
the convolution algebra. 
For a Dynkin quiver $Q$
and an element $\beta \in \cQ^{+}$, we have the corresponding graded quiver varieties 
$\M_{\beta}^{\bullet}, \M_{0, \beta}^{\bullet}$
with actions of a linear algebraic group
$\G(\beta)$ determined by $\beta$.
We remark that 
our $\M^{\bullet}_{0, \beta}$ is the same graded quiver variety as  
Hernandez-Leclerc studied in \cite{HL15}, where
it was proved that $\M^{\bullet}_{0, \beta}$
is isomorphic to the space of representations of
the quiver $Q$ of dimension vector $\beta$.
From the canonical projective $\G(\beta)$-equivariant morphism
$\M_{\beta}^{\bullet} \to \M_{0, \beta}^{\bullet}$,
we form
the corresponding Steinberg type variety 
$Z^{\bullet}_{\beta} = \M_{\beta}^{\bullet} 
\times_{\M_{0, \beta}^{\bullet}} \M_{\beta}^{\bullet}$.
Thanks to the construction by Nakajima~\cite{Nakajima01}, we obtain
an algebra homomorphism from
$U_{q}(L \g)$ 
to a suitable central completion 
$\hK^{\G(\beta)}(Z^{\bullet}_{\beta})$ of
the $\G(\beta)$-equivariant $K$-group 
with the convolution product.
Then we define our \lq\lq{}completion\rq\rq{} 
$\hCc_{Q, \beta}$ of 
Hernandez-Leclerc\rq{}s category $\Cc_{Q, \beta}$
as the category of finitely generated 
$\hK^{\G(\beta)}(Z^{\bullet}_{\beta})$-modules.
We prove that this category 
$\hCc_{Q, \beta}$ actually plays a role of 
a completion of 
$\Cc_{Q, \beta}$
and that it has a structure of affine highest weight category.

\begin{theo}[{= Theorem~\ref{Thm:main}}]
\label{theo:main}
The followings hold.
\begin{enumerate}
\item \label{theo:main:affhw}
The convolution algebra $\hK^{\G(\beta)}(Z^{\bullet}_{\beta})$
is an affine quasi-hereditary algebra and hence
the category $\hCc_{Q, \beta}$ has a structure of 
affine highest weight category.
\item \label{theo:main:main}
Via Nakajima\rq{}s homomorphism, 
standard modules in $\hCc_{Q, \beta}$ are
identified with the deformed local Weyl modules 
(see Definition \ref{Def:deflocWeyl}) and
the full subcategory of all the finite-dimensional
modules in $\hCc_{Q, \beta}$ is identified 
with the category $\Cc_{Q, \beta}$.
\end{enumerate}
\end{theo}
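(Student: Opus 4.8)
The plan is to establish the two parts of the theorem in tandem, using Nakajima's geometric construction as the bridge between the $K$-theoretic and representation-theoretic worlds. First I would recall the structure of the graded quiver variety $\M^{\bullet}_{\beta}$: it is smooth, carries a $\G(\beta)$-action, and the projective morphism $\pi_{\beta}\colon \M^{\bullet}_{\beta}\to\M^{\bullet}_{0,\beta}$ is semismall with respect to a natural stratification of $\M^{\bullet}_{0,\beta}$. Since Hernandez--Leclerc identified $\M^{\bullet}_{0,\beta}$ with the representation space $E_{\beta}$ of the quiver $Q$ of dimension vector $\beta$, its strata are the $\G(\beta)$-orbits, which by Gabriel's theorem are indexed by the Kostant partitions $\KP(\beta)$ of $\beta$ into positive roots. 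This gives a finite poset, with the order induced by orbit closure inclusion, and a corresponding set of $\G(\beta)$-equivariant simple perverse sheaves on $\M^{\bullet}_{0,\beta}$ appearing in the decomposition of $(\pi_{\beta})_{*}\underline{\C}$. The convolution algebra $K^{\G(\beta)}(Z^{\bullet}_{\beta})$ is then, after base change, an $\Ext$-algebra of this pushforward, and I would invoke the standard machinery (Chriss--Ginzburg, Nakajima) producing a \emph{standardly stratified} / \emph{affine quasi-hereditary} structure out of a semismall map with a stratification: the standard modules $\std(\lambda)$, $\lambda\in\KP(\beta)$, are built from the (shifted) constant sheaves on the strata, they are free of finite rank over the polynomial subalgebra coming from $R(\text{stabilizer})$, and they filter the projectives. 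This proves part (1) once one checks the ``affine'' finiteness conditions — Noetherianity of the completion and finite rank of standard modules over the relevant central polynomial ring — which follow from properness of $\pi_{\beta}$ and smoothness of $\M^{\bullet}_{\beta}$.

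For part (2), I would identify the standard module $\std(\lambda)$ attached to a Kostant partition $\lambda$ with the deformed local Weyl module of Definition~\ref{Def:deflocWeyl}. The key point is that both are geometrically realized on the fibre of $\pi_{\beta}$ over a generic point of the stratum $\mathbb{O}_{\lambda}\subset\M^{\bullet}_{0,\beta}$: on the $K$-theory side, $\std(\lambda)\cong \hK^{\G(\beta)}(\pi_{\beta}^{-1}(x_{\lambda}))$ localized appropriately, while the deformed local Weyl module is by definition (or by Nakajima's standard-module theorem in \cite{Nakajima01}) the $U_{q}(L\g)$-module obtained from exactly this fibre $K$-group. One must match the $U_{q}(L\g)$-action: this is Nakajima's homomorphism $U_{q}(L\g)\to\hK^{\G(\beta)}(Z^{\bullet}_{\beta})$, and the compatibility of convolution with restriction to a fibre is formal. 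It then remains to identify the specialization at the trivial central character, i.e.\ the finite-dimensional objects. Since the finite-dimensional $\hK^{\G(\beta)}(Z^{\bullet}_{\beta})$-modules are exactly those supported at the closed point corresponding to the augmentation ideal of the central subalgebra, and Nakajima's construction there recovers precisely the standard modules' heads and the simple $U_{q}(L\g)$-modules $L(\lambda)$ lying in $\Cc_{Q,\beta}$, I would show the resulting abelian category of finite-dimensional modules coincides with $\Cc_{Q,\beta}$ by comparing simple objects (both indexed by $\KP(\beta)$, matched via dual canonical basis / Hernandez--Leclerc's parametrization) and using that a highest weight category is determined by its poset of simples together with the standard objects.

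The technical heart — and the main obstacle — is the semismallness of $\pi_{\beta}$ together with the precise matching of the geometric poset with the combinatorial order on $\KP(\beta)$ that governs both the affine highest weight structure and the ordering of deformed local Weyl modules by their $\ell$-weights. Semismallness is what makes $(\pi_{\beta})_{*}\underline{\C}$ a genuine perverse sheaf (no shifts), hence what makes the $\Ext$-algebra positively graded and the stratification into an honest affine quasi-hereditary structure rather than a mere standardly stratified one; verifying it requires a dimension estimate on $\pi_{\beta}^{-1}(\mathbb{O}_{\lambda})$ for each stratum, which in turn uses the explicit transversal-slice description of graded quiver varieties. I expect this to be where the real work lies, with the rest being an orchestration of known results (Chriss--Ginzburg's convolution formalism, Kleshchev's and Kato's axiomatics of affine highest weight categories, and Nakajima's standard-module theorem), once the geometry is pinned down. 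A secondary subtlety is ensuring that the central completion $\hK^{\G(\beta)}(Z^{\bullet}_{\beta})$ is Noetherian and that the affine quasi-hereditary axioms (which are about the completed algebra) are stable under completion — this should follow from the fact that $\G(\beta)$ is a product of general linear groups, so $R(\G(\beta))$ is a polynomial ring and the completion is a completion of a finitely generated algebra at a maximal ideal of a central polynomial subring.
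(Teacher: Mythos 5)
Your proposal for part~(1) follows the geometric extension algebra route that the paper explicitly acknowledges but deliberately avoids (see Remark~\ref{Rem:ext}): one identifies $\hK^{\G(\hlam)}(Z^{\bullet}(\hlam))$ with the completion of $\Ext^{\bullet}_{G_{\beta}}(\mathcal{L},\mathcal{L})$ for $\mathcal{L}=\pi_{*}\underline{\kk}$, applies Nakajima's decomposition into shifted IC sheaves, and then invokes Kato~\cite{Kato17} or McNamara~\cite{McNamara17} (or the Varagnolo--Vasserot Morita equivalence with $H_{Q}(\beta)$ together with \cite{Kato14,BKM14}) to conclude affine quasi-heredity. The paper instead gives a direct $K$-theoretic proof, building the affine heredity chain $\mathcal{I}_{i}=\hK^{\G(\hlam)}(\mathcal{Z}_{\ge i+1})$ out of the stratification of $\M_{0}^{\bullet}(\hlam)$ and proving exactness of the restriction sequences via vanishing of topological $K^{1}$. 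Both routes are legitimate, but your invocation of semismallness is not: $\pi^{\bullet}$ is not semismall onto $E_{\beta}$ in general, and $\pi_{*}\underline{\kk}$ is a direct sum of \emph{shifted} IC sheaves rather than a single perverse sheaf. The positivity of grading (or whatever property you need of the $\Ext$-algebra) comes from purity/parity considerations of the Lusztig sheaves, not from semismallness. If you insist on this route, you should instead verify the hypotheses of McNamara's Theorem~4.7, which do not include semismallness.

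The serious gap is in part~(2), specifically the identification of the finite-dimensional subcategory of $\hCc_{Q,\beta}$ with $\Cc_{Q,\beta}$. You propose to match simples and standards and conclude from the observation that ``a highest weight category is determined by its poset of simples together with the standard objects.'' This is not a valid argument, and moreover it hides the central difficulty that the paper emphasizes in the Introduction: the Nakajima homomorphism $\widehat{\Phi}_{\hlam}\colon\tilU\to\hK^{\G(\hlam)}(Z^{\bullet}(\hlam))$ is neither injective nor surjective in general, so the pullback functor along it need not be fully faithful, and one must prove that every object of $\Cc_{Q,\beta}$ is killed by $\Ker\widehat{\Phi}_{\hlam}^{N}$ for some $N$. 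The paper handles this by (a) showing $\widehat{\Phi}_{\hlam}$ factors through the quotient $U_{\le\lambda}$, (b) proving surjectivity modulo $\rr_{\hlam}^{N}$ by comparing the affine cellular filtration of $U_{\le\lambda}$ (Beck--Nakajima, Theorem~\ref{Thm:cellular}) with the geometric filtration of $\hK^{\G(\hlam)}(Z^{\bullet}(\hlam))$ term by term (Proposition~\ref{Prop:compare}), and (c) controlling the kernels $\Ker\widehat{\Phi}_{\hlam}^{N}$ via Proposition~\ref{Prop:Kernel} and Lemma~\ref{Lem:ring}. None of this is ``formal.'' Similarly, your identification of the standard modules with deformed local Weyl modules via ``Nakajima's standard-module theorem'' glosses over the needed computation of the stabilizer at a generic point of each stratum and its restriction map on representation rings (Lemmas~\ref{Lem:red} and~\ref{Lem:rest}), which is what makes the completion along $\rr_{\hlam}$ match the completion defining $\hW(\hmu)$. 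Without these ingredients the equivalence $(\hCc_{Q,\beta})_{f}\simeq\Cc_{Q,\beta}$ does not follow.
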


Kang-Kashiwara-Kim\rq{}s functor $\mathcal{F}_{Q, \beta}$
is extended to an exact functor 
$\hMm_{Q, \beta} \to \hCc_{Q, \beta}$
between the two affine highest weight categories.
There is a simple sufficient condition
(= Theorem~\ref{Thm:criterion}) 
obtained in \cite{Fujita18} 
for an exact functor 
between two affine highest weight categories
to give an equivalence of categories.
We prove that the functor $\mathcal{F}_{Q, \beta}$
satisfies this sufficient condition.
Restricting the equivalence $\mathcal{F}_{Q, \beta}$ 
to the subcategory 
$\Mm_{Q, \beta}$ of finite-dimensional modules
and summing up over $\beta \in \cQ^{+}$, 
we obtain the following result. 

\begin{theo}[= Corollary \ref{Cor:equiv}]
\label{theo:equiv}
Under a certain assumption about simpleness
of poles of normalized $R$-matrices for
type $\mathrm{E}$,
Kang-Kashiwara-Kim\rq{}s
generalized quantum affine Schur-Weyl duality 
functor $\mathcal{F}_{Q} \colon \Mm_{Q} \to \Cc_{Q}$
gives an equivalence of monoidal categories.
\end{theo}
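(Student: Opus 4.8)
The plan is to lift Kang--Kashiwara--Kim's functor to the completed categories and then apply the abstract comparison criterion of \cite{Fujita16}. First I would check that the $(U_{q}(L\g), H_{Q}(\beta))$-bimodule defining $\mathcal{F}_{Q, \beta}$ carries compatible actions of the central completion $\hH_{Q}(\beta)$ and of $\hK^{\G(\beta)}(Z^{\bullet}_{\beta})$, so that tensoring with it gives a functor $\widehat{\mathcal{F}}_{Q, \beta}\colon \hMm_{Q, \beta}\to\hCc_{Q, \beta}$ extending $\mathcal{F}_{Q, \beta}$. Exactness should follow once one shows this bimodule is flat, in fact projective, as a right $\hH_{Q}(\beta)$-module; this can be reduced to the known exactness of $\mathcal{F}_{Q, \beta}$ on $\Mm_{Q, \beta}$ established in \cite{KKK15}, together with the Noetherianity of $\hH_{Q}(\beta)$ and a Nakayama-type argument along the trivial central character.

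By Theorem \ref{theo:main}\,\ref{theo:main:affhw}, both $\hMm_{Q, \beta}$ and $\hCc_{Q, \beta}$ are affine highest weight categories, whose standard objects are respectively the standard modules of the quiver Hecke algebra (built from the cuspidal modules of Brundan--Kleshchev--McNamara and Kato along a convex order) and, by Theorem \ref{theo:main}\,\ref{theo:main:main}, the deformed local Weyl modules. The criterion of \cite{Fujita16} (= Theorem \ref{Thm:criterion}) asserts that an exact functor between two affine highest weight categories is an equivalence provided that it sends standard objects to standard objects, induces an order-isomorphism of the indexing posets, and is fully faithful on standard objects (equivalently, induces isomorphisms on the relevant $\Hom$- and $\Ext^{1}$-spaces between standards). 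So I would verify these three conditions for $\widehat{\mathcal{F}}_{Q, \beta}$. For the poset bijection, both posets are combinatorially the set of Kostant partitions attached to $Q$ via its Auslander--Reiten quiver, and since $\mathcal{F}_{Q}$ is monoidal and induces an isomorphism of Grothendieck rings matching simple classes, it matches the prime objects indexing the simples, hence the whole poset with its (degeneration) order. That $\widehat{\mathcal{F}}_{Q, \beta}$ carries standard modules to standard modules I would deduce from the fact that cuspidal modules are sent to $\ell$-fundamental (deformed local Weyl) modules --- essentially built into the construction of $\mathcal{F}_{Q}$ --- combined with the monoidal compatibility, since the standard modules on both sides are obtained by the same normalized-$R$-matrix-ordered induction (resp.\ convolution) from these cuspidal (resp.\ fundamental) pieces.

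The full faithfulness on standards is the heart of the argument: one must compare $\Hom$- and $\Ext^{1}$-spaces between standard modules. Here I would use that, by the affine highest weight structure, these spaces are computed after a finite truncation of each category, reducing the question to a finite-dimensional highest weight category; alternatively, a cleaner route is to show that $\widehat{\mathcal{F}}_{Q, \beta}$ carries a projective generator of $\hMm_{Q, \beta}$ to a projective generator of $\hCc_{Q, \beta}$ and induces an isomorphism of the corresponding endomorphism algebras. In this second approach, the $\Delta$-filtrations together with the Grothendieck-ring isomorphism force the standard multiplicities to agree, while the $R$-matrix normalization underlying both the quiver Hecke relations and Nakajima's homomorphism identifies the endomorphism rings. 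I expect this step --- promoting the Grothendieck-level isomorphism to an isomorphism of $\Hom$/$\Ext$ groups between standard modules --- to be the main obstacle, since it is precisely the homological information the Grothendieck ring forgets and is the reason the completions $\hMm_{Q, \beta}$ and $\hCc_{Q, \beta}$ were introduced.

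Once $\widehat{\mathcal{F}}_{Q, \beta}\colon\hMm_{Q, \beta}\to\hCc_{Q, \beta}$ is an equivalence, it restricts to an equivalence on the full subcategories of finite-dimensional modules, which by Theorem \ref{theo:main}\,\ref{theo:main:main} are $\Mm_{Q, \beta}$ and $\Cc_{Q, \beta}$; this restriction is exactly $\mathcal{F}_{Q, \beta}$. Taking the direct sum over $\beta\in\cQ^{+}$ yields an equivalence of categories $\mathcal{F}_{Q}\colon\Mm_{Q}\to\Cc_{Q}$, and since $\mathcal{F}_{Q}$ is already known from \cite{KKK15} to be an exact monoidal functor, it is an equivalence of monoidal categories. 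The hypothesis on simpleness of poles of normalized $R$-matrices for type $\mathsf{E}$ enters only because the construction of $\mathcal{F}_{Q}$ in \cite{KKK15} requires it there; for types $\mathsf{AD}$ no such assumption is needed.
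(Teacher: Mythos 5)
Your overall strategy matches the paper's: lift $\mathcal{F}_{Q,\beta}$ to $\hMm_{Q,\beta}\to\hCc_{Q,\beta}$, verify the hypotheses of Theorem~\ref{Thm:criterion}, and then restrict to finite-dimensional modules and sum over $\beta$. However, you misremember what Theorem~\ref{Thm:criterion} actually asks for, and the misremembered version leads you into a difficulty that you flag as the ``main obstacle'' and do not resolve. The criterion does not ask for full faithfulness on standard objects or for isomorphisms on $\Hom$ and $\Ext^{1}$ between standards. It asks, besides exactness and an order-preserving bijection of posets, that $F$ send standard modules to standard modules \emph{and proper costandard modules to proper costandard modules}. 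Once both conditions hold, the $\Ext$-orthogonality between standards and proper costandards, which holds automatically in any affine highest weight category by Theorem~\ref{Thm:affhw}, together with the tilting theory underlying Theorem~\ref{Thm:criterion}, delivers full faithfulness for free. You replaced the proper-costandard condition by a ``full faithfulness on standards'' condition, which is neither equivalent a priori nor something you indicate how to verify.

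The point is that matching proper costandards is essentially as easy as matching standards, not a separate hard step. On the quiver Hecke side, $\pcstd(\mathbf{m})$ is the convolution $S(\gamma_{r})^{\circ m_{r}}\circ\cdots\circ S(\gamma_{1})^{\circ m_{1}}$ of the cuspidals in the reverse order. Applying the monoidality of $\mathcal{F}_{Q,\beta}$ together with $\mathcal{F}_{Q,\alpha}(S(\alpha))\cong L(\varpi_{\phi(\alpha)})$ sends this to $L(\varpi_{\phi(\gamma_{r})})^{\otimes m_{r}}\otimes\cdots\otimes L(\varpi_{\phi(\gamma_{1})})^{\otimes m_{1}}$, and Proposition~\ref{Prop:dualWeyl} identifies that tensor product with the dual local Weyl module $W^{\vee}(f(\mathbf{m}))$, which is the proper costandard of $\hCc_{Q,\beta}$ by Theorem~\ref{Thm:main}. (The cyclicity hypotheses are checked via the zero-structure of the denominators, using Lemma~\ref{Lem:path}.) This is Proposition~\ref{Prop:check}. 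Your proposal effectively re-derives the standard-module half of that proposition by the same reasoning but then abandons the criterion in favour of a direct $\Hom$/$\Ext$ comparison, which is exactly the kind of hard homological comparison that Theorem~\ref{Thm:criterion} is designed to avoid. To close the gap, replace the ``full faithfulness on standards'' step by the parallel computation for proper costandards.

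Two smaller remarks. First, exactness of the lifted functor does not need a Nakayama argument: the bimodule $\widehat{V}^{\otimes\beta}$ is flat as a right $\hH_{Q}(\beta)$-module by Kang--Kashiwara--Kim (Theorem~\ref{Thm:KKK2}~(\ref{Thm:KKK2:exact})), and this is quoted directly. Second, the order-isomorphism of posets is established geometrically in Lemma~\ref{Lem:strata}, identifying both posets with Kostant partitions ordered by closure of $G_{\beta}$-orbits in $E_{\beta}$, rather than by appealing to the Grothendieck-ring isomorphism; that is a cleaner way to control the degeneration order than inferring it from class equality of simples.
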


\subsection{Remark}
Theorem~\ref{theo:main} (\ref{theo:main:affhw})
can be obtained as 
a consequence of the theory of geometric extension
algebras developed by Kato~\cite{Kato17} and 
by McNamara~\cite{McNamara17},
although in this paper
we give an alternative proof 
which does not use geometric extension algebras. 
In fact, 
our convolution algebra $\hK^{\G(\beta)}(Z^{\bullet}_{\beta})$
is  isomorphic to the completion of 
the geometric extension algebra associated with 
$\M^{\bullet}_{\beta} \to \M_{0, \beta}^{\bullet}$,
which satisfies some
conditions presented in \cite{Kato17, McNamara17}.
Moreover, we can see that this geometric extension algebra is
Morita equivalent to 
the quiver Hecke algebra $H_{Q}(\beta)$
thanks to the result of Varagnolo-Vasserot~\cite{VV11}. 
From this point of view, we already know an abstract equivalence 
of categories $\hMm_{Q, \beta} \simeq \hCc_{Q, \beta}$. 
See Remark~\ref{Rem:ext} for more details.
However, one should note that
it is still not obvious that
the equivalence is realized concretely by  
Kang-Kashiwara-Kim\rq{}s functor $\mathcal{F}_{Q, \beta}$.  
In addition, to prove the other statement (\ref{theo:main:main})
in Theorem~\ref{theo:main},
it is necessary to
compare the quantum loop algebra $U_{q}(L\g)$
and the convolution algebra 
$\hK^{\G(\beta)}(Z^{\bullet}_{\beta})$
via Nakajima\rq{}s homomorphism,
which is neither injective nor surjective in general.

\subsection*{Organization}
This paper is organized as follows.
In Section~\ref{Sec:affhw}, we recall 
the definition and some properties 
of affine highest weight categories
and affine quasi-hereditary algebras.
Section~\ref{Sec:HL} is concerned with
the representation theory of quantum loop algebras
$U_{q}(L\g)$ of type $\mathrm{ADE}.$
In Section~\ref{Sec:quivvar}, we recall and prove
some geometric properties of quiver varieties, 
which are needed in the sequel.
Section~\ref{Sec:completion} is the main part of this paper.
After recalling Nakajima\rq{}s construction in Subsection~\ref{Ssec:Nakajima},
we prove Theorem~\ref{theo:main} 
in Subsection~\ref{Ssec:main}. 
We study the Kang-Kashiwara-Kim functor
in Section~\ref{Sec:KKK}.
Theorem~\ref{theo:equiv} is proved in Subsection~\ref{Ssec:final}.
%Finally in Subsection \ref{Ssec:refl}, we 
%describe the reflection functors between
%categories $\Cc_{Q}$\rq{}s. 

\subsection*{Acknowledgments}
The author is deeply grateful to 
Syu Kato and Ryosuke Kodera 
for many fruitful discussions and encouragements.
He also thanks Masaki Kashiwara,
Hiraku Nakajima and Katsuyuki Naoi for helpful discussions.
A part of this work was done during the author's
visit at UC Riverside in March 2017.
He thanks Vyjayanthi Chari for hospitality 
and stimulating discussions during his stay.
He also thanks the anonymous referee 
for many valuable comments. 

The work of the author was supported in part by
the Kyoto Top Global University program.
It was also supported by Grant-in-Aid for JSPS 
Research Fellow (No.~18J10669) 
and by JSPS Overseas Research Fellowships during the revision.
 
%%%%%%%%%%%%%%%%%%%%%%%%%%%%%%%%%
									
%\tableofcontents
									
%%%%%%%%%%%%%%%%%%%%%%%%%%%%%%%%%			

\subsection*{Convention}	

For an algebra $A$, 
the category of finitely generated left $A$-modules is denoted by $A \modfg$.
If $\kk$ is a field and $A$ is a $\kk$-algebra,
the category
of finite-dimensional left $A$-modules is denoted by $A \modfd$.
For a two-sided ideal $\mathfrak{a} \subset A$
and a left $A$-module $M$,
the quotient $M/\mathfrak{a}M$ is denoted by $M/\mathfrak{a}$
for simplicity. 
Working over a field $\kk$, the symbol $\otimes$
stands for $\otimes_{\kk}$ if there is no other clarification.
For $i=1,2$, let $R_{i}$ be a complete local commutative $\kk$-algebra
with maximal ideal $\rr_{i} \subset R_{i}$ satisfying $R_{i}/\rr_{i} \cong \kk$.
For any $R_{i}$-module $M_{i} \; (i=1,2)$, 
the symbol $M_{1}\hat{\otimes}M_{2}$ denotes the completion
of the $(R_{1} \otimes R_{2})$-module $M_{1} \otimes M_{2}$ with respect to
the maximal ideal $\rr_{1} \otimes R_{2} + R_{1} \otimes \rr_{2}$.
Note that $M_{1}\hat{\otimes}M_{2}$ is a module over 
the complete local algebra $R_{1} \hat{\otimes} R_{2}$. 

%%%%%%%%%%%%%%%%%%%%%%%%%%%%%%%%%%%%%%%
			
\section{Affine highest weight categories}
\label{Sec:affhw}

In this section, we recall the definitions 
and some properties of 
(topologically complete) 
affine highest weight categories and
affine quasi-hereditary algebras following Kleshchev~\cite{Kleshchev15}. 

Let $A$ be a left Noetherian algebra
over an algebraically closed field $\kk$
and $J \subset A$ the Jacobson radical of $A$.
Throughout this section,
we assume 
that $\dim (A/J) < \infty$ and
$A$ is complete with respect to the $J$-adic topology, 
i.e.,~$\displaystyle
\varprojlim A/J^{n} \cong A$.
Let $\Cc := A \modfg$
be the $\kk$-linear abelian category 
of all finitely generated left $A$-modules.
Our assumption guarantees that
any simple module of $\Cc$ is finite-dimensional
and the number of isomorphism classes of simple modules
in $\Cc$ is finite. 
We parametrize the set $\Irr \Cc$ 
of simple isomorphism classes in $\Cc$ 
by a finite set $\Pi$ as 
$\Irr \Cc = \{L(\pi) \in \Cc \mid \pi \in \Pi \}.$
For each $\pi \in \Pi$,
we fix a projective cover $P(\pi)$ of the simple module $L(\pi)$.

\begin{Def}
\label{Def:affhered}
A two-sided ideal $I \subset A$ is said to be {\em affine heredity} if
the following three conditions are satisfied:

\begin{enumerate}

\item
\label{Def:affhered:idem}
We have $\Hom_{\Cc}(I, A/I) = 0$;

\item
\label{Def:affhered:proj}
As a left $A$-module, we have $I \cong P(\pi)^{\oplus m}$ 
for some $\pi \in \Pi$ and $m \in \Z_{>0}$;

\item
\label{Def:affhered:endo}
The endomorphism $\kk$-algebra 
$\End_{A}(P(\pi))$ is isomorphic to 
a ring of formal power series 
$\kk [\![ z_{1}, \ldots, z_{n} ]\!]$ 
for some $n \in \Z_{\ge 0}$, and
$P(\pi)$ is free of finite rank over
$\End_{A}(P(\pi))$.

\end{enumerate}
\end{Def}

\begin{Def}
We say that the algebra 
$A$ is {\em affine quasi-hereditary} if 
there is a chain of ideals:
\begin{equation}
\label{Eq:affheredchain}
0 = I_{l} \subsetneq I_{l-1} \subsetneq \cdots
\subsetneq I_{1} 
\subsetneq I_{0} = A  
\end{equation}
such that,  
for each $i \in \{1,2, \ldots, l\}$, the ideal 
$I_{i-1}/I_{i}$ is an affine heredity ideal of
the algebra $A/I_{i}$.
We refer to such a chain (\ref{Eq:affheredchain}) as
an {\em affine heredity chain}.
\end{Def}

Let $\le$ be a partial order of $\Pi$.

\begin{Def}
\label{Def:affhw}

The category $\Cc = A \modfg$ 
is called an \emph{affine highest weight category}
for the poset $(\Pi, \le)$ if, for each $\pi \in \Pi$, there
exists an indecomposable module $\std(\pi)$ which 
is a nonzero quotient of $P(\pi)$ (i.e.,~$P(\pi) \twoheadrightarrow \Delta(\pi) \twoheadrightarrow L(\pi)$)
satisfying the following three conditions:

\begin{enumerate}
\item 
The endomorphism $\kk$-algebra 
$B_{\pi} := \End_{\Cc}(\std(\pi))$ 
is isomorphic to a ring of formal power series
$\kk [\![ z_{1}, \ldots , z_{n_{\pi}} ]\!]$
for some $n_{\pi} \in \Z_{\ge 0}$, 
and $\std(\pi)$ is free of finite rank over $B_{\pi}$;
\item 
Define $\pstd(\pi) := \std(\pi)/\rad B_{\pi}$,
where $\rad B_{\pi}$ denotes the maximal ideal of $B_{\pi}$. 
Then each composition factor of 
the kernel 
of the natural quotient map 
$\pstd(\pi) \twoheadrightarrow L(\pi)$
is isomorphic to $L(\sigma)$ for some $\sigma < \pi$;
\item
The kernel 
of natural quotient map 
$P(\pi) \twoheadrightarrow \std(\pi)$
is filtered by various $\std(\sigma)$\rq{}s with $\sigma > \pi$.
\end{enumerate} 
We refer to the module $\std(\pi)$ (resp.~$\pstd(\pi)$)
as the {\em standard module} 
(resp.~{\em proper standard module})
associated with $\pi \in \Pi$.
\end{Def}

The next theorem is an analog of a famous result by Cline-Parshall-Scott~\cite{CPS88}. 

\begin{Thm}[{\cite[Theorem 6.7]{Kleshchev15}}]
\label{Thm:CPS}
The followings are mutually equivalent.
\begin{enumerate}
\item The algebra $A$ is an affine quasi-hereditary algebra;
\item There is a partial order $\le$ of $\Pi$ such that the category $\Cc = A \text{-mod}_{\text{fg}}$ is  
an affine highest weight category for $(\Pi, \le)$.
\end{enumerate}
\end{Thm}

\begin{Rem}
\label{Rem:ordering}
Let $A$ be an affine quasi-hereditary algebra 
with $\Irr \Cc = \{ L(\pi) \in \pi \in \Pi \}.$ 
Then the partial order $\le$ of $\Pi$ in Theorem~\ref{Thm:CPS} can be chosen as follows.
The standard modules of the affine highest weight 
category $\Cc$ are obtained as 
an indecomposable direct summand of
subquotients $I_{i-1}/I_{i}$ of 
an affine heredity chain (\ref{Eq:affheredchain}). 
Thus an affine heredity chain (\ref{Eq:affheredchain}) 
gives a total ordering $\{ \pi_{1}, \pi_{2}, \ldots , \pi_{l}\}$
of the parameter set $\Pi$ by 
$I_{i-1}/I_{i} \cong \std(\pi_{i})^{\oplus m_{i}}$.
Using this notation, we
define a partial order $\le$ on the set $\Pi$ so that the 
following condition is satisfied:
\begin{itemize}
\item[$(*)$] For $\sigma, \tau \in \Pi$, 
we have $\sigma < \tau$ if and only if
for any affine heredity chain we have
$\sigma = \pi_{i}, \tau = \pi_{j}$
for some $i,j$ with $1 \le i < j \le l$.
\end{itemize}
Then we can prove that the category $\Cc$ is
an affine highest weight category
for this partial order $\le$ on $\Pi$.  
\end{Rem}

The following theorem
 is the Ext-version of BGG type reciprocity. 

\begin{Thm}[{\cite[Lemmas 7.2 and 7.4]{Kleshchev15}}]
\label{Thm:affhw}
Let $\Cc$ be an affine highest weight category for a poset
$(\Pi, \le)$. Then, 
for each $\pi \in \Pi$, 
there exists an indecomposable module $\pcstd(\pi) \in \Cc$,
uniquely up to isomorphism, 
characterized by the following $\Ext$-orthogonality:
 $$
\Ext_{\Cc}^{i}(\std(\sigma), \pcstd(\pi)) =
					\begin{cases}
					\kk
					& i=0,\sigma = \pi; \\
					0 & \text{else}.
					\end{cases}
$$ 
\end{Thm}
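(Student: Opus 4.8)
The plan is to produce $\pcstd(\pi)$ by a duality argument combined with the structure of the affine highest weight category already recorded in Definitions \ref{Def:affhered}--\ref{Def:affhw} and Theorem \ref{Thm:CPS}. First I would set up the relevant duality: since $A$ is left Noetherian, complete with respect to the $J$-adic topology and $\dim(A/J)<\infty$, the category $\Cc = A\modfg$ has a well-behaved (topological) linear duality $\mathbb{D}$ interchanging finitely generated left $A$-modules and finitely generated right $A$-modules, restricting to the usual $\kk$-linear dual on the finite-length part and sending simples to simples. Applying $\mathbb{D}$ to an affine heredity chain \eqref{Eq:affheredchain} shows that the opposite algebra $A^{\mathrm{op}}$ is again affine quasi-hereditary for the same poset $(\Pi,\le)$; hence by Theorem \ref{Thm:CPS} the category $A^{\mathrm{op}}\modfg$ is an affine highest weight category, with its own standard modules $\std^{\mathrm{op}}(\pi)$ and proper standard modules $\pstd^{\mathrm{op}}(\pi)$. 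I would then \emph{define} $\pcstd(\pi) := \mathbb{D}\,\pstd^{\mathrm{op}}(\pi)$, which is an object of $\Cc$ with socle $L(\pi)$ and all other composition factors of the form $L(\sigma)$ with $\sigma<\pi$.

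Next I would verify the claimed $\Ext$-orthogonality. The key input is that $\std(\pi)$ is, by Definition \ref{Def:affhw}(1), free of finite rank over the power series algebra $B_\pi = \End_\Cc(\std(\pi))$, so $\std(\pi)$ has a finite projective resolution whose terms are (up to the $B_\pi$-action) direct sums of the $P(\sigma)$ appearing in the given affine heredity chain; more precisely, the kernel of $P(\pi)\twoheadrightarrow\std(\pi)$ is $\std$-filtered with subquotients $\std(\sigma)$, $\sigma>\pi$, by Definition \ref{Def:affhw}(3). Dually, $\pcstd(\pi)=\mathbb{D}\,\pstd^{\mathrm{op}}(\pi)$ fits into an analogous filtration statement on the costandard side. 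The computation $\Ext^i_\Cc(\std(\sigma),\pcstd(\pi))$ then reduces, by the standard dévissage using these filtrations and adjunction $\Ext^i_\Cc(\std(\sigma),\mathbb{D}M)\cong \mathbb{D}\operatorname{Tor}^A_i(\std^{\mathrm{op}}\text{-related data},\ldots)$, to computing $\Hom$ and higher $\Ext$ between a standard and a proper costandard object supported at comparable poset points. One checks directly that $\Hom_\Cc(\std(\pi),\pcstd(\pi))$ is one-dimensional over $\kk$ (the composite $\std(\pi)\twoheadrightarrow L(\pi)\hookrightarrow\pcstd(\pi)$ spans it, using that $\pstd(\pi)$ has simple top $L(\pi)$ and $\pcstd(\pi)$ has simple socle $L(\pi)$, and that the composition-factor supports of their radical/quotient are disjoint), that it vanishes when $\sigma\neq\pi$ by comparing composition factors against the partial order, and that all higher $\Ext$ groups vanish by induction on the length $l$ of the affine heredity chain: quotienting by the bottom ideal $I_{l-1}$ peels off one $\std(\pi_l)^{\oplus m_l}$ and reduces to the shorter chain for $A/I_{l-1}$, while the projectivity of $I_{l-1}$ over $A$ (Definition \ref{Def:affhered}(2)) together with the $\Hom$-vanishing condition (Definition \ref{Def:affhered}(1)) controls the long exact sequences. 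Indecomposability and the uniqueness of $\pcstd(\pi)$ follow because an object with the stated $\Ext$-orthogonality has local endomorphism ring and is determined up to isomorphism by its class in an appropriate Grothendieck group together with the orthogonality.

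The main obstacle I anticipate is making the duality $\mathbb{D}$ genuinely functorial and exact on all of $A\modfg$ rather than merely on the finite-length subcategory: because $A$ is only a complete Noetherian (not finite-dimensional) algebra, $M\mapsto M^*$ need not preserve finite generation on the nose, and one must pass through the $J$-adic completion and use that each $\std(\pi)$ is \emph{finitely generated and free} over a Noetherian complete local ring $B_\pi\cong\kk[\![z_1,\dots,z_{n_\pi}]\!]$ to get a good duality on the $\std$-filtered subcategory. Concretely, I would restrict attention to the exact subcategory $\Cc^{\std}$ of $\std$-filtered modules (equivalently, objects admitting a finite filtration with standard subquotients), show $\mathbb{D}$ is an exact contravariant autoequivalence between $\Cc^{\std}$ and the corresponding $\std^{\mathrm{op}}$-filtered subcategory of $A^{\mathrm{op}}\modfg$, and then invoke the homological characterization of $\pcstd$ inside this subcategory — this is exactly where one cites \cite[Theorem 6.7 and its surrounding results]{Kleshchev15}. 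Once the duality is in place on this subcategory, the $\Ext$-orthogonality is a formal consequence and the theorem follows.
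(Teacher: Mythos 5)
Your plan matches the route actually taken in Kleshchev's paper, which is all the paper does here (the paper's ``proof'' is simply a citation to \cite[Lemmas~7.2 and 7.4]{Kleshchev15}): one introduces a duality $\mathbb{D}$ that identifies $\Cc$ with the corresponding category over $A^{\mathrm{op}}$, defines $\pcstd(\pi)$ as the dual of the proper standard module for $A^{\mathrm{op}}$, and then establishes the $\Ext$-orthogonality by a filtration/d\'evissage argument using the affine heredity chain. You have also correctly located where the real work is (making $\mathbb{D}$ behave well in the complete Noetherian setting rather than the finite-dimensional one). Two small cautions: freeness of $\std(\pi)$ over $B_\pi$ alone does not give the finite projective resolution you invoke --- that comes from the $\std$-filtration of $\Ker(P(\pi)\twoheadrightarrow\std(\pi))$ together with the chain structure, as you note elsewhere; and in the paper's topologically-complete convention (as opposed to Kleshchev's graded one) the bookkeeping for the duality is carried out in \cite{Fujita16}, which is the intended bridge and would replace your ad hoc restriction to the $\std$-filtered subcategory.
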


We refer to the module $\pcstd(\pi)$ as 
the {\em proper costandard module}
associated with $\pi \in \Pi$.
The following criterion is proved
by using a theory of tilting modules in
affine highest weight categories.

\begin{Thm}[{\cite[Theorem 3.9]{Fujita18}}]
\label{Thm:criterion}
For $i=1,2$, let $\Cc_{i} = A_{i} \modfg$ 
be an affine highest weight category
for a poset $(\Pi_{i}, \le_{i})$.
Assume that we have an exact functor 
$F \colon \Cc_{1} \to \Cc_{2}$
and the following conditions are satisfied:
\begin{enumerate}
\item The algebra $A_{i}$ is a finitely generated module
over its center ($i=1,2$);
\item
There exists a bijection 
$f\colon \Pi_{1} \xrightarrow{\simeq} \Pi_{2}$ 
preserving the partial orders
and we have the following isomorphisms 
for each $\pi \in \Pi_{1}$:
$$
F(\std(\pi)) \cong \std(f(\pi)), \qquad
F(\pcstd(\pi)) \cong \pcstd(f(\pi)). 
$$
\end{enumerate}
Then the functor $F$ gives an equivalence of categories
$F \colon \Cc_{1} \simeq \Cc_{2}$.    
\end{Thm}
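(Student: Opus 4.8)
The plan is to pass to tilting modules --- where the hypotheses force enough rigidity to obtain full faithfulness cheaply --- and then transport the equivalence to the whole categories through the bounded derived categories. I write $\Cc_{i}^{\std}$ and $\Cc_{i}^{\pcstd}$ for the full subcategories of $\std$-filtered and $\pcstd$-filtered modules, and $T_{i}(\pi)$ for the indecomposable tilting module of highest weight $\pi$, and I freely use the tilting theory of affine highest weight categories (\cite{Kleshchev15,Fujita16}, where hypothesis (1) enters): $T_{i}(\pi)$ exists and lies in $\Cc_{i}^{\std}\cap\Cc_{i}^{\pcstd}$; it is the unique indecomposable tilting module with $(T_{i}(\pi):\std(\pi))=1$ and $(T_{i}(\pi):\std(\mu))\neq0\Rightarrow\mu\le\pi$; each $\std_{i}(\pi)$ has a finite resolution by tilting modules; and affine quasi-hereditary algebras have finite global dimension.

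First I would check that $F$ is faithful and kills no nonzero module. By induction along $(\Pi_{1},\le_{1})$ one sees that $F(L(\pi))$ is a nonzero submodule of $F(\pcstd(\pi))=\pcstd(f(\pi))$: applying the exact functor $F$ to $0\to L(\pi)\to\pcstd(\pi)\to\pcstd(\pi)/L(\pi)\to0$ (recall $\soc\pcstd(\pi)=L(\pi)$) and using that $\pcstd(\pi)/L(\pi)$ has composition factors $L(\mu)$ with $\mu<\pi$, induction gives that $F(\pcstd(\pi)/L(\pi))$ has composition factors among the $L(\nu)$ with $\nu<f(\pi)$, whereas $[\pcstd(f(\pi)):L(f(\pi))]=1$; hence $F(L(\pi))\neq0$, all of its composition factors are $L(\nu)$ with $\nu\le f(\pi)$, and $[F(L(\pi)):L(f(\pi))]\ge1$ since the simple socle of $\pcstd(f(\pi))$ lies in every nonzero submodule. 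Then, for $M\neq0$, picking $\pi$ maximal with $[M:L(\pi)]\neq0$ yields $[F(M):L(f(\pi))]=[M:L(\pi)]\neq0$ in the Grothendieck group, so $F(M)\neq0$; exactness of $F$ then forces $\Ima g=0$ whenever $F(g)=0$, so $F$ is faithful.

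Next I would show that $F$ is fully faithful on tilting modules and identifies them. As $F$ is exact with $F(\std(\pi))=\std(f(\pi))$ and $F(\pcstd(\pi))=\pcstd(f(\pi))$, it maps $\Cc_{1}^{\std}$ into $\Cc_{2}^{\std}$ and $\Cc_{1}^{\pcstd}$ into $\Cc_{2}^{\pcstd}$ with $(F(M):\std(f(\pi)))=(M:\std(\pi))$ and the analogue for $\pcstd$; in particular tilting modules go to tilting modules. From the $\Ext$-orthogonality of Theorem~\ref{Thm:affhw}, giving $\Hom_{\Cc_{i}}(\std(\sigma),\pcstd(\tau))=\delta_{\sigma\tau}\kk$ and $\Ext^{>0}_{\Cc_{i}}(\std(\sigma),\pcstd(\tau))=0$, a filtration count shows that $\Hom_{\Cc_{i}}(S,S')$ is finite-dimensional of dimension $\sum_{\pi}(S:\std(\pi))(S':\pcstd(\pi))$ for tilting $S,S'$. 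Comparing this in $\Cc_{1}$ for $T_{1}(\sigma),T_{1}(\tau)$ with the corresponding formula in $\Cc_{2}$ for their $F$-images --- using the multiplicity identities and that $f$ is an order-preserving bijection --- gives $\dim_{\kk}\Hom_{\Cc_{1}}(T_{1}(\sigma),T_{1}(\tau))=\dim_{\kk}\Hom_{\Cc_{2}}(F(T_{1}(\sigma)),F(T_{1}(\tau)))$. Faithfulness of $F$ then makes the induced map between these equal finite-dimensional spaces an isomorphism, so $F$ is fully faithful on tilting modules. Consequently $\End_{\Cc_{1}}(T_{1}(\pi))\cong\End_{\Cc_{2}}(F(T_{1}(\pi)))$ is local, $F(T_{1}(\pi))$ is indecomposable, and --- as $\std(f(\pi))$ occurs once among its $\std$-multiplicities and all other $\std$-factors are labelled $<f(\pi)$ --- the characterisation above yields $F(T_{1}(\pi))\cong T_{2}(f(\pi))$.

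Finally I would conclude using derived categories. The exact functor $F$ induces a $t$-exact triangulated functor $DF\colon D^{b}(\Cc_{1})\to D^{b}(\Cc_{2})$. Since $\Ext^{>0}_{\Cc_{i}}(T_{i}(\sigma),T_{i}(\tau))=0$ (again Theorem~\ref{Thm:affhw}), the previous step shows $DF$ induces isomorphisms on $\Hom_{D^{b}(\Cc_{1})}(T_{1}(\sigma),T_{1}(\tau)[n])$ for all $n$. Because affine quasi-hereditary algebras have finite global dimension, $D^{b}(\Cc_{i})$ is generated as a triangulated category by the finitely generated projectives, hence --- they are $\std$-filtered, and each $\std_{i}(\pi)$ is built by finitely many cones from the $T_{i}(\mu)$ --- by the $\{T_{i}(\pi)\}$; a standard d\'evissage then upgrades the above isomorphisms to full faithfulness of $DF$, and its essential image, being triangulated and containing every $T_{2}(\tau)=DF(T_{1}(f^{-1}(\tau)))$, is all of $D^{b}(\Cc_{2})$. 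Thus $DF$ is a $t$-exact triangulated equivalence, and passing to hearts it restricts to the desired equivalence $\Cc_{1}\xrightarrow{\sim}\Cc_{2}$, which is $F$. The main obstacle is exactly this passage from ``$F$ preserves standard and proper costandard modules'' to ``$F$ is fully faithful'': it cannot be carried out directly on $\Cc_{1}$, which has neither enough injectives nor a duality, so the argument is routed through tilting modules, where $\Hom$-spaces are finite-dimensional and are read off from filtration multiplicities, making faithfulness upgrade to full faithfulness for free; a secondary subtlety --- excluding spurious indecomposable summands of $F(T_{1}(\pi))$ --- is handled through the locality of $\End_{\Cc_{1}}(T_{1}(\pi))$.
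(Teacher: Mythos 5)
Your overall plan --- pass through tilting modules and then deduce the equivalence on the level of bounded derived categories --- is the right idea, and it matches the route indicated by the paper (the text just before the statement says the criterion ``is proved by using a theory of tilting modules in affine highest weight categories''). The faithfulness argument is also essentially correct, modulo a small wrinkle: for a finitely generated module $M$ of possibly infinite length the phrase ``$[F(M):L(f(\pi))]=[M:L(\pi)]$ in the Grothendieck group'' does not make literal sense; the clean fix is simply to take a simple quotient $M\twoheadrightarrow L(\pi)$ (which exists because $A_1/J_1$ is finite dimensional) and apply exactness of $F$ together with $F(L(\pi))\neq 0$.

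The genuine gap is in the step where you claim that $\Hom_{\Cc_i}(S,S')$ is \emph{finite-dimensional} for tilting modules $S,S'$, of dimension $\sum_{\pi}(S:\std(\pi))(S':\pcstd(\pi))$, and then upgrade faithfulness to full faithfulness by ``equal finite dimensions.'' In the affine setting the $\pcstd$-filtration of a tilting module is typically infinite, so the quantity $(S':\pcstd(\pi))$ is typically infinite, and correspondingly $\Hom_{\Cc_i}(S,S')$ is an infinite-dimensional $\kk$-vector space. The simplest possible example already exhibits this: take $A=\kk[\![z]\!]$, $\Pi=\{*\}$, so that $\std(*)=P(*)=T(*)=A$ and $\pcstd(*)=\kk$. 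Then $\End_{\Cc}(T(*))=A$ is infinite-dimensional, while the tilting module $A$ has an infinite $\pcstd$-filtration $A\supset zA\supset z^2A\supset\cdots$. Since the $\Hom$-spaces are not finite-dimensional, the inference ``injective map between equal finite-dimensional spaces is an isomorphism'' is unavailable, and consequently neither locality of $\End_{\Cc_2}(F(T_1(\pi)))$, nor indecomposability of $F(T_1(\pi))$, nor the identification $F(T_1(\pi))\cong T_2(f(\pi))$, is actually established by your argument. This is presumably where hypothesis (1) (finite generation of $A_i$ over its centre) must enter more seriously than just to guarantee existence of tilting modules: one expects that the surjectivity of $F$ on $\Hom$-spaces between tiltings has to be extracted by a Nakayama-type argument over the centres, or by comparing $\Hom$-spaces after reduction modulo maximal ideals, rather than by a naive $\kk$-dimension count. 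Once full faithfulness on tiltings (with the correct labelling of the indecomposable tiltings) is in place, your d\'evissage through $D^{b}(\Cc_i)$ and the $t$-exactness argument do complete the proof.
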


%%%%%%%%%%%%%%%%%%%%%%%%%%%%%%%%%%%%
%%%%%%%%%%%%%%%%%%%%%%%%%%%%%%%%%%%%

\section{Representations of quantum loop algebras}
\label{Sec:HL}

In this section, we recall and prove some basic facts
on the representation theory of quantum loop algebras
$U_{q}(L \g)$ of type $\mathrm{ADE}$.

%%%%%%%%%%%%%%%%%%%%%%%%%%%%%%%%%%%%

\subsection{Quivers and root systems}
\label{Ssec:quivers}

From now on, we fix a Dynkin quiver $Q = (I, \Omega)$
(of type $\mathrm{ADE}$)
with its set of vertices $I=\{1,2, \ldots, n\}$ and
its set of arrows $\Omega$. 
We write $i \sim j$ if $i , j \in I$ are adjacent in $Q$.
We also fix a function  
$\xi \colon I \to \Z; i \mapsto \xi_{i}$ such that
$\xi_{j} = \xi_{i} -1$ if $i \to j \in \Omega$.
Given a Dynkin quiver $Q$,
such a function $\xi$ is determined uniquely up to adding a constant 
and called a {\em height function} on $Q$.

Let $\g$ be the complex simple Lie algebra 
whose Dynkin diagram is the underlying graph of the quiver $Q$.
The Cartan matrix $A =  (a_{ij})_{i,j \in I}$ of $\g$ is given by
$$
a_{ij} = 
\begin{cases}
2 & \text{if $i=j$;} \\
-1 &\text{if $i \sim j$;} \\
0 &\text{otherwise}.
\end{cases}
$$
Let $\cP^{\vee} = \bigoplus_{i \in I} \Z h_{i}$ 
be the coroot lattice of $\g$.
The fundamental weights $\{ \varpi_{i} \mid i \in I\}$ 
form a basis of  {the weight lattice} 
$\cP := \Hom_{\Z}(\cP^{\vee}, \Z)$
which is dual to $\{h_{i} \mid i \in I \}.$
Let $\cP^{+} := \sum_{i \in } \Z_{\ge 0}\varpi_{i}$
be the set of {dominant weights}.  
The simple roots $\{ \alpha_{i} \mid i \in I \}$ are defined by
$\alpha_{i} := \sum_{j \in I} a_{ij} \varpi_{j}$. 
We define the root lattice by 
$\cQ := \bigoplus_{i \in I} \Z \alpha_{i} \subset \cP$
and put $\cQ^{+} := \sum_{i \in I} \Z_{\ge 0} \alpha_{i}$.
We define a partial order $\le$ 
called the dominance order on
$\cP$ by the condition that for $\lambda, \mu \in \cP$,
we have $\lambda \le \mu$
if and only if $\mu - \lambda \in \cQ^{+}$. 

The Weyl group $W$ of $\g$ is a group of 
linear transformations of $\cP$ generated by 
the set $\{s_{i} \mid i \in I\}$ of simple reflections, which are
defined by
$s_{i}(\lambda) := \lambda - \lambda(h_{i}) \alpha_{i}$
for $\lambda \in \cP$.
For an element $w \in W$, its length $l(w)$ is the smallest number
$l \in \Z_{\ge 0}$ 
such that there is an expression $w = s_{i_{1}} s_{i_{2}} \cdots s_{i_{l}}$.
We say that an expression 
$w = s_{i_{1}} s_{i_{2}} \cdots s_{i_{l}}$ is reduced
if $l=l(w)$. 
Let $w_{0}$ denote the unique longest element of $W$.    
Let $\mathsf{R} := W \{ \alpha_{i} \mid i \in I\}$ 
be the set of roots, which decomposes
as $\mathsf{R} = \mathsf{R}^{+} \sqcup ( - \mathsf{R}^{+})$,
where $\mathsf{R}^{+} := \mathsf{R} \cap \cQ^{+}$ denotes
the set of positive roots. 

For each $i \in I$, 
we denote by $s_{i}Q$ the quiver obtained from $Q$ by
changing the orientations of all arrows 
incident to $i$. 
A vertex $i \in I$ is called a source (resp.~sink)
if there is no arrow
of the form $j \to i$ (resp.~$i \to j$) in $\Omega$.
A reduced expression
$w = s_{i_{1}} s_{i_{2}} \cdots s_{i_{l}}$ is said to be 
adapted to $Q$ if 
the vertex $i_{k}$ is a source of the quiver
$s_{i_{k-1}}\cdots s_{i_{1}}Q$ for every $k \in \{ 1,2, \ldots l \}$.
For any Dynkin quiver $Q$, we can choose a total ordering
$I = \{i_{1}, i_{2}, \ldots, i_{n}\}$ of the vertex set $I$ such that 
we have $a < b$ whenever $i_{a} \to i_{b} \in \Omega$.  
Then the reduced expression 
$s_{i_{1}} s_{i_{2}} \cdots s_{i_{n}}$
is adapted to $Q$.   
We define the corresponding Coxeter element $\tau \in W$
to be the product
$\tau := s_{i_{1}} s_{i_{2}} \cdots s_{i_{n}}$.
The element $\tau$ does not 
depend on the choice of such a total ordering
$I = \{i_{1}, i_{2}, \ldots, i_{n}\}$
(i.e.,~depends only on the orientation of the quiver $Q$).

Let 
$\widehat{I} := \{ (i, p) \in I \times \Z 
\mid p - \xi_{i} \in 2 \Z \}$,
where $\xi \colon I \to \Z$ is a fixed height function on $Q$. 
Following Hernandez-Leclerc~\cite[Section 2.2]{HL15},
we define the bijection\footnote{Our bijection $\phi$ is the inverse of the bijection $\varphi$ in \cite{HL15}.} 
$\phi \colon \mathsf{R}^{+} \times \Z \to \widehat{I}$ by
the following rule:

\begin{itemize}
\item[(a)]
For each $i \in I$, we put 
$\gamma_{i} := \sum_{j} \alpha_{j}$ where 
$j$ runs over all the vertices $j \in I$ such that there is an oriented path
in $Q$ from $j$ to $i$.
Then we define $\phi (\gamma_{i}, 0) := (i, \xi_{i})$;
\item[(b)]
Inductively, if $\phi(\alpha, k) = (i, p)$ 
for $(\alpha, k) \in \mathsf{R}^{+} \times \Z$, then we define
\begin{align*}
\phi(\tau^{\pm1}(\alpha), k) &:= (i, p\mp 2) 
&& \text{if $\tau^{\pm 1}(\alpha) \in \mathsf{R}^{+}$}, \\
\phi(-\tau^{\pm1}(\alpha), k \mp 1) &:= (i, p\mp 2) 
&& \text{if $\tau^{\pm 1}(\alpha) \in - \mathsf{R}^{+}$}. 
\end{align*}
\end{itemize}

Let $\widehat{Q}$ be the infinite quiver 
whose set of vertices is $\widehat{I}$ and 
whose set of arrows consists of all the arrows 
$(i, p) \to (j, p+1)$ for
$(i,p) \in \widehat{I}$ and $j \sim i$.
Note that the quiver $\widehat{Q}$ does not depends on 
the orientation of $Q$.
This quiver $\widehat{Q}$ is called the 
{\em repetition quiver}. 
Recall that by
Gabriel\rq{}s theorem,
taking dimension vector gives a bijection
between the set of isomorphism classes of  
indecomposable modules of the path algebra 
$\C Q$ and the set $\mathsf{R}^{+}$ of positive roots.
For a positive root $\alpha \in \mathsf{R}^{+}$,
let $M(\alpha)$ denote the indecomposable 
$\C Q$-module whose dimension vector is $\alpha$.
It is known that
the full subquiver $\Gamma_{Q}$ of $\widehat{Q}$
whose vertex set is 
the subset $\widehat{I}_{Q} := \phi(\mathsf{R}^{+} \times \{0\})$
is isomorphic to the {\em Auslander-Reiten quiver} 
of $\C Q $. 
The indecomposable module $M(\alpha)$ corresponds
to the vertex $\phi(\alpha) := \phi(\alpha, 0)$.
Moreover,
the action of the Coxeter element $\tau$ on $\mathsf{R}$ corresponds 
to the Auslander-Reiten translation.

%%%%%%%%%%%%%%%%%%%%%%%%%%%%%%%%%%%%

\subsection{Quantum loop algebras}
\label{Ssec:qloop}

Let $q$ be an indeterminate.
From now on, let
$\kk$ denote
the algebraic closure of $\Q(q)$ in the ambient field 
$\bigcup_{m \in \Z_{\ge 1}} \overline{\Q}(\!(q^{1/m})\!)$.

\begin{Def}
\label{Def:qloop}
The quantum loop algebra 
$U_{q} \equiv U_{q}(L\g)$ 
associated with $\g$
is a $\kk$-algebra given by the generators:
$$
\{ e_{i,r} , f_{i,r} \mid i \in I, r \in \Z \}\cup 
\{ q^{h} \mid h \in \cP^{\vee}\} \cup
\{ h_{i, m} \mid i \in I, m \in \Z\setminus \{0\}\}$$
and the following relations:
$$
q^{0} = 1, \quad
q^{h} q^{h^{\prime}} = q^{h+h^{\prime}}, \quad
[q^{h}, h_{i,m}] = [h_{i,m}, h_{j, l}] = 0, 
$$
$$
q^{h} e_{i,r} q^{-h} = q^{\alpha_{i}(h)} e_{i,r}, 
\quad
q^{h} f_{i,r} q^{-h} = q^{-\alpha_{i}(h)} f_{i,r},
$$
$$
(z-q^{\pm a_{ij}}w)\psi_{i}^{\varepsilon}(z) x_{j}^{\pm}(w)
= (q^{\pm a_{ij}}z - w)x_{j}^{\pm}(w) \psi_{i}^{\varepsilon}(z), 
$$
$$
[x_{i}^{+}(z), x_{j}^{-}(w)] = 
\frac{\delta_{ij}}{q - q^{-1}} 
\left( \delta \left( \frac{w}{z} \right) \psi_{i}^{+}(w)
- \delta \left( \frac{z}{w} \right) \psi_{i}^{-}(z) \right), 
$$
$$
(z - q^{\pm a_{ij}}w)x_{i}^{\pm}(z) x_{j}^{\pm}(w) 
=
(q^{\pm a_{ij}}z-w)
x_{j}^{\pm}(w) x_{i}^{\pm}(z),   
$$
\vspace{-6mm}
\begin{multline}
\{ x_{i}^{\pm}(z_{1}) x_{j}^{\pm}(z_{2}) x_{j}^{\pm}(w)
-(q + q^{-1})x_{i}^{\pm}(z_{1}) x_{j}^{\pm}(w)x_{i}^{\pm}(z_{2})
\\
+ x_{j}^{\pm}(w) x_{i}^{\pm}(z_{1}) x_{i}^{\pm}(z_{2}) 
\} 
+
\{ z_{1} \leftrightarrow z_{2}\}
= 0
\qquad \text{if $i \sim j$},
\nonumber
\end{multline}
where $\varepsilon \in \{+, - \}$
and $\delta(z),  \psi_{i}^{\pm}(z), x_{i}^{\pm}(z)$ 
are the formal series defined as follows:
$$
\delta(z) := \sum_{r=-\infty}^{\infty}z^{r},
\quad
\psi_{i}^{\pm}(z) := q^{\pm h_{i}}
\exp \left( \pm (q-q^{-1}) 
\sum_{m=1}^{\infty}h_{i, \pm m} z^{\mp m} \right),
$$
$$
x_{i}^{+}(z) := \sum_{r=-\infty}^{\infty}e_{i,r}z^{-r},
\quad x_{i}^{-}(z) := \sum_{r=-\infty}^{\infty}f_{i,r}z^{-r}.
$$
In the last relation, the second term $\{ z_{1} \leftrightarrow z_{2}\}$
means the exchange of $z_{1}$ with $z_{2}$ in the first term.
\end{Def}

\begin{Rem}
\label{Rem:Beck}
By \cite{Beck94},
we have a $\kk$-algebra isomorphism
$U_{q}(L\g) \cong 
U_{q}^{\prime}(\widehat{\g}) / \langle q^{c} -1 \rangle$, 
where the RHS is 
a quotient of the quantum affine algebra
$U_{q}^{\prime}(\widehat{\g})$ 
(without the degree operator),
which is presented by the Chevalley type generators
$\{ e_{i}, f_{i} \mid i \in I \cup \{ 0 \} \} \cup 
\{ q^{h} \mid h \in 
\cP^{\vee} \oplus \Z c \}$,
by the ideal generated by a central element $q^{c}-1$.
Although this isomorphism depends on 
a function $o \colon I \to \{\pm 1\}$ such that
$o(i) = - o(j)$ if $i \sim j$,
the choice does not affect the results of this paper.
Via this isomorphism,
the quantum loop algebra $U_{q}(L\g)$  
inherits a structure of Hopf algebra from 
the quantum affine algebra $U_{q}^{\prime}(\widehat{\g})$.
In terms of the Chevalley type generators,
the coproduct
$\Delta$ is given 
by:
$$
\Delta(e_{i}) = e_{i} \otimes q^{-h_{i}} + 1 \otimes e_{i}, \quad
\Delta(f_{i}) = f_{i} \otimes 1 + q^{h_{i}} \otimes f_{i}, \quad
\Delta(q^{h}) = q^{h} \otimes q^{h}  
$$
for $i \in I \cup \{0\}, h \in \cP^{\vee}$.
By \cite[Proposition 7.1]{Damiani98} (see also \cite[Remark 3.4]{Hernandez10}), for each $i \in I$ and $r \in \Z_{>0}$,
we have
\begin{equation}
\label{Eq:delta_looph}
\Delta(h_{i, \pm r}) - (h_{i, \pm r}\otimes 1
+ 1 \otimes h_{i, \pm r}) \in
 \bigoplus_{\gamma \in \cQ^{+} \setminus \{ 0 \}}
(U_{q})_{- \gamma} \otimes
(U_{q})_{ \gamma}, 
\end{equation} 
where
$(U_{q})_{\gamma}
:=\{x \in U_{q} \mid q^{h} x q^{-h} = q^{\gamma(h)}x\;
(\forall h \in \cP^{\vee}) \}
$.
The antipode $S$ is given by 
$$
S(e_{i}) = - e_{i}q^{h_{i}}, \quad
S(f_{i}) = - q^{-h_{i}}f_{i}, \quad
S(q^{h}) = q^{-h}, 
$$
which we use to define the dual modules.
\end{Rem}  

A $U_{q}$-module $M$ is said to be 
of type $\mathbf{1}$ if it has 
a decomposition
$
M = \bigoplus_{\lambda \in \cP}M_{\lambda},
$
where
$
M_{\lambda} := \{v \in M \mid 
q^{h}v = q^{\lambda(h)}v \ (\forall h \in \cP^{\vee})\}.
$
A non-zero subspace $M_{\lambda}$ is called a weight spaces of $M$.
Let $\Cc_{\g}$ denote the category of 
finite-dimensional $U_{q}$-modules of type $\mathbf{1}$. 
This is an abelian $\kk$-linear monoidal category.

We also use the modified quantum loop algebra
$\tilU(L\g)$ defined by
$$
\tilU \equiv
\tilU(L\g)
:= \bigoplus_{\lambda \in \cP} U_{q}a_{\lambda},
\quad
U_{q}a_{\lambda} 
:= \left.
U_{q} \middle/
\sum_{h \in \cP^{\vee}} U_{q}(q^{h} - q^{\lambda(h)}),\right. 
$$
where $a_{\lambda}$ stands for 
the image of $1$ in the quotient.
This is a non-unital $\kk$-algebra, whose multiplication is given by
$$
a_{\lambda} a_{\mu} = \delta_{\lambda \mu} a_{\lambda},
\quad
a_{\lambda} x = x a_{\lambda - \gamma},
$$
where $x \in (U_{q})_{\gamma}, \gamma \in \cQ$. 
By definition, considering a 
$\tilU$-module is 
the same as considering a $U_{q}$-module 
of type $\mathbf{1}$.  

We use the following notation.
Let $\lP := \bigoplus_{(i , a) \in I \times \kk^{\times}}
\Z \varpi_{i, a}  
$  
be the set of {\em $\ell$-weights}, which is 
a free abelian group with a basis 
$ \{\varpi_{i,a} \mid i \in I, a \in \kk^{\times} \}$.
We call a basis element $\varpi_{i,a}$ 
a {\em fundamental $\ell$-weight}.
An element in the submonoid  
$\lP^{+} 
:= \sum
\Z_{\ge 0} \varpi_{i,a} 
$
is said to be {\em $\ell$-dominant}.
We define a $\Z$-linear map
$\mathsf{cl}\colon \lP \to \cP$
by $\varpi_{i,a} \mapsto \varpi_{i}$.  
Following~\cite{FR99},
we define the {\em $\ell$-root}
$\alpha_{i,a}$ for each $(i, a) \in I \times \kk^{\times}$ by 
$$
\alpha_{i,a} := \varpi_{i,aq} + \varpi_{i, aq^{-1}} 
-\sum_{j \sim i} \varpi_{j,a}.
$$
We define the {\em $\ell$-root lattice} by
$\lQ := \bigoplus_{(i,a) \in I \times \kk^{\times}} \Z \alpha_{i,a}
\subset \lP$ and set
$\lQ^{+} := \sum \Z_{\ge 0}\alpha_{i,a}.$
Note that 
$\mathsf{cl} \colon \lP \to \cP$ induces a map
$\mathsf{cl} \colon \lQ \to \cQ$ since 
$\mathsf{cl} (\alpha_{i,a}) = \alpha_{i}.$
We define a partial order $\le$ on
$\lP$ called the {\em Nakajima partial ordering}
by the condition
that for $\hlam, \hmu \in \lP$, we have $\hlam \le \hat{\mu}$
if and only if $\hat{\mu} - \hlam \in \lQ^{+}$. 

Let $U_{q}(L\h)$ denote the commutative $\kk$-subalgebra
of $U_{q}(L\g)$ generated by 
$\{ q^{h} \mid h \in \cP^{\vee}\} \cup
\{ h_{i, r} \mid i \in I, r \in \Z \setminus \{ 0 \} \}$.
A module $M \in \Cc_{\g}$ decomposes into a direct sum
of generalized eigenspaces for $U_{q}(L\h)$ as
$M = \bigoplus M_{\Psi},$
where $\Psi = (\Psi^+_{i}(z), \Psi_i^-(z))_{i \in I} 
\in \kk[\![ z^{-1}]\!]^{I} \times \kk[\![z]\!]^I$ and
$M_{\Psi}$ is the subspace 
on which all the coefficients of the series 
$\psi^{\pm}_{i}(z) - \Psi_{i}^{\pm}(z)\, \mathrm{id}_M$
act nilpotently.
By \cite{FR99}, it is known that 
if $M_{\Psi} \neq 0$,
there is a unique $\ell$-weight
$\hlam = \sum l_{i,a}
\varpi_{i,a} \in \lP$ such that we have
\begin{equation}
\label{Eq:Psi}
\Psi_{i}^{\pm}(z) = q^{\mathsf{cl}(\hlam)(h_{i})}\left(
\prod_{a \in \kk^{\times}} 
\left(
\frac{1-aq^{-2}z^{-1}}{1-az^{-1}}\right)^{l_{i,a}}
\right)^{\pm},
\end{equation}
where $(-)^{\pm}$ denotes 
the formal expansion at $z=\infty$ and $0$ respectively.
In this case, we write $M_{\hlam} = M_{\Psi}$
and call it the {\em $\ell$-weight space} 
of $\ell$-weight $\hlam$.

We say a module $M \in \Cc_{\g}$ is an 
{\em $\ell$-highest weight module} 
of $\ell$-highest weight $\hlam \in \lP$
if there exists a generating vector $v_{0} \in M$
satisfying
$$
x_{i}^{+}(z) \cdot v_{0} = 0, \quad
\psi_{i}^{\pm}(z) \cdot v_{0} =
q^{\mathsf{cl}(\hlam)(h_{i})}\left(
\prod_{a \in \kk^{\times}} 
\left(
\frac{1-aq^{-2}z^{-1}}{1-az^{-1}}\right)^{l_{i,a}}
\right)^{\pm} v_{0}
$$
in $M[\![z, z^{-1}]\!]$ for any $i \in I$.
Compare the latter equation with (\ref{Eq:Psi}).
In this case, it is known by \cite{CP, CP95}, that the $\ell$-highest weight $\hlam$
automatically becomes $\ell$-dominant, i.e.,~$\hlam \in \lP^{+}$
and we have $M_{\hlam} = \kk \cdot v_{0}.$
Moreover, for any $\hlam \in \lP^+$,
there is a simple $\ell$-highest weight module $L(\hlam) \in \Cc_\g$ uniquely up to isomorphism
and any simple module in $\Cc_{\g}$ is of this form.
By \cite{Nakajima01, FM01}, it is known that for an element $\hmu \in \lP$,  
we have $L(\hlam)_{\hmu} \neq 0$ only if
$\hmu \le \hlam$.
When $\hlam = \varpi_{i,a}$ for some $(i,a) \in I \times \kk^\times$, the simple module $L(\varpi_{i,a})$ 
is called an \emph{$\ell$-fundamental module}. 

Recall that
for two simple modules $M_{1}, M_{2} \in \Cc_{\g}$,
we say that $M_{1}$ and  $M_{2}$ are \emph{linked}
if there is no splitting $\Cc_{\g} \cong
\Cc_{1} \oplus \Cc_{2}$  such that
$M_{1} \in \Cc_{1}$ and $M_{2} \in \Cc_{2}$.

\begin{Thm}[Chari-Moura~\cite{CM05}]
\label{Thm:block}
For any $\ell$-dominant $\ell$-weight
$\hlam , \hmu \in \lP^{+}$,
the simple modules $L(\hlam)$
and $L(\hmu)$ are linked if and only if
$\hlam - \hmu \in \lQ$.
\end{Thm}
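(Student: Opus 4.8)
The plan is to prove the statement by establishing both implications of the equivalence, using the block decomposition of $\Cc_{\g}$ together with a study of how the $\ell$-weights of composition factors of a module can change under the action of $U_q(L\g)$. First I would recall the standard structural fact that a block of $\Cc_\g$ is determined by an ``elliptic'' or ``spectral'' invariant: to each simple module $L(\hlam)$ one associates the coset $\overline{\hlam}$ obtained by projecting $\hlam$ modulo $\lQ$. The content of the theorem is precisely that this coset is a \emph{complete} invariant of the block. So the proof naturally splits into (i) showing that $\hlam - \hmu \in \lQ$ is necessary for $L(\hlam)$ and $L(\hmu)$ to be linked, and (ii) showing it is sufficient.

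For the necessity direction, the key observation is that $U_q(L\g) = \bigoplus_{\gamma \in \cQ} (U_q)_\gamma$ carries a $\cQ$-grading (via the $q^h$-action), and more refined than that, the $\ell$-weight lattice map $\mathsf{cl}: \lQ \to \cQ$ is surjective, with $\alpha_{i,a} \mapsto \alpha_i$; the point is that the $\ell$-weights appearing in a single indecomposable (or more generally, in any module with all composition factors linked) all lie in a single coset of $\lQ$. Concretely: if $M$ is a module in $\Cc_\g$ and $m \in M_{\hmu}$ for some $\ell$-weight $\hmu$, then applying the generators $e_{i,r}$ or $f_{i,r}$ shifts the $\ell$-weight by exactly $\pm\alpha_{i,a}$ for appropriate $a$ (this follows from the commutation relations defining $U_q(L\g)$ together with the explicit description (\ref{Eq:Psi}) of $\ell$-weights in terms of the Drinfeld series $\psi_i^\pm(z)$). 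Hence the $\lQ$-coset of the set of $\ell$-weights of $M$ is locally constant as $M$ varies over a connected family, and in particular it is constant on each block. Therefore if $L(\hlam)$ and $L(\hmu)$ lie in the same block, every chain of non-split extensions connecting them forces $\hlam - \hmu \in \lQ$. The cleanest way to phrase this is to define, for each coset $c \in \lP/\lQ$, the Serre subcategory $\Cc_\g^{(c)}$ generated by those $L(\hlam)$ with $\hlam \in c$, and to observe that $\Cc_\g = \bigoplus_{c} \Cc_\g^{(c)}$ is a genuine direct sum decomposition of abelian categories — this is exactly the statement that $\Ext^1_{\Cc_\g}(L(\hlam), L(\hmu)) = 0$ whenever $\hlam - \hmu \notin \lQ$, which again follows from the grading argument applied to the middle term of a non-split extension.

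The sufficiency direction is the harder part and is where I expect the main obstacle to lie. One must show that if $\hlam - \hmu \in \lQ$ then $L(\hlam)$ and $L(\hmu)$ cannot be separated by any splitting of the category. A natural strategy is to reduce, by a connectedness/transitivity argument, to the case where $\hmu = \hlam - \alpha_{i,a} + (\text{correction})$ differs from $\hlam$ by a single $\ell$-root (or a small number of $\ell$-roots), and then to exhibit an explicit indecomposable module — for instance a suitable subquotient of a tensor product of fundamental modules $L(\varpi_{i,a})$, or a standard/Weyl-type module — having both $L(\hlam)$ and $L(\hmu)$ among its composition factors, or more directly a non-split self-extension-type configuration realizing a nonzero $\Ext^1$. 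The cleanest route is probably to use that the fundamental modules generate the block in the sense that every simple is a subquotient of a tensor product of them, combined with the fact that for appropriate choices of spectral parameters the tensor product $L(\varpi_{i,aq})\otimes L(\varpi_{i,aq^{-1}})$ (or the relevant Kirillov–Reshetikhin-type module) is \emph{reducible}, producing the elementary linking relation $L(\hlam) \sim L(\hlam - \alpha_{i,a})$; then transitivity of linking closes the argument. Handling the bookkeeping of spectral parameters carefully, and verifying irreducibility/reducibility of the relevant tensor products uniformly across types, is the technical heart; the hard part will be ensuring that the elementary linking relations one produces actually generate all of $\lQ$ and not merely a sublattice, i.e. that the reducible tensor products range over all $(i,a)$.
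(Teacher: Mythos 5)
The paper does not prove this statement --- it is quoted from Chari--Moura \cite{CM05} without even a ``see [ref]'' proof block --- so there is no in-paper argument to compare against. On its own terms, your necessity direction is correct and is the mechanism used in \cite{CM05}: for each coset $c \in \lP/\lQ$ the sum $M^{(c)} := \bigoplus_{\hmu \in c} M_{\hmu}$ of generalized $\ell$-weight spaces is a $U_{q}(L\g)$-submodule of any $M \in \Cc_{\g}$, because the Drinfeld currents shift $\ell$-weights only by elements of $\lQ$; the resulting direct sum decomposition of $\Cc_{\g}$ gives the forward implication. One phrasing caveat: $e_{i,r}$ does not in general carry one generalized $\ell$-weight space into one generalized $\ell$-weight space; the accurate form of your claim is $x_i^{\pm}(z)\,M_{\hmu} \subseteq \sum_a M_{\hmu \pm \alpha_{i,a}}$, and that weaker statement is all the coset-preservation argument needs.

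The sufficiency direction, however, has a genuine gap, and it is not quite the one you flag at the end. Your elementary linking $L(\varpi_{i,aq} + \varpi_{i,aq^{-1}}) \sim L(\sum_{j\sim i}\varpi_{j,a})$, coming from the reducibility of $L(\varpi_{i,aq}) \otimes L(\varpi_{i,aq^{-1}})$, is fine, and these linkings plainly range over all $(i,a)$ and hence their differences generate $\lQ$ --- so generating the whole lattice is not the obstruction. The real problem is that ``transitivity closes the argument'' requires a chain of elementary linkings from $L(\hlam)$ to $L(\hmu)$ passing through $\ell$-dominant $\ell$-weights, and such a chain need not exist inside $\lP^{+}$ even when $\hlam - \hmu \in \lQ$; the ``plus correction'' hedge does not say how to choose one. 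The missing ingredient, which is what \cite{CM05} actually supply, is a tensor-propagation lemma of the form: if $L(\hlam_1)$ and $L(\hlam_2)$ are linked and $\hnu \in \lP^{+}$, then all composition factors of $L(\hlam_1) \otimes L(\hnu)$ and of $L(\hlam_2) \otimes L(\hnu)$ lie in a single block. Together with the factorization of any simple as a subquotient of a tensor product of fundamental modules, this bootstraps the elementary linkings to the general case without ever requiring a dominant walk. Without some version of that lemma, your sketch does not yield the hard implication.
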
 

For each $M \in \Cc_{\g}$, we define its left dual module
$M^{*}$ (resp.~right dual module ${}^{*}M$)
as the dual space $\Hom_{\kk}(M, \kk)$ equipped
with the left $U_{q}$-action
obtained by twisting the natural right action with the antipode $S$ (resp.~$S^{-1}$).
For any $M_{1}, M_{2} \in \Cc_{\g}$, we have
$$
(M_{1} \otimes M_{2})^{*} \cong M_{2}^{*} \otimes M_{1}^{*},
\quad
{}^{*}(M_{1} \otimes M_{2})
\cong {}^{*}M_{2} \otimes {}^{*}M_{1}.
$$
We define the following $\Z$-linear maps on $\lP$:
\begin{align*}
(-)^{*} \colon & \lP \to \lP, \quad \varpi_{i, a} \mapsto 
\varpi_{i,a}^{*}
:=\varpi_{i^{*}, aq^{-h}}, \\
{}^{*}(-) \colon & \lP \to \lP, \quad \varpi_{i, a} \mapsto 
{}^{*}\varpi_{i,a} := \varpi_{i^{*}, aq^{h}},
\end{align*}    
where $i \mapsto i^{*}$ denotes the involution on $I$ defined by
$\alpha_{i^{*}} := - w_{0} \alpha_{i}$ and $h$ is 
the Coxeter number (the order of a Coxeter element of $W$).
Then we have 
\begin{equation}
\nonumber
L(\varpi_{i,a})^{*} \cong L(\varpi_{i,a}^{*}), \quad
{}^{*}L(\varpi_{i,a}) \cong L({}^{*}\varpi_{i,a}).
\end{equation}
See~\cite[Corollary 6.10]{FM01}.
%%%%%%%%%%%%%%%%%%%%%%%%%%%%%%%%%%%%

\subsection{Weyl modules}
\label{Ssec:Weyl} 

In this subsection, we recall the global and
local Weyl modules of $U_{q}$ introduced by Chari-Pressley~\cite{CP01}. 
Also we define the 
deformed local Weyl modules,
which will play a role of 
standard modules of an affine highest weight category later.

\begin{Def}
A $U_{q}$-module $M$ of type $\mathbf{1}$
is said to be
{\em $\ell$-integrable}
if the following property is satisfied:
For each $v \in M$, there exists an integer $n_{0} \ge 1$
such that we have
$
e_{i, r_{1}} e_{i, r_{2}}\cdots e_{i, r_{N}} v
=
f_{i, r_{1}} f_{i, r_{2}}\cdots f_{i,r_{N}} v
= 0
$
for any $N \ge n_{0}$ and
any $i \in I$, $r_{1} , \ldots, r_{N} \in \Z$.
\end{Def}

\begin{Rem}
In this paper,
we do not impose 
that $\dim M_{\lambda} < \infty$
for $\ell$-integrability.
Note that
any finite-dimensional modules of type $\mathbf{1}$,
i.e.,~any objects of the category $\mathcal{C}_{\g}$
are automatically $\ell$-integrable. 
\end{Rem}

First we define the global Weyl modules. 

\begin{Def}
Let $\lambda \in \cP^{+}$ be a dominant weight.
We define the
{\em global Weyl module} 
associated with $\lambda$
to be the left $U_{q}$-module $\bW(\lambda)$  generated by a 
cyclic vector $w_{\lambda}$ 
satisfying the following defining relations:
$$
e_{i,r} w_{\lambda} = 0, \quad
q^{h} w_{\lambda} = q^{\lambda(h)} w_{\lambda},\quad 
(f_{i,r})^{\lambda(h_{i})+1} w_{\lambda} = 0,
$$ 
where $i \in I, r \in \Z$ and $h \in \cP^{\vee}$.
\end{Def}

In what follows, $\SG_d$ denotes the symmetric group of degree $d \in \Z_{\ge 1}$.
For a dominant weight 
$\lambda = \sum_{i \in I} l_{i}\varpi_{i} \in \cP^{+}$,
we define the following $\kk$-algebra of
partially symmetric Laurent polynomials:
\begin{equation}
\label{Eq:Rlambda}
\R(\lambda) :=
\bigotimes_{i \in I} 
(\kk[z_{i}^{\pm 1}]^{\otimes l_{i}})^{\SG_{l_{i}}} 
=
\bigotimes_{i \in I}
\kk[z_{i, 1}^{\pm 1}, \ldots , 
z_{i, l_{i}}^{\pm 1}]^{\SG_{l_{i}}}.
\end{equation}

\begin{Thm}[Chari-Pressley~\cite{CP01}, Nakajima]
\label{Thm:globalWeyl}
Write $\lambda = \sum_{i \in I} l_{i} \varpi_{i} \in \cP^{+}.$
\begin{enumerate}
\item
\label{Thm:globalWeyl:univ}
The global Weyl module $\bW(\lambda)$ is 
$\ell$-integrable and has the following universal property: 
If  $M$ is an $\ell$-integrable $U_{q}$-module
with a vector $v \in M_{\lambda}$
of weight $\lambda$ satisfying
$x_{i}^{+}(z) \cdot v=0$ for any $i \in I$,
then there is a unique $U_{q}$-homomorphism
$\bW(\lambda) \to M$ such that
$w_{\lambda} \mapsto v$; 
\item
\label{Thm:globalWeyl:End}
There is a unique isomorphism  
$\End_{U_{q}} (\bW(\lambda))
\cong 
\R(\lambda)
$ such that we have
$$\psi_{i}^{\pm}(z) w_{\lambda}
= q^{l_{i}}
\prod_{k=1}^{l_{i}}
\left( 
\frac{1 - q^{-2}z_{i, k}z^{-1}}{1-z_{i,k}z^{-1}}
\right)^{\pm} w_{\lambda}
$$
for each $i \in I$;
\item
\label{Thm:globalWeyl:free}
$\bW(\lambda)$ 
is free over $\R(\lambda)$ 
of finite rank.
\end{enumerate} 
\end{Thm}

\begin{proof}
See \cite[Section 4]{CP01}.
The assertion 
(\ref{Thm:globalWeyl:free}) is proved by 
the geometric realization due to Nakajima.
For details, see Theorem~\ref{Thm:Kcentral} and Theorem~\ref{Thm:Kfibers}
(\ref{Thm:Kfibers:K0}) below.
\end{proof}

\begin{Rem}
The global Weyl module
$\bW(\lambda)$
is known to be isomorphic to 
the level $0$ extremal weight module
$V^{\mathrm{max}}(\lambda)$
of extremal weight $\lambda$,
in the sense of Kashiwara~\cite{Kashiwara94}.
See \cite[Proposition 4.5]{CP01}
and \cite[Remark 2.15]{Nakajima04}.
\end{Rem}

Next we discuss the local Weyl modules.
We identify a point 
of the quotient space 
$(\kk^{\times})^{N}/ \SG_{N}$
with a $(\Z_{\ge 0})$-linear combination
of the formal symbols $\{[a] \mid a \in \kk^{\times} \}$
whose coefficients sum up to $N$.  
Note that we have
$\Specm \R(\lambda)
\cong \prod_{i \in I}\left( (\kk^{\times} )^{l_{i}} 
/ \SG_{l_{i}} \right).
$
Let $\hlam = 
\sum_{(i, a) \in I \times \kk^{\times}}
l_{i,a} \varpi_{i,a}
\in \lP^{+}$
be an $\ell$-dominant $\ell$-weight
and put
$\lambda := \mathsf{cl}(\hlam) 
\in \cP^{+}.$
Let
$\rr_{\lambda, \hlam}$ denote
the maximal ideal of $\R(\lambda)$
corresponding to the
point
$$\left(\sum_{a \in \kk^{\times}}
l_{i, a} [a] \right)_{i \in I} \in 
\prod_{i \in I} \left( ( \kk^{\times} )^{l_{i}} 
/ \SG_{l_{i}} \right).$$
\begin{Def}
We define the 
{\em local Weyl module} $W(\hlam)$
associated with $\hlam \in \lP^{+}$
by $W(\hlam) := \bW(\lambda)/\rr_{\lambda, \hlam}$.
The image of the cyclic vector 
$w_{\lambda} \in \bW(\lambda)$ is denoted by $w_{\hlam} \in W(\hlam)$.
\end{Def}

\begin{Thm}[Chari-Pressley~\cite{CP01}]
Let $\hlam \in \lP^{+}$ be an $\ell$-dominant $\ell$-weight. 
\begin{enumerate}
\item 
The local Weyl module $W(\hlam)$ is 
a finite-dimensional 
$\ell$-highest weight module of $\ell$-highest weight
$\hlam$ with $W(\hlam)_{\hlam} = \kk \cdot w_{\hlam}.$
Moreover it has the following universal property:
If $M \in \Cc_{\g}$ is an $\ell$-highest weight module
of $\ell$-highest weight $\hlam$ with 
$M_{\hlam} = \kk \cdot v_{0}$,
then there is a unique surjective $U_{q}$-homomorphism
$W(\hlam) \to M$ such that $w_{\hlam} \mapsto v_{0}$;
\item
$W(\hlam)$ has a simple head isomorphic to
$L(\hlam).$
\end{enumerate}
\end{Thm}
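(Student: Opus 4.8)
The plan is to derive both assertions from the structural properties of the global Weyl module $\bW(\lambda)$ recorded in Theorem~\ref{Thm:globalWeyl}, where $\lambda := \mathsf{cl}(\hlam) \in \cP^{+}$. First, for finiteness and the $\ell$-highest weight structure: by Theorem~\ref{Thm:globalWeyl}(\ref{Thm:globalWeyl:End}) the module $\bW(\lambda)$ is free of finite rank over $\R(\lambda)$, and since $\kk$ is algebraically closed, $\R(\lambda)/\rr_{\lambda,\hlam} \cong \kk$; hence $W(\hlam) = \bW(\lambda) \otimes_{\R(\lambda)} \R(\lambda)/\rr_{\lambda,\hlam}$ is finite-dimensional over $\kk$. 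The image $w_{\hlam}$ of the cyclic generator has weight $\lambda$ and satisfies $x_{i}^{+}(z) w_{\hlam} = 0$ because $e_{i,r}w_{\lambda}=0$; reducing the formula of Theorem~\ref{Thm:globalWeyl}(\ref{Thm:globalWeyl:loop}) modulo $\rr_{\lambda,\hlam}$ — under which the variables $z_{i,1},\dots,z_{i,l_{i}}$ specialize to the multiset containing each $a\in\kk^{\times}$ with multiplicity $l_{i,a}$, while $q^{l_{i}} = q^{\mathsf{cl}(\hlam)(h_{i})}$ — shows $\psi_{i}^{\pm}(z)$ acts on $w_{\hlam}$ by exactly the scalar series appearing in the definition of an $\ell$-highest weight vector of $\ell$-weight $\hlam$. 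So $W(\hlam)$ is $\ell$-highest weight of $\ell$-highest weight $\hlam$; and $W(\hlam)_{\hlam} = \kk w_{\hlam}$, because one knows $\bW(\lambda)_{\lambda}$ is free of rank one over $\R(\lambda)$ with basis $w_{\lambda}$, which reduces modulo $\rr_{\lambda,\hlam}$ to a one-dimensional space.

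For the universal property, suppose $M\in\Cc_{\g}$ is $\ell$-highest weight of $\ell$-highest weight $\hlam$ with $M_{\hlam} = \kk m_{0}$. Then $m_{0}$ has weight $\lambda$ and is annihilated by all $x_{i}^{+}(z)$, and $M$ is $\ell$-integrable being finite-dimensional; so Theorem~\ref{Thm:globalWeyl}(\ref{Thm:globalWeyl:univ}) yields a unique $U_{q}$-map $\phi\colon \bW(\lambda)\to M$ with $w_{\lambda}\mapsto m_{0}$. It remains to check $\phi(\rr_{\lambda,\hlam}\bW(\lambda)) = 0$, and by cyclicity it suffices to see $\phi(r w_{\lambda}) = 0$ for $r\in\rr_{\lambda,\hlam}$. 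The $\R(\lambda)$-action on $\bW(\lambda)_{\lambda} = \R(\lambda) w_{\lambda}$ is precisely what records the eigenvalues of $\psi_{i}^{\pm}(z)$ on that weight space (Theorem~\ref{Thm:globalWeyl}(\ref{Thm:globalWeyl:loop})), and on $M_{\hlam} = \kk m_{0}$ those eigenvalues are fixed to the scalars attached to $\hlam$; hence $\phi$ factors this action through $\R(\lambda)/\rr_{\lambda,\hlam} = \kk$. Therefore $\phi$ descends to $W(\hlam)\to M$ with $w_{\hlam}\mapsto m_{0}$, and uniqueness is immediate from cyclicity of $W(\hlam)$.

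For the simple head: since $W(\hlam)$ is generated by the $\ell$-highest weight vector $w_{\hlam}$ and $W(\hlam)_{\hlam} = \kk w_{\hlam}$ is one-dimensional, every proper submodule $N$ satisfies $N_{\hlam} = 0$ (otherwise $w_{\hlam}\in N$ and $N = W(\hlam)$); hence the sum of all proper submodules is again proper, so $W(\hlam)$ has a unique maximal submodule. Its head is therefore simple, is $\ell$-highest weight of $\ell$-highest weight $\hlam$, and so is isomorphic to $L(\hlam)$ by the uniqueness in the classification of simple objects of $\Cc_{\g}$.

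\textbf{The main obstacle} is the genuinely nontrivial input packaged in Theorem~\ref{Thm:globalWeyl}: the freeness of $\bW(\lambda)$ over $\R(\lambda)$ (which ultimately rests on Nakajima's geometric realization) together with the explicit description of the $\psi_{i}^{\pm}$-action on the top weight space, the latter needed both to identify $W(\hlam)_{\hlam}$ and to justify the factorization through $\R(\lambda)/\rr_{\lambda,\hlam}$ in the universal property. Granting these, every remaining step is formal.
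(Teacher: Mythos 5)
Your proposal is correct and fills in exactly the details the paper elides: the paper's proof is the single line ``Follows from Theorem~\ref{Thm:globalWeyl},'' and the intended argument is precisely the one you spell out — finiteness from freeness of $\bW(\lambda)$ over $\R(\lambda)$, the $\ell$-highest weight structure and the identification of $W(\hlam)_{\hlam}$ from Theorem~\ref{Thm:globalWeyl}(\ref{Thm:globalWeyl:loop}), the universal property by factoring the universal property of $\bW(\lambda)$ through the quotient by $\rr_{\lambda,\hlam}$, and the simple head from the one-dimensionality of the top weight space. The only micro-point worth making explicit in the universal-property step is that elements $r\in\rr_{\lambda,\hlam}$ act on $w_{\lambda}$ via elements of $U_{q}(L\h)$ (because the $z_{i,k}$ are read off from the $\psi_{i}^{\pm}$-eigenvalues), so $\phi(r w_{\lambda}) = r\,\phi(w_{\lambda})$ is indeed a well-defined evaluation and vanishes on $M_{\hlam}$; you indicate this and it is right.
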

\begin{proof}
The assertions follow immediately from Theorem~\ref{Thm:globalWeyl}.
\end{proof}

\begin{Rem} \label{Rem:locWeyl}
The local Weyl module $W(\hlam)$ is known to be isomorphic to
the standard module in the sense of Nakajima~\cite[Section 13]{Nakajima01}
and Varagnolo-Vasserot~\cite{VV02}.
See Theorem~\ref{Thm:AK} below and \cite[Corollary 7.16]{VV02}.  
\end{Rem}

Finally, we introduce
the deformed local Weyl modules.
Let $\hlam = \sum l_{i,a} \varpi_{i,a} \in \lP^{+}$ be 
an $\ell$-dominant $\ell$-weight and 
set $\lambda := \mathsf{cl}(\hlam)$. 
We define  
$$
\hR(\lambda, \hlam) := 
\varprojlim_{N}
\R(\lambda)
/ \rr_{\lambda, \hlam}^{N}.
$$
\begin{Def}
\label{Def:deflocWeyl}
We define the {\em deformed local Weyl module}  $\hW(\hlam)$
associated with $\hlam \in \lP^{+}$ by
$$
\hW(\hlam) := \bW(\lambda) \otimes_{\R(\lambda)}
\hR(\lambda, \hlam)
\cong \varprojlim_{N} \bW(\lambda) / \rr_{\lambda, \hlam}^{N}.
$$
We set $\widehat{w}_{\hlam} := w_{\lambda} \otimes 1 
\in \hW(\hlam).$
\end{Def}
We also use the following algebra:
\begin{equation}
\label{Eq:Rhlam}
\R(\hlam) := \bigotimes_{i \in I} \bigotimes_{a \in \kk^{\times}}
\left( \kk[z_{i}^{\pm1}]^{\otimes l_{i,a}} \right)
^{\SG_{l_{i,a}}}.
\end{equation}
Note that 
the algebra $\R(\lambda)$ is
a subalgebra of $\R(\hlam)$.
Let $\rr_{\hlam}$ denote a maximal ideal of
$\R(\hlam)$ corresponding to the point
$$\left( l_{i,a} [a] \right)_{(i,a) \in I \times \kk^{\times}}
\in \prod_{(i, a) \in I \times \kk^{\times}} 
\left( (\kk^{\times})^{l_{i,a}} / \SG_{l_{i,a}} \right) = \Specm \R(\hlam).$$
Then we have $\rr_{\lambda, \hlam} = \R(\lambda) \cap 
\rr_{\hlam}$ and there is a natural isomorphism 
\begin{equation}
\label{Eq:etale}
\hR(\hlam) := 
\varprojlim_{N}
\R(\hlam)
/ \rr_{\hlam}^{N}
\cong \hR(\lambda, \hlam).
\end{equation}  
Hereafter we identify $\hR(\lambda, \hlam)$
with $\hR(\hlam)$ via the isomorphism \eqref{Eq:etale}.
For simplicity, we will use the same symbol $\rr_{\hlam}$ to denote the maximal ideal of the local ring $\hR(\hlam)$.  
%By the $\rr_{\hlam}$-adic topology, the deformed local Weyl module $\hW(\hlam)$
%is regarded as a topological $U_q$-module (we always regard $U_q$ as a topological $\kk$-algebra 
%with the discrete topology).

%In the next proposition, $\Hom_{U_{q}}^{cont.} (-,-)$ denotes the space of 
%continuous homomorphisms between topological $U_q$-modules.
%A finite-dimensional $M \in \Cc_\g$ is regarded as a topological $U_q$-module
%with the discrete topology.

\begin{Prop}
\label{Prop:defWeyl}
The deformed local Weyl module $\hW(\hlam)$ 
satisfies the following properties:
\begin{enumerate}
\item  
\label{Prop:defWeyl:univ}
For each $M \in \Cc_{\g}$, 
taking the image of $\widehat{w}_{\hlam}$ gives a
natural isomorphism:
$$
\Hom_{U_{q}}(\hW(\hlam) , M) \cong \{ v \in M_{\hlam}
\mid e_{i,r} v= 0 \; \text{for any $i \in I, r \in \Z$} \};
$$
\item 
\label{Prop:defWeyl:End}
$\End_{U_{q}}(\hW(\hlam)) = \hR(\hlam)$ 
and
$\hW(\hlam)$ is free over $\hR(\hlam)$ of finite rank;
\item 
$\hW(\hlam) / \rr_{\hlam} \cong W(\hlam)$.
\end{enumerate}
\end{Prop}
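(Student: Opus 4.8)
The plan is to deduce all three assertions from Theorem \ref{Thm:globalWeyl} by base change along the faithfully flat completion map $\R(\lambda) \to \hR(\lambda, \hlam) = \hR(\hlam)$, together with the universal property of the global Weyl module. First I would establish \eqref{Eq:etale}: since $\R(\hlam)$ is a free (hence flat) module over $\R(\lambda)$ with $\rr_{\lambda,\hlam} = \R(\lambda) \cap \rr_{\hlam}$ and $\R(\hlam)/\rr_{\hlam}\R(\hlam) \cong \R(\lambda)/\rr_{\lambda,\hlam} \cong \kk$ (the point $\bigl(l_{i,a}[a]\bigr)$ lies over $\bigl(\sum_a l_{i,a}[a]\bigr)_i$ as the unique preimage once the multiplicities $l_{i,a}$ are prescribed), the map $\R(\lambda) \to \R(\hlam)$ is étale at $\rr_{\hlam}$, so the induced map on $\rr_{\lambda,\hlam}$- resp. $\rr_{\hlam}$-adic completions is an isomorphism; this is a standard fact about completions along étale morphisms. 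Using this we freely identify $\hR(\lambda,\hlam)$ with $\hR(\hlam)$.

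For part \eqref{Prop:defWeyl:End}: by Theorem \ref{Thm:globalWeyl}\,(\ref{Thm:globalWeyl:End}), $\bW(\lambda)$ is free of finite rank over $\R(\lambda)$, so $\hW(\hlam) = \bW(\lambda) \otimes_{\R(\lambda)} \hR(\hlam)$ is free of finite rank over $\hR(\hlam)$, proving the second clause. For the endomorphism algebra, I would argue that $\End_{U_q}(\bW(\lambda)) = \R(\lambda)$ commutes with the flat base change $- \otimes_{\R(\lambda)} \hR(\hlam)$: since $\bW(\lambda)$ is finitely generated as a $U_q$-module (being cyclic) and $\hR(\hlam)$ is flat over $\R(\lambda)$, we have $\End_{U_q}(\hW(\hlam)) \cong \End_{U_q}(\bW(\lambda)) \otimes_{\R(\lambda)} \hR(\hlam) = \R(\lambda) \otimes_{\R(\lambda)} \hR(\hlam) = \hR(\hlam)$; one must check that the natural map $\Hom_{U_q}(\bW(\lambda),M) \otimes_{\R(\lambda)} \hR(\hlam) \to \Hom_{U_q}(\hW(\hlam), M \otimes_{\R(\lambda)} \hR(\hlam))$ is an isomorphism for finitely presented $M$, which follows because $\bW(\lambda)$ admits a finite presentation by free $U_q \otimes \R(\lambda)$-modules (it is finitely generated over the Noetherian-ish setup, and flatness of $\hR(\hlam)$ passes the isomorphism through). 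Part \eqref{Prop:defWeyl:univ} I would get from the universal property in Theorem \ref{Thm:globalWeyl}\,(\ref{Thm:globalWeyl:univ}): an element $m \in M_{\hlam}$ with $e_{i,r}m = 0$ (equivalently $x_i^+(z)m = 0$) is in particular a highest-weight vector of weight $\lambda = \mathsf{cl}(\hlam)$ in the $\ell$-integrable module $M$, so yields $\bW(\lambda) \to M$, $w_\lambda \mapsto m$; that $m \in M_{\hlam}$ forces $\psi_i^\pm(z)$ to act on $m$ through the quotient by $\rr_{\lambda,\hlam}$ by Theorem \ref{Thm:globalWeyl}\,(\ref{Thm:globalWeyl:loop}) and \eqref{Eq:Psi}, and since $M \in \Cc_\g$ is finite-dimensional the $\R(\lambda)$-action on the image factors through some $\R(\lambda)/\rr_{\lambda,\hlam}^N$, hence through $\hR(\hlam)$, giving the factorization $\hW(\hlam) \to M$; injectivity and surjectivity of the correspondence $f \mapsto f(\widehat{w}_{\hlam})$ are then immediate from cyclicity. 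Finally part \eqref{Prop:defWeyl:proj} is just $\hW(\hlam)/\rr_{\hlam} = \bigl(\bW(\lambda) \otimes_{\R(\lambda)} \hR(\hlam)\bigr) \otimes_{\hR(\hlam)} \kk = \bW(\lambda) \otimes_{\R(\lambda)} \kk = \bW(\lambda)/\rr_{\lambda,\hlam} = W(\hlam)$ by definition.

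The one genuinely delicate point, and the place I would spend the most care, is the interchange of $\End_{U_q}(-)$ with the completion $- \otimes_{\R(\lambda)} \hR(\hlam)$ in part \eqref{Prop:defWeyl:End}: flat base change for $\Hom$ requires finite presentation of $\bW(\lambda)$ as a module over $U_q \otimes_{\kk} \R(\lambda)$, and one should verify that the defining relations of $\bW(\lambda)$ do give such a finite presentation (or, alternatively, invoke that $\bW(\lambda)$ is free of finite rank over $\R(\lambda)$ and reduce the $\Hom$-computation to a rank count plus the already-known $\End_{U_q}(\bW(\lambda)) = \R(\lambda)$, using that $\hR(\hlam)$ is a domain so that the natural inclusion $\hR(\hlam) \hookrightarrow \End_{U_q}(\hW(\hlam))$ cannot be proper by comparing generic ranks). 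Everything else is formal manipulation with flat base change and the universal properties already recorded in Theorem \ref{Thm:globalWeyl}.
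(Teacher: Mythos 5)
The paper's own proof is the one-line ``Follows from Theorem~\ref{Thm:globalWeyl},'' so you are supplying the details the author left implicit, and your overall route (base change of $\bW(\lambda)$ along $\R(\lambda)\to\hR(\hlam)$ together with the identification \eqref{Eq:etale}) is correct in spirit. Parts~(1) and~(3) are fine, apart from a cosmetic point in~(1): the universal property in Theorem~\ref{Thm:globalWeyl}(\ref{Thm:globalWeyl:univ}) is stated for a \emph{cyclic} vector, so one should pass to the submodule $U_q m\subset M$ before invoking it and then include back into $M$.

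The genuine gap is in the endomorphism calculation in~(2), and neither of your two repairs closes it as written. Flat base change for $\Hom$ needs $\bW(\lambda)$ to be finitely \emph{presented} over $U_q\otimes_\kk\R(\lambda)$; cyclicity gives finite generation, but $U_q$ is not Noetherian, so this is not automatic. Your ``generic rank'' alternative is circular: to speak of the $\hR(\hlam)$-rank of $\End_{U_q}(\hW(\hlam))$ you must already know that every $U_q$-endomorphism is $\hR(\hlam)$-linear, which is precisely what must be shown. A way to actually finish is this. Since $\bW(\lambda)$ is free of finite rank $n$ over $\R(\lambda)$ and the two actions commute, the $U_q$-action lands in an $\R(\lambda)$-subalgebra $B\subset M_n(\R(\lambda))$; because $\R(\lambda)$ is Noetherian, $B$ is a finite $\R(\lambda)$-module, so $\bW(\lambda)$ is finitely presented over $B$ and base change applies to $\End_B$. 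It then remains to see, independently, that every $\phi\in\End_{U_q}(\hW(\hlam))$ is $\hR(\hlam)$-linear. For this, note $\phi$ preserves the top weight space $\hW(\hlam)_\lambda=\hR(\hlam)\,\widehat w_{\hlam}$ and there commutes with $U_q(L\h)$, whose image (by Theorem~\ref{Thm:globalWeyl}(\ref{Thm:globalWeyl:loop})) is the dense image of $\R(\lambda)$ in $\hR(\hlam)$; in particular $\phi|_\lambda$ commutes with $\rr_{\lambda,\hlam}$, hence preserves each $\rr_{\hlam}^{N}\hW(\hlam)_\lambda$, hence is $\rr_{\hlam}$-adically continuous and therefore multiplication by the element $r_0\in\hR(\hlam)$ with $\phi(\widehat w_{\hlam})=r_0\widehat w_{\hlam}$. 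Then $\phi-r_0$ kills $\hW(\hlam)_\lambda$ and, since $\hW(\hlam)=U_q\cdot\hW(\hlam)_\lambda$, vanishes. With this supplement the remainder of your argument goes through.
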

\begin{proof}
The assertions follow immediately from Theorem~\ref{Thm:globalWeyl}.
\end{proof}

\begin{Rem} \label{Rem:aff}
Let $z$ be an indeterminate.
For any $U_q$-module $M$, we can equip the $\kk[z^{\pm 1}]$-module
$M[z^{\pm 1}] := M \otimes \kk[z^{\pm 1}]$ with a structure of $U_q$-module by setting
\begin{equation} \label{Eq:aff}
\begin{aligned}
e_{i,r} \cdot (v\otimes \varphi) &= e_{i,r}v \otimes z^r \varphi, 
& f_{i,r} \cdot (v\otimes \varphi) &= f_{i,r}v \otimes z^r \varphi, \\
q^h \cdot (v\otimes \varphi) &= q^h v \otimes \varphi, 
& h_{i,l} \cdot (v\otimes \varphi) &= h_{i,l}v \otimes z^l \varphi,
\end{aligned}
\end{equation}
for any $v \in M, \varphi \in \kk[z^{\pm 1}]$ and the defining generators $e_{i,r}, f_{i,r}, q^h, h_{i,l}$ of $U_q$. 
The resulting $U_q$-module $M[z^{\pm 1}]$ is called the \emph{affinization of $M$} 
(cf.~\cite[Section 4.2]{Kashiwara02}).

For each $i \in I$, we have an isomorphism $\bW( \varpi_i ) \cong L(\varpi_{i,1})[z^{\pm 1}]$ of 
$U_q$-modules, under which the generating vector $w_{\varpi_i}$ corresponds to the vector 
$v_0 \otimes 1$ with $v_0 \in L(\varpi_{i,1})$ being a fixed $\ell$-highest weight vector.
This isomorphism yields the isomorphism $\End_{U_q}(\bW(\varpi_i)) \cong \kk[z^{\pm 1}]$
in Theorem~\ref{Thm:globalWeyl}~\eqref{Thm:globalWeyl:End}.
In particular, the deformed local Weyl module $\hW(\varpi_{i,a})$ is isomorphic to 
the $\kk[\![z-a]\!]$-module
$L(\varpi_{i,1}) \otimes \kk[\![ z-a ]\!]$ equipped with the action of $U_q$
by the same formulas as \eqref{Eq:aff}.
\end{Rem}

%%%%%%%%%%%%%%%%%%%%%%%%%%%%%%%%%%%%

\subsection{$R$-matrices and factorization of deformed local Weyl modules}
\label{Ssec:Rmatrix}

In this subsection, we recall some facts about 
$R$-matrices between $\ell$-fundamental modules 
following \cite{AK97, KKK18, Kashiwara02}
and
describe a factorization 
of deformed local Weyl modules.  

For any pair $(i_1, i_2) \in I^{2}$,
let us write
$\End_{U_{q}}(\bW(\varpi_{i_j})) = \kk[ z_{j}^{\pm 1}]$
for $j=1, 2$
as in Theorem~\ref{Thm:globalWeyl} 
(\ref{Thm:globalWeyl:End}).
Then there is a unique homomorphism 
of $(U_{q}, \kk[z_{1}^{\pm1}, z_{2}^{\pm1}])$-bimodules,
called the {\em normalized $R$-matrix} 
$$
R^{\mathrm{norm}}_{i_1, i_2}
\colon \bW(\varpi_{i_1}) \otimes \bW(\varpi_{i_2})
\to 
\kk(z_{2}/z_{1}) \otimes_{\kk[(z_{2}/z_{1})^{\pm 1}]}
\left( \bW(\varpi_{i_2}) \otimes \bW(\varpi_{i_1}) \right),
$$
such that $R^{\mathrm{norm}}_{i_1, i_2} 
(w_{\varpi_{i_1}}\otimes w_{\varpi_{i_2}}) = w_{\varpi_{i_2}} \otimes
w_{\varpi_{i_1}}$.
The {\em denominator} of 
$R^{\mathrm{norm}}_{i_1, i_2}$ is defined to be the monic polynomial 
$d_{i_1, i_2}(u) \in \kk[u]$ with the smallest degree 
among polynomials satisfying
$$
\Ima R^{\mathrm{norm}}_{i_1, i_2} 
\subset 
d_{i_1, i_2}(z_{2}/z_{1})^{-1} \otimes  
\left( \bW(\varpi_{i_2}) \otimes \bW(\varpi_{i_1}) \right).
$$
By \cite[Proposition 9.3]{Kashiwara02}, we have
\begin{equation} \label{Eq:denom}
d_{i_1, i_2}(a) = 0 \quad \Longrightarrow 
\quad a \in \textstyle \bigcup_{m \in \Z_{>0}} q^{1/m}\overline{\Q}[\![q^{1/m}]\!].
\end{equation}

\begin{Thm}[\cite{CM05, Kashiwara02, VV02}]  
\label{Thm:AK}
Let $\hlam = \sum_{j=i}^{l} \varpi_{i_{j}, a_{j}} \in \lP^{+}$
be an $\ell$-dominant $\ell$-weight.
Then the following three conditions are mutually equivalent: 
\begin{enumerate}
\item 
\label{Thm:AK:cyclic}
The tensor product module
$L(\varpi_{i_{1}, a_{1}}) \otimes
L(\varpi_{i_{2}, a_{2}}) \otimes \cdots
\otimes L(\varpi_{i_{l}, a_{l}})$
is generated by the tensor product 
of $\ell$-highest weight vectors;
\item 
\label{Thm:AK:localWeyl}
$W(\hlam) \cong 
L(\varpi_{i_{1}, a_{1}}) \otimes
L(\varpi_{i_{2}, a_{2}}) \otimes \cdots
\otimes L(\varpi_{i_{l}, a_{l}});$
\item 
\label{Thm:AK:denom}
$d_{i_{j}, i_{k}}(a_{k}/a_{j}) \neq 0$ for any $1 \le j < k \le l$.
\end{enumerate}
We can always make these conditions satisfied by reordering $\{(i_j, a_j)\}_j$ suitably. 
\end{Thm}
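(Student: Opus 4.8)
The plan is to prove the equivalence of the three conditions by a cyclic chain of implications $(\ref{Thm:AK:denom}) \Rightarrow (\ref{Thm:AK:cyclic}) \Rightarrow (\ref{Thm:AK:localWeyl}) \Rightarrow (\ref{Thm:AK:denom})$, together with a reduction to the two-factor case for the first implication. First I would recall the fundamental fact (due to Akasaka--Kashiwara \cite{AK97} and Kashiwara \cite{Kashiwara02}) that for two $\ell$-fundamental modules the tensor product $L(\varpi_{i,a}) \otimes L(\varpi_{j,b})$ is generated by the tensor product of $\ell$-highest weight vectors precisely when $d_{i,j}(b/a) \neq 0$; this is the known two-factor version and it can be deduced by specializing the normalized $R$-matrix $R^{\mathrm{norm}}_{i,j}$ at $z_2/z_1 = b/a$ and observing that it is well-defined and nonzero there exactly when $b/a$ is not a zero of the denominator $d_{i,j}(u)$. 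The nontrivial direction is that nonvanishing of all the $d_{i_j,i_k}(a_k/a_j)$ for $j<k$ forces cyclicity of the full $l$-fold tensor product.

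For $(\ref{Thm:AK:denom}) \Rightarrow (\ref{Thm:AK:cyclic})$ I would argue by induction on $l$. Set $M := L(\varpi_{i_2,a_2}) \otimes \cdots \otimes L(\varpi_{i_l,a_l})$, which is generated by the tensor product of highest weight vectors by the inductive hypothesis. I want to show $L(\varpi_{i_1,a_1}) \otimes M$ is likewise cyclic. The standard mechanism (as in \cite{AK97}, \cite{Kashiwara02}) is to use the normalized $R$-matrices: composing the $R$-matrices $R^{\mathrm{norm}}_{i_1,i_k}$ for $k=2,\dots,l$ moves the first tensor factor past the others, and the poles encountered in this process are exactly at the zeros of $\prod_{k=2}^{l} d_{i_1,i_k}(a_k/a_1)$; since these are all nonzero by hypothesis, the composite $R$-matrix specializes to a well-defined nonzero $U_q$-homomorphism $L(\varpi_{i_1,a_1}) \otimes M \to M' \otimes L(\varpi_{i_1,a_1})$, where $M'$ is $M$ with the factors suitably reordered. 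Using that both the source and a twisted version of the target are $\ell$-highest weight generated and comparing $\ell$-highest weight spaces (which are one-dimensional), one concludes that the submodule generated by the tensor product of highest weight vectors must be the whole module. (One should be slightly careful about the order of factors; the cleanest route is to induct by peeling off the \emph{last} factor instead, or to invoke that the property is symmetric under any reordering by the two-factor result applied to adjacent transpositions, which is again controlled by the denominators.)

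For $(\ref{Thm:AK:cyclic}) \Rightarrow (\ref{Thm:AK:localWeyl})$, I would use the universal property of the local Weyl module $W(\hlam)$: the tensor product $N := L(\varpi_{i_1,a_1}) \otimes \cdots \otimes L(\varpi_{i_l,a_l})$ has an $\ell$-highest weight vector $v$ of $\ell$-weight $\hlam$ (the tensor product of the individual highest weight vectors), with $\psi^{\pm}_i(z)$ acting by the expected eigenvalue; cyclicity of $v$ means $N$ is an $\ell$-highest weight module of $\ell$-highest weight $\hlam$ with $N_{\hlam} = \kk v$. By the universal property there is a surjection $W(\hlam) \twoheadrightarrow N$. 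Conversely, since $W(\hlam)$ is finite-dimensional and each $L(\varpi_{i_j,a_j})$ is a quotient of the corresponding local Weyl module, one gets a map the other way, or more directly one compares dimensions: $\dim W(\hlam) \le \prod_j \dim L(\varpi_{i_j,a_j}) = \dim N$ using the tensor factorization of the Weyl module character (the character of $W(\hlam)$ is the product of the characters of the $W(\varpi_{i_j,a_j}) = L(\varpi_{i_j,a_j})$, a known fact about Weyl modules). Hence the surjection $W(\hlam) \twoheadrightarrow N$ is an isomorphism. Finally $(\ref{Thm:AK:localWeyl}) \Rightarrow (\ref{Thm:AK:denom})$: if some $d_{i_j,i_k}(a_k/a_j) = 0$ with $j<k$, then by the two-factor result the tensor product $L(\varpi_{i_j,a_j}) \otimes L(\varpi_{i_k,a_k})$ is \emph{not} cyclic, and one shows this obstructs cyclicity of the whole product, hence $N$ is a proper quotient of $W(\hlam)$, contradicting the isomorphism; concretely the failure of cyclicity in positions $j,k$ produces a proper submodule of $N$ containing $v$ after using $R$-matrices to bring factors $j$ and $k$ adjacent, which is again legitimate because all the \emph{other} denominators are irrelevant for detecting this.

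The main obstacle I anticipate is the bookkeeping in $(\ref{Thm:AK:denom}) \Rightarrow (\ref{Thm:AK:cyclic})$: making the induction on the number of factors genuinely rigorous requires carefully tracking which products of denominators appear when one commutes tensor factors via $R$-matrices, and ensuring that at each specialization the relevant $R$-matrix is both defined (no pole) and nonzero (so it detects cyclicity). The symmetry of the cyclicity condition under permuting the factors — which one needs to reduce everything to adjacent transpositions — follows from the two-factor statement but must be invoked cleanly. Everything else is a fairly standard application of universal properties of Weyl modules and the multiplicativity of their characters.
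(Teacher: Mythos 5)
You take a genuinely different route from the paper, which is important to flag: the paper's entire proof of this theorem consists of two citations, namely the equivalence of (\ref{Thm:AK:cyclic}) and (\ref{Thm:AK:localWeyl}) to Chari--Moura \cite[Theorem 6.4]{CM05} (whose proof relies on Nakajima's geometric construction), and the equivalence of (\ref{Thm:AK:cyclic}) and (\ref{Thm:AK:denom}) to Kashiwara \cite[Proposition 9.4]{Kashiwara02}. You instead sketch a proof from first principles via a cyclic chain of implications. Your $(\ref{Thm:AK:cyclic}) \Rightarrow (\ref{Thm:AK:localWeyl})$ step (universal property of $W(\hlam)$ plus the dimension/character formula) is in fact close in spirit to what Chari--Moura do; just be explicit that the equality $\dim W(\hlam) = \prod_j \dim L(\varpi_{i_j,a_j})$ is precisely the deep geometric input, essentially the freeness statement in Theorem \ref{Thm:globalWeyl} (\ref{Thm:globalWeyl:End}), and is not an elementary character computation.

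There is, however, a genuine gap in your argument for $(\ref{Thm:AK:localWeyl}) \Rightarrow (\ref{Thm:AK:denom})$, and a related one in $(\ref{Thm:AK:denom}) \Rightarrow (\ref{Thm:AK:cyclic})$. In the former you argue by contradiction: assume some $d_{i_j,i_k}(a_k/a_j) = 0$ with $j<k$, and ``use $R$-matrices to bring factors $j$ and $k$ adjacent,'' asserting that the other denominators are ``irrelevant.'' But specializing an $R$-matrix $R^{\mathrm{norm}}_{i_m,i_{m'}}$ to move a factor past another requires $d_{i_m,i_{m'}}(a_{m'}/a_m) \neq 0$; in a proof by contradiction you have no control over these intermediate denominators, so some of the permuting $R$-matrices may simply fail to exist at the required specialization, and the ``bring adjacent'' maneuver is not legitimate as stated. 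The same bookkeeping difficulty underlies your induction for $(\ref{Thm:AK:denom}) \Rightarrow (\ref{Thm:AK:cyclic})$ (which you do flag): showing that the composite normalized $R$-matrix specializes and that its image detects cyclicity is exactly the technical core of Kashiwara's argument, and cannot be dispatched by the schematic description given. In short: the ideas are correct in outline and match the structure of the cited proofs, but the key lemma about composite $R$-matrices and cyclicity of multi-factor products is precisely the content of \cite[Proposition 9.4]{Kashiwara02}, which the paper (reasonably) invokes rather than reproves.
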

\begin{proof}
The equivalence of (\ref{Thm:AK:cyclic}) and
(\ref{Thm:AK:localWeyl}) was proved by
Chari-Moura~\cite[Theorem 6.4]{CM05}
using the results from geometry 
due to Nakajima~\cite{Nakajima01}.
The equivalence of (\ref{Thm:AK:cyclic})
and (\ref{Thm:AK:denom}) was proved 
by Kashiwara~\cite[Proposition 9.4]{Kashiwara02}. 
The last assertion follows from \eqref{Eq:denom}.
See also \cite[Corollary 7.15]{VV02}.
\end{proof}
  
\begin{Def}
Let $\hlam \in \lP^{+}$ be an $\ell$-dominant $\ell$-weight.
We define the {\em dual local Weyl module} 
associated with $\hlam$ by 
$W^{\vee}(\hlam) := W({}^{*}\hlam)^{*}$.
\end{Def}

\begin{Prop}
\label{Prop:dualWeyl}
Let $\hlam = \sum_{j=i}^{l} \varpi_{i_{j}, a_{j}} \in \lP^{+}$
be an $\ell$-dominant $\ell$-weight.
Assume 
$W(\hlam) \cong 
L(\varpi_{i_{1}, a_{1}}) \otimes
L(\varpi_{i_{2}, a_{2}}) \otimes \cdots
\otimes L(\varpi_{i_{l}, a_{l}}).$
Then we have
$$
W^{\vee}(\hlam) \cong 
L(\varpi_{i_{l}, a_{l}}) \otimes
L(\varpi_{i_{l-1}, a_{l-1}}) \otimes \cdots
\otimes L(\varpi_{i_{1}, a_{1}}).
$$
\end{Prop}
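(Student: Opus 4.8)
The plan is to use the characterization of when a tensor product of $\ell$-fundamental modules realizes a local Weyl module (Theorem \ref{Thm:AK}), together with the behaviour of the left-dual functor $(-)^{*}$ on tensor products and on $\ell$-weights, as recorded in Subsection \ref{Ssec:qloop}. First I would unwind the hypothesis: by Theorem \ref{Thm:AK}, the assumption $W(\hlam) \cong L(\varpi_{i_{1}, a_{1}}) \otimes \cdots \otimes L(\varpi_{i_{l}, a_{l}})$ is equivalent to $d_{i_{j}, i_{k}}(a_{k}/a_{j}) \neq 0$ for all $1 \le j < k \le l$. The target of the proposition is a statement about $W^{\vee}(\hlam) = W({}^{*}\hlam)^{*}$, so I would first analyze $W({}^{*}\hlam)$. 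Writing ${}^{*}\hlam = \sum_{j=1}^{l} {}^{*}\varpi_{i_{j}, a_{j}} = \sum_{j=1}^{l} \varpi_{i_{j}^{*}, a_{j} q^{h}}$, I would apply Theorem \ref{Thm:AK} again: $W({}^{*}\hlam)$ is the ordered tensor product of the $L({}^{*}\varpi_{i_{j}, a_{j}})$ in \emph{some} order making the denominator condition hold, and I must check that the reverse order $j = l, l-1, \ldots, 1$ works. That amounts to verifying $d_{i_{k}^{*}, i_{j}^{*}}\!\big((a_{j}q^{h})/(a_{k}q^{h})\big) \neq 0$ for $1 \le j < k \le l$, i.e. $d_{i_{k}^{*}, i_{j}^{*}}(a_{j}/a_{k}) \neq 0$.

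The key computation is then to relate $d_{i_{k}^{*}, i_{j}^{*}}(a_{j}/a_{k})$ to $d_{i_{j}, i_{k}}(a_{k}/a_{j})$. By \eqref{Eq:dual_denom} we have $d_{i_{k}^{*}, i_{j}^{*}}(u) = d_{i_{k}, i_{j}}(u)$, so it suffices to compare $d_{i_{k}, i_{j}}(a_{j}/a_{k})$ with $d_{i_{j}, i_{k}}(a_{k}/a_{j})$. Here I would invoke the standard symmetry of denominators of normalized $R$-matrices (the zeros of $d_{i,j}(u)$ and $d_{j,i}(u^{-1})$ coincide up to the known normalization; see \cite{AK97}, \cite{Kashiwara02}), which gives that $d_{i_{k}, i_{j}}(a_{j}/a_{k}) \neq 0$ if and only if $d_{i_{j}, i_{k}}(a_{k}/a_{j}) \neq 0$. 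Combined with the hypothesis, this shows the denominator condition holds for the reversed sequence, hence $W({}^{*}\hlam) \cong L({}^{*}\varpi_{i_{l}, a_{l}}) \otimes \cdots \otimes L({}^{*}\varpi_{i_{1}, a_{1}})$.

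Finally I would dualize. Using $(M_{1} \otimes M_{2})^{*} \cong M_{2}^{*} \otimes M_{1}^{*}$ iterated, together with $L(\varpi_{i,a})^{*} \cong L(\varpi_{i,a}^{*})$ and the fact that ${}^{*}(-)$ and $(-)^{*}$ are mutually inverse on $\lP$ (so $({}^{*}\varpi_{i_{j}, a_{j}})^{*} = \varpi_{i_{j}, a_{j}}$), we get
\[
W^{\vee}(\hlam) = W({}^{*}\hlam)^{*}
\cong \big( L({}^{*}\varpi_{i_{l}, a_{l}}) \otimes \cdots \otimes L({}^{*}\varpi_{i_{1}, a_{1}}) \big)^{*}
\cong L(\varpi_{i_{l}, a_{l}}) \otimes L(\varpi_{i_{l-1}, a_{l-1}}) \otimes \cdots \otimes L(\varpi_{i_{1}, a_{1}}),
\]
as claimed. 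I expect the only genuinely delicate point to be the denominator symmetry step: one must be careful that the precise functional equation relating $d_{i,j}$ and $d_{j,i}$ (and the $q^{h}$-shift coming from ${}^{*}\varpi_{i,a}$) does not introduce spurious zeros; everything else is formal manipulation with the duality functors and the universal properties already established. An alternative, perhaps cleaner, route would be to bypass the denominator symmetry entirely by dualizing the cyclicity statement of Theorem \ref{Thm:AK}\eqref{Thm:AK:cyclic}: since $W(\hlam)$ being the ordered tensor product means it is $\ell$-highest weight, the dual $W(\hlam)^{*}$ is $\ell$-lowest weight and hence, read backwards, identifies $W^{\vee}$ with the reversed tensor product; I would use whichever of these two arguments integrates most smoothly with the conventions fixed in Subsection \ref{Ssec:Rmatrix}.
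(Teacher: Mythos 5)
Your plan is in the right spirit (use the equivalence in Theorem~\ref{Thm:AK} together with~(\ref{Eq:dual_denom}) and the duality rules), but as written it contains an order-of-factors error that the paper's intended route avoids, and it imports a symmetry of denominators that the paper never states.

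The problem lies in the choice of order for $W({}^{*}\hlam)$. You set out to show $W({}^{*}\hlam)\cong L({}^{*}\varpi_{i_{l},a_{l}})\otimes\cdots\otimes L({}^{*}\varpi_{i_{1},a_{1}})$ in the \emph{reversed} order, which via Theorem~\ref{Thm:AK} requires $d_{i_{k}^{*},i_{j}^{*}}(a_{j}/a_{k})\neq 0$ for $j<k$, i.e.\ $d_{i_{k},i_{j}}(a_{j}/a_{k})\neq 0$; this is not what the hypothesis gives you, and so you are forced to invoke a ``denominator symmetry'' $d_{i,j}(u)=0\iff d_{j,i}(u^{-1})=0$ that is not recorded anywhere in the paper. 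Then, in the final display, you take the left dual of this reversed product and claim the result is $L(\varpi_{i_{l},a_{l}})\otimes\cdots\otimes L(\varpi_{i_{1},a_{1}})$. But $(M_{1}\otimes\cdots\otimes M_{l})^{*}\cong M_{l}^{*}\otimes\cdots\otimes M_{1}^{*}$ reverses the order \emph{again}, so what actually comes out is $L(\varpi_{i_{1},a_{1}})\otimes\cdots\otimes L(\varpi_{i_{l},a_{l}})$, not the desired reversed product. So the two ``reversals'' cancel and you do not land on the statement.

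The correct intermediate claim is the opposite one, in the \emph{same} order: $W({}^{*}\hlam)\cong L({}^{*}\varpi_{i_{1},a_{1}})\otimes\cdots\otimes L({}^{*}\varpi_{i_{l},a_{l}})$. The denominator condition for this ordering is $d_{i_{j}^{*},i_{k}^{*}}\bigl((a_{k}q^{h})/(a_{j}q^{h})\bigr)=d_{i_{j}^{*},i_{k}^{*}}(a_{k}/a_{j})\neq 0$ for $j<k$, which by~(\ref{Eq:dual_denom}) equals $d_{i_{j},i_{k}}(a_{k}/a_{j})$ and is precisely the hypothesis (via Theorem~\ref{Thm:AK}), with no extra symmetry required. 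Applying $(-)^{*}$ then reverses the order once and produces $L(\varpi_{i_{l},a_{l}})\otimes\cdots\otimes L(\varpi_{i_{1},a_{1}})$, as claimed. This is what the paper's one-line proof is gesturing at; if you correct the order at the intermediate step, your argument becomes the same as the paper's and also becomes shorter. Your proposed ``alternative route'' via $\ell$-lowest weight vectors is too sketchy to assess: $W^{\vee}(\hlam)$ is built from $W({}^{*}\hlam)$, not from $W(\hlam)^{*}$, so the dualization does not act on the object you start from.
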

\begin{proof}
Use the equivalence of (\ref{Thm:AK:localWeyl})
and
(\ref{Thm:AK:denom})
in Theorem~\ref{Thm:AK}
and the fact $d_{i_1, i_2}(u) = d_{i_{1}^{*}, i_{2}^{*}}(u)$ 
(see \cite[Appendix A]{AK97}).
\end{proof}

For any two $\ell$-dominant $\ell$-weights 
$\hlam = \sum l_{i, a}\varpi_{i, a}, 
\hlam^{\prime} = \sum l^{\prime}_{i, a} \varpi_{i,a} \in \lP^{+}$,
we have the following injective homomorphism:
\begin{multline*}
\R(\hlam + \hlam^{\prime})
= \bigotimes_{i \in I} \bigotimes_{a \in \kk^{\times}}
\left( \kk[z_{i}^{\pm1}]^{\otimes l_{i,a} + l^{\prime}_{i,a}} \right)
^{\SG_{(l_{i,a} + l^{\prime}_{i,a})}}\\
\hookrightarrow
\bigotimes_{i \in I} \bigotimes_{a \in \kk^{\times}}
\left( \kk[z_{i}^{\pm1}]^{\otimes l_{i,a}} 
\otimes \kk[z_{i}^{\pm 1}]^{\otimes l^{\prime}_{i,a}}\right)
^{\SG_{l_{i,a}} \times \SG_{l^{\prime}_{i ,a}}}
= \R(\hlam) \otimes \R(\hlam^{\prime}).
\end{multline*}
We have similar injective homomorphisms
$$
\R(\hlam_{1} + \cdots + \hlam_{l})
\hookrightarrow
\R(\hlam_{1}) \otimes \cdots \otimes \R(\hlam_{l})
$$
for any $\hlam_{1}, \ldots, \hlam_{l} \in \lP^{+}.$
They induce the following injective homomorphisms
for the completions:
$$
\hR(\hlam_{1} + \cdots + \hlam_{l})
\hookrightarrow
\hR(\hlam_{1}) \hat{\otimes} \cdots \hat{\otimes} \R(\hlam_{l}).
$$
We refer to these kinds of injective homomorphisms as the \emph{standard inclusions}. 

Let $\hlam := \sum_{j=1}^{l} m_{j} \varpi_{i_j, a_{j}}
\in \lP^{+}$
be an $\ell$-dominant $\ell$-weight with
$(i_{j}, a_{j}) \neq (i_{k}, a_{k})$ for $j \neq k$.
We see that the standard inclusion is an isomorphism 
$\hR(\hlam) \cong
\hR(m_{1} \varpi_{i_1, a_1}) \hat{\otimes}
\cdots \hat{\otimes} \hR(m_{l} \varpi_{i_l, a_l})$ in this case.
Thus the completed tensor product
$
\hW(m_{1} \varpi_{i_1, a_1}) \hat{\otimes}
\cdots \hat{\otimes} \hW(m_{l} \varpi_{i_l, a_l})
$
is regarded as a $(U_{q}, \hR(\hlam))$-bimodule.

\begin{Prop}
\label{Prop:hW_tensor}
With the above notation, we further assume 
that $d_{i_j, i_k}(a_k / a_j) \neq 0$
for any $1 \le j <  k \le l$. 
Then we have an isomorphism of 
$(U_{q}, \hR(\hlam))$-bimodules:
$$
\hW(\hlam) \cong \hW(m_{1} \varpi_{i_1, a_1}) \hat{\otimes}
\cdots \hat{\otimes} \hW(m_{l} \varpi_{i_l, a_l}).
$$
\end{Prop}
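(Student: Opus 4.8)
The plan is to realize the asserted isomorphism by an explicit $(U_{q}, \hR(\hlam))$-bimodule homomorphism and then to verify it is bijective by Nakayama's lemma. Put $\lambda := \mathsf{cl}(\hlam) = \sum_{j=1}^{l} m_{j}\varpi_{i_{j}} \in \cP^{+}$. Because the pairs $(i_{j}, a_{j})$ are pairwise distinct, the standard inclusion identifies $\hR(\hlam)$ with $\hR(m_{1}\varpi_{i_{1},a_{1}}) \hat{\otimes} \cdots \hat{\otimes} \hR(m_{l}\varpi_{i_{l},a_{l}})$, so the right-hand side is an $\hR(\hlam)$-module, free of finite rank because each $\hW(m_{j}\varpi_{i_{j},a_{j}})$ is free of finite rank over $\hR(m_{j}\varpi_{i_{j},a_{j}})$ by Proposition~\ref{Prop:defWeyl}~(\ref{Prop:defWeyl:End}) and a completed tensor product of free modules of finite rank is again free of finite rank; the left-hand side $\hW(\hlam)$ is likewise free of finite rank over $\hR(\hlam)$. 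So once I produce an $\hR(\hlam)$-linear $U_{q}$-homomorphism $\Phi \colon \hW(\hlam) \to \hW(m_{1}\varpi_{i_{1},a_{1}}) \hat{\otimes} \cdots \hat{\otimes} \hW(m_{l}\varpi_{i_{l},a_{l}})$ sending $\widehat{w}_{\hlam}$ to $\widehat{w}_{m_{1}\varpi_{i_{1},a_{1}}} \hat{\otimes} \cdots \hat{\otimes} \widehat{w}_{m_{l}\varpi_{i_{l},a_{l}}}$, it remains only to check that the reduction $\bar{\Phi}$ of $\Phi$ modulo the maximal ideal $\rr_{\hlam}$ is an isomorphism: a homomorphism of free modules of finite rank over the Noetherian local ring $\hR(\hlam)$ that becomes bijective modulo $\rr_{\hlam}$ is bijective (surjective by Nakayama, and then a surjection of free modules of the same finite rank over a commutative ring is injective).

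For the construction of $\Phi$, I would first obtain a $U_{q}$-homomorphism $\varphi \colon \bW(\lambda) \to \bW(m_{1}\varpi_{i_{1}}) \otimes \cdots \otimes \bW(m_{l}\varpi_{i_{l}})$ with $w_{\lambda} \mapsto w_{m_{1}\varpi_{i_{1}}} \otimes \cdots \otimes w_{m_{l}\varpi_{i_{l}}}$. The target is $\ell$-integrable, being a tensor product of the $\ell$-integrable global Weyl modules of Theorem~\ref{Thm:globalWeyl}~(\ref{Thm:globalWeyl:univ}); the vector $w_{m_{1}\varpi_{i_{1}}} \otimes \cdots \otimes w_{m_{l}\varpi_{i_{l}}}$ has weight $\lambda$ and is annihilated by every $x_{i}^{+}(z)$ (the standard fact that a tensor product of $\ell$-highest weight vectors is again $\ell$-highest weight, using the triangular shape of the coproduct and (\ref{Eq:delta_looph})), so the universal property of Theorem~\ref{Thm:globalWeyl}~(\ref{Thm:globalWeyl:univ}), applied to the cyclic $\ell$-integrable submodule it generates, yields $\varphi$. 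A standard check using the $\psi_{i}^{\pm}(z)$-eigenvalues of Theorem~\ref{Thm:globalWeyl}~(\ref{Thm:globalWeyl:loop}) — which on $w_{m_{1}\varpi_{i_{1}}} \otimes \cdots$ multiply to exactly the expression for $w_{\lambda}$ — shows that $\varphi$ is $\R(\lambda)$-linear for the standard inclusion $\R(\lambda) \hookrightarrow \bigotimes_{j}\R(m_{j}\varpi_{i_{j}})$. I would then base-change $\varphi$ along this inclusion and complete at the point of $\Specm \bigotimes_{j}\R(m_{j}\varpi_{i_{j}})$ corresponding to $\hlam$. Because the $(i_{j},a_{j})$ are pairwise distinct, that point is the unique one lying over $\rr_{\lambda, \hlam}$ and the inclusion is unramified there, so the $\rr_{\lambda, \hlam}$-adic completion of the finite $\R(\lambda)$-module $\bW(m_{1}\varpi_{i_{1}}) \otimes \cdots \otimes \bW(m_{l}\varpi_{i_{l}})$ coincides with its $\rr_{\hlam}$-adic completion, which unwinds through (\ref{Eq:etale}) and Definition~\ref{Def:deflocWeyl} to $\hW(m_{1}\varpi_{i_{1},a_{1}}) \hat{\otimes} \cdots \hat{\otimes} \hW(m_{l}\varpi_{i_{l},a_{l}})$; the corresponding completion of $\bW(\lambda)$ is $\hW(\hlam)$. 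The resulting map is the desired $\Phi$, automatically $U_{q}$- and $\hR(\hlam)$-linear.

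Reducing modulo $\rr_{\hlam}$, and using Proposition~\ref{Prop:defWeyl}~(3) on the source and the identity that the reduction of a completed tensor product modulo $\rr_{\hlam}$ equals the ordinary tensor product of the reductions on the target, $\bar{\Phi}$ becomes a $U_{q}$-homomorphism $W(\hlam) \to W(m_{1}\varpi_{i_{1},a_{1}}) \otimes \cdots \otimes W(m_{l}\varpi_{i_{l},a_{l}})$ taking $w_{\hlam}$ to $w_{m_{1}\varpi_{i_{1},a_{1}}} \otimes \cdots \otimes w_{m_{l}\varpi_{i_{l},a_{l}}}$. Now list the $\ell$-fundamental weights of $\hlam$ with multiplicity as $(\varpi_{j_{1},b_{1}}, \ldots, \varpi_{j_{M},b_{M}})$, with the first $m_{1}$ equal to $\varpi_{i_{1},a_{1}}$, the next $m_{2}$ to $\varpi_{i_{2},a_{2}}$, and so on, so $\hlam = \sum_{s=1}^{M}\varpi_{j_{s},b_{s}}$. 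For $s < t$ the denominator $d_{j_{s},j_{t}}(b_{t}/b_{s})$ is $d_{i_{j},i_{j}}(1)$ when $s,t$ lie in the same block — nonzero, since every zero of $d_{i,i}(u)$ lies in $q^{1/m}\overline{\Q}[\![q^{1/m}]\!]$ and is thus not $1$ — and it is $d_{i_{j},i_{k}}(a_{k}/a_{j})$ with $j<k$ when they lie in different blocks — nonzero by hypothesis. Hence, by Theorem~\ref{Thm:AK} (equivalence of (\ref{Thm:AK:localWeyl}) and (\ref{Thm:AK:denom}), applied to $\hlam$ and to each $m_{j}\varpi_{i_{j},a_{j}}$), both $W(\hlam)$ and $W(m_{1}\varpi_{i_{1},a_{1}}) \otimes \cdots \otimes W(m_{l}\varpi_{i_{l},a_{l}})$ are identified with $L(\varpi_{j_{1},b_{1}}) \otimes \cdots \otimes L(\varpi_{j_{M},b_{M}})$, each identification carrying the distinguished cyclic vector to the tensor product of $\ell$-highest weight vectors. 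As $\bar{\Phi}$ matches these cyclic vectors, it is the identity under the identifications, hence an isomorphism, and the Nakayama argument concludes.

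The step I expect to be hardest is the completion in the second paragraph: reconciling the paper's formal $\hat{\otimes}$ with the genuine completion of $\bW(m_{1}\varpi_{i_{1}}) \otimes \cdots \otimes \bW(m_{l}\varpi_{i_{l}})$ along the embedding of spectral-parameter spaces, and deriving the $\hR(\hlam)$-linearity of $\Phi$ from it; everything there hinges on the pairwise distinctness of the $(i_{j},a_{j})$, which is precisely what makes the relevant scheme-theoretic fibre a single reduced point. A minor external input is the non-vanishing $d_{i,i}(1) \neq 0$, absent from Theorem~\ref{Thm:AK} itself but immediate from the known location of the zeros of the denominators.
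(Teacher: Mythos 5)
Your proof is correct and follows essentially the same strategy as the paper's: build a $U_q$-homomorphism out of $\bW(\lambda)$ via the universal property and Theorem~\ref{Thm:globalWeyl}, verify $\R(\lambda)$-linearity from the $\psi_i^{\pm}(z)$-eigenvalues, complete at $\rr_{\lambda,\hlam}$, then use Theorem~\ref{Thm:AK} to identify the reduction modulo $\rr_{\hlam}$ with a tensor product of simples and invoke Nakayama. You spell out a few details the paper leaves implicit (in particular that the point over $\rr_{\lambda,\hlam}$ is unique and reduced so the two completions agree, and that $d_{i,i}(1)\neq 0$ is needed when applying Theorem~\ref{Thm:AK} to each $m_j\varpi_{i_j,a_j}$), but the argument is the same.
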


\begin{proof}
By the universal property of the 
global Weyl module $\bW(\lambda)$ 
(Theorem~\ref{Thm:globalWeyl} (\ref{Thm:globalWeyl:univ})),
there is the $U_{q}$-homomorphism
\begin{equation}
\label{Eq:bW_hom}
\bW(\lambda) \to \bW(m_{1} \varpi_{i_{1}})
\otimes \cdots
\otimes \bW(m_{l} \varpi_{i_{l}}); 
\, w_{\lambda} \mapsto w_{m_{1} \varpi_{i_{1}}}
\otimes \cdots \otimes w_{m_{l} \varpi_{i_{l}}}.
\end{equation}
By (\ref{Eq:delta_looph}), we have
$$\psi_{i}^{\pm}(z)  
(w_{m_{1} \varpi_{i_{1}}}
\otimes \cdots \otimes w_{m_{l} \varpi_{i_{l}}})
= 
(\psi_{i}^{\pm}(z)  w_{m_{1} \varpi_{i_{1}}})
\otimes \cdots \otimes 
(\psi_{i}^{\pm}(z)  w_{m_{l} \varpi_{i_{l}}})
$$
for any $i \in I$.
Therefore,
by Theorem~\ref{Thm:globalWeyl} 
(\ref{Thm:globalWeyl:End}), 
the homomorphism (\ref{Eq:bW_hom}) is actually 
a homomorphism of $(U_{q}, \R(\hlam))$-bimodules,
where the RHS of (\ref{Eq:bW_hom}) is regarded
as an $\R(\hlam)$-module via the standard inclusion.  
Completing with respect to the maximal ideal
$\rr_{\lambda, \hlam} \subset \R(\lambda)$,
we get
\begin{equation}
\label{Eq:hW_hom}
\hW(\hlam) \to 
\hW(m_{1} \varpi_{i_1, a_1}) \hat{\otimes}
\cdots \hat{\otimes} \hW(m_{l} \varpi_{i_l, a_l}).
\end{equation}
Note that both sides of (\ref{Eq:hW_hom})
are free over $\hR(\hlam)$.
By Theorem~\ref{Thm:AK},
taking the quotients by $\rr_{\hlam}$ 
induces an isomorphism 
$$
W(\hlam) \xrightarrow{\cong} 
W(m_{1}\varpi_{i_1, a_1}) \otimes 
\cdots \otimes W(m_{l}\varpi_{i_l, a_l})
\cong 
L(\varpi_{i_1, a_1})^{\otimes m_{1}} \otimes
\cdots \otimes L(\varpi_{i_l, a_l})^{\otimes m_l}.
$$
Therefore we see that (\ref{Eq:hW_hom})
is an isomorphism by Nakayama\rq{}s lemma.
\end{proof}

Note that 
if $d_{i_1, i_2}(a_2/a_1) \neq 0$,
the $R$-matrix 
$R^{\mathrm{norm}}_{i_1, i_2}$ 
induces a homomorphism 
of $(U_{q}, \kk[\![ z_{1} -a_{1}, z_{2}-a_{2}]\!])$-bimodules
$$R^{\mathrm{norm}}_{i_1, i_2}
\colon \hW(\varpi_{i_1, a_1}) \hat{\otimes}
\hW(\varpi_{i_2, a_2})
\to 
\hW(\varpi_{i_2, a_2}) \hat{\otimes}
\hW(\varpi_{i_1, a_1}).
$$

Let $NH_{m}$ denote
the {\em nil-Hecke algebra} of degree $m \in \Z_{>0}$. 
For its basic properties, see \cite[Examples 2.2.3]{KL09} and references therein.
It is a $\kk$-algebra presented by the generators
$\{x_{1}, \ldots, x_{m} \} \cup \{ \tau_{1}, \ldots, \tau_{m-1} \}$
satisfying the following relations:
$$
x_{k} x_{l} = x_{l} x_{k}, \quad \tau_{k}^{2} = 0,
\quad 
\tau_{k} \tau_{k+1} \tau_{k} = \tau_{k+1} \tau_{k} \tau_{k+1},
\quad \tau_{k} \tau_{l} = \tau_{l} \tau_{k} \quad \text{if $|k-l|>1$}, 
$$
$$
\tau_{k} x_{k+1} - x_{k} \tau_{k}
= x_{k+1} \tau_{k} - \tau_{k} x_{k} = 1,
\quad
\tau_{k} x_{l} = x_{l} \tau_{k} \quad \text{if $l \neq k, k+1$}.
$$
The third and forth relations 
above are called the braid relations.
Let $\sigma_{i} \in \SG_{m}$ 
denote the transposition of $i$ and $i+1$ $(1 \le i < m)$.
For each $\sigma \in \SG_{m}$, we define
$\tau_{\sigma} := \tau_{i_1} \cdots \tau_{i_l} \in NH_{m}$,
where $\sigma = \sigma_{i_1} \cdots \sigma_{i_l}$ 
is a reduced expression of $\sigma$.
Thanks to the braid relations, 
the element $\tau_{\sigma}$ does not depend on
the choice of a reduced expression.
It is known that the algebra
$NH_{m}$ is free as a left (or a right) 
$\kk[x_{1}, \ldots, x_{m}]$-module with
a free basis $\{ \tau_{\sigma} \mid \sigma \in \SG_{m}\}$
and the center of $NH_{m}$ coincides
with the subalgebra $\mathbb{S}_{m}
:= \kk[x_{1}, \ldots, x_{m}]^{\SG_{m}}$
of symmetric polynomials.
Moreover, it is known that $NH_{m}$ is isomorphic 
to the matrix algebra of 
rank $m!$ over its center $\mathbb{S}_{m}$.
A primitive idempotent $e_{m}$ is given by
$e_{m} := \tau_{\sigma_{0}} x_2 x_{3}^{2} \cdots x_{m}^{m-1}$,
where $\sigma_{0} \in \SG_{m}$ is the longest element.
The nil-Hecke algebra $NH_{m}$ is a graded algebra 
by setting $\mathrm{deg} \, x_{k} = 2,  \mathrm{deg} \, \tau_{k} = -2.$
Since the grading is bounded from below, 
the completion $\widehat{NH}_{m}$ with respect to 
the grading naturally becomes an algebra. 
The completed nil-Hecke algebra 
$\widehat{NH}_{m}$ is isomorphic to the matrix algebra
over its center $\hS_{m} := \kk [\![ x_{1}, \ldots, x_{m}]\!]^{\SG_{m}}$
of rank $m!$.

Let $\hlam := m \varpi_{i,a} \in \lP^{+}$
for a pair $(i,a) \in I \times \kk^{\times}$ and $m \in \Z_{>0}$.
Consider the completed tensor product
$\hW(\varpi_{i, a})^{\hat{\otimes} m}$,
which is regarded as a $(U_{q}, \hR(\hlam))$-bimodule,
via the standard inclusion
$\hR(\hlam) \hookrightarrow \hR(\varpi_{i,a})^{\hat{\otimes} m} = \kk[\![z_1 -a, \ldots, z_m -a ]\!]$.
Since we know that $d_{i, i}(1) \neq 0$ (recall \eqref{Eq:denom}),
the 
$(U_{q}, \hR(\hlam))$-bimodule automorphism
$$r_{k} := 1^{\hat{\otimes} k-1} 
\hat{\otimes} \sigma_k R^{\mathrm{norm}}_{i,i}
\hat{\otimes} 1^{\hat{\otimes} m-k-1} \in
\End (\hW(\varpi_{i, a})^{\hat{\otimes} m})
$$
is well-defined for $1 \le k < m$, where $\sigma_k$ 
denotes the permutation of $z_k$ and $z_{k+1}$.
Then the formulas 
$$
x_{k} \mapsto z_{k} -a, \quad
\tau_{k} \mapsto (z_{k} - z_{k+1})^{-1}(r_{k} -1)
$$
define a right action of $\widehat{NH}_{m}$,
which makes $\hW(\varpi_{i, a})^{\hat{\otimes} m}$ into
a $(U_{q}, \widehat{NH}_{m})$-bimodule.

\begin{Rem}
The nil-Hecke algebra $HN_{m}$ 
coincides with the quiver Hecke algebra of type $\mathrm{A}_1$. 
The above construction of the $\widehat{NH}_{m}$-action is a special case of the construction 
of Kang-Kashiwara-Kim~\cite{KKK18}, which we will review later in Section~\ref{Ssec:KKK}.
\end{Rem}

\begin{Prop}
\label{Prop:nilHecke}
Let $\hlam = m \varpi_{i,a} \in \lP^{+}$. 
As $(U_{q}, \hR(\hlam))$-bimodules, we have
$$
\hW(\varpi_{i,a})^{\hat{\otimes} m} \cong 
\hW(\hlam)^{\oplus m!}.
$$
\end{Prop}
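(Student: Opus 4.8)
The plan is to combine Proposition~\ref{Prop:hW_tensor} with the structure theory of the completed nil-Hecke algebra $\widehat{NH}_{m}$. First I would invoke Proposition~\ref{Prop:hW_tensor} in the special case $\hlam = m\varpi_{i,a}$: since $d_{i,i}(1) \neq 0$, the proposition applies with all $(i_{j},a_{j}) = (i,a)$ and $m_{1} = m$, giving an isomorphism of $(U_{q},\hR(\hlam))$-bimodules $\hW(\hlam) \cong \hW(\varpi_{i,a})^{\hat{\otimes}m}$ \emph{as far as the underlying module goes}; but here I actually want to go the other way and analyze the $\widehat{NH}_{m}$-module structure on $\hW(\varpi_{i,a})^{\hat{\otimes}m}$ directly. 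The key observation is that the $\hR(\hlam)$-action on $\hW(\varpi_{i,a})^{\hat{\otimes}m}$ coming from the standard inclusion $\hR(\hlam) = \hS_{m} \hookrightarrow \hR(\varpi_{i,a})^{\hat{\otimes}m}$ is precisely the action of the center $\hS_{m} \subset \widehat{NH}_{m}$ under the right $\widehat{NH}_{m}$-action defined above (one checks $x_{k} \mapsto z_{k}-a$ identifies $\hS_{m} = \kk[\![x_{1},\dots,x_{m}]\!]^{\SG_{m}}$ with the symmetric Laurent polynomials completed at $\rr_{\hlam}$, which is exactly $\hR(\hlam)$).

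Next I would use the structure theorem recalled in the text: $\widehat{NH}_{m}$ is isomorphic to the matrix algebra $\mathrm{Mat}_{m!}(\hS_{m})$ over its center, with primitive idempotent $e_{m} = \tau_{\sigma_{0}} x_{2}x_{3}^{2}\cdots x_{m}^{m-1}$. Consequently, for \emph{any} right $\widehat{NH}_{m}$-module $V$ that is finitely generated and free over $\hS_{m}$ (regarded via the center), one has $V \cong (V e_{m})^{\oplus m!}$ as $(\text{anything commuting},\hS_{m})$-bimodules --- this is just Morita theory for matrix algebras, applied on the right, with the left $U_{q}$-action carried along since it commutes with the right $\widehat{NH}_{m}$-action. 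Applying this with $V = \hW(\varpi_{i,a})^{\hat{\otimes}m}$ yields
$$
\hW(\varpi_{i,a})^{\hat{\otimes}m} \cong \bigl(\hW(\varpi_{i,a})^{\hat{\otimes}m}\, e_{m}\bigr)^{\oplus m!}
$$
as $(U_{q},\hR(\hlam))$-bimodules. It then remains to identify the summand $\hW(\varpi_{i,a})^{\hat{\otimes}m}\,e_{m}$ with the deformed local Weyl module $\hW(\hlam)$.

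For that last identification I would argue as follows. The element $\widehat{w}_{\hlam}$ should be matched, up to the $\hR(\hlam)$-action, with $(\widehat{w}_{\varpi_{i,a}}^{\hat{\otimes}m})\cdot e_{m}$; concretely, the map $\bW(\varpi_{i})^{\otimes m} \to \bW(\varpi_{i})^{\otimes m}$ given by right multiplication by $e_{m}$ sends the cyclic vector to a vector of weight $m\varpi_{i}$ annihilated by all $x_{i}^{+}(z)$ (since $e_{m}$ acts by an endomorphism of the global Weyl module $\bW(m\varpi_{i})$ after passing through~\eqref{Eq:bW_hom}, hence preserves the $\ell$-highest weight condition), so by the universal property in Theorem~\ref{Thm:globalWeyl}~(\ref{Thm:globalWeyl:univ}) and Proposition~\ref{Prop:defWeyl}~(\ref{Prop:defWeyl:univ}) there is a $(U_{q},\hR(\hlam))$-bimodule map $\hW(\hlam) \to \hW(\varpi_{i,a})^{\hat{\otimes}m}\,e_{m}$. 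Both sides are free of finite rank over $\hR(\hlam)$, so by Nakayama it suffices to check the induced map is an isomorphism after applying $-/\rr_{\hlam}$; there it becomes the statement that $W(\varpi_{i,a})^{\otimes m}\, e_{m} \cong W(m\varpi_{i,a})$ as $U_{q}$-modules, which follows from Theorem~\ref{Thm:AK} (the tensor product is the local Weyl module $W(\hlam)$, since $d_{i,i}(1)\neq 0$) together with the classical fact that the image of the primitive idempotent $e_{m}$ picks out a single copy of the head --- equivalently, that $\dim W(\hlam) \cdot m! = \dim W(\varpi_{i,a})^{\otimes m}$, which one gets by comparing $q$-characters or simply by ranks over $\R(\lambda)$ versus $\R(\hlam)$.

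The main obstacle I expect is the bookkeeping in the last step: making sure the $\hR(\hlam)$-module structures on both sides genuinely agree under the identification of $\hR(\hlam)$ with $\hS_{m}$, and verifying that $e_{m}$ indeed acts on the tensor product as an endomorphism \emph{of $U_{q}$-modules} compatible with the global Weyl module picture (so that one may legitimately invoke the universal property). Everything else is either a direct citation (matrix-algebra Morita equivalence, Theorem~\ref{Thm:AK}, freeness from Theorem~\ref{Thm:globalWeyl} and Proposition~\ref{Prop:defWeyl}) or a Nakayama-type reduction to the already-understood non-deformed statement.
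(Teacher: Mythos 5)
Your proposal follows essentially the same route as the paper: view $\hW(\varpi_{i,a})^{\hat{\otimes}m}$ as a right $\widehat{NH}_{m}$-module whose central $\hS_{m}$-action coincides with the $\hR(\hlam)$-action, use the matrix-algebra structure of $\widehat{NH}_{m}$ over $\hS_{m}$ to split off $m!$ copies of the $e_{m}$-summand, and then identify $\hW(\varpi_{i,a})^{\hat{\otimes}m}e_{m}$ with $\hW(\hlam)$ via the universal property of the deformed Weyl module and Nakayama's lemma. Two details in your final paragraph need correction. The quotient one actually needs to understand is $\hW(\varpi_{i,a})^{\hat{\otimes}m}e_{m}/\rr_{\hlam}$ where $\rr_{\hlam}$ is the maximal ideal of the \emph{center} $\hS_{m}$; this is not the same as ``$W(\varpi_{i,a})^{\otimes m}e_{m}$'', since $W(\varpi_{i,a})^{\otimes m}$ is the quotient of $\hW(\varpi_{i,a})^{\hat{\otimes}m}$ by the larger maximal ideal of $\hR(\varpi_{i,a})^{\hat{\otimes}m}$. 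Relatedly, the dimension identity $\dim W(\hlam)\cdot m! = \dim W(\varpi_{i,a})^{\otimes m}$ you write is false: both sides equal $(\dim L(\varpi_{i,a}))^{m}$. The factor of $m!$ instead enters when comparing the rank of $\hW(\varpi_{i,a})^{\hat{\otimes}m}$ over $\hR(\varpi_{i,a})^{\hat{\otimes}m}$ (which is $(\dim L(\varpi_{i,a}))^{m}$) with its rank over the subalgebra $\hR(\hlam)=\hS_{m}$ (which is $m!\,(\dim L(\varpi_{i,a}))^{m}$). The correct Nakayama reduction, as the paper runs it, is that $\hW(\varpi_{i,a})^{\hat{\otimes}m}e_{m}$ is $\hR(\hlam)$-free of rank $(\dim L(\varpi_{i,a}))^{m}$, so the nonzero map $W(\hlam)\to\hW(\varpi_{i,a})^{\hat{\otimes}m}e_{m}/\rr_{\hlam}$ goes from the simple module $W(\hlam)\cong L(\varpi_{i,a})^{\otimes m}$ to a module of the same dimension, hence is an isomorphism. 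Finally, your opening appeal to Proposition~\ref{Prop:hW_tensor} is vacuous in this case, since that proposition requires the $\varpi_{i_{j},a_{j}}$ to be pairwise distinct; but you discard it immediately, so it does not affect the argument.
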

\begin{proof}
Since $\widehat{NH}_{m} \cong
\left(\widehat{NH}_{m} e_{m} \right)^{\oplus m!}$
as a left $\widehat{NH}_{m}$-module,
we have
$$
\hW(\varpi_{i,a})^{\hat{\otimes} m}
=
\hW(\varpi_{i,a})^{\hat{\otimes} m} \otimes_{\widehat{NH}_{m}}
\widehat{NH}_{m}
\cong 
\left( \hW(\varpi_{i,a})^{\hat{\otimes} m} e_{m} \right)^{\oplus m!}.
$$
A summand
$ \hW(\varpi_{i,a})^{\hat{\otimes} m} e_{m}$
is a $(U_{q}, \hS_{m})$-bimodule.
Note that we have $\hS_{m} = \hR(\hlam)$ inside 
$\End(\hW(\varpi_{i,a})^{\hat{\otimes} m} e_{m})$.
Since $\hW(\varpi_{i,a})^{\hat{\otimes} m}$ is free over
$\hR(\varpi_{i,a})^{\hat{\otimes} m}$ of rank $(\dim L(\varpi_{i,a}))^{m}$,
the summand $ \hW(\varpi_{i,a})^{\hat{\otimes} m} e_{m}$ 
is free over $\hR(\hlam)$ of rank
$(\dim L(\varpi_{i,a}))^{m}$.
Using the universal property, we have the $(U_{q}, \hR(\hlam))$-homomorphism
$$\hW(\hlam) \to  \hW(\varpi_{i,a})^{\hat{\otimes} m} e_{m}
; \quad \widehat{w}_{\hlam} \mapsto 
(\widehat{w}_{\varpi_{i,a}})^{\hat{\otimes} m} e_{m}.
$$
Taking the quotients by $\rr_{\hlam}$, we get a non-zero
$U_{q}$-homomorphism
$W(\hlam) \to \hW(\varpi_{i,a})^{\hat{\otimes} m} e_{m} / \rr_{\hlam}$. 
This should be an isomorphism because 
$W(\hlam)\cong L(\varpi_{i,a})^{\otimes m} \cong W^\vee(\hlam)$
is simple (recall Theorem~\ref{Thm:AK} and Proposition~\ref{Prop:dualWeyl}) and $\dim W(\hlam) 
= \dim (\hW(\varpi_{i,a})^{\hat{\otimes} m} e_{m} / \rr_{\hlam})$.
Thus we complete the proof by Nakayama\rq{}s lemma.
\end{proof}

\begin{Cor}
\label{Cor:hW_tensor}
Let $\hlam := \sum_{j=1}^{l} m_{j} \varpi_{i_j, a_{j}}
\in \lP^{+}$
be an $\ell$-dominant $\ell$-weight with
$(i_{j}, a_{j}) \neq (i_{k}, a_{k})$ for $j \neq k$.
Assume that $d_{i_j, i_k}(a_k / a_j) \neq 0$
for any $1 \le j <  k \le l$. 
Then there is an isomorphism of $(U_{q}, \hR(\hlam))$-bimodules
$$
\hW(\varpi_{i_1, a_1})^{\hat{\otimes} m_{1}}
\hat{\otimes} \cdots \hat{\otimes}
\hW(\varpi_{i_l, a_l})^{\hat{\otimes}m_l}
\cong
\hW(\hlam)^{\oplus m_{1}! \cdots m_{l}!},
$$
where $\hR(\hlam)$ acts on the LHS via the
standard inclusion. 
\end{Cor}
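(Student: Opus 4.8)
The plan is to deduce the corollary directly from Proposition~\ref{Prop:nilHecke} and Proposition~\ref{Prop:hW_tensor}; no new geometric or representation-theoretic input is needed, only a careful handling of the several $\hR(\hlam)$-module structures involved and of the fact that completed tensor products commute with finite direct sums.

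First I would apply Proposition~\ref{Prop:nilHecke} to each factor $\hW(\varpi_{i_j, a_j})^{\hat{\otimes} m_j}$ separately. For each $j$ this produces an isomorphism of $(U_q, \hR(m_j \varpi_{i_j, a_j}))$-bimodules
$$
\hW(\varpi_{i_j, a_j})^{\hat{\otimes} m_j} \cong \hW(m_j \varpi_{i_j, a_j})^{\oplus m_j!},
$$
in which $\hR(m_j \varpi_{i_j, a_j}) = \hS_{m_j}$ acts on the left-hand side through the standard inclusion into $\hR(\varpi_{i_j, a_j})^{\hat{\otimes} m_j}$. Taking the completed tensor product over $\kk$ of these $l$ isomorphisms (which again yields an isomorphism, as $\hat{\otimes}$ is functorial on free modules of finite rank over complete local rings), and using that the standard inclusion is an isomorphism $\hR(\hlam) \cong \hR(m_1 \varpi_{i_1, a_1}) \hat{\otimes} \cdots \hat{\otimes} \hR(m_l \varpi_{i_l, a_l})$ because the pairs $(i_j, a_j)$ are pairwise distinct (as noted before Proposition~\ref{Prop:hW_tensor}), I obtain an isomorphism of $(U_q, \hR(\hlam))$-bimodules between the left-hand side of the corollary and $\bigl( \hW(m_1 \varpi_{i_1, a_1})^{\oplus m_1!} \bigr) \hat{\otimes} \cdots \hat{\otimes} \bigl( \hW(m_l \varpi_{i_l, a_l})^{\oplus m_l!} \bigr)$. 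Here one should also check that the $\hR(\hlam)$-action on the corollary's left-hand side---defined through the standard inclusion $\hR(\hlam) \hookrightarrow \hR(\varpi_{i_1, a_1})^{\hat{\otimes} m_1} \hat{\otimes} \cdots \hat{\otimes} \hR(\varpi_{i_l, a_l})^{\hat{\otimes} m_l}$---coincides with the one induced by the factorization above, which follows from the compatibility of standard inclusions with composition and with $\hat{\otimes}$.

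Next I would pull the finite direct sums out of the completed tensor product. Since each $\hW(m_j \varpi_{i_j, a_j})$ is free of finite rank over the complete local ring $\hR(m_j \varpi_{i_j, a_j})$ by Proposition~\ref{Prop:defWeyl}, the ordinary tensor product $\otimes_\kk$ commutes with the finite direct sums, and completion of a finite direct sum is the direct sum of completions; iterating over the $l$ factors yields, as $(U_q, \hR(\hlam))$-bimodules,
$$
\hW(\varpi_{i_1, a_1})^{\hat{\otimes} m_1} \hat{\otimes} \cdots \hat{\otimes} \hW(\varpi_{i_l, a_l})^{\hat{\otimes} m_l} \cong \bigl( \hW(m_1 \varpi_{i_1, a_1}) \hat{\otimes} \cdots \hat{\otimes} \hW(m_l \varpi_{i_l, a_l}) \bigr)^{\oplus m_1! \cdots m_l!}.
$$
Finally, since the hypotheses of the corollary are exactly those of Proposition~\ref{Prop:hW_tensor}, that proposition gives $\hW(\hlam) \cong \hW(m_1 \varpi_{i_1, a_1}) \hat{\otimes} \cdots \hat{\otimes} \hW(m_l \varpi_{i_l, a_l})$ as $(U_q, \hR(\hlam))$-bimodules, and substituting this in finishes the proof.

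I do not expect a serious obstacle: the whole argument is formal. The only point requiring care is the bookkeeping of the $\hR(\hlam)$-bimodule structures at each step---associativity of $\hat{\otimes}$, compatibility of standard inclusions with $\hat{\otimes}$, and the commutation of $\hat{\otimes}$ with finite direct sums---which I would dispatch with a short observation, or fold into the conventions on $\hat{\otimes}$, rather than spell out in detail.
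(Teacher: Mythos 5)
Your proposal is correct and follows exactly the route the paper intends: the paper's own proof is just the one-line statement that the corollary follows from Propositions~\ref{Prop:hW_tensor} and \ref{Prop:nilHecke}, and you have spelled out precisely the combination of those two results---applying Proposition~\ref{Prop:nilHecke} factorwise, pulling the finite direct sums out of the completed tensor product, and finishing with Proposition~\ref{Prop:hW_tensor}---together with the necessary bookkeeping of $\hR(\hlam)$-module structures via the standard inclusions.
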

\begin{proof}
It follows from Propositions
\ref{Prop:hW_tensor} and \ref{Prop:nilHecke} above.
\end{proof}
%%%%%%%%%%%%%%%%%%%%%%%%%%%%%%%%%%%%

\subsection{Affine cellular structure}
\label{Ssec:affcell}

In this subsection,
we recall
the affine cellular algebra structure 
(in the sense of Koenig-Xi~\cite{KX12}) of
the modified quantum loop algebra
$\tilU$,
following 
\cite{BN04, Nakajima15}.
Note that the notion of affine cellular algebra 
is closely related to the notion of affine quasi-hereditary algebra 
as explained in \cite[Section 9]{Kleshchev15}.
What we discuss in this subsection will be a key ingredient 
in the proof of our main theorem in Section~\ref{Ssec:main}. 

Let 
$\lambda = \sum_{i \in I} l_{i} \varpi_{i} \in \cP^{+}$.
By Theorem~\ref{Thm:globalWeyl} 
(\ref{Thm:globalWeyl:End}),
the global Weyl module $\bW(\lambda)$
is regarded as a ($\tilU, \R(\lambda)$)-bimodule.
We obtain a ($\R(\lambda), \tilU$)-bimodule
$\bW(\lambda)^{\sharp}$ 
from $\bW(\lambda)$
by twisting the actions
of $\tilU$ and $\R(\lambda)$
with the anti-involution 
$\sharp$ on $\tilU \otimes \R(\lambda)$ given by
$$
\sharp (e_{i}) = f_{i}, \quad
\sharp (f_{i}) = e_{i}, \quad
\sharp (q^{h}) = q^{h}, \quad
\sharp (a_{\lambda}) = a_{\lambda}, \quad
\sharp (z_{j, k}) = z^{-1}_{j,k},
$$
where $e_{i}, f_{i} \, (i \in I \cup\{0\})$
are the Chevalley generators (see Remark~\ref{Rem:Beck}),
$h \in \cP^{\vee}$, $\lambda \in \cP$ and
$z_{j,k} \, (j \in I, 1 \le k \le l_{j})$
are as in (\ref{Eq:Rlambda}).
 
Fix a dominant weight $\lambda \in \cP^{+}$.
Let $U_{\le \lambda}$ be the following quotient of 
the modified quantum loop algebra $\tilU$ :
\begin{equation}
\label{Eq:Ule}
U_{\le \lambda} := \left. \tilU \middle/ \bigcap_{\mu \le \lambda}
\Ann_{\tilU} \bW(\mu), \right.
\end{equation}
where $\Ann_{\tilU} M$
denotes the annihilator of a $\tilU$-module $M$.  
We fix a total ordering
$\{ \lambda_{1} , \lambda_{2}, \ldots, \lambda_{l} \}$ 
of the finite set $\cP^{+}_{\le \lambda} := 
\{\mu \in \cP^{+} \mid \mu \le \lambda \}$
such that we have $\lambda_{l} = \lambda$
and $i<j$ whenever $\lambda_{i} < \lambda_{j}$. 
For each $i \in \{1, 2, \ldots , l-1\}$,
we define a two-sided ideal $I_{i}$ 
of $U_{\le \lambda}$ by
\begin{equation}
\label{Eq:cellularideals}
I_{i} := \bigcap_{j \le i}
\Ann_{U_{\le \lambda}} \bW(\lambda_{j}).
\end{equation}
We also define $I_{0} := U_{\le \lambda}$ and $I_{l} := 0$.
By definition, we have $I_{i} \subset I_{i-1}$ for 
each $i \in \{1, \ldots, l\}$. 
         
\begin{Thm}
[Beck-Nakajima~\cite{BN04, Nakajima15}]
\label{Thm:cellular}
For each $i \in \{1, \ldots, l\}$,
there is an isomorphism of 
$(U_{\le \lambda}, U_{\le \lambda})$-bimodules
$$
I_{i-1} / I_{i} \cong \bW(\lambda_{i}) \otimes_{\R(\lambda_{i})}
\bW(\lambda_{i})^{\sharp}.
$$
Under this isomorphism,
the image of the element $a_{\lambda_{i}} \in I_{i-1}$
corresponds to the generating vector
$w_{\lambda_{i}} \otimes w_{\lambda_{i}} \in 
\bW(\lambda_{i}) \otimes_{\R(\lambda_{i})}
\bW(\lambda_{i})^{\sharp}.$
\end{Thm}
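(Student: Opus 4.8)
The plan is to establish the bimodule isomorphism $I_{i-1}/I_i \cong \bW(\lambda_i) \otimes_{\R(\lambda_i)} \bW(\lambda_i)^\sharp$ by exhibiting a natural surjection and then checking it is injective via a dimension/character count in each weight space. First I would note that, since $U_{\le \lambda}$ is the quotient of $\tilU$ by the common annihilator of all $\bW(\mu)$ with $\mu \le \lambda$, the module $\bW(\lambda_i)$ is naturally a faithful-enough $(U_{\le\lambda}, \R(\lambda_i))$-bimodule, and by Theorem~\ref{Thm:globalWeyl}(\ref{Thm:globalWeyl:End}) it is free of finite rank over $\R(\lambda_i)$. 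The element $a_{\lambda_i} \in U_{\le\lambda}$ acts as the projection onto the $\lambda_i$-weight space, and $\bW(\lambda_i)_{\lambda_i} = \R(\lambda_i)\cdot w_{\lambda_i}$. So there is a well-defined bimodule map $U_{\le\lambda}\, a_{\lambda_i}\, U_{\le\lambda} \to \bW(\lambda_i)\otimes_{\R(\lambda_i)} \bW(\lambda_i)^\sharp$ sending $a_{\lambda_i} \mapsto w_{\lambda_i}\otimes w_{\lambda_i}$: one writes an element of the two-sided ideal as $\sum x\, a_{\lambda_i}\, y$ and sends it to $\sum (x w_{\lambda_i}) \otimes (y^\sharp w_{\lambda_i})$, using the defining relations of $\bW(\lambda_i)$ together with $\sharp$ to see this is independent of the presentation (here one uses that $e_{i,r} w_{\lambda_i} = 0$ translates under $\sharp$ to the annihilation conditions on the $\bW(\lambda_i)^\sharp$ side, and the higher Serre-type relations $(f_{i,r})^{\lambda_i(h_i)+1} w_{\lambda_i}=0$ are what cut $\bW(\lambda_i)$ down to the right size).

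Next I would identify the source of the map with $I_{i-1}/I_i$. By construction $I_{i-1} = \bigcap_{j\le i-1}\Ann \bW(\lambda_j)$ and $I_i = \bigcap_{j\le i}\Ann \bW(\lambda_j)$, so $I_{i-1}/I_i$ injects into $\Hom$-spaces built from $\bW(\lambda_i)$; one shows $I_{i-1}/I_i$ is precisely the image of the two-sided ideal generated by $a_{\lambda_i}$ in $U_{\le\lambda}/I_i$, using that any $\bW(\mu)$ with $\mu < \lambda_i$ has no $\lambda_i$-weight space (by the ordering $\mu \le \lambda_i$ forces $\lambda_i - \mu \in \cQ^+$ nonzero) so $a_{\lambda_i}$ already lies in $\Ann\bW(\mu)$ for those $\mu$ and hence in $I_{i-1}$. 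This gives a surjection $\bW(\lambda_i)\otimes_{\R(\lambda_i)}\bW(\lambda_i)^\sharp \twoheadrightarrow I_{i-1}/I_i$ as well as the reverse map, so it remains to see they are mutually inverse, equivalently that the composite $\bW(\lambda_i)\otimes_{\R(\lambda_i)}\bW(\lambda_i)^\sharp \to I_{i-1}/I_i \hookrightarrow \bigoplus$ (image in endomorphisms of $\bW(\lambda_i)$ over $\R(\lambda_i)$) is injective.

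The core computation, and the step I expect to be the main obstacle, is the injectivity: one must show that the natural map $\bW(\lambda_i)\otimes_{\R(\lambda_i)}\bW(\lambda_i)^\sharp \to \End_{\R(\lambda_i)}(\bW(\lambda_i))$ (the ``trace form'' realization) is injective, and moreover that its image is exactly $I_{i-1}/I_i$. Since $\bW(\lambda_i)$ is free of finite rank over the (Noetherian, integral) ring $\R(\lambda_i)$, after inverting enough it becomes a matrix algebra and injectivity reduces to a generic-point statement; concretely one can argue after base change to the fraction field or to each maximal ideal $\rr_{\lambda_i,\hlam}$, where $\bW(\lambda_i)/\rr_{\lambda_i,\hlam}$ is the local Weyl module $W(\hlam)$. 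For generic $\hlam$ the local Weyl module is simple (Theorem~\ref{Thm:AK}), so $\bW(\lambda_i)$ is generically a simple bimodule and the map to endomorphisms is the Jacobson density/Wedderburn isomorphism; this forces the kernel to be an $\R(\lambda_i)$-torsion submodule, which vanishes by freeness. I would assemble these pieces — generic simplicity, freeness over $\R(\lambda_i)$, and the weight-space bookkeeping that separates $\lambda_i$ from lower $\lambda_j$ — to conclude the isomorphism, with the last sentence of the theorem about $a_{\lambda_i}\mapsto w_{\lambda_i}\otimes w_{\lambda_i}$ following immediately from the construction of the map. The delicate point throughout is controlling the annihilators: making sure that $I_{i-1}/I_i$, defined via intersections of annihilators, really coincides with the honest two-sided ideal generated by $a_{\lambda_i}$, which is where the freeness results of Nakajima (cited as Theorem~\ref{Thm:Kcentral} and Theorem~\ref{Thm:Kfibers}) enter essentially.
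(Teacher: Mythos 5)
The paper itself does not prove this theorem; it cites Beck--Nakajima \cite{BN04} and the appendix \cite{Nakajima15}, where the argument rests on the cell decomposition of the canonical basis of $\tilU$. Your proposal deliberately avoids canonical bases and tries to get by with generic simplicity of local Weyl modules, freeness of $\bW(\lambda_i)$ over $\R(\lambda_i)$, and a Jacobson-density argument at generic points. That is a genuinely different route, and it is worth saying what it can and cannot buy.

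What it can buy: once you grant that the natural map
$T:\bW(\lambda_i)\otimes_{\R(\lambda_i)}\bW(\lambda_i)^\sharp\to\End_{\R(\lambda_i)}(\bW(\lambda_i))$
is a bimodule map whose image is contained in (the image of) $I_{i-1}/I_i$, your torsion argument does prove $T$ is injective: it is a map of free $\R(\lambda_i)$-modules that is an isomorphism at a generic maximal ideal (where $W(\hlam)$ is simple and Jacobson density gives surjectivity, hence bijectivity by rank), so its kernel is torsion, hence zero. That part is sound, modulo carefully constructing $T$ and checking $T(w_{\lambda_i}\otimes w_{\lambda_i})$ is the operator $a_{\lambda_i}$.

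Where there is a genuine gap: the entire surjectivity direction. You assert that ``one shows $I_{i-1}/I_i$ is precisely the image of the two-sided ideal generated by $a_{\lambda_i}$,'' but you never show it, and this is the hard content of the theorem. The easy inclusion is $U_{\le\lambda}\,a_{\lambda_i}\,U_{\le\lambda}\subset I_{i-1}$ (because $\lambda_i$ is not a weight of $\bW(\lambda_j)$ for $j<i$, since $\lambda_j\not\ge\lambda_i$ by the chosen numbering). The hard inclusion $I_{i-1}\subset U_{\le\lambda}\,a_{\lambda_i}\,U_{\le\lambda}+I_i$ is a nontrivial upper bound on a two-sided annihilator ideal; no amount of ``density at a generic point'' gives it, because density only tells you the \emph{image} of $U_{\le\lambda}$ in $\End$ is large at generic $\hlam$, not that every element of $I_{i-1}$ factors through a rank-one-type expression $x\,a_{\lambda_i}\,y$. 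Concretely: $I_{i-1}/I_i$ embeds in $\End_{\R(\lambda_i)}(\bW(\lambda_i))$ as a two-sided sub-$U_{\le\lambda}$-module, and there is no a priori reason its embedded image must coincide with the image of $T$ rather than being strictly larger. It is exactly here that \cite{BN04}, \cite{Nakajima15} invoke the canonical basis of $\tilU$: the two-sided cells are in bijection with dominant weights, the cell attached to $\lambda_i$ spans $I_{i-1}/I_i$, and each such cell basis element lies (essentially by construction) in $U_{\le\lambda}\,a_{\lambda_i}\,U_{\le\lambda}$. Without this input (or a substitute for it), the argument does not close. A secondary but also real issue: well-definedness of the assignment $\sum x\,a_{\lambda_i}\,y\mapsto\sum(xw_{\lambda_i})\otimes(y^\sharp w_{\lambda_i})$ is precisely the injectivity of the \emph{other} map and cannot be taken for granted; the clean way around this is to define the map in the opposite direction (from $\bW(\lambda_i)\otimes_{\R(\lambda_i)}\bW(\lambda_i)^\sharp$ into $\End_{\R(\lambda_i)}(\bW(\lambda_i))$), where cyclicity of the bimodule source makes it automatic — but then you are back to needing the surjectivity claim above.
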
      
\begin{proof}
See \cite[Section A(ii), A(iii)]{Nakajima15}.
\end{proof}

%%%%%%%%%%%%%%%%%%%%%%%%%%%%%%%%%%%%

\subsection{Hernandez-Leclerc category $\Cc_{Q}$}
\label{Ssec:catCQ}

In this subsection, we define the Hernandez-Leclerc category 
$\Cc_{Q}$ following \cite{HL15}.

Henceforth, we only consider $\ell$-weights $\varpi_{i,a}$
with $a = q^{p}$ for some $p \in \Z$.
To simplify the notation, 
we write 
$\varpi_{i,p}$ and
$\alpha_{i,p}$ for $(i,p) \in I \times \Z$
instead of $\varpi_{i, q^{p}}$ and $\alpha_{i,q^{p}}$ respectively.
Recall that we defined the subsets 
$\widehat{I}_{Q} \subset \widehat{I}
\subset I \times \Z$ in Subsection \ref{Ssec:quivers}.
Now, we consider the following sublattices:
$$\lP \quad \supset \quad \lP_{\Z} := 
\bigoplus_{(i,p) \in \widehat{I}} \Z \varpi_{i,p}
\quad \supset \quad 
\lP_{Q} := \bigoplus_{(i,p) \in \widehat{I}_{Q}}
\Z \varpi_{i,p},$$
$$
\lQ
\quad \supset \quad
\lQ_{\Z} := \bigoplus_{(i,p) \in \widehat{J}} \Z \alpha_{i,p}
\quad \supset \quad
\lQ_{Q} 
:= \bigoplus_{(i,p) \in \widehat{J}_{Q}} \Z \alpha_{i,p},
$$
where $\widehat{J} := (I \times \Z) \setminus \widehat{I}$
and 
$\widehat{J}_{Q}:= \{ (i,p) \in I \times \Z \mid
(i, p-1), (i , p+1) \in \widehat{I}_{Q} \}$.
By definition, we have 
$\lQ_{\Z} =\lP_{\Z} \cap \lQ$.
We set
$\lP_{\Z}^{+}
:= \lP_{\Z} \cap \lP^{+},
\lP_{Q}^{+}
:= \lP_{Q} \cap \lP^{+},
\lQ_{\Z}^{+}
:= \lQ_{\Z} \cap \lQ^{+},
\lQ_{Q}^{+}
:= \lQ_{Q} \cap \lQ^{+}.
$

\begin{Lem}
\label{Lem:lrootQ}
The followings hold.
\begin{enumerate}
\item \label{Lem:lrootQ1}
$\lQ_{Q} = \lP_{Q} \cap \lQ.$
\item \label{Lem:lrootQ2}
Assume that $\hlam - \hnu \in \lP_{\Z}^{+}$
for $\hlam \in \lP^{+}_{Q}$, $\hnu \in \lQ^{+}_{\Z}$.
Then we have $\hnu \in \lQ^{+}_{Q}$.
\end{enumerate}
\end{Lem}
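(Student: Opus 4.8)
The plan is to reduce both statements to combinatorial facts about the repetition quiver $\widehat{Q}$ and the bijection $\phi : \mathsf{R}^{+} \times \Z \to \widehat{I}$, using the explicit description of $\widehat{I}_{Q}$, $\widehat{J}$, and $\widehat{J}_{Q}$ given in Subsection \ref{Ssec:quivers}. First I would recall that $\phi$ restricts to a bijection $\mathsf{R}^{+} \times \{0\} \xrightarrow{\sim} \widehat{I}_{Q}$, and that under the identification $\lP_{\Z} \cong \bigoplus_{(\alpha,k)} \Z$ the $\ell$-root $\alpha_{i,p}$ for $(i,p) \in \widehat{J}$ corresponds (via $\mathsf{cl}$ and the Coxeter combinatorics) to a $\Z$-linear combination of the standard basis vectors of $\lP_{\Z}$. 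The point is that the key map here is the projection $\overline{\mathsf{cl}} : \lP_{\Z} \to \cQ$ (well, a suitable $\Z$-linear incarnation recording multiplicities along $\tau$-orbits), under which $\lQ_{\Z}$ is exactly the kernel-free part and the indexing sets match up with the combinatorics of positive roots.

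For part (\ref{Lem:lrootQ1}): the inclusion $\lQ_{Q} \subseteq \lP_{Q} \cap \lQ$ is immediate from the definitions since $\widehat{J}_{Q} \subseteq$ (the relevant index set cutting out $\lP_{Q}$-support) and $\lQ_{Q} \subseteq \lQ$. For the reverse inclusion, I would take $\hnu \in \lP_{Q} \cap \lQ$, write it in the $\alpha$-basis as $\hnu = \sum_{(i,p)} c_{i,p}\,\alpha_{i,p}$ with $(i,p)$ ranging over $\widehat{J}$ (possible since $\hnu \in \lQ \cap \lP_{\Z}= \lQ_{\Z}$), and then show that the constraint $\mathrm{supp}(\hnu) \subseteq \widehat{I}_{Q}$ forces all $c_{i,p}$ with $(i,p) \notin \widehat{J}_{Q}$ to vanish. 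This is a "boundary-peeling" argument: since $\widehat{I}_{Q}$ is a finite "slice" of the infinite repetition quiver $\widehat{Q}$, an $\ell$-root $\alpha_{i,p}$ with $(i,p) \in \widehat{J} \setminus \widehat{J}_{Q}$ has one of its contributing fundamental $\ell$-weights $\varpi_{i,p\pm 1}$ lying outside $\widehat{I}_{Q}$; by induction on the distance from the boundary of the slice (or on $p$, going outward), the coefficient of any such "outer" $\alpha_{i,p}$ in $\hnu$ must be zero because there is nothing else in the sum that can cancel the corresponding $\varpi$-coordinate.

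For part (\ref{Lem:lrootQ2}): given $\hlam \in \lP^{+}_{Q}$, $\hnu \in \lQ^{+}_{\Z}$ with $\hlam - \hnu \in \lP^{+}_{\Z}$, I want to conclude $\hnu \in \lQ^{+}_{Q}$. By part (\ref{Lem:lrootQ1}) it suffices to show $\mathrm{supp}(\hnu) \subseteq \widehat{I}_{Q}$ (together with $\hnu \in \lQ^{+}_{\Z}$, which is given). The idea is: $\hlam$ is supported on $\widehat{I}_{Q}$; $\hnu \ge 0$; and $\hlam - \hnu \ge 0$ as an $\ell$-weight means the $\varpi$-coordinates of $\hnu$ are dominated by those of $\hlam$ at every vertex of $\widehat{I}$ — but wait, $\hnu$ is a sum of $\ell$-roots $\alpha_{i,p}$, each of which has both positive coordinates (at $\varpi_{i,p\pm1}$) and negative coordinates (at $\varpi_{j,p}$, $j \sim i$), so "$\hlam - \hnu \in \lP^{+}_{\Z}$" does not directly bound $\mathrm{supp}(\hnu)$. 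Instead I would argue via $\ell$-dominance structure: expand $\hnu = \sum c_{i,p}\alpha_{i,p}$ over $\widehat{J}$ with $c_{i,p} \ge 0$ (using $\hnu \in \lQ^{+}_{\Z}$ — here I should be careful that $\lQ^{+}_{\Z}$ really is the $\Z_{\ge 0}$-span of $\{\alpha_{i,p} : (i,p) \in \widehat{J}\}$, which follows from the root-system interpretation of the slice). Then run the same outward induction: if $c_{i,p} > 0$ for some maximal/outermost $(i,p) \in \widehat{J} \setminus \widehat{J}_{Q}$, then $\hlam - \hnu$ acquires a strictly negative coordinate at the outer fundamental $\ell$-weight $\varpi_{i,p+1}$ (or $\varpi_{i,p-1}$) that lies outside $\widehat{I}_{Q}$ and cannot be compensated, contradicting $\hlam - \hnu \in \lP^{+}_{\Z} \subseteq \lP^{+}$.

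The main obstacle will be setting up the boundary-peeling induction cleanly — i.e., making precise the partial order on $\widehat{J}$ (by "distance outward from the slice $\widehat{I}_{Q}$" along the two half-lines of each $i$-column) and verifying that at each outermost occupied position the relevant $\varpi$-coordinate genuinely cannot be cancelled by any other $\alpha_{j,p'}$ in the support. This amounts to a finiteness/convexity property of the finite slice $\widehat{I}_{Q}$ inside $\widehat{Q}$, which should follow from the fact (recalled in Subsection \ref{Ssec:quivers}) that $\Gamma_Q$ is the Auslander--Reiten quiver and $\phi$ matches $\tau$-orbits with the Coxeter action on $\mathsf{R}^{+}$; everything else is bookkeeping with the explicit formula for $\alpha_{i,p}$.
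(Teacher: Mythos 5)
Your proposal takes essentially the same approach as the paper, which isolates exactly two closure properties of $\widehat{I}_{Q}$ (convexity along each column, i.e.\ along $\tau$-orbits, and mesh-closure coming from the Auslander--Reiten quiver structure) and then leaves the outward peeling induction implicit --- precisely the argument you sketch, with the obstacle you flag being the step the paper compresses into ``From these properties, we obtain the assertions.'' The preliminary remark about $\overline{\mathsf{cl}}$ and ``$\lQ_{\Z}$ being the kernel-free part'' is muddled (the $\alpha_{i,p}$ do not span a kernel of $\mathsf{cl}$), but it plays no role in the boundary-peeling argument you actually run.
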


\begin{proof}
As mentioned in the last paragraph
of Subsection \ref{Ssec:quivers},  
the full subquiver $\Gamma_{Q}$ 
with its vertex set $\widehat{I}_{Q}$ 
inside $\widehat{Q}$ is isomorphic to the 
Auslander-Reiten quiver of the path algebra $\C Q$.
In particular, the following properties are satisfied:
\begin{itemize}
\item[(i)] 
If both $(i, p_{1})$ and $(i, p_{2})$ belong to $\widehat{I}_{Q}$
with $p_{1} < p_{2}$,
then $(i, p)$ also belongs to $\widehat{I}_{Q}$ for any $p$ with 
$p_{1} < p < p_{2}$
and $p - \xi_{i} \in 2\Z$.
\item[(ii)]
If both $(i, p-1)$ and $(i, p+1)$ belong to $\widehat{I}_{Q}$,
then $(j, p)$ also 
belongs to $\widehat{I}_{Q}$ for any $j$ with $i \sim j$. 
\end{itemize}
From these properties, we obtain the assertions. 
\end{proof}

The following combinatorial lemma will be used in Subsection \ref{Ssec:final}.  

\begin{Lem}
\label{Lem:path}
Suppose that $Q$ is not of type $\mathrm{A}_{1}$.\footnote{The type $\mathrm{A}_1$ is exceptional here
because its repetition quiver is disconnected.}
\begin{enumerate}
\item \label{Lem:path:repet}
Let $(i,p), (j,r) \in \widehat{I}$
be two vertices of the repetition quiver $\widehat{Q}$.
If $d_{i,j}(q^{r-p})=0$,
there is an oriented path in $\widehat{Q}$
from $(i,p)$ to $(j,r)$. 
\item \label{Lem:path:AR}
Let $(i,p), (j,r) \in \widehat{I}_{Q}$
be two vertices of the Auslander-Reiten quiver $\Gamma_{Q}$.
If $d_{i,j}(q^{r-p})=0$,
there is an oriented path in $\Gamma_{Q}$
from $(i,p)$ to $(j,r)$. 
\end{enumerate}
\end{Lem}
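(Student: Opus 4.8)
The plan is to reduce the statement about the repetition quiver $\widehat{Q}$ to the combinatorics of the bijection $\phi : \mathsf{R}^{+} \times \Z \to \widehat{I}$ and the Coxeter element $\tau$, using the known description of the denominators $d_{i,j}(u)$ in terms of the root system. Recall that the zeros of $d_{i,j}(u)$ are controlled by the pairing structure of $\g$; more precisely, there is a classical fact (going back to \cite{AK97}, \cite{KKK13}, and reproven in the quiver-variety language compatible with $\Cc_{Q}$) that $d_{i,j}(q^{r-p}) = 0$ forces $(i,p)$ and $(j,r)$ to lie in a common ``coray/section relation'' of $\widehat{Q}$, i.e. $r - p > 0$ and the pair occurs among the multiplicity data of some indecomposable. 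So the first step is to make precise the translation: via $\phi$, write $(i,p) = \phi(\alpha, k)$ and $(j,r) = \phi(\beta, m)$ for positive roots $\alpha, \beta$ and integers $k, m$, and express the condition $d_{i,j}(q^{r-p})=0$ as a condition on $(\alpha, k)$ and $(\beta, m)$.

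Next I would exploit the fact that $\widehat{Q}$ is, up to relabeling by $\phi$, a ``universal cover'' type quiver on which $\tau$ acts as a shift, and that paths in $\widehat{Q}$ from $\phi(\alpha, k)$ to $\phi(\beta, m)$ correspond exactly to the $\tau$-orbit ordering: there is such a path iff $m \le k$ (or $m < k$, depending on normalization) together with a dominance-type condition relating $\alpha$, $\beta$ and the intermediate $\tau$-translates. The key step is then to show that the denominator vanishing condition $d_{i,j}(q^{r-p})=0$ implies exactly this inequality. For this I would use the $R$-matrix denominator formulas of \cite{Kashiwara02} (Proposition 9.3 there), which pin down the possible exponents $r-p$ for which vanishing occurs in terms of the heights of roots and the Coxeter number $h$; each such exponent corresponds to $(j,r)$ being a successor of $(i,p)$ under iterated application of the arrows $(k,s) \to (k',s+1)$ of $\widehat{Q}$. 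Concretely, the possible poles are governed by the formula for $d_{i,j}$ in type $\mathsf{ADE}$ whose roots are of the form $q^{\pm(\text{something between } 2 \text{ and } h)}$, and these translate under $\phi$ precisely into ``$(j,r)$ reachable from $(i,p)$.''

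For part (\ref{Lem:path:AR}), the Auslander--Reiten quiver $\Gamma_{Q}$ is the full subquiver of $\widehat{Q}$ on $\widehat{I}_{Q} = \phi(\mathsf{R}^{+} \times \{0\})$, so once I have a path in $\widehat{Q}$ from $(i,p)$ to $(j,r)$ with both endpoints in $\widehat{I}_{Q}$, I need to know that this path can be taken to stay inside $\Gamma_{Q}$. This follows from the convexity property of $\widehat{I}_{Q}$ inside $\widehat{Q}$ established in the proof of Lemma \ref{Lem:lrootQ}: if $(i,p)$ and $(j,r)$ both lie in $\widehat{I}_{Q}$ and there is a path between them in $\widehat{Q}$, then since $\widehat{I}_{Q}$ is an ``interval'' for the natural partial order (vertices on a path between two elements of $\widehat{I}_{Q}$ are again in $\widehat{I}_{Q}$, by the two bulleted properties in that proof), the whole path lies in $\Gamma_{Q}$. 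So part (\ref{Lem:path:AR}) is a corollary of part (\ref{Lem:path:repet}) plus the convexity already recorded.

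The main obstacle I anticipate is the first reduction: correctly identifying, in full type $\mathsf{ADE}$ generality, which exponents $r - p$ make $d_{i,j}(q^{r-p})$ vanish, and matching them against ``reachability in $\widehat{Q}$.'' This is where the structure of the denominators genuinely enters, and the bookkeeping with $\phi$, $\tau$, the involution $i \mapsto i^{*}$, and the Coxeter number $h$ is delicate --- especially ensuring the direction of the path is correct (from $(i,p)$ to $(j,r)$, not the reverse) and handling the boundary cases where $\tau^{\pm 1}(\alpha)$ changes sign. I would organize this by first proving it for $\g$ of type $\mathsf{A}$ (where denominators are completely explicit and $\widehat{Q}$ has a transparent coordinate description) and then reducing general $\mathsf{ADE}$ to rank-two and type-$\mathsf{A}$ sub-configurations, or alternatively by citing the uniform denominator results and the compatibility of $\phi$ with them from \cite{HL15} and \cite{Fujita16}. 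The hypothesis that $Q$ is not of type $\mathsf{A}_{1}$ is used precisely to exclude the degenerate case where $\widehat{Q}$ has no arrows at all and the statement is vacuous or false.
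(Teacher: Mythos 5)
Your proposal takes a genuinely different route from the paper, but it contains a real gap at exactly the point you flag as the "main obstacle."

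The paper does not use explicit denominator formulas at all. Instead it invokes Theorem~\ref{Thm:AK}: $d_{i,j}(q^{r-p})=0$ is equivalent to $L(\varpi_{i,p})\otimes L(\varpi_{j,r})$ being non-simple, which forces the existence of a nonzero $\hnu\in\lQ^{+}_{\Z}$ with $\varpi_{i,p}+\varpi_{j,r}-\hnu\in\lP^{+}_{\Z}$. The proof then reads off combinatorial constraints on the support $X$ of $\hnu$ --- every $(k,s)\in X$ has $p<s<r$, $(i,p+1)\in X$, and only $j$ can appear at height $r-1$ --- and builds the path step by step by peeling off one $\ell$-root at a time: $\hnu\mapsto\hnu-\alpha_{i,p+1}\mapsto\hnu-\alpha_{i,p+1}-\alpha_{k_1,p+2}\mapsto\cdots$, each step producing the next arrow. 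No knowledge of which exponents $r-p$ actually give zeros of $d_{i,j}$ is needed; only the qualitative consequence of a zero is used. Your approach, by contrast, proposes to characterize those exponents explicitly via $\phi$, $\tau$, $i\mapsto i^{*}$ and the Coxeter number, then match them against reachability. That is a legitimate strategy in principle, but you never carry out the matching: you say you would "organize this by first proving it for type $\mathsf{A}$" and then reduce or cite, which leaves the central step unproved. Since that step is precisely the content of the lemma, the proposal as written is not a proof.

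There is also a smaller issue with your treatment of part~(\ref{Lem:path:AR}). You assert that the two properties recorded in the proof of Lemma~\ref{Lem:lrootQ} make $\widehat{I}_{Q}$ "convex" in the sense that any path in $\widehat{Q}$ between two vertices of $\widehat{I}_{Q}$ lies entirely in $\widehat{I}_{Q}$. Those two properties do not obviously give that: a path could wander through a vertex $(k,s)\notin\widehat{I}_{Q}$, and deforming it back into $\widehat{I}_{Q}$ would need a separate argument. What the paper actually does is observe that, by Lemma~\ref{Lem:lrootQ}~(\ref{Lem:lrootQ2}), the witnessing $\hnu$ automatically lies in $\lQ^{+}_{Q}$ when $(i,p),(j,r)\in\widehat{I}_{Q}$, so the very same peeling argument produces a path whose vertices are all in $\widehat{I}_{Q}$ by construction --- no convexity of $\widehat{I}_{Q}$ is invoked. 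If you want to salvage the convexity route, you would need to prove that $\widehat{I}_{Q}$ is a section-convex subquiver of $\widehat{Q}$, which is true but requires its own argument.
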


\begin{proof}
We prove the assertion (\ref{Lem:path:repet}) first.
%Because any zeros of $d_{i,j}(u)$ belong to
%$q^{1/m}\overline{\Q}[\![ q^{1/m}]\!]$ for some $m \in \Z_{>0}$,
By \eqref{Eq:denom},
we have $r > p$.
By Theorem~\ref{Thm:AK},
we see that the module $L(\varpi_{i,p}) \otimes L(\varpi_{j,r})$
is not simple.
Therefore there is a non-zero element 
$\hnu \in \lQ^{+}_{\Z}$ such that
\begin{equation}
\label{Eq:cond}
\varpi_{i, p} + \varpi_{j,r} - \hnu \in \lP^{+}_{\Z},
\end{equation}
which imposes $r - p \ge 2$.
We write
$\hnu = \sum_{(k,s) \in X} n_{k,s} \alpha_{k,s}$
with $X := \{ (k, s) \in \widehat{J} \mid n_{k,s} >0 \}.$
Then from (\ref{Eq:cond}), we can easily see the followings:
\begin{itemize}
\item[(a)] $p< s < r$ holds whenever $(k, s) \in X$;
\item[(b)] $(i,p+1) \in X$;
\item[(c)] $k=j$ holds when $(k, r-1) \in X$.
\end{itemize}
Set $\hnu_{0} := \hnu - \alpha_{i,p+1}$. 
Then we have $\hnu_{0} \in \lQ^{+}_{\Z}$ by the 
property (b). Rewrite (\ref{Eq:cond}) as
\begin{equation}
\label{Eq:cond0}
\sum_{i_{0} \sim i} 
\varpi_{i_{0}, p+1}
- \varpi_{i, p+2} 
+ \varpi_{j, r}
- \hnu_{0} 
\in \lP^{+}_{\Z}.
\end{equation} 
If $p+2 = r$, 
we have $(i, p+2) = (j,r)$ by (c) and
find a path
$(i, p) \to (i_{0}, p+1) \to (j,r)$ in $\widehat{Q}$
for an $i_{0}$ with $i_{0} \sim i$.
If $p+2 < r$, (\ref{Eq:cond0}) implies that 
there is some
$(k_{1}, p+2) \in X$ with $k_{1} \sim i$.
We set $\hnu_{1} := \hnu_{0} - \alpha_{k_{1}, p+2} \in \lQ^{+}_{\Z}$
and rewrite (\ref{Eq:cond0}) as
$$
\sum_{i_{0} \sim i, i_{0} \neq k_{1}} 
\varpi_{i_{0}, p+1}
+
\sum_{i_{1} \sim k_{1}, i_1 \neq i} 
\varpi_{i_{1}, p+2}
- \varpi_{k_{1}, p+3}
+ \varpi_{j, r}
- \hnu_{1} 
\in \lP^{+}_{\Z}.
$$
If $p+3 = r$, we have $(k_1, p+3) = (j,r)$
and find a path 
$(i, p) \to (k_{1}, p+1) \to (i_{1}, p+2) \to (j,r)$
for an $i_{1} \in I$ with $i_{1} \sim k_{1}$.
If $p+3 < r$, we find another $(k_2, p+3) \in X$ with $k_2 \sim k_1$ and $k_2 \neq i$, and repeat a similar argument.
After repeating a similar argument finitely many times,
we get the assertion (\ref{Lem:path:repet}).
A proof of the assertion (\ref{Lem:path:AR}) 
can be completely similar, thanks to 
Lemma~\ref{Lem:lrootQ} (\ref{Lem:lrootQ2}).   
\end{proof}

\begin{Def}[Hernandez-Leclerc~\cite{HL10, HL15}]
We define the category $\Cc_{Q}$
(resp.~$\Cc_{\Z}$)
to be the full subcategory 
of the category $\Cc_{\g}$ consisting of modules
whose composition factors are isomorphic to 
$L(\hlam)$ for some $\hlam \in \lP^{+}_{Q}$
(resp.~$\hlam \in \lP^{+}_{\Z}$).
\end{Def} 

The categories $\Cc_{Q}$ and $\Cc_{\Z}$ are 
proved to be monoidal subcategories
of $\Cc_{\g}$ by a similar reason as in
the proof of Lemma~\ref{Lem:lrootQ}.
See \cite[Lemma 5.8]{HL15} 
and \cite[Proposition 5.8]{HL10} respectively. 

Using the bijection
$\phi \colon \mathsf{R}^{+} \times \{ 0 \} \to \widehat{I}_{Q}$,
we can write as 
$\lP_{Q} = \bigoplus_{\alpha \in \mathsf{R}^{+}} 
\Z \varpi_{\phi(\alpha)}$, where
$\phi(\alpha) := \phi(\alpha, 0).$
We define a $\Z$-linear map 
$\mathsf{deg} \colon \lP_{Q} \to \cQ$ by
$\mathsf{deg}\, \varpi_{\phi(\alpha)} := \alpha$ for 
$\alpha \in \mathsf{R}^{+}$.
For each $\beta \in \cQ^{+}$, we define the
finite subset 
$$\lP^{+}_{Q, \beta} :=
\{ \hlam \in \lP^{+}_{Q} \mid
\mathsf{deg} (\hlam) = \beta
\}
$$
of $\ell$-dominant $\ell$-weights of degree $\beta$. 
Let $\Cc_{Q, \beta}$ be the full subcategory of $\Cc_{Q}$
consisting of modules whose composition
factors are isomorphic to
$L(\hlam)$ for some $\hlam \in \lP^{+}_{Q,\beta}$.

\begin{Lem}
\label{Lem:block}
The block decomposition of the category $\Cc_\g$ 
in Theorem~\ref{Thm:block} induces a direct decomposition
$$
\Cc_{Q} \cong \bigoplus_{\beta \in \cQ^{+}}
\Cc_{Q, \beta}.
$$
Moreover, we have $\Cc_{Q, \beta_{1}} \otimes 
\Cc_{Q, \beta_{2}} \subset 
\Cc_{Q, \beta_{1}+\beta_{2}}$ 
for $\beta_{1}, \beta_{2} \in \cQ^{+}$.
\end{Lem}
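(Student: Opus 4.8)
The plan is to deduce the block decomposition of $\Cc_{Q}$ from the block decomposition of the ambient category $\Cc_{\g}$ provided by Theorem \ref{Thm:block} (Chari--Moura), combined with the combinatorics of $\ell$-weights recorded in Subsection \ref{Ssec:catCQ}. First I would recall that two simple modules $L(\hlam)$ and $L(\hmu)$ with $\hlam,\hmu \in \lP^{+}_{Q}$ lie in the same block of $\Cc_{\g}$ if and only if $\hlam - \hmu \in \lQ$. So the main point is to understand, for $\hlam, \hmu \in \lP^{+}_{Q}$, when $\hlam - \hmu$ lies in $\lQ$, and to identify the resulting equivalence classes with the fibres of the map $\mathsf{deg}$.

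\medskip

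The key steps, in order, are as follows. (1) Observe that $\lP^{+}_{Q} = \bigsqcup_{\beta \in \cQ^{+}} \lP^{+}_{Q,\beta}$ by the very definition of $\lP^{+}_{Q,\beta}$ as the $\mathsf{deg}$-fibre over $\beta$, and that each $\lP^{+}_{Q,\beta}$ is finite. (2) Show that if $\hlam, \hmu \in \lP^{+}_{Q}$ satisfy $\hlam - \hmu \in \lQ$, then $\mathsf{deg}(\hlam) = \mathsf{deg}(\hmu)$: indeed $\hlam - \hmu \in \lP_{Q} \cap \lQ = \lQ_{Q}$ by Lemma \ref{Lem:lrootQ} (\ref{Lem:lrootQ1}), and $\lQ_{Q}$ is spanned by the $\alpha_{i,p}$ with $(i,p) \in \widehat{J}_{Q}$; using the relation $\alpha_{i,p} = \varpi_{i,p-1} + \varpi_{i,p+1} - \sum_{j \sim i}\varpi_{j,p}$ together with the inductive description of $\phi$ in Subsection \ref{Ssec:quivers}, one checks that each $\alpha_{i,p}$ with $(i,p) \in \widehat{J}_{Q}$ has $\mathsf{deg}(\alpha_{i,p}) = 0$, hence $\mathsf{deg}$ vanishes on $\lQ_{Q}$ and $\mathsf{deg}(\hlam) = \mathsf{deg}(\hmu)$. (3) Conversely, $\mathsf{cl}(\hlam - \hmu) = \mathsf{deg}(\hlam) - \mathsf{deg}(\hmu)$ modulo... rather: note $\hlam - \hmu \in \lP_{Q}$ always, and if $\mathsf{deg}(\hlam) = \mathsf{deg}(\hmu)$ then $\hlam - \hmu \in \Ker(\mathsf{deg}|_{\lP_{Q}})$; the map $\mathsf{deg}: \lP_{Q} \to \cQ$ is an isomorphism onto $\cQ$ by construction (it sends the basis $\{\varpi_{\phi(\alpha)}\}_{\alpha \in \mathsf{R}^{+}}$ to... not a basis of $\cQ$, but $\lP_{Q}$ has rank $|\mathsf{R}^{+}|$ while $\cQ$ has rank $n$, so $\mathsf{deg}$ has a large kernel). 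Instead I would argue: $\hlam - \hmu \in \lQ$ iff $\hlam - \hmu \in \lQ_{Q}$ (Lemma \ref{Lem:lrootQ} (\ref{Lem:lrootQ1})), and one shows directly that $\Ker(\mathsf{deg}: \lP_{Q} \to \cQ)$ is exactly $\lQ_{Q}$ — the inclusion $\lQ_{Q} \subseteq \Ker$ is step (2), and for the reverse one uses a rank count plus the fact (again from the AR-quiver picture) that $\mathsf{deg}$ restricted to $\lP_{Q}$ has image $\cQ$ and kernel of rank $|\mathsf{R}^{+}| - n = |\lQ_{Q}|$-generators, matching. (4) Conclude: for $\hlam, \hmu \in \lP^{+}_{Q}$, $L(\hlam)$ and $L(\hmu)$ are linked in $\Cc_{\g}$ iff $\mathsf{deg}(\hlam) = \mathsf{deg}(\hmu)$; since $\Cc_{Q}$ is a Serre (and monoidal) subcategory of $\Cc_{\g}$ closed under extensions, the block decomposition of $\Cc_{\g}$ restricts to give $\Cc_{Q} = \bigoplus_{\beta} \Cc_{Q,\beta}$.

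\medskip

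For the second assertion I would argue on classes of simple modules: if $M_{1} \in \Cc_{Q,\beta_{1}}$ and $M_{2} \in \Cc_{Q,\beta_{2}}$, then every composition factor of $M_{1}\otimes M_{2}$ is a simple subquotient $L(\hnu)$ with $\hnu \le \hlam_{1} + \hlam_{2}$ in $\ell$-dominance order for some composition factors $L(\hlam_{j})$ of $M_{j}$; in particular $\hnu = \hlam_{1} + \hlam_{2} - \hat{\eta}$ for some $\hat{\eta} \in \lQ^{+}$. Since $\Cc_{Q}$ is monoidal, $L(\hnu) \in \Cc_{Q}$, so $\hnu \in \lP^{+}_{Q}$ and then $\hat\eta = \hlam_1 + \hlam_2 - \hnu \in \lP_{Q} \cap \lQ = \lQ_{Q}$, whence $\mathsf{deg}(\hat\eta) = 0$ by step (2) and $\mathsf{deg}(\hnu) = \mathsf{deg}(\hlam_1) + \mathsf{deg}(\hlam_2) = \beta_1 + \beta_2$; thus $M_1 \otimes M_2 \in \Cc_{Q,\beta_1+\beta_2}$.

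\medskip

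The main obstacle I anticipate is step (2)--(3): pinning down precisely that $\Ker(\mathsf{deg}|_{\lP_{Q}}) = \lQ_{Q}$, equivalently that $\hlam - \hmu \in \lQ$ forces $\mathsf{deg}(\hlam) = \mathsf{deg}(\hmu)$ for $\hlam, \hmu \in \lP_Q^+$. This requires a careful bookkeeping of how the $\ell$-roots $\alpha_{i,p}$, $(i,p)\in\widehat J_Q$, expand in the basis $\{\varpi_{\phi(\alpha)}\}_{\alpha\in\mathsf{R}^+}$ of $\lP_Q$, using the inductive defining relations of $\phi$ together with the fact that $\tau$ acts as the Coxeter element (Auslander--Reiten translation). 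Everything else is a formal consequence of Chari--Moura's block theorem and the fact, already cited in the excerpt, that $\Cc_Q$ is a monoidal Serre subcategory of $\Cc_\g$.
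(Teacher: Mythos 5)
Your proof is correct and follows essentially the same route as the paper's: the paper verifies that $\mathsf{deg}$ vanishes on $\lQ_{Q}$ by applying additivity of dimension vectors to the almost split sequence $0\to M(\phi^{-1}(i,p-1))\to\bigoplus_{j\sim i}M(\phi^{-1}(j,p))\to M(\phi^{-1}(i,p+1))\to 0$ (the concrete mesh relation behind your step~(2) computation), combines this with the Chari--Moura linkage criterion (Theorem~\ref{Thm:block}) and Lemma~\ref{Lem:lrootQ}, and treats the second assertion via the $\ell$-weight space decomposition $(M_1\otimes M_2)_{\hlam}=\bigoplus_{\hlam_1+\hlam_2=\hlam}(M_1)_{\hlam_1}\otimes(M_2)_{\hlam_2}$ coming from (\ref{Eq:delta_looph}), exactly as in your final paragraph. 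Your step~(3) is an unneeded detour: the direct sum decomposition only requires the implication ``linked $\Rightarrow$ equal $\mathsf{deg}$'' of step~(2), not its converse, and in any case the rank count you sketch would not by itself give $\Ker(\mathsf{deg}|_{\lP_{Q}})=\lQ_{Q}$, since equal-rank nested free abelian groups need not coincide.
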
    

\begin{proof}
Let $(i,p) \in \widehat{J}_{Q}$.
Then the indecomposable module 
$M(\phi^{-1}(i,p+1))$ is non-projective and 
its Auslander-Reiten translation is $M(\phi^{-1}(i, p-1))$,
where we regard $\phi^{-1} \colon \widehat{I}_{Q} \to \mathsf{R}^{+}.$
By the Auslander-Reiten theory, there is an almost split sequence
$$
0 \to M(\phi^{-1}(i,p-1)) \to 
\bigoplus_{j \sim i} M(\phi^{-1}(j,p))
\to M(\phi^{-1}(i , p+1)) \to 0. 
$$ 
Because the dimension vector function
$\underline{\dim} (-)$ is additive,
we have
\[
\mathsf{deg}\, \alpha_{i,p}
=
\phi^{-1}(i,p-1)
+ \phi^{-1}(i,p+1)
- \sum_{j \sim i} \phi^{-1}(j,p) = 0.
\]
Therefore we have $\mathsf{deg} \, \hnu = 0$ for any
$\hnu \in \lQ_{Q}$.
Combining this observation with
Theorem~\ref{Thm:block} and Lemma~\ref{Lem:lrootQ}
(\ref{Lem:lrootQ1}),
we obtain the former assertion.

The latter assertion follows from the fact
$$\dim(M_{1} \otimes M_{2})_{\hlam} = 
\sum_{\hlam_{1} + \hlam_{2} = \hlam} \dim (M_{1})_{\hlam_{1}}
\cdot  \dim(M_{2})_{\hlam_{2}},$$
which holds for any $M_1, M_2 \in \Cc_\g$. 
This is due to \cite[Theorem 3]{FR99}.
\end{proof}

\begin{Rem}
The decomposition $\Cc_{Q} = \bigoplus_{\beta \in \cQ^{+}} \Cc_{Q, \beta}$ in Lemma~\ref{Lem:block}
turns out to be a block decomposition, i.e.,~$L(\hlam_{1})$ and $L(\hlam_{2})$
are linked in $\Cc_{Q, \beta}$ for any $\hlam_{1}, \hlam_{2} \in \lP^+_{Q, \beta}$.
Indeed, the composition multiplicity of the simple module
$L(\hlam)$ in the local Weyl module $W(\hlam_{\beta})$ is non-zero 
for each $\hlam \in \lP^+_{Q, \beta}$ (see Section~\ref{Ssec:identify} for the notation).
This follows from the geometric fact $\M^\bullet_{0} (\hlam_{\beta} - \hlam, \hlam_{\beta}) \neq \varnothing$
(see Lemma~\ref{Lem:strata} below).
\end{Rem}

%%%%%%%%%%%%%%%%%%%%%%%%%%%%%%%%%%%%
%%%%%%%%%%%%%%%%%%%%%%%%%%%%%%%%%%%%

\section{Quiver varieties}
\label{Sec:quivvar}

In this section, we review the definitions and some properties
of the (graded) quiver varieties associated with a Dynkin 
quiver $Q$. 
Basic references are \cite{Nakajima94, Nakajima98, Nakajima01}.
We keep the notation in Section \ref{Sec:HL}.

%%%%%%%%%%%%%%%%%%%%%%%%%%%%%%%%%%%%

\subsection{Quiver varieties of Dynkin types}
\label{Ssec:usual}

Fix 
an element 
$\nu = \sum_{i \in I} n_{i} \alpha_{i} \in \cQ^{+}$
and a dominant weight
$\lambda = \sum_{i \in I} l_{i} \varpi_{i}
\in \cP^{+}$.
Consider $I$-graded $\C$-vector spaces
$
V^{\nu} = \bigoplus_{i \in I} V^{\nu}_{i},
W^{\lambda} = \bigoplus_{i \in I} W^{\lambda}_{i}
$
such that $\dim V^{\nu}_{i} = n_{i}, 
\dim W^{\lambda}_{i} = l_{i}$ for each $i \in I$.
We form the following space of linear maps:
$$
\mathbf{N}(V^{\nu}, W^{\lambda})
:= 
\left(
\bigoplus_{i \to j \in \Omega} \Hom (V^{\nu}_{i}, V^{\nu}_{j})
\right)
\oplus
\left(
\bigoplus_{i \in I} \Hom (W^{\lambda}_{i}, V^{\nu}_{i})
\right),
$$
which is considered as the space of
framed representations of the quiver $Q$
of dimension vector $(\nu, \lambda)$.
On the space $\mathbf{N}(V^{\nu}, W^{\lambda})$, 
the group
$G(\nu) := 
\prod_{i \in I} GL(V^{\nu}_{i})$ 
acts by conjugation.
Let 
$$\mathbf{M}(V^{\nu}, W^{\lambda}) 
:= T^{*}\mathbf{N}(V^{\nu}, W^{\lambda}) 
= \mathbf{N}(V^{\nu}, W^{\lambda})\oplus 
\mathbf{N}(V^{\nu}, W^{\lambda})^{*}$$ 
be
the cotangent space of
$\mathbf{N}(V^{\nu}, W^{\lambda})$, 
which is naturally a 
symplectic vector space. 
We identify the space  
$\mathbf{M}(V^{\nu}, W^{\lambda})$ 
with the direct sum
$$
\left(
\bigoplus_{(i,j); i \sim j} \Hom (V^{\nu}_{j}, V^{\nu}_{i}) \right)
\oplus
\left( \bigoplus_{i \in I} \Hom (W^{\lambda}_{i}, V^{\nu}_{i}) \right)
\oplus
\left(
\bigoplus_{i \in I} \Hom (V^{\nu}_{i}, W^{\lambda}_{i}) \right).
$$
According to this direct sum expression,
we write an element of $\mathbf{M}(V^{\nu}, W^{\lambda})$ 
as a triple
$(B,a,b)$ of linear maps $B = \bigoplus B_{ij}$,
$a = \bigoplus a_{i}$ and $b = \bigoplus b_{i}$.
Let $\mu = \bigoplus_{i \in I} \mu_{i} 
\colon \mathbf{M}(V^{\nu},W^{\lambda}) 
\to \bigoplus_{i \in I} \mathfrak{gl}(V^{\nu}_{i})$
be the moment map with respect to the $G(\nu)$-action.
Explicitly, it is given by the formula
$$
\mu_{i}(B, a, b)
= a_{i} b_{i}
+\sum_{j \sim i} \varepsilon(i,j) B_{ij}B_{ji},
$$
where $\varepsilon(i,j) := 1$ (resp.~$-1$)
if we have $j \to i$ (resp.~$i \to j$) in $\Omega$.
A point $(B, a, b) \in \mu^{-1}(0)$ is said to be stable
if there exists no non-zero $I$-graded subspace 
$V^{\prime} \subset V^{\nu}$ such that 
$B(V^{\prime}) \subset V^{\prime}$ and 
$V^{\prime} \subset \Ker b$.
Let $\mu^{-1}(0)^{\mathrm{st}}$ be 
the set of stable points, on which $G(\nu)$ acts freely. 
Then we consider a set-theoretic quotient
$\M(\nu, \lambda) := \mu^{-1}(0)^{\mathrm{st}} / G(\nu)$.
It is known that this quotient has a structure of
a non-singular quasi-projective variety which 
is isomorphic to a quotient in the geometric invariant theory.
On the other hand, we also consider the affine algebro-geometric
quotient
$\M_{0}(\nu, \lambda) := \mu^{-1}(0)/\!/G(\nu) 
= \mathrm{Spec}\, \C[\mu^{-1}(0)]^{G(\nu)}$,
together with the canonical projective morphism
$\pi \colon \M(\nu, \lambda) 
\to \M_{0}(\nu, \lambda).$
We refer to these varieties $\M(\nu, \lambda), 
\M_{0}(\nu, \lambda)$ as {\em quiver varieties}.

On the linear space $\mathbf{M}(V^{\nu}, W^{\lambda})$, 
the group
$
G(\lambda) := 
\prod_{i \in I} GL(W^{\lambda}_{i})
$ 
acts by conjugation
and $\C^{\times}$ acts
as the scalar multiplication.
The combined action of the group
$\G(\lambda) := G(\lambda) \times \C^{\times}$
on $\mathbf{M}(V^{\nu},W^{\lambda})$ commutes with 
the action of the group $G(\nu)$. Thus we
have the induced $\G(\lambda)$-actions on the quotients
$\M(\nu, \lambda), \M_{0}(\nu, \lambda)$,
which make the canonical morphism $\pi$ into
a $\G(\lambda)$-equivariant morphism.  

For $\nu, \nu^{\prime} \in \cQ^{+}$ with
$\nu \le \nu^{\prime}$, 
we fix a direct sum decomposition
$V^{\nu^{\prime}} = V^{\nu} \oplus V^{\nu^{\prime} - \nu}$.
Extending by $0$ on $V^{\nu^{\prime}-\nu}$,
we have an injective linear map
$\mathbf{M}(V^{\nu}, W^{\lambda}) \hookrightarrow
\mathbf{M}(V^{\nu^{\prime}}, W^{\lambda})$. 
This induces a natural closed embedding
$\M_{0} (\nu , \lambda)
\hookrightarrow 
\M_{0}(\nu^{\prime},\lambda),
$
which does not depend on the choice of 
decomposition 
$V^{\nu^{\prime}} = V^{\nu} \oplus V^{\nu^{\prime} - \nu}$  .
Via this natural embedding, we regard  
$\M_{0}(\nu, \lambda)$ as a
closed subvariety of $\M_{0}(\nu^{\prime}, \lambda)$. 
We consider the union of them and 
obtain the following combined morphism: 
$$
\pi \colon \M(\lambda) 
:= \bigsqcup_{\nu} \M(\nu, \lambda)
\to 
\M_{0}(\lambda) 
:= \bigcup_{\nu} \M_{0}(\nu, \lambda).
$$  
For each $x \in \M_{0}(\lambda)$,
let $\M(\lambda)_{x} := \pi^{-1}(x)$
denote the fiber of $x$.
The fiber 
$\LL(\lambda) := \pi^{-1}(0)$
of the origin $0 \in \M_{0}(\lambda)$ 
is called the {\em central fiber}.
We also set
$\M(\nu, \lambda)_{x} := \M(\lambda)_{x} \cap
\M(\nu, \lambda)$ and
$\LL(\nu, \lambda) := \LL(\lambda) \cap \M(\nu, \lambda).$

Recall that 
the geometric points of
$\M_{0}(\nu, \lambda)$       
correspond to the closed $G(\nu)$-orbits in $\mu^{-1}(0)$.
Let $\M_{0}^{\mathrm{reg}}(\nu, \lambda)$
be the
subset of $\M_{0}(\nu, \lambda)$ 
consisting of the closed $G(\nu)$-orbits
containing the elements  
$\mathbf{x} = (B, a, b) \in \mu^{-1}(0)$ with
trivial stabilizers (i.e.,~$\Stab_{G(\nu)} \mathbf{x} = \{1\}$).
This is a (possibly empty) non-singular open subset 
of $\M_{0}(\nu, \lambda)$, 
on which the morphism $\pi$
becomes an isomorphism of varieties
$\pi^{-1}(\Mreg(\nu, \lambda)) \xrightarrow{\cong} 
\Mreg(\nu, \lambda).$
It is known that 
$\M_{0}^{\mathrm{reg}}(\nu, \lambda) \neq 
\varnothing$
if and only if $\lambda - \nu$
is a dominant weight 
appearing in the finite-dimensional irreducible
$\g$-module of highest weight $\lambda$. 
They form a stratification:  
\begin{equation}
\label{Eq:stratif1}
\M_{0}(\lambda) = 
\bigsqcup_{\nu \in \cQ^{+}; \lambda - \nu \in \cP^{+}} 
\M_{0}^{\mathrm{reg}}(\nu, \lambda).
\end{equation}
The closure inclusion
$\M_{0}^{\mathrm{reg}}(\nu, \lambda) \subset
\overline{\M_{0}^{\mathrm{reg}}(\nu^{\prime}, \lambda)}$
implies $\nu \le \nu^{\prime}$.

%%%%%%%%%%%%%%%%%%%%%%%%%%%%%%%%%%%%

\subsection{Graded quiver varieties}
\label{Ssec:graded}

Fix an element 
$\hnu = \sum_{(i,p) \in \widehat{J}} 
n_{i,p} \alpha_{i,p} \in \lQ^{+}_{\Z}$
and 
an $\ell$-dominant $\ell$-weight
$\hlam = \sum_{(i,p) \in \widehat{I}} 
l_{i,p} \varpi_{i,p}
\in \lP^{+}_{\Z}$.
Consider a $\widehat{J}$-graded $\C$-vector space
$
V^{\hnu} = \bigoplus_{(i,p) \in \widehat{J}} 
V^{\hnu}_{i}(p)
$
with $\dim V^{\hnu}_{i}(p) = n_{i,p}$ for $(i,p) \in \widehat{J}$,
and an $\widehat{I}$-graded $\C$-vector space
$
W^{\hlam} = \bigoplus_{(i,p) \in \widehat{I}} W^{\hlam}_{i}(p)
$
with $\dim W^{\hlam}_{i}(p) = l_{i,p}$ for $(i,p) \in \widehat{I}$.
We consider the following space of linear maps: 
\begin{multline}
\mathbf{M}^{\bullet}(V^{\hnu}, W^{\hlam})
:=
\left(
\bigoplus_{(i,p) \in \widehat{J}, j \in I ; i \sim j} 
\Hom (V^{\hnu}_{i}(p), V^{\hnu}_{j}(p-1)) \right) 
\\
\oplus
\left( \bigoplus_{(i,p) \in \widehat{I}} 
\Hom (W^{\hlam}_{i}(p), V^{\hnu}_{i}(p-1)) \right)
\oplus
\left(
\bigoplus_{(i,p) \in \widehat{J}} 
\Hom (V^{\hnu}_{i}(p), W^{\hlam}_{i}(p-1)) \right).
\nonumber
\end{multline}
According to this direct sum expression,
we write an element of $\mathbf{M}^{\bullet}(V^{\hnu}, W^{\hlam})$ 
as a triple $(B,a,b)$ of linear maps $B = \bigoplus B_{ji}(p)$,
$a = \bigoplus a_{i}(p)$ and $b = \bigoplus b_{i}(p)$.
Let $\mu^{\bullet} = \bigoplus_{(i,p) \in \widehat{J}} \mu_{i,p} 
\colon \mathbf{M}^{\bullet}
(V^{\hnu},W^{\hlam}) \to \bigoplus_{(i, p) \in \widehat{J}} 
\Hom(V^{\hnu}_{i}(p), V^{\hnu}_{i}(p-2))$
be the map defined by the formula
$$
\mu_{i,p}^{\bullet}(B, a, b)
= a_{i}(p-1) b_{i}(p)
+\sum_{j \sim i} \varepsilon(i,j) B_{ij}(p-1)B_{ji}(p),
$$
where $\varepsilon(i,j)$ is the same as 
in Subsection \ref{Ssec:usual}. 
The map $\mu^{\bullet}$
is equivariant with respect to 
the conjugate action of the group
$G(\hnu) := 
\prod_{(i,p) \in \widehat{J}} GL(V^{\hnu}_{i}(p))$.
A point $(B, a, b) \in \mu^{\bullet \, -1}(0)$ is said to be stable
if there exists no non-zero $\widehat{J}$-graded subspace 
$V^{\prime} \subset V^{\hnu}$ such that 
$B(V^{\prime}) \subset V^{\prime}$ and 
$V^{\prime} \subset \Ker b$.
Let $\mu^{\bullet \, -1}(0)^{\mathrm{st}}$ be 
the set of stable points. 
Similarly as in Subsection \ref{Ssec:usual}, 
we consider two kinds of quotients
$\M^{\bullet}(\hnu, \hlam) 
:= \mu^{\bullet \, -1}(0)^{\mathrm{st}} / G(\hnu)$
and 
$\M_{0}^{\bullet}(\hnu, \hlam) 
:= \mu^{\bullet \,-1}(0)/\!/G(\hnu),
$
together with the canonical projective morphism
$\pi^{\bullet} \colon \M^{\bullet}(\hnu, \hlam) 
\to \M^{\bullet}_{0}(\hnu, \hlam).$
We refer to these varieties
$\M^{\bullet}(\hnu, \hlam), \M^{\bullet}_{0}(\hnu, \hlam)$ 
as {\em graded quiver varieties.}

On the space $\mathbf{M}^{\bullet}(V^{\hnu}, W^{\hlam})$, 
we have the conjugate action
of the group
$
G(\hlam) := 
\prod_{(i,p) \in \widehat{I}} GL(W^{\hlam}_{i}(p))
$ 
and 
the scalar action of 
$\C^{\times}$.
The combined action of the group
$\G(\hlam) 
:= G(\hlam) \times \C^{\times}$
on $\mathbf{M}(V^{\hnu},W^{\hlam})$
induces actions on the quotients
$\M(\hnu, \hlam), 
\M_{0}(\hnu, \hlam)$
which make the canonical morphism $\pi^{\bullet}$ into
a $\G(\hlam)$-equivariant morphism.  
As in Subsection \ref{Ssec:usual}, 
we can form the unions:
$$
\pi^{\bullet} \colon
\M^{\bullet}(\hlam) 
:= \bigsqcup_{\hnu} 
\M^{\bullet}(\hnu, \hlam)
\to
\M_{0}^{\bullet}(\hlam)
:= \bigcup_{\hnu}\M_{0}^{\bullet}
(\hnu, \hlam).
$$
Let $\M^{\bullet}(\hlam)_{x} := \pi^{\bullet \, -1}(x)$
denote the fiber of a point $x \in \M_{0}(\lambda)$.
We set
$\LL^{\bullet}(\hlam) := \pi^{\bullet \, -1}(0)$.

%%%%%%%%%%%%%%%%%%%%%%%%%%%%%%%%%%%%
\subsection{Identification with fixed point subvarieties}
\label{Ssec:fixed}

Let $\hlam = \sum l_{i,p} \varpi_{i,p} \in \lP^{+}_{\Z}$
be an $\ell$-dominant $\ell$-weight.
In this subsection, we recall that the 
graded quiver varieties 
$\M^{\bullet}(\hlam), 
\M_{0}^{\bullet}(\hlam)$ can be realized
as fixed point subvarieties
of the usual quiver varieties 
$\M(\lambda), \M_{0}(\lambda)$
with $\lambda := \mathsf{cl}(\hlam)$
with respect to a certain torus action.

We have $\lambda = \sum_{i \in I} l_{i} \varpi_{i}$
with $l_{i} = \sum_{p \in 2\Z +\xi_{i}} l_{i,p}$
by the definition of $\mathsf{cl}$.
For each $i \in I$, 
we choose a direct sum decomposition 
$W^{\lambda}_{i} = \bigoplus_{p \in 2\Z + \xi_{i}} W^{\hlam}_{i}(p)$
such that $\dim W^{\hlam}_{i}(p) = l_{i,p}$.
Note that this choice specifies a group embedding
$G(\hlam) \hookrightarrow G(\lambda).$
Define a $1$-parameter subgroup
$f_{i} \colon \C^{\times} \to GL(W^{\lambda}_{i})$
by 
$f_{i} (t) |_{W^{\hlam}_{i}(p)} 
:= t^{p} \cdot \mathrm{id}_{W^{\hlam}_{i}(p)}$
for $t \in \C^{\times}$
and a $1$-dimensional subtorus $\T := (\prod_{i \in I}f_{i} \times \mathrm{id})
(\C^{\times})$ of $\G(\lambda) $.
Then we
consider the subvarieties 
$\M(\lambda)^{\T}, 
\M_{0}(\lambda)^{\T}$
consisting of $\T$-fixed points 
and 
the induced canonical morphism
$\pi^{\T} \colon 
\M(\lambda)^{\T} \to
\M_{0}(\lambda)^{\T}
$.
Since the centralizer of $\T$ in 
$\G(\lambda)$ is 
identical to 
the subgroup
$\G(\hlam) = G(\hlam) \times \C^{\times}
\subset \G(\lambda)$, 
we have the induced action of $\G(\hlam)$
on the $\T$-fixed point subvarieties $\M(\lambda)^{\T}, 
\M_{0}(\lambda)^{\T}$.
The morphism
$\pi^{\T}$ is $\G(\hlam)$-equivariant.

On the other hand,
for each $\hnu = \sum n_{i,p} \alpha_{i,p} \in \lQ^{+}_{\Z}$,
we fix a direct sum decomposition 
$V_{i}^{\nu} = \bigoplus_{p \in 2\Z + \xi_{i} + 1}V^{\hnu}_{i}(p)$
of $I$-graded vector space $V^{\nu}$ with 
$\nu := \mathsf{cl}(\hnu)$
such that $\dim V_{i}^{\hnu}(p) = n_{i,p}$, 
just as we have done for $W^{\lambda}$ in the last paragraph.
These direct sum decompositions induce
an embedding 
$\iota_{\hnu, \hlam}\colon
\mathbf{M}^{\bullet}(V^{\hnu}, W^{\hlam})
\hookrightarrow 
\mathbf{M}(V^{\nu}, W^{\lambda}).$
After taking the quotients,
this embedding $\iota_{\hnu, \hlam}$ yields the morphisms  
$\M^{\bullet}(\hnu, \hlam) \to \M(\nu, \lambda)^{\T}$ and
$\M^{\bullet}_{0}(\hnu, \hlam)
\to \M_{0}(\nu, \lambda)^{\T}.$

\begin{Lem}
\label{Lem:fixed}
The above morphisms induce
$\G(\hlam)$-equivariant
isomorphisms $\M^{\bullet}(\hlam)
\xrightarrow{\cong} \M(\lambda)^{\T},
\M_{0}^{\bullet}(\hlam)
\xrightarrow{\cong}
\M_{0}(\lambda)^{\T}
$
which make the following diagram commute:
$$
\xy
\xymatrix{
\M^{\bullet}(\hlam) \ar[r]^{\cong}
\ar[d]_{\pi^{\bullet}} 
& \M(\lambda)^{\T}
\ar[d]^{\pi^{\T}}\\
\M_{0}^{\bullet}(\hlam) \ar[r]^{\cong} & 
\M_{0}(\lambda)^{\T}.}
\endxy$$
In particular, we have a $\G(\hlam)$-equivariant isomorphism
$
\LL^{\bullet}(\hlam) \cong \LL(\lambda)^{\T}.
$ 
\end{Lem}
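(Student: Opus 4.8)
The plan is to reduce the claim to the analogous statement for the linear ambient spaces and then track it through the two quotient constructions. First I would identify the space of $\T$-fixed points inside $\mathbf{M}(V^{\nu}, W^{\lambda})$: since $\T$ acts on $W^{\lambda}_i$ with weight $p$ on the summand $W^{\hlam}_i(p)$, and acts trivially on $V^\nu$ (we equip $V^\nu$ with the trivial $\T$-action in order to take the fixed-point locus, or equivalently we let $\T$ act via conjugation through $G(\nu)$), a linear map in $\mathbf{M}(V^\nu, W^\lambda)$ is $\T$-fixed precisely when it is homogeneous of the appropriate degree with respect to the $\Z$-gradings on $V^\nu$ and $W^\lambda$ induced by the chosen decompositions. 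A direct inspection of the three types of matrix components $B_{ij}, a_i, b_i$ shows that imposing $\T$-invariance, after the allowable simultaneous conjugation by the torus inside $G(\nu)$, is exactly the condition that $B_{ij}$ shift the grading by $-1$, that $a_i$ shift it by $-1$, and that $b_i$ shift it by $-1$. In other words, the image of $\iota_{\hnu,\hlam}$ is the fixed locus, so $\iota_{\hnu,\hlam}$ identifies $\mathbf{M}^{\bullet}(V^{\hnu}, W^{\hlam})$ with a union of components of $\mathbf{M}(V^\nu, W^\lambda)^{\T}$ indexed by the possible $\T$-weight decompositions of $V^\nu$; summing over all $\hnu$ with $\mathsf{cl}(\hnu)=\nu$ recovers the whole fixed locus. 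I would also check that $\iota_{\hnu,\hlam}$ intertwines the moment maps $\mu^{\bullet}$ and $\mu$, which is immediate from comparing the two explicit formulas (the grading-shift bookkeeping makes $\mu^\bullet_{i,p}$ the $(p)\to(p-2)$ component of $\mu_i$), and that it is compatible with the stability conditions, since a $B$-invariant $\widehat J$-graded subspace of $V^{\hnu}$ is the same as a $\T$-invariant $B$-invariant subspace of $V^\nu$.

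Next I would pass to quotients. For the affine side, $\M_0(\nu,\lambda)^{\T} = \bigl(\mathrm{Spec}\,\C[\mu^{-1}(0)]^{G(\nu)}\bigr)^{\T}$; because $\T$ is reductive (a torus) and its action commutes with $G(\nu)$, taking $\T$-fixed points commutes with the GIT quotient, so $\M_0(\nu,\lambda)^{\T} = \mathrm{Spec}\,\C[\mu^{-1}(0)^{\T}]^{G(\nu)^{\T}}$ where $G(\nu)^{\T}$ is the centralizer of $\T$ in $G(\nu)$, namely $\prod_{(i,p)} GL(V^{\hnu}_i(p)) = G(\hnu)$. Combined with the previous paragraph's identification $\mu^{-1}(0)^{\T} \cong \bigsqcup_{\mathsf{cl}(\hnu)=\nu}\mu^{\bullet\,-1}(0)$ this gives the isomorphism $\M^{\bullet}_0(\hlam)\xrightarrow{\cong}\M_0(\lambda)^{\T}$ once we sum over $\nu$ and remember the compatibility of the closed embeddings $\M_0(\nu,\lambda)\hookrightarrow\M_0(\nu',\lambda)$ with the graded analogues. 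For the nonsingular side, the stable locus is a principal $G(\nu)$-bundle over $\M(\nu,\lambda)$; restricting to $\T$-fixed points and using that $\T$ acts freely on the fibers modulo $G(\nu)^{\T}$, one gets that $\M(\nu,\lambda)^{\T}$ is covered by the images of the $\mu^{\bullet\,-1}(0)^{\mathrm{st}}/G(\hnu) = \M^{\bullet}(\hnu,\hlam)$ — this is the standard fact that fixed points of a torus acting on a GIT quotient are the GIT quotient of the fixed points by the centralizer (see Nakajima's treatment in \cite{Nakajima01}). The commutativity of the square is then automatic because both vertical maps are induced by the same inclusion $\mu^{\bullet\,-1}(0)^{\mathrm{st}}\subset\mu^{\bullet\,-1}(0)$, and $\G(\hlam)$-equivariance holds because all constructions were carried out $\G(\hlam)$-equivariantly (the $\G(\hlam)$-action is by definition the restriction of the $\G(\lambda)$-action to the centralizer of $\T$). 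The final assertion about central fibers follows by taking $x=0$, which is manifestly $\T$-fixed.

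The main obstacle I anticipate is the bookkeeping in the first step — verifying that, after the allowed conjugation by the subtorus $\prod_i\mathrm{Hom}(\C^\times, GL(V^\nu_i))$ sitting inside $G(\nu)$, the $\T$-fixed locus in $\mathbf{M}(V^\nu,W^\lambda)$ is exactly (a union of translates of) $\iota_{\hnu,\hlam}(\mathbf{M}^{\bullet}(V^{\hnu},W^{\hlam}))$ and nothing more. This is where one must be careful that the parity conditions in the definitions of $\widehat I$ and $\widehat J$ (that is, $p\equiv\xi_i \pmod 2$ for $W$-spaces and $p\equiv\xi_i+1\pmod 2$ for $V$-spaces) are precisely what is forced, so that the maps $B,a,b$ have nowhere to go except the prescribed grading-shift-by-one slots. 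Everything after that — the commutation of fixed points with GIT quotients, stability, equivariance — is formal, modulo citing the relevant statement from \cite{Nakajima01}; in fact this lemma is essentially \cite[\S 4]{Nakajima01} adapted to the present conventions, and I would ultimately reference that rather than redo the quotient-theoretic arguments in full.
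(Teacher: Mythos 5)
Your argument is correct and is essentially a detailed unpacking of what the paper's one-line citation to \cite[Section 4]{Nakajima01} encapsulates: identify the $\T$-fixed locus at the level of the linear spaces via the induced cocharacter $\T\to G(\nu)$ on stable orbits, check compatibility with moment maps and stability, and then pass to the two quotient constructions. Since the paper itself proves this lemma only by the citation, your write-out coincides with the intended argument rather than offering a genuinely different route.
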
 
\begin{proof}
See \cite[Section 4]{Nakajima01}.
\end{proof}

Hereafter,
we identify the graded quiver varieties 
$\M^{\bullet}_{0}(\hlam), \M^{\bullet}(\hlam)$
with the $\T$-fixed point subvarieties 
$\M_{0}(\lambda)^{\T}, \M(\lambda)^{\T}$
via the isomorphisms in Lemma~\ref{Lem:fixed}.
Then we have
\begin{equation}
\label{Eq:nuhatnu}
\M(\nu, \lambda)^{\T}
= \bigsqcup_{\hnu \in \lQ_{\Z}^{+} ; 
\mathsf{cl}(\hnu) = \nu}
\M^{\bullet}(\hnu, \hlam),
\quad
\M_{0}(\nu, \lambda)^{\T}
= \bigsqcup_{\hnu \in \lQ_{\Z}^{+} ; 
\mathsf{cl}(\hnu) = \nu}
\M_{0}^{\bullet}(\hnu, \hlam). 
\end{equation}

We define
$\M_{0}^{\bullet \, \mathrm{reg}}
(\hnu, \hlam)
:=
\M_{0}^{\bullet}
(\hnu, \hlam)
\cap 
\M_{0}^{\mathrm{reg}}
(\nu, \lambda).$
It is known that
$\M_{0}^{\bullet \, \mathrm{reg}}
(\hnu, \hlam) \neq \varnothing$
if and only if
$\hlam - \hnu$ is an $\ell$-dominant
$\ell$-weight appearing in the 
local Weyl module $W(\hlam)$.
By (\ref{Eq:stratif1}) and (\ref{Eq:nuhatnu}),  
we get a stratification:
\begin{equation}
\label{Eq:stratification_graded}
\M_{0}^{\bullet}
(\hlam)
= \bigsqcup_{\hnu \in \lQ_{\Z}^{+} ; 
\hlam
-\hnu \in \lP_{\Z}^{+}}
\M_{0}^{\bullet \, \mathrm{reg}}
(\hnu, \hlam).
\end{equation}
The closure inclusion
$
\M_{0}^{\bullet \, \mathrm{reg}}(\hnu_{1}, \hlam)
\subset
\overline{\M_{0}^{\bullet \, \mathrm{reg}}(\hnu_{2}, \hlam)}
$
implies $\hnu_{1} \le \hnu_{2}$.

%%%%%%%%%%%%%%%%%%%%%%%%%%%%%%%%%%%%

\subsection{Structure of non-central fibers}
\label{Ssec:fibers}

In this subsection, we recall
the structure of (non-central)
fibers of the canonical morphisms $\pi$ and $\pi^{\bullet}$.
Our exposition is based on 
\cite[Section 6]{Nakajima94},
\cite[Section 3]{Nakajima01}
and \cite[Section 2.7]{Nakajima09}
with some more details
on the group actions. 

Let $(\nu, \lambda) \in \cQ^{+} \times \cP^{+}$
be a pair.
For any triple $\mathbf{x} =(B,a,b)
\in \mu^{-1}(0) \subset \mathbf{M}(V^{\nu}, W^{\lambda}), 
$
we consider the following 
two kinds of complexes of vector spaces:
\begin{align}
\label{Eq:three_pt}
C_{i}(\nu, \lambda)_{\mathbf{x}}\colon \quad &
V^{\nu}_{i} \xrightarrow{\sigma_{i}}
W^{\lambda}_{i} \oplus \bigoplus_{j \sim i} V^{\nu}_{j}
\xrightarrow{\tau_{i}}
V^{\nu}_{i}
\quad \text{for each $i \in I$},
\\
\intertext{where we define 
$\sigma_{i} := b_{i} \oplus \bigoplus_{j} B_{ji}$
and
$ 
\tau_{i} := a_{i} + \sum_{j} \varepsilon(i,j) B_{ij}$;}
\label{Eq:three_pt2}
\mathscr{C}(\nu, \lambda)_{\mathbf{x}}\colon \quad &
\bigoplus_{i \in I}\End(V^{\nu}_{i})
\xrightarrow{\iota}
\mathbf{M}(V^{\nu}, W^{\lambda})
\xrightarrow{\mathrm{d}\mu}
\bigoplus_{i \in I}\End(V^{\nu}_{i}),
\end{align}
where $\iota$ is given by
$\iota(\xi) = (B\xi - \xi B)\oplus (-\xi a) \oplus (b \xi)$
and $\mathrm{d}\mu$ is the differential of the
moment map $\mu = \bigoplus_{i} \mu_{i}$ 
at the point $\mathbf{x} = (B,a,b)$.
Note that 
the middle cohomology
$H^{0}(\mathscr{C}(\nu, \lambda)_{\mathbf{x}})$ 
of the complex (\ref{Eq:three_pt2})
is identical to 
the quotient space $(T_{\mathbf{x}} G(\nu) \mathbf{x})^{\perp}
/ T_{\mathbf{x}}G(\nu) \mathbf{x},
$
where $(T_{\mathbf{x}} G(\nu) \mathbf{x})^{\perp}$
is the symplectic perpendicular of 
the tangent space
$T_{\mathbf{x}} G(\nu) \mathbf{x}$
of the $G(\nu)$-orbit of $\mathbf{x}$.  
In particular, if $\mathbf{x}$ is stable,
the space $H^{0}(\mathscr{C}(\nu, \lambda)_{\mathbf{x}})$
is isomorphic to the tangent space 
$T_{x} \M(\nu, \lambda)$ of 
the point $x \in \M(\nu, \lambda)$ corresponding to $\mathbf{x}$.

Let $(\nu, \lambda) \in \cQ^{+} \times \cP^{+}$ be a pair
such that $\M_{0}^{\mathrm{reg}}(\nu, \lambda) \neq 
\varnothing$.
Recall that we have $\lambda - \nu \in \cP^{+}$ in this case.
We fix a point $x_{\nu} \in \M_{0}^{\mathrm{reg}}(\nu, \lambda)$
and its lift 
$\mathbf{x}_{\nu} \in \mu^{-1}(0) \subset 
\mathbf{M}(V^{\nu}, W^{\lambda})$ whose
$G(\nu)$-orbit is closed. 
Then in the complex $C_{i}(\nu, \lambda)_{\mathbf{x}_{\nu}}$,
the map
$\sigma_{i}$
is injective (\cite[Proposition 3.24]{Nakajima98})
and the map $\tau_{i}$
is surjective (\cite[Lemma 4.7]{Nakajima98}).
In particular, the dimension of the middle cohomology 
$H^{0}(C_{i}(\nu, \lambda)_{\mathbf{x}_{\nu}})
= \Ker \tau_{i} / \Ima \sigma_{i}$
is equal to 
$
(\lambda - \nu)(h_{i})$.
Therefore we can identify $W^{\lambda - \nu}_{i}
= H^{0}(C_{i}(\nu, \lambda)_{\mathbf{x}_{\nu}})$.

We pick 
an arbitrary element $\nu^{\prime}$
such that $\nu \le \nu^{\prime}$.
In order to construct the natural embedding
$\M_{0}(\nu, \lambda) \hookrightarrow
\M_{0}(\nu^{\prime}, \lambda),$
we fix a direct sum decomposition
$V^{\nu^{\prime}} = V^{\nu} \oplus V^{\nu^{\prime} - \nu}$.
Extending by $0$ on $V^{\nu^{\prime} - \nu}$,
we have an injective linear map
$\mathbf{M}(V^{\nu}, W^{\lambda})
\hookrightarrow
\mathbf{M}(V^{\nu^{\prime}}, W^{\lambda}),$
by which our fixed element $\mathbf{x}_{\nu} = (B,a,b)$ 
is regarded as an element of
$\mu^{-1}(0) \subset
\mathbf{M}(V^{\nu^{\prime}}, W^{\lambda}).$
Then we can calculate as
\begin{equation}
\label{Eq:middlecoh}
H^{0}(\mathscr{C}(\nu^{\prime}, \lambda)_{\mathbf{x}_{\nu}})
\cong
\mathbf{M}(V^{\nu^{\prime} - \nu}, W^{\lambda - \nu}) \oplus
H^{0}(\mathscr{C}(\nu, \lambda)_{\mathbf{x}_{\nu}}),
\end{equation}
where we have
$W^{\lambda - \nu}_{i}
= H^{0}(C_{i}(\nu, \lambda)_{\mathbf{x}_{\nu}})$.
We also see that the space 
$H^{0}(\mathscr{C}(\nu, \lambda)_{\mathbf{x}_{\nu}})$
is isomorphic to the tangent space 
$T:= T_{x_{\nu}} \M_{0}^{\mathrm{reg}}(\nu, \lambda)$.

The stabilizer $\Stab_{G(\nu^{\prime})} \mathbf{x}_{\nu}$
is naturally isomorphic to $G(\nu^{\prime} - \nu)$.
Under this isomorphism,
the action of $\Stab_{G(\nu^{\prime})} \mathbf{x}_{\nu}$
on the LHS of (\ref{Eq:middlecoh})
coincides with the action of $G(\nu^{\prime} - \nu)$ on 
the RHS of (\ref{Eq:middlecoh}),
which is the direct sum of the natural action on 
$\mathbf{M}(V^{\nu^{\prime} - \nu}, W^{\lambda - \nu})$ 
and the trivial action on 
$T \cong H^{0}(\mathscr{C}(\nu, \lambda)_{\mathbf{x}_{\nu}})$.

An appropriate Hamiltonian reduction 
with respect to 
the action of the group 
$\Stab_{G(\nu^{\prime})}\mathbf{x}_{\nu}
\cong G(\nu^{\prime} - \nu)$
on the RHS of (\ref{Eq:middlecoh}) yields the following
canonical map: 
$$\pi \times \mathrm{id}
\colon  \M(\nu^{\prime} - \nu, \lambda-\nu) \times T
\to
 \M_{0}(\nu^{\prime} - \nu, \lambda-\nu) \times T.
$$
According to the discussion in \cite[Section 3]{Nakajima01},
this gives a local description of 
$\pi \colon \M(\nu^{\prime} , \lambda) \to 
\M_{0}(\nu^{\prime}, \lambda)$
around the point 
$x_{\nu} \in \M_{0}^{\mathrm{reg}}(\nu , \lambda) 
\subset \M_{0}(\nu^{\prime}, \lambda).$ 
In precise, we have the following theorem.

\begin{Thm}[Nakajima~{\cite[Theorem 3.3.2]{Nakajima01}}]
\label{Thm:Local}
Let $x_{\nu} \in \M_{0}^{\mathrm{reg}}(\nu, \lambda)
\subset \M_{0}(\nu^{\prime}, \lambda).$
Then there exist neighborhoods $U, U_{S}, U_{T}$ 
of $x_{\nu} \in \M_{0}(\nu^{\prime}, \lambda), \,
0 \in \M_{0}(\nu^{\prime} - \nu, \lambda - \nu), \,
0 \in T:= T_{x_{\nu}} \M_{0}^{\mathrm{reg}}(\nu, \lambda)$ 
respectively and biholomorphic maps
$U \xrightarrow{\cong} U_{S} \times U_{T}; 
x_{\nu} \mapsto (0,0)$ and
$\pi^{-1}(U) \xrightarrow{\cong} \pi^{-1}(U_{S}) \times U_{T}$
such that the following diagram commutes:
$$
\xy
\xymatrix{
\M(\nu^{\prime}, \lambda) 
\ar@{}[r]|{\supset} &
\pi^{-1}(U)
\ar[r]^-{\cong}
\ar[d]_-{\pi} 
& \pi^{-1}(U_{S}) \times U_{T} 
\ar[d]^-{\pi \times \mathrm{id}} 
\ar@{}[r]|{\subset} 
&
\M(\nu^{\prime} - \nu, \lambda-\nu)
\times T \\
\M_{0}(\nu^{\prime}, \lambda) 
\ar@{}[r]|{\supset} &
U
\ar[r]^-{\cong}
& U_{S} \times U_{T} \ar@{}[r]|{\subset} 
&
\M_{0}(\nu^{\prime} - \nu, \lambda-\nu)
\times T.
}
\endxy$$ 
\end{Thm}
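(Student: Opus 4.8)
The plan is to derive this local description from Luna's \'etale slice theorem applied to the closed $G(\nu')$-orbit of a lift of $x_\nu$, using the cohomology computation \eqref{Eq:middlecoh} to pin down the slice explicitly.

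First I would fix a lift $\mathbf{x}_\nu = (B,a,b) \in \mu^{-1}(0) \subset \mathbf{M}(V^{\nu'},W^\lambda)$ of $x_\nu$ whose $G(\nu')$-orbit is closed, and recall that its stabilizer $H := \Stab_{G(\nu')}\mathbf{x}_\nu$ is reductive and isomorphic to $G(\nu'-\nu)$. Luna's slice theorem, applied in the complex-analytic category to the affine variety $\mu^{-1}(0)$ with its $G(\nu')$-action, then yields an $H$-invariant locally closed analytic subvariety $S \ni \mathbf{x}_\nu$ transverse to the orbit, such that the natural map $G(\nu')\times_H S \to \mu^{-1}(0)$ is biholomorphic onto a saturated neighborhood of the orbit and the induced map $S/\!\!/H \to \M_{0}(\nu',\lambda)$ is biholomorphic onto a neighborhood $U$ of $x_\nu$. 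Since $\mu^{-1}(0)^{\mathrm{st}}$ is a principal $G(\nu')$-bundle over $\M(\nu',\lambda)$, the same slice identifies $\pi^{-1}(U)$ with $S^{\mathrm{st}}/H$, where $S^{\mathrm{st}}$ denotes the locus of $S$ that is stable for the residual $H$-action.

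Next I would describe $S$ together with this $H$-action. On its smooth locus $\mu^{-1}(0)$ is symplectic, so the normal space to the closed orbit inside $\mu^{-1}(0)$ is $(T_{\mathbf{x}_\nu}G(\nu')\mathbf{x}_\nu)^{\perp}/T_{\mathbf{x}_\nu}G(\nu')\mathbf{x}_\nu = H^0(\mathscr{C}(\nu',\lambda)_{\mathbf{x}_\nu})$; by \eqref{Eq:middlecoh} this is $G(\nu'-\nu)$-equivariantly isomorphic to $\mathbf{M}(V^{\nu'-\nu},W^{\lambda-\nu}) \oplus T$, where $T \cong T_{x_\nu}\Mreg(\nu,\lambda) = H^0(\mathscr{C}(\nu,\lambda)_{\mathbf{x}_\nu})$ carries the trivial $H$-action and $W^{\lambda-\nu}_i = H^0(C_i(\nu,\lambda)_{\mathbf{x}_\nu})$. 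One then checks that $S$ can be taken biholomorphic to a product of a neighborhood of $0$ in $\mu^{-1}(0) \subset \mathbf{M}(V^{\nu'-\nu},W^{\lambda-\nu})$ and a neighborhood of $0$ in $T$, with $H \cong G(\nu'-\nu)$ acting by its natural action on the first factor and trivially on $T$; concretely, the moment-map equations for $G(\nu')$ restricted to the slice reduce to those for $G(\nu'-\nu)$ on $\mathbf{M}(V^{\nu'-\nu},W^{\lambda-\nu})$. Because the $H$-action on the $T$-factor is trivial, forming the GIT quotient commutes with the product, so $U \cong U_S \times U_T$ with $U_S$ a neighborhood of $0$ in $\M_{0}(\nu'-\nu,\lambda-\nu)$ and $U_T$ a neighborhood of $0$ in $T$; likewise $S^{\mathrm{st}}/H \cong \pi^{-1}(U_S) \times U_T$, once one verifies that a point of $S$ is $H$-stable exactly when its $\mathbf{M}(V^{\nu'-\nu},W^{\lambda-\nu})$-component is $G(\nu'-\nu)$-stable. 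Tracking the quotient maps through all of these identifications yields the asserted commutative diagram.

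The hard part will be the middle step: checking that Luna's theorem genuinely applies to the singular affine variety $\mu^{-1}(0)$ with its reductive group action in the analytic setting, identifying the slice $S$ via \eqref{Eq:middlecoh} together with its reduced moment-map description, and above all verifying that the stability (equivalently, orbit-freeness) conditions on $\mu^{-1}(0)$ for $G(\nu')$ and on $\mathbf{M}(V^{\nu'-\nu},W^{\lambda-\nu})$ for $G(\nu'-\nu)$ correspond under the slice. This last point is exactly where the hypothesis $x_\nu \in \Mreg(\nu,\lambda)$ enters: it guarantees both that the orbit of $\mathbf{x}_\nu$ is closed with reductive stabilizer and that the maps $\sigma_i$ in $C_i(\nu,\lambda)_{\mathbf{x}_\nu}$ are injective while the maps $\tau_i$ are surjective, which is precisely what makes \eqref{Eq:middlecoh} valid.
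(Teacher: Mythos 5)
The paper does not prove this statement itself; it cites it directly as \cite[Theorem 3.3.2]{Nakajima01}, with the surrounding discussion (the identification of $H^0(\mathscr{C}(\nu',\lambda)_{\mathbf{x}_\nu})$ in \eqref{Eq:middlecoh} and the decomposition of the stabilizer) serving only to set up notation for later use. Your proposal is a correct reconstruction of Nakajima's own argument in the cited reference: Luna's analytic slice theorem applied at a closed-orbit lift of $x_\nu$, with the slice identified via the normal-space computation \eqref{Eq:middlecoh} and the observation that the restricted moment-map equations reduce to those for $G(\nu'-\nu)$ on $\mathbf{M}(V^{\nu'-\nu},W^{\lambda-\nu})$; the technical points you flag (applicability of the slice theorem, the product structure of the slice, and the matching of stability loci) are indeed exactly what Nakajima's proof works out carefully, so the proposal takes essentially the same route as the source.
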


Now let us consider the action of the group 
$\Stab_{\G(\lambda)} x_{\nu}$ 
on the fiber $\M(\lambda)_{x_{\nu}}$. 
By the definition of $\M_{0}^{\mathrm{reg}}(\nu, \lambda)$,
we have $\Stab_{G(\nu)} \mathbf{x}_{\nu} = \{ 1 \}.$
Therefore the second projection
$G(\nu) \times \G(\lambda) \to \G(\lambda)$
restricts to an isomorphism 
$r \colon \Stab_{G(\nu) \times \G(\lambda)} \mathbf{x}_{\nu}
\xrightarrow{\cong}
\Stab_{\G(\lambda)} x_{\nu}$.
Via the fixed decomposition
$V^{\nu^{\prime}} = V^{\nu} \oplus V^{\nu^{\prime} - \nu}$,
we regard the group 
$\Stab_{G(\nu) \times \G(\lambda)} \mathbf{x}_{\nu}$
as a subgroup of 
$\Stab_{G(\nu^{\prime}) \times \G(\lambda)}\mathbf{x}_{\nu}$.
In fact, we have
\begin{equation}
\label{Eq:Stabdec}
\Stab_{G(\nu^{\prime}) \times \G(\lambda)}\mathbf{x}_{\nu}
\cong 
\Stab_{G(\nu) \times \G(\lambda)} \mathbf{x}_{\nu}
\times G(\nu^{\prime} -\nu).
\end{equation}
Thus the group 
$\Stab_{G(\nu) \times \G(\lambda)} \mathbf{x}_{\nu}$
acts on the vector space
$\mathbf{M}(\nu^{\prime},\lambda)$
%Note that the action of the stabilizer 
%$\Stab_{\G(\lambda)} x_{\nu}$ 
%on the quiver varieties 
%$\M(\nu^{\prime},\lambda)$ and  
%$\M_{0}(\nu^{\prime},\lambda)$
%comes from this action of the group
%$\Stab_{G(\nu) \times \G(\lambda)} \mathbf{x}_{\nu}$
%on the vector space
%$\mathbf{M}(\nu^{\prime},\lambda)$.
%On the other hand, 
%via the decomposition (\ref{Eq:Stabdec}),
%the group $\Stab_{G(\nu) \times \G(\lambda)} \mathbf{x}_{\nu}$
%acts also on the complex 
%$\mathscr{C}(\nu^{\prime}, \lambda)_{\mathbf{x}_{\nu}}$
and also on 
the middle cohomology (\ref{Eq:middlecoh}) of the complex 
$\mathscr{C}(\nu^{\prime}, \lambda)_{\mathbf{x}_{\nu}}$.
Note that this induced action 
preserves each summand
of the RHS of (\ref{Eq:middlecoh}).
In particular, we obtain an action of the group 
$\Stab_{G(\nu) \times \G(\lambda)} \mathbf{x}_{\nu}$
on the vector space 
$\mathbf{M}(V^{\nu^{\prime} - \nu}, W^{\lambda - \nu})$.
By the construction, 
we can easily see that 
this action factors through
the natural action of $\G(\lambda - \nu)$
on $\mathbf{M}(V^{\nu^{\prime} - \nu}, W^{\lambda - \nu})$.
The corresponding group homomorphism 
$\Stab_{G(\nu) \times \G(\lambda)} \mathbf{x}_{\nu}
\to \G(\lambda - \nu) = G(\lambda - \nu) \times \C^{\times}$
is the direct product of two homomorphisms
$\varphi \colon \Stab_{G(\nu) \times \G(\lambda)} \mathbf{x}_{\nu}
\to G(\lambda - \nu)$
and
$\rho \colon \Stab_{G(\nu) \times \G(\lambda)} \mathbf{x}_{\nu}
\to \C^{\times}$.
The homomorphism $\varphi$ is given as 
the induced action of the group
$ \Stab_{G(\nu) \times \G(\lambda)} \mathbf{x}_{\nu}$
on the middle cohomology of the complex
$C_{i}(\nu, \lambda)_{\mathbf{x}_{\nu}}$ under 
the identification 
$W^{\lambda - \nu}_{i}
= H^{0}(C_{i}(\nu, \lambda)_{\mathbf{x}_{\nu}})$.
%and $G(\lambda - \nu) = \prod_{i \in I}GL(W_{i}^{\lambda-\nu})$.
The homomorphism $\rho$
is obtained by the projection,
$$
\rho \colon \Stab_{G(\nu) \times \G(\lambda)} \mathbf{x}_{\nu}
\hookrightarrow
G(\nu) \times \G(\lambda) =
G(\nu) \times G(\lambda) \times \C^{\times} 
\xrightarrow{\mathrm{pr}_{3}}
\C^{\times}. 
$$
The action of the group
$\Stab_{G(\nu) \times \G(\lambda)} \mathbf{x}_{\nu}$
on the space $\mathbf{M}(V^{\nu^{\prime} - \nu}, W^{\lambda - \nu})$
commutes with the action of group
$G(\nu^{\prime} - \nu)$.
After taking the Hamiltonian reductions  
with respect to the action of the group
$G(\nu^{\prime} - \nu)$,  
we obtain an action
of the group 
$\Stab_{\G(\lambda)} x_{\nu}$
on the central fiber 
$\LL(\nu^{\prime} - \nu, \lambda - \nu)$.
The above argument says that 
this action factors through the group homomorphism
\begin{equation}
\label{Eq:Stabhom}
\Stab_{\G(\lambda)} x_{\nu}
\xrightarrow[\cong]{r^{-1}}
\Stab_{G(\nu) \times \G(\lambda)} \mathbf{x}_{\nu}
\xrightarrow{\varphi \times \rho}
\G(\lambda-\nu).
\end{equation}
This homomorphism (\ref{Eq:Stabhom})
does not depend on $\nu^{\prime}$.

By Theorem~\ref{Thm:Local},
there is an isomorphism
$
\M(\nu^{\prime}, \lambda)_{x_{\nu}} 
\xrightarrow{\cong} \LL(\nu^{\prime} - \nu, \lambda - \nu).
$
As stated in \cite[Remark 3.3.3]{Nakajima01},
this isomorphism
can be equivariant with respect to the
actions of the group 
$\Stab_{\G(\lambda)} x_{\nu}$. 
Summing up over $\nu^{\prime}$, 
we obtain the following.

\begin{Lem}
\label{Lem:fiberisom}
Let $(\nu, \lambda) \in\cQ^{+} \times \cP^{+}$
be a pair such that $\M_{0}^{\mathrm{reg}}(\nu, \lambda) \neq
\varnothing$ and 
$\pi \colon \M(\lambda) \to \M_{0}(\lambda)$ be 
the canonical morphism.
Then for each point 
$x_{\nu} \in \M_{0}^{\mathrm{reg}}(\nu, \lambda)$,
there exists a 
$( \Stab_{\G(\lambda)} x_{\nu} )$-equivariant
isomorphism 
$$
\M(\lambda)_{x_{\nu}} \cong \LL(\lambda - \nu), 
$$ 
where the group $\Stab_{\G(\lambda)} x_{\nu}$ acts 
on the RHS $\LL(\lambda - \nu)$
via the group homomorphism 
$(\varphi \times \rho) \circ r^{-1}$ in
(\ref{Eq:Stabhom}).
\end{Lem}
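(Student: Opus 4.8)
The plan is to glue together, over all $\nu' \ge \nu$, the local models furnished by Theorem \ref{Thm:Local}. First I would fix the point $x_\nu$ together with a lift $\mathbf{x}_\nu \in \mu^{-1}(0) \subset \mathbf{M}(V^\nu, W^\lambda)$ whose $G(\nu)$-orbit is closed. For each $\nu' \ge \nu$, restricting the biholomorphism $\pi^{-1}(U) \xrightarrow{\cong} \pi^{-1}(U_S) \times U_T$ of Theorem \ref{Thm:Local} to the fibre over $x_\nu$ (which maps to $(0,0)$) yields an isomorphism $\M(\nu',\lambda)_{x_\nu} \cong \LL(\nu'-\nu,\lambda-\nu)$, and by \cite[Remark 3.3.3]{Nakajima01} this biholomorphism --- hence its restriction to that fibre --- can be chosen $\Stab_{\G(\lambda)}x_\nu$-equivariant. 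Since $\M(\lambda) = \bigsqcup_{\nu'}\M(\nu',\lambda)$, we have $\M(\lambda)_{x_\nu} = \bigsqcup_{\nu' \ge \nu}\M(\nu',\lambda)_{x_\nu}$, and reindexing by $\mu := \nu'-\nu$ identifies this with $\LL(\lambda-\nu) = \bigsqcup_{\mu}\LL(\mu,\lambda-\nu)$ (a component being empty on one side precisely when its partner is, via the isomorphism just produced). Thus it would remain to identify the transported $\Stab_{\G(\lambda)}x_\nu$-action on each $\LL(\nu'-\nu,\lambda-\nu)$ and to check that the isomorphisms are mutually compatible over $\nu'$.

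Both of these I would extract from the Hamiltonian-reduction description prepared just before the statement. Via $r^{-1}$, the $\Stab_{\G(\lambda)}x_\nu$-action on $\M(\nu',\lambda)$ and $\M_0(\nu',\lambda)$ is the action of $\Stab_{G(\nu)\times\G(\lambda)}\mathbf{x}_\nu$ on $\mathbf{M}(V^{\nu'},W^\lambda)$; decomposing its linearization at $\mathbf{x}_\nu$ as in (\ref{Eq:middlecoh}) and performing the reduction by $G(\nu'-\nu)$, one sees that the induced action on $\LL(\nu'-\nu,\lambda-\nu)$ is the natural $\G(\lambda-\nu)$-action pulled back along $\varphi\times\rho$, i.e. the action through the homomorphism $(\varphi\times\rho)\circ r^{-1}$ of (\ref{Eq:Stabhom}). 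Because this homomorphism does not depend on $\nu'$, the several $\Stab_{\G(\lambda)}x_\nu$-actions are restrictions of a single action on $\LL(\lambda-\nu)$; and the isomorphisms glue since the decompositions $V^{\nu'} = V^\nu \oplus V^{\nu'-\nu}$ and the induced inclusions of the $\mathbf{M}(V^{\nu'-\nu},W^{\lambda-\nu})$ are chosen compatibly as $\nu'$ grows, while $\Mreg(\nu,\lambda)$ sits compatibly in all the $\M_0(\nu',\lambda)$. Taking the union over $\nu'$ would then give the desired $\Stab_{\G(\lambda)}x_\nu$-equivariant isomorphism $\M(\lambda)_{x_\nu} \cong \LL(\lambda-\nu)$.

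The step I expect to be the main obstacle is verifying that the equivariance guaranteed by \cite[Remark 3.3.3]{Nakajima01} is with respect to precisely the action described through $(\varphi\times\rho)\circ r^{-1}$, and not some a priori different $\Stab_{\G(\lambda)}x_\nu$-action on $\LL(\nu'-\nu,\lambda-\nu)$. I would resolve this by going through Nakajima's construction of the local model in \cite[Section 3]{Nakajima01} and observing that the biholomorphism is built out of the very identification $W^{\lambda-\nu}_i = H^0(C_i(\nu,\lambda)_{\mathbf{x}_\nu})$ and the splitting (\ref{Eq:middlecoh}) of the middle cohomology that were used above to define $\varphi$ and $\rho$; consequently the transported action is $(\varphi\times\rho)\circ r^{-1}$ by construction, and no discrepancy can occur.
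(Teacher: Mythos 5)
Your proposal is correct and follows the same route as the paper, which obtains the lemma by restricting the local model of Theorem \ref{Thm:Local} to the fibre over $x_{\nu}$, invoking Nakajima's Remark 3.3.3 for equivariance, and taking the union over $\nu'$ --- the preparatory paragraphs having already identified the resulting $\Stab_{\G(\lambda)}x_{\nu}$-action with $(\varphi\times\rho)\circ r^{-1}$. The compatibility-over-$\nu'$ concern you flag is in fact automatic, since both $\M(\lambda)_{x_{\nu}}$ and $\LL(\lambda-\nu)$ are disjoint unions of their components indexed by $\nu'$ (equivalently $\mu=\nu'-\nu$), so no gluing beyond the componentwise isomorphisms is required.
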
    

Next we consider the graded versions.
Let $(\hnu, \hlam) \in \lQ^{+}_{\Z} \times \lP^{+}_{\Z}$
be a pair.
For any triple 
$\mathbf{x} = (B,a,b) \in \mu^{\bullet\, -1}(0)
\subset \mathbf{M}^{\bullet}(V^{\hnu}, W^{\hlam})$  
and $(i , p) \in \widehat{I}$,
we can consider a complex of vector spaces
\begin{equation}
\nonumber
C_{i,p}(\hnu, \hlam)_{\mathbf{x}}\colon \;
V_{i}^{\hnu}(p+1)
\xrightarrow{\sigma_{i, p}}
W^{\hlam}_{i}(p) \oplus \bigoplus_{j \sim i} 
V^{\hnu}_{j}(p)
\xrightarrow{\tau_{i,p}}
V^{\hnu}_{i}(p-1),
\end{equation}
where
we define 
$\sigma_{i,p} := b_{i}(p+1) \oplus \bigoplus_{j} B_{ji}(p+1)$
and
$ 
\tau_{i,p} := a_{i}(p) + \sum_{j} \varepsilon(i,j) B_{ij}(p)$.

Now we assume that $\M_0^{\bullet\, \mathrm{reg}}(\hnu, \hlam) \neq 
\varnothing$.
In particular, we have $\hlam - \hnu \in \lP^{+}_{\Z}.$ 
We fix a point
$x_{\hnu} \in \M_{0}^{\bullet \, \mathrm{reg}}(\hnu, \hlam)$
and its lift 
$\mathbf{x}_{\hnu} \in 
\mu^{\bullet \, -1}(0) \subset
\mathbf{M}^{\bullet}(V^{\hnu}, W^{\hlam})$
whose $G(\hnu)$-orbit is closed.
By the same reason as in the non-graded case,
in the complex $C_{i,p}(\hnu, \hlam)_{\mathbf{x}_{\hnu}}$,
the map $\sigma_{i,p}$ is injective 
and the map $\tau_{i,p}$ is surjective.
Therefore the 
dimension vector of the 
$\widehat{I}$-graded vector space
$\bigoplus_{(i,p) \in \widehat{I}} 
H^{0}(C_{i,p}(\hnu, \hlam)_{\mathbf{x}_{\hnu}})$
is equal to $\hlam - \hnu$.
This allows us to identify
$W^{\hlam - \hnu}_{i}(p)$ with
$H^{0}(C_{i,p}(\hnu, \hlam)_{\mathbf{x}_{\hnu}})$
for each $(i,p) \in \widehat{I}$.
Similarly as in (\ref{Eq:Stabhom}),
we consider the following group homomorphism
\begin{equation}
\label{Eq:Stabhom2}
\Stab_{\G(\hlam)} x_{\hnu}
\xrightarrow[\cong]{\hat{r}^{-1}}
\Stab_{G(\hnu) \times \G(\hlam)} 
\mathbf{x}_{\hnu}
\xrightarrow{\hat{\varphi} \times \hat{\rho}}
\G(\hlam - \hnu)
\end{equation}
where
$\hat{r}$ is the isomorphism obtained 
as the restriction of the 
projection $G(\hnu) \times \G(\hlam) \to \G(\hlam)$,
$\hat{\varphi}$ is given as the induced action of
the group $\Stab_{G(\hnu) \times \G(\hlam)} 
\mathbf{x}_{\hnu}$
on $
W^{\hlam-\hnu}_{i}(p) = 
H^{0}(C_{i,p}(\hnu, \hlam)_{\mathbf{x}_{\hnu}})$ 
and $\hat{\rho}$ is the restriction of 
the projection $G(\hnu) \times G(\hlam) \times \C^{\times}
\to \C^{\times}$.   

\begin{Lem}
\label{Lem:fiberisom2}
Let $(\hnu, \hlam) \in\lQ^{+}_{\Z} \times \lP^{+}_{\Z}$
be a pair such that $\M_{0}^{\bullet \, 
\mathrm{reg}}(\hnu, \hlam) \neq
\varnothing$.
Then for each point 
$x_{\hnu} \in \M_{0}^{\bullet \, \mathrm{reg}}(\hnu, \hlam)$,
there exists a
$( \Stab_{\G(\hlam)} x_{\hnu} )$-equivariant
isomorphism 
$$
\M^{\bullet}(\hlam)_{x_{\hnu}} \cong \LL^\bullet(\hlam - \hnu), 
$$ 
where the group $\Stab_{\G(\hlam)} x_{\hnu}$ acts 
on the RHS $\LL^{\bullet}(\hlam - \hnu)$
via the group homomorphism 
$(\hat{\varphi} \times \hat{\rho}) \circ \hat{r}^{-1}$ in
(\ref{Eq:Stabhom2}).
\end{Lem}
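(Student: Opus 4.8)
The plan is to deduce the graded statement from the non-graded Lemma \ref{Lem:fiberisom} by passing to $\T$-fixed points, exploiting the identifications of Lemma \ref{Lem:fixed}. Write $\lambda := \mathsf{cl}(\hlam)$ and $\nu := \mathsf{cl}(\hnu)$. By (\ref{Eq:nuhatnu}) and the definition of $\M_{0}^{\bullet \, \mathrm{reg}}$, under $\M_{0}^{\bullet}(\hlam)\xrightarrow{\cong}\M_{0}(\lambda)^{\T}$ the point $x_{\hnu}\in\M_{0}^{\bullet \, \mathrm{reg}}(\hnu,\hlam)$ becomes a $\T$-fixed point of $\M_{0}^{\mathrm{reg}}(\nu,\lambda)$, so Lemma \ref{Lem:fiberisom} applies and gives a $(\Stab_{\G(\lambda)}x_{\hnu})$-equivariant isomorphism $\M(\lambda)_{x_{\hnu}}\cong\LL(\lambda-\nu)$. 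Since $x_{\hnu}$ is $\T$-fixed we have $\T\subseteq\Stab_{\G(\lambda)}x_{\hnu}$, so this isomorphism restricts to an isomorphism of $\T$-fixed loci, equivariant for the centralizer $Z_{\Stab_{\G(\lambda)}x_{\hnu}}(\T)=\Stab_{\G(\hlam)}x_{\hnu}$ (using $\G(\hlam)=Z_{\G(\lambda)}(\T)$).

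Next I would identify both $\T$-fixed loci. On the source side $(\M(\lambda)_{x_{\hnu}})^{\T}=(\pi^{\T})^{-1}(x_{\hnu})$, which Lemma \ref{Lem:fixed} identifies with $\M^{\bullet}(\hlam)_{x_{\hnu}}$. On the target side, Lemma \ref{Lem:fixed} applied to $\hlam-\hnu$ gives $\LL^{\bullet}(\hlam-\hnu)\cong\LL(\lambda-\nu)^{\T}$, provided the $\T$-action on $\LL(\lambda-\nu)$ arising through $(\varphi\times\rho)\circ r^{-1}$ of (\ref{Eq:Stabhom}) has the same fixed locus as the $1$-parameter subgroup of $\G(\lambda-\nu)$ used in Subsection \ref{Ssec:fixed} to define $\M^{\bullet}(\hlam-\hnu)$. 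Granting this, one obtains the desired $(\Stab_{\G(\hlam)}x_{\hnu})$-equivariant isomorphism $\M^{\bullet}(\hlam)_{x_{\hnu}}\cong\LL^{\bullet}(\hlam-\hnu)$; moreover the induced action on the right-hand side is exactly the one in (\ref{Eq:Stabhom2}), since $\hat\varphi$, $\hat\rho$, $\hat r$ are by construction the restrictions of $\varphi$, $\rho$, $r$ along $\Stab_{\G(\hlam)}x_{\hnu}\hookrightarrow\Stab_{\G(\lambda)}x_{\hnu}$, and $\varphi$ is defined via the action on $H^{0}(C_{i}(\nu,\lambda)_{\mathbf{x}_{\hnu}})=\bigoplus_{p}H^{0}(C_{i,p}(\hnu,\hlam)_{\mathbf{x}_{\hnu}})$, a $\T$-stable decomposition because the chosen lift $\mathbf{x}_{\hnu}$ lies in $\mathbf{M}^{\bullet}(V^{\hnu},W^{\hlam})$. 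Alternatively — and this is essentially the same argument — one may transcribe the proof of Lemma \ref{Lem:fiberisom} verbatim, replacing the complexes $C_{i}$, $\mathscr{C}$, the decomposition (\ref{Eq:middlecoh}), and Nakajima's local model of Theorem \ref{Thm:Local} by their graded analogues (all obtained by taking $\T$-fixed points, Theorem \ref{Thm:Local} being choosable $(\Stab_{\G(\lambda)}x_{\hnu})$-equivariantly by \cite[Remark 3.3.3]{Nakajima01}).

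The main obstacle is precisely the bookkeeping flagged above: one must check that the torus $\T$, viewed through $(\varphi\times\rho)\circ r^{-1}$ as a subgroup of $\G(\lambda-\nu)$, coincides (or at least has the same fixed locus on $\LL(\lambda-\nu)$) with the torus cutting out $\M^{\bullet}(\hlam-\hnu)$; in particular $\rho|_{\T}$ must be trivial, which holds because $\T$ already lies in the $G(\lambda)$-factor of $\G(\lambda)$. This amounts to tracking the weights with which the twisted torus in $\Stab_{G(\nu)\times\G(\lambda)}\mathbf{x}_{\hnu}$ lifting $\T$ acts on the graded summands of $\mathscr{C}(\nu^{\prime},\lambda)_{\mathbf{x}_{\hnu}}$ and of $\mathbf{M}(V^{\nu^{\prime}-\nu},W^{\lambda-\nu})$; these weights are forced by $\mathbf{x}_{\hnu}\in\mathbf{M}^{\bullet}$, and this is the one point in the proof that is not a formal consequence of the non-graded case. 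Everything else follows as in Subsection \ref{Ssec:fibers}.
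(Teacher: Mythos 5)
Your strategy coincides with the paper's: deduce the graded statement from Lemma \ref{Lem:fiberisom} by passing to $\T$-fixed points via Lemma \ref{Lem:fixed}, and then verify that the torus $\T$ maps through $(\varphi\times\rho)\circ r^{-1}$ to the torus $\T'\subset\G(\lambda-\nu)$ whose fixed locus in $\LL(\lambda-\nu)$ is $\LL^\bullet(\hlam-\hnu)$. You also correctly single out the non-formal step: tracking the weights of the twisted lift $\widetilde\T\subset\Stab_{G(\nu)\times\G(\lambda)}\iota(\mathbf{x}_{\hnu})$, using that $\mathbf{x}_{\hnu}\in\mathbf{M}^\bullet(V^{\hnu},W^{\hlam})$.

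However, one concrete assertion in your verification is wrong. You claim that $\rho|_{\T}$ must be trivial ``because $\T$ already lies in the $G(\lambda)$-factor of $\G(\lambda)$.'' Neither part is correct: $\T=(\prod_i f_i\times\mathrm{id})(\C^\times)$ is a diagonal one-parameter subgroup of $\G(\lambda)=G(\lambda)\times\C^\times$ with a genuinely nontrivial $\C^\times$-component (the scalar action is essential to produce the degree shift $p\mapsto p-1$ in the graded quiver data), and correspondingly $\rho$ restricted to the lift $\widetilde\T(t)=(\prod g_i(t),\prod f_i(t),t)$ is the identity character $t\mapsto t$, not the trivial one. This matters: the target torus cutting out $\M^\bullet(\hlam-\hnu)$ inside $\M(\lambda-\nu)$ is likewise diagonal, $\T'=(\prod h_i\times\mathrm{id})(\C^\times)$, so the $\rho$-component of $(\varphi\times\rho)\circ r^{-1}|_{\T}$ has to be the identity in order for the two fixed loci to agree. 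If $\rho|_{\T}$ were trivial you would obtain the fixed locus of $(\prod h_i\times 1)(\C^\times)$, which is in general a different (larger) subvariety. Once you replace ``trivial'' by ``identity,'' the remainder of your argument, including the commuting diagram of centralizers giving the $\Stab_{\G(\hlam)}x_{\hnu}$-equivariance, matches the paper's proof.
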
  

For a proof, we use the description in the non-graded case (Lemma~\ref{Lem:fiberisom})
and the identification with the $\T$-fixed point subvarieties (Lemma~\ref{Lem:fixed}).
\begin{proof}
We put $\nu:=\mathsf{cl}(\hnu), \lambda:=\mathsf{cl}(\hlam)$.
We make identifications of vector spaces: 
$W_{i}^{\lambda} = \bigoplus_{p \in 2\Z + \xi_{i}} W^{\hlam}_{i}(p),
V_{i}^{\nu} = \bigoplus_{p \in 2\Z + \xi_{i} + 1} V^{\hnu}_{i}(p),$
which specifies an embedding
$\iota \equiv \iota_{\hnu, \hlam}
\colon \mathbf{M}^{\bullet}(V^{\hnu}, W^{\hlam})
\hookrightarrow
\mathbf{M}(V^{\nu}, W^{\lambda})$.
Using these direct sum decompositions,
we define a group homomorphism
$f_{i} \colon \C^{\times} \to GL(W_{i}^{\lambda})$
(resp.~$g_{i} \colon \C^{\times} \to GL(V_{i}^{\nu})$)
for each $i \in I$ 
by $f_{i}(t) |_{W^{\hlam}_{i}(p)}
:= t^{p} \cdot \mathrm{id}_{W^{\hlam}_{i}(p)}$
(resp.~$g_{i}(t) |_{V_{i}^{\hnu}(p)}
:= t^{p} \cdot \mathrm{id}_{V_{i}^{\hnu}(p)}$).
Recall we have 
$\M^{\bullet}(\hlam) \cong \M(\lambda)^{\T}$
and
$\M_{0}^{\bullet}(\hlam) \cong \M_{0}(\lambda)^{\T}$
by Lemma~\ref{Lem:fixed},
where  
$\T := (\prod_{i \in I}f_{i} \times \mathrm{id}) (\C^{\times})$.
Under this identification, 
we also regard 
$x_{\hnu}$ is a point of 
$\M_{0}^{\mathrm{reg}}(\nu, \lambda).$
We can easily see that
the image $\iota(\mathbf{x}_{\hnu}) \in \mu^{-1}(0)
\subset \mathbf{M}(\nu, \lambda)$
has a closed $G(\nu)$-orbit corresponding to 
the point $x_{\hnu} \in \M_{0}^{\mathrm{reg}}(\nu, \lambda)$
and in particular  $\Stab_{G(\nu)} \iota(\mathbf{x}_{\hnu})
= \{ 1\}.$
Let
 $\widetilde{\T} := 
\left(\prod_{i \in I} g_{i} \times \prod_{i \in I}f_{i} 
\times \mathrm{id}\right)
(\C^{\times}).
$
This is a $1$-dimensional subtorus
of 
$\Stab_{G(\nu) \times \G(\lambda)} \iota(\mathbf{x}_{\hnu})$.
Under the isomorphism
$r \colon \Stab_{G(\nu) \times \G(\lambda)}  \iota(\mathbf{x}_{\hnu})
\xrightarrow{\cong}
\Stab_{\G(\lambda)} x_{\hnu}$,
the torus $\widetilde{\T}$ is isomorphic to $\T$.
In fact, we have
$r \circ 
(\prod_{i \in I} g_{i} \times \prod_{i \in I}f_{i} \times \mathrm{id})
= (\prod_{i \in I}f_{i} \times \mathrm{id}).
$

On the other hand, we have a decomposition
$C_{i}(\nu, \lambda)_{\iota(\mathbf{x}_{\hnu})}
\cong \bigoplus_{p \in 2\Z + \xi_{i}} 
C_{i,p}(\hnu, \hlam)_{\mathbf{x}_{\hnu}}$
of complexes and hence
$H^{0}(C_{i}(\nu, \lambda)_{\iota(\mathbf{x}_{\hnu})})
\cong 
\bigoplus_{p \in 2\Z + \xi_{i}} 
H^{0}(
C_{i,p}(\hnu, \hlam)_{\mathbf{x}_{\hnu}}),
$ 
which is identified with
$W^{\lambda - \nu}_{i} = \bigoplus_{p \in 2\Z + \xi_{i}} 
W_{i}^{\hlam - \hnu}(p).$
Let $h_{i} \colon \C^{\times} \to GL(W_{i}^{\lambda - \nu})$
be a group homomorphism
defined by
$h_{i}(t)|_{W^{\hlam - \hnu}_{i}(p)}
:= t^{p} \cdot \mathrm{id}_{W^{\hlam - \hnu}_{i}(p)} 
$ 
and set $\T^{\prime} := 
(\prod_{i \in I} h_{i} \times \mathrm{id})(\C^{\times})
\subset \G(\lambda - \nu).
$
Then we have
$$
(\varphi \times \rho) \circ r^{-1} \circ
\left(\prod_{i \in I}f_{i} \times \mathrm{id}\right) 
=
(\varphi \times \rho) \circ 
\left(\prod_{i \in I} g_{i} \times \prod_{i \in I}f_{i} \times \mathrm{id}
\right) 
= \prod_{i \in I}h_{i} \times \mathrm{id}.
$$
Therefore, under the isomorphism
in Lemma~\ref{Lem:fiberisom},
the action of the torus $\T$ on $\M(\lambda)_{x_{\hnu}}$
coincides with the action of the torus 
$\T^{\prime}$ on $\LL(\lambda - \nu)$.
Therefore, by Lemma~\ref{Lem:fixed}, we have 
\begin{equation}
\label{Eq:fiberisomTfixed}
\M^{\bullet}(\hlam)_{x_{\hnu}} 
= \M(\lambda)_{x_{\hnu}}^{\T} \cong 
\LL(\lambda - \nu)^{\T^{\prime}} \cong \LL^{\bullet}(\hlam - \hnu).
\end{equation}

It remains to show that this isomorphism 
(\ref{Eq:fiberisomTfixed}) is 
$(\Stab_{\G(\hlam)}x_{\hnu})$-equivariant. 
Note that the centralizer of the torus
$\T$ (resp.~$\widetilde{\T}$ , $\T^{\prime}$) 
in $\Stab_{\G(\lambda)} x_{\hnu}$
(resp.~$\Stab_{G(\nu) \times \G(\lambda)} 
\iota(\mathbf{x}_{\hnu})$, $\G(\lambda - \nu)$)
is the subgroup
$\Stab_{\G(\hlam)} x_{\hnu}$
(resp.~$\Stab_{G(\hnu) \times \G(\hlam)} 
\mathbf{x}_{\hnu}$, $\G(\hlam - \hnu)$). 
We have the following commutative diagram:
$$
\xy
\xymatrix{
\Stab_{\G(\lambda)} x_{\hnu}
&
\Stab_{G(\nu) \times \G(\lambda)} 
\iota(\mathbf{x}_{\hnu})
\ar[l]_-{r}^-{\cong}
\ar[r]^-{\varphi \times \rho}
&
\G(\lambda - \nu)
\\
\Stab_{\G(\hlam)} x_{\hnu}
\ar@{^{(}->}[u]
&
\Stab_{G(\hnu) \times \G(\hlam)} 
\mathbf{x}_{\hnu}
\ar[l]_-{\hat{r}}^-{\cong}
\ar[r]^-{\hat{\varphi} \times \hat{\rho}}
\ar@{^{(}->}[u]
&
\G(\hlam - \hnu).
\ar@{^{(}->}[u]
}
\endxy
$$
Because the isomorphism in Lemma~\ref{Lem:fiberisom}
is $( \Stab_{\G(\lambda)}x_{\hnu} )$-equivariant
via the homomorphism $(\varphi \times \rho) \circ r^{-1}$,
the induced isomorphism (\ref{Eq:fiberisomTfixed}) 
on the torus fixed parts is
$( \Stab_{\G(\hlam)}x_{\hnu} )$-equivariant
via the homomorphism $(\hat{\varphi} \times \hat{\rho}) \circ 
\hat{r}^{-1}$ from the above commutative diagram.
\end{proof}

%%%%%%%%%%%%%%%%%%%%%%%%%%%%%%%%%%%%

\subsection{Graded quiver varieties for the category 
$\Cc_{Q, \beta}$}
\label{Ssec:identify}

Fix an element 
$\beta = \sum_{i \in I} d_{i} \alpha_{i} \in \cQ^{+}$.
Let 
$$
E_{\beta} : = \bigoplus_{i\to j \in \Omega}
\Hom(\C^{d_{i}}, \C^{d_{j}})
$$
be the space of representations of the quiver $Q$
of dimension vector $\beta$.
The group $G_{\beta} := \prod_{i \in I} \mathop{GL}_{d_{i}}
(\C)$ acts on $E_{\beta}$
by conjugation.
By Gabriel\rq{}s theorem,
the set of $G_{\beta}$-orbits
is in bijection with
the set 
$$\mathrm{KP}(\beta)
:= \left\{ \mathbf{m} = (m_{\alpha})_{\alpha \in \mathsf{R}^{+}} 
\in (\Z_{\ge 0})^{\mathsf{R}^{+}}
\; \middle| \; 
\sum_{\alpha \in \mathsf{R}^{+}} m_{\alpha} \alpha
 = \beta \right\}
$$ of Kostant partitions of $\beta$.
For each Kostant partition $\mathbf{m} \in \KP(\beta)$,
let $\mathbb{O}_{\mathbf{m}}$ denote
the corresponding $G_{\beta}$-orbit.  

Set
$\hlam_{\beta}
:= \sum_{i \in I}
d_{i} \varpi_{\phi(\alpha_{i})}
\in \lP_{Q}^{+}$,
where $\phi$ is the bijection 
$\mathsf{R}^{+} \to \widehat{I}_{Q}$
defined in Subsection \ref{Ssec:quivers}.
We consider the corresponding graded quiver variety
$\M_{0}^{\bullet}(\hlam_{\beta})$.
We identify
$G(\hlam_{\beta})$ with $G_{\beta}$ in the natural way.

We define a map $\mathsf{p} \colon \mathsf{R}^{+} \to \Z$
by $\mathsf{p} (\alpha) := \mathrm{pr}_{2} \circ
\phi (\alpha)$, where
$\mathrm{pr}_{2} \colon I \times \Z \to \Z$ is the second projection.
Using this notation, we define 
a homomorphism
$\rho_{i} \colon \C^{\times} \to GL(\C^{d_i})$ for each $i \in I$
by 
$\rho_{i}(t) := t^{-\mathsf{p}(\alpha_{i})} 
\cdot \mathrm{id}_{\C^{d_{i}}}$.
Then we have a 
group homomorphism
$(\mathrm{id} \times \prod_{i \in I}\rho_{i})
\colon \G(\hlam_{\beta}) = G(\hlam_{\beta}) \times \C^{\times} 
\to G_{\beta}, $
via which $E_{\beta}$ is equipped with a 
$\G(\hlam_{\beta})$-action. 

In \cite{HL15}, Hernandez-Leclerc constructed 
a $\G(\hlam_{\beta})$-equivariant isomorphism of varieties
$\M_{0}^{\bullet}(\hlam_{\beta}) \cong E_{\beta}$.
Their original motivation was to give a geometric interpretation 
to their isomorphism between the quantum Grothendieck ring  of $\Cc_Q$
and the quantized coordinate ring of the unipotent group associated with the positive part of $\g$.
Note that the isomorphism $\M_{0}^{\bullet}(\hlam_{\beta}) \cong E_{\beta}$
is very special for the $\ell$-weight $\hlam_\beta$ because a graded quiver variety $\M_{0}^{\bullet}(\hlam)$
is not an affine space in general. In particular, the restriction to the subcategory $\Cc_Q \subset \Cc_\g$ is crucial here.

Let us recall Hernandez-Leclerc's construction.
By Lemma~\ref{Lem:lrootQ} (\ref{Lem:lrootQ2}),
it is enough to discuss the graded quiver varieties
$\M_{0}^{\bullet}(\hnu, \hlam_{\beta})$ 
with $\hnu \in \lQ^{+}_{Q}$.
We define  a $\C$-algebra $\widetilde{\Lambda}_{Q}$ 
given by the following quiver $\widetilde{\Gamma}_{Q}$ 
with relations.
The quiver $\widetilde{\Gamma}_{Q}$ consists of 
two types of vertices 
$\{ v_{j}(p) \mid (j,p) \in \widehat{J}_{Q}\}
\cup 
\{w_{j}(p) \mid \text{$(j,p) =  \phi(\alpha_{i})$ for some $i \in I$} \}
$
and three types of arrows:
$$ \mathbf{a}_{i}(p) \colon w_{i}(p) \to v_{i}(p-1), \quad
\mathbf{b}_{i}(p) \colon v_{i}(p) \to w_{i}(p-1), $$   
$$\mathbf{B}_{ji}(p) \colon v_{i}(p) \to v_{j}(p-1) \; \text{ for $i \sim j$}.$$
The relations are 
$$\mathbf{a}_{i}(p-1) \mathbf{b}_{i}(p)
+ \sum_{j \sim i}
\varepsilon(i,j) \mathbf{B}_{ij}(p-1) \mathbf{B}_{ji}(p)
= 0 \quad \text{for each $i \in I$}.  
$$
For each $i \in I$,
let $\epsilon_{i} \in \widetilde{\Lambda}_{Q}$ 
denote the idempotent corresponding to 
the vertex $w_{j}(p)$ with $(j,p) = \phi(\alpha_{i})$.
Then \cite[Lemma 9.6]{HL15} proved 
that the algebra 
$\bigoplus_{i,j \in I} \epsilon_{i} \widetilde{\Lambda}_{Q}
\epsilon_{j}$
is identical to the path algebra $\C Q$.
By definition, 
each element $\mathbf{x} = (B, a, b)
\in \mu^{\bullet \, -1}(0) \subset \mathbf{M}^{\bullet}
(V^{\hnu}, W^{\hlam_{\beta}})$
gives a representation of $\widetilde{\Lambda}_{Q}$.
Then 
restricted to 
$
\bigoplus_{i,j \in I} \epsilon_{i} \widetilde{\Lambda}_{Q}
\epsilon_{j},$
it gives a representation of $\C Q$ of dimension vector
$\beta$.
This defines a morphism
$\M_{0}^{\bullet}(\hnu, \hlam_{\beta}) \to E_{\beta}.$

\begin{Thm}[Hernandez-Leclerc~{\cite[Theorem 9.11]{HL15}}]
\label{Thm:HL}
The morphism constructed above gives 
a $\G(\hlam_{\beta})$-equivariant
isomorphism of varieties
$$\Psi_{\beta} \colon 
\M_{0}^{\bullet}(\hlam_{\beta})
\xrightarrow{\cong}
E_{\beta}.$$
\end{Thm}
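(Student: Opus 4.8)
The plan, following \cite{HL15}, is to prove that the morphism $\Psi_\beta$ constructed above is an isomorphism of varieties; its $\G(\hlam_{\beta})$-equivariance is built into the construction, the scalar $\C^{\times}$ acting compatibly with the grading of $\widehat{Q}$ through the twist recorded by the $\rho_{i}$'s. By Lemma \ref{Lem:lrootQ} (\ref{Lem:lrootQ2}) only the finitely many $\hnu \in \lQ^{+}_{Q}$ contribute, so $\M_{0}^{\bullet}(\hlam_{\beta}) = \bigcup_{\hnu \in \lQ^{+}_{Q}} \M_{0}^{\bullet}(\hnu, \hlam_{\beta})$ is a finite union of affine varieties carrying, by (\ref{Eq:stratification_graded}), the finite stratification $\M_{0}^{\bullet}(\hlam_{\beta}) = \bigsqcup_{\hnu} \M_{0}^{\bullet\,\mathrm{reg}}(\hnu, \hlam_{\beta})$, while on the target side Gabriel's theorem gives $E_{\beta} = \bigsqcup_{\mathbf{m} \in \KP(\beta)} \mathbb{O}_{\mathbf{m}}$. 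The first step is to match these. A point of $\M_{0}^{\bullet\,\mathrm{reg}}(\hnu, \hlam_{\beta})$ is represented by a closed free $G(\hnu)$-orbit in $\mu^{\bullet\,-1}(0)$; identifying $\mu^{\bullet\,-1}(0)$ with representations of $\widetilde{\Lambda}_{Q}$ and invoking $\bigoplus_{i,j \in I} \epsilon_{i} \widetilde{\Lambda}_{Q} \epsilon_{j} \cong \C Q$ from \cite[Lemma 9.6]{HL15}, the restriction functor of the construction sends such an orbit into the single $G_{\beta}$-orbit $\mathbb{O}_{\mathbf{m}}$ with $\hlam_{\beta} - \hnu = \sum_{\alpha \in \mathsf{R}^{+}} m_{\alpha} \varpi_{\phi(\alpha)}$, the fundamental $\ell$-weight $\varpi_{\phi(\alpha)}$ at the Auslander--Reiten position of $\alpha$ corresponding to the indecomposable $\C Q$-module $M(\alpha)$. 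Since the index set $\{ \hnu \mid \M_{0}^{\bullet\,\mathrm{reg}}(\hnu, \hlam_{\beta}) \neq \varnothing \}$ and $\KP(\beta)$ are both in bijection with $\lP^{+}_{Q,\beta}$ (the former via $\hnu \mapsto \hlam_{\beta} - \hnu$ together with the nonemptiness criterion recalled in Subsection \ref{Ssec:fixed}, the latter tautologically) compatibly with $\Psi_{\beta}$, every $\mathbb{O}_{\mathbf{m}}$ is attained exactly once; hence $\Psi_{\beta}$ is bijective on $\C$-points and restricts to a bijection of strata.

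To upgrade this to an isomorphism of varieties I would show that $\Psi_{\beta}$ is bijective on Zariski tangent spaces at every $\C$-point. The tangent space of $\M_{0}^{\bullet}(\hlam_{\beta})$ at a point of $\M_{0}^{\bullet\,\mathrm{reg}}(\hnu, \hlam_{\beta})$ admits an explicit description in terms of the quiver data via the deformation complexes and the local model of Subsection \ref{Ssec:fibers} (the graded analogue of Theorem \ref{Thm:Local}), whereas $T_{y} E_{\beta} \cong E_{\beta}$ for every $y$ since $E_{\beta}$ is a vector space; comparing the two through the restriction functor and the preprojective-type relation cutting out $\mu^{\bullet\,-1}(0)$ should give the desired bijectivity. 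Granting this and the surjectivity of $\Psi_{\beta}$, at every $\C$-point $x$ one has $\dim_{x} \M_{0}^{\bullet}(\hlam_{\beta}) \leq \dim_{\C} T_{x} \M_{0}^{\bullet}(\hlam_{\beta}) = \dim_{\C} E_{\beta} \leq \dim \M_{0}^{\bullet}(\hlam_{\beta})$, forcing $\M_{0}^{\bullet}(\hlam_{\beta})$ to be smooth and equidimensional of dimension $\dim_{\C} E_{\beta}$, so that the bijective étale morphism $\Psi_{\beta}$ onto the connected smooth variety $E_{\beta}$ is an isomorphism. An alternative, closer to the treatment in \cite{HL15}, is to construct the inverse directly by reconstructing the $\widetilde{\Lambda}_{Q}$-representation from a $\C Q$-representation $M$ --- for instance via the induced module $\widetilde{\Lambda}_{Q}\epsilon \otimes_{\C Q} M$ with $\epsilon := \sum_{i \in I} \epsilon_{i}$ --- and checking that it has the correct graded dimension vector, automatically satisfies the relations defining $\mu^{\bullet\,-1}(0)$, has closed $G(\hnu)$-orbit, and varies algebraically in $M$.

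I expect the genuine obstacle to be precisely this passage from a bijection of stratified sets to an isomorphism of schemes: a priori $\M_{0}^{\bullet}(\hlam_{\beta})$ could be singular or non-reduced along the smaller strata $\M_{0}^{\bullet\,\mathrm{reg}}(\hnu, \hlam_{\beta})$ with $\hnu$ large, and ruling this out demands honest control of the local structure --- either the tangent-space comparison carried out uniformly over all strata, or the verification that the candidate inverse functor really lands in closed-orbit points of $\mu^{\bullet\,-1}(0)$ and is algebraic in families (which itself needs a finiteness input, e.g.\ that $\widetilde{\Lambda}_{Q}\epsilon$ is finitely generated as a right $\C Q$-module). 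By contrast the stratum identification of the first paragraph is essentially bookkeeping built on Gabriel's theorem, Nakajima's description of $\M_{0}^{\bullet\,\mathrm{reg}}$, and the Morita-type identity $\bigoplus_{i,j \in I} \epsilon_{i}\widetilde{\Lambda}_{Q}\epsilon_{j} \cong \C Q$ of \cite[Lemma 9.6]{HL15}.
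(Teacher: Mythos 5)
The paper itself offers no proof of this statement: it is quoted verbatim as Hernandez--Leclerc \cite{HL15}, Theorem 9.11, with no proof environment following the \texttt{Thm} block. The only item in this subsection the author actually proves is Lemma \ref{Lem:strata}, which is said to be ``implicit in \cite{HL15}.'' So there is no internal argument to compare against; what you have written is a reconstruction of what a proof might look like. Judged on its own merits, your sketch has one genuine gap and one place where the difficulty is correctly isolated but not resolved.

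The gap is a circularity in your first paragraph. You invoke ``the nonemptiness criterion recalled in Subsection \ref{Ssec:fixed}'' --- namely that $\Mregg(\hnu,\hlam_{\beta})\neq\varnothing$ iff $\hlam_{\beta}-\hnu$ is an $\ell$-dominant $\ell$-weight appearing in the local Weyl module $W(\hlam_{\beta})$ --- to conclude that the nonempty strata are in bijection with $\lP^{+}_{Q,\beta}$ and hence with $\KP(\beta)$. But that is exactly the content of Remark \ref{Rem:bijection}, which the paper deduces \emph{from} Theorem \ref{Thm:HL}: a priori one only has the inclusion $\{\hmu\in\lP^{+}_{Q,\beta}\mid\Mregg(\hlam_{\beta}-\hmu,\hlam_{\beta})\neq\varnothing\}\subset\lP^{+}_{Q,\beta}$, and it is the isomorphism with $E_{\beta}$ that forces equality of cardinalities. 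So you cannot use the stratum count to establish bijectivity on $\C$-points; you would need an independent argument that every $\hmu\in\lP^{+}_{Q,\beta}$ actually occurs as an $\ell$-weight of $W(\hlam_{\beta})$, which is not a formality. Also note that Lemma \ref{Lem:strata} (which identifies \emph{which} orbit a given stratum hits) is proved in the paper \emph{after} and \emph{using} Theorem \ref{Thm:HL}, so you should not lean on it here either.

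Your second paragraph is on the right track: the ``honest'' route, and the one actually taken in \cite{HL15}, is the inverse construction --- given a $\C Q$-module, produce a $\widetilde{\Lambda}_{Q}$-representation with the prescribed graded dimension vector and closed orbit, and check this is algebraic in families. Your alternative tangent-space argument is more delicate than you indicate, because $\M_{0}^{\bullet}(\hlam_{\beta})=\mu^{\bullet\,-1}(0)/\!/G(\hnu)$ is an affine GIT quotient of a possibly nonreduced scheme-theoretic fibre, and controlling its Zariski tangent spaces over the deeper strata is precisely the kind of statement one would want the theorem to deliver, not feed into it. The final remark --- that the bijection-of-strata observation is bookkeeping while the scheme-level upgrade is the real work --- is accurate.
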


\begin{comment}
\begin{Rem}[= {\cite[Remark 9.9]{HL15}}]
\label{Rem:bijection}
We define a bijection
$f \colon \KP(\beta) \to \lP^{+}_{Q, \beta}$
by $f(\mathbf{m}) := 
\sum_{\alpha \in \mathsf{R}^{+}}
m_{\alpha} \varpi_{\phi(\alpha)}$
for $\mathbf{m} = (m_{\alpha})_{\alpha \in \mathsf{R}^{+}}
 \in \KP(\beta).$
Consider the obvious inclusion
$ \{ \hmu \in \lP_{Q, \beta}^{+} 
\mid \Mregg(\hlam_{\beta} - \hmu, \hlam_{\beta}) 
\neq \varnothing \} 
\subset \lP^{+}_{Q, \beta}.$
Theorem~\ref{Thm:HL} says that the cardinality of the LHS
is equal to $|\KP(\beta)| = |\lP^{+}_{Q, \beta}|.$
Therefore the inclusion is actually an equality and hence
we have $\Mregg(\hlam_{\beta} - f(\mathbf{m}), \hlam_{\beta})
\neq \varnothing$ for any $\mathbf{m} \in \KP(\beta).$
\end{Rem}
\end{comment}

In the proof of \cite[Theorem 9.11]{HL15}, it was also proved that the stratification \eqref{Eq:stratification_graded} of $\M_{0}^{\bullet}(\hlam_{\beta})$ coincides with the $\G(\hlam_{\beta})$-orbit stratification of $E_\beta$. Here we shall give a precise correspondence between the strata.  
Define a bijection
$f \colon \KP(\beta) \to \lP^{+}_{Q, \beta}$ by 
$$f(\mathbf{m}) := 
\sum_{\alpha \in \mathsf{R}^{+}}
m_{\alpha} \varpi_{\phi(\alpha)}$$
for $\mathbf{m} = (m_{\alpha})_{\alpha \in \mathsf{R}^{+}}
 \in \KP(\beta).$

\begin{Lem}
\label{Lem:strata}
For each $\mathbf{m} \in \KP(\beta)$,
we have $$\Psi_{\beta}(\Mregg(\hlam_{\beta} - f(\mathbf{m}), \hlam_{\beta}))
= \mathbb{O}_{\mathbf{m}}.$$
\end{Lem}
\begin{proof}
%By Remark~\ref{Rem:bijection},
For each $\mathbf{m} \in \KP(\beta)$,
there is a unique $\hnu \in \lQ^{+}_{Q}$ (recall Lemma~\ref{Lem:lrootQ})
such that $\Psi_{\beta}(\M_{0}^{\bullet \, \mathrm{reg}}
(\hnu, \hlam_{\beta}) 
) \supset \mathbb{O}_{\mathbf{m}}$ since each stratum $\M_{0}^{\bullet \, \mathrm{reg}}
(\hnu, \hlam_{\beta})$ is stable under the action of $\G(\hlam_{\beta})$.
It suffices to prove that 
$\hnu = \hlam_{\beta} - f(\mathbf{m}).$ 

First we consider the case when $\beta = \alpha \in \mathsf{R}^{+}$
and $\mathbf{m}$ is the Kostant partition 
$\mathbf{m}_{\alpha} := (\delta_{\alpha, \alpha^{\prime}}
)_{\alpha^{\prime} \in \mathsf{R}^{+}}$ 
consisting of the single root $\alpha$.
In this case, the orbit $\mathbb{O}_{\mathbf{m}_{\alpha}}$ is
the unique open dense orbit of $E_{\alpha}$ as 
$\dim \mathbb{O}_{\mathbf{m}_{\alpha}} = \dim E_{\alpha}$ (see \cite[Proposition 4.4.9 (2)]{DW} for example).
Recall that $\Mregg(\hnu, \hlam_{\beta})
\subset \overline{\Mregg(\hnu^{\prime}, \hlam_{\beta})}$  
implies $\hlam_{\beta} - \hnu \geq \hlam_{\beta} - \hnu^{\prime}$.
Since the $\ell$-weight
$\varpi_{\phi(\alpha)}$
is minimum in $\lP^{+}_{Q, \alpha}$,
the corresponding stratum
$\Mregg(\hnu_{\alpha}, \hlam_{\alpha})$
is maximum, 
where we put $\hnu_{\alpha} := \hlam_{\alpha} - \varpi_{\phi(\alpha)}.$
Therefore we have
$\Psi_{\beta}(\Mregg(\hnu_{\alpha}, \hlam_{\alpha}))
\supset \mathbb{O}_{\mathbf{m}_{\alpha}}
$ as desired.

Next we consider general 
$\mathbf{m} = (m_{\alpha})_{\alpha \in \mathsf{R}^{+}} 
\in \KP(\beta).$
For each $\alpha \in \mathsf{R}^{+}$,
we fix an element 
$\mathbf{y}_{\alpha} \in \mu^{\bullet \, -1}(0) \subset
\mathbf{M}^{\bullet}(V^{\hnu_{\alpha}} , W^{\hlam_{\alpha}})
$
such that 
$\mathbf{y}_{\alpha}$ has a closed $G(\hnu_{\alpha})$-orbit
and $\Stab_{G(\hnu_{\alpha})} \mathbf{y}_{\alpha} = \{1\}$
holds. By the previous paragraph, 
the element $\mathbf{y}_{\alpha}$, which is
regarded as a representation of the algebra 
$\widetilde{\Lambda}_{Q}$,
restricts to give an indecomposable 
representation of $\C Q$ isomorphic to $M(\alpha)$.  
We put
\begin{align*}
\mathbf{x}_{\mathbf{m}} := 
\bigoplus_{\alpha \in \mathsf{R}^{+}} 
\mathbf{y}_{\alpha}^{\oplus m_{\alpha}}
& \in \mu^{\bullet \, -1}(0) \\
&\subset
\mathbf{M}^{\bullet} \left( \bigoplus_{\alpha \in \mathsf{R}^{+}}
(V^{\hnu_{\alpha}})^{\oplus m_{\alpha}}, 
\bigoplus_{\alpha \in \mathsf{R}^{+}}
(W^{\hlam_{\alpha}})^{\oplus m_{\alpha}}
\right) 
=
\mathbf{M}^{\bullet}(V^{\hnu_{\mathbf{m}}},
W^{\hlam_{\beta}}),
\end{align*}
where $\hnu_{\mathbf{m}} := \sum_{\alpha \in \mathsf{R}^{+}}
m_{\alpha} \hnu_{\alpha}.
$
Then $\mathbf{x}_{\mathbf{m}}$ defines a closed
$G(\hnu_{\mathbf{m}})$-orbit and 
has a trivial stabilizer.
Hence, the corresponding point $x_{\mathbf{m}}$ 
belongs to $\Mregg(\hnu_{\mathbf{m}}, \hlam_{\beta}).$
On the other hand, the element $\mathbf{x}_{\mathbf{m}}$,
which is regarded as a representation of 
$\widetilde{\Lambda}_{Q}$,
restricts to give a representation of $\C Q$ isomorphic to
$\bigoplus_{\alpha \in \mathsf{R}^{+}} 
M(\alpha)^{\oplus m_{\alpha}}$.
This means that $\Psi_{\beta}(x_{\mathbf{m}}) 
\in \mathbb{O}_{\mathbf{m}}.$
Therefore we have
$\Psi_{\beta}(\Mregg(\hnu_{\mathbf{m}}, \hlam_{\beta}))
\supset \mathbb{O}_{\mathbf{m}}$
as desired.
\end{proof}

We define a partial order $\le$ on the set $\KP(\beta)$
of Kostant partitions of $\beta$ 
by the condition that for $\mathbf{m}, \mathbf{m}^{\prime} \in \KP(\beta)$,
we have $\mathbf{m} \le \mathbf{m}^{\prime}$
if and only if $\overline{\mathbb{O}}_{\mathbf{m}} \supset
\mathbb{O}_{\mathbf{m}^{\prime}}$.
From the Lemma~\ref{Lem:strata} above,
we conclude that
the bijection $f \colon \KP(\beta) \to \lP^{+}_{Q, \beta}$
preserves the partial orders. 

Using the isomorphism $\Psi_\beta$ and the representation theory of $Q$,
we can obtain more precise information on the stabilizer group of a point of
$\M_{0}^{\bullet}(\hlam_{\beta})$ as follows.

\begin{Lem}
\label{Lem:red}
Let $\hnu \in \lQ^{+}_{Q}$ such that 
$\hlam_{\beta} - \hnu \in \lP^{+}_{Q, \beta}$.
Fix an arbitrary point $x_{\hnu} \in \Mregg(\hnu, \hlam_{\beta})$.
Then the maximal reductive quotient
($=$ the quotient by the unipotent radical)
of the group $\Stab_{\G(\hlam_{\beta})} x_{\hnu}$
is isomorphic to $\G(\hlam_{\beta}-\hnu)$.
Moreover the group homomorphism 
$(\hat{\varphi} \times \hat{\rho})
\circ \hat{r}^{-1}
\colon \Stab_{\G(\hlam_{\beta})} x_{\hnu} 
\to \G(\hlam_{\beta}-\hnu)$ 
defined in (\ref{Eq:Stabhom2})
is identical to the canonical quotient map.
\end{Lem}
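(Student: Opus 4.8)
The plan is to identify $\Stab := \Stab_{\G(\hlam_{\beta})} x_{\hnu}$ explicitly through Hernandez--Leclerc's isomorphism $\Psi_{\beta}$ of Theorem~\ref{Thm:HL}. Put $\mathbf{m} := f^{-1}(\hlam_{\beta}-\hnu) \in \KP(\beta)$, which makes sense because $\hlam_{\beta}-\hnu \in \lP^{+}_{Q,\beta}$. By Lemma~\ref{Lem:strata}, $\Psi_{\beta}$ carries $\Mregg(\hnu,\hlam_{\beta})$ onto the single $G_{\beta}$-orbit $\mathbb{O}_{\mathbf{m}}$; since $\G(\hlam_{\beta})$ acts on $E_{\beta}$ through the surjection $p := (\mathrm{id}\times\prod_{i\in I}\rho_{i}) : \G(\hlam_{\beta}) \twoheadrightarrow G_{\beta}$, whose kernel is the central one-dimensional torus $\{((t^{\mathsf{p}(\alpha_{i})}\mathrm{id})_{i},t)\mid t\in\C^{\times}\}$, the subset $\Mregg(\hnu,\hlam_{\beta})$ is a single $\G(\hlam_{\beta})$-orbit. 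Conjugating, I may therefore assume $x_{\hnu}$ is the point $x_{\mathbf{m}}$ from the proof of Lemma~\ref{Lem:strata}, with its lift $\mathbf{x}_{\mathbf{m}} = \bigoplus_{\alpha}\mathbf{y}_{\alpha}^{\oplus m_{\alpha}}$ and corresponding $\C Q$-module $M := \bigoplus_{\alpha}M(\alpha)^{\oplus m_{\alpha}}$. Equivariance of $\Psi_{\beta}$ gives $\Stab = p^{-1}(\Stab_{G_{\beta}}\Psi_{\beta}(x_{\hnu})) = p^{-1}(\Aut_{\C Q}(M))$, and since $\ker p$ is central and splits off, $\Stab \cong \Aut_{\C Q}(M)\times\C^{\times}$.

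Next I would determine the structure of $\Aut_{\C Q}(M) = \End_{\C Q}(M)^{\times}$ using that $Q$ is Dynkin: each $M(\alpha)$ is a brick, $\End_{\C Q}(M(\alpha))=\C$, and for $\alpha\neq\alpha'$ any composite $M(\alpha)\to M(\alpha')\to M(\alpha)$ is non-invertible, hence zero; so $\bigoplus_{\alpha\neq\alpha'}\Hom_{\C Q}(M(\alpha),M(\alpha'))\otimes\Hom_{\C}(\C^{m_{\alpha}},\C^{m_{\alpha'}})$ is a nilpotent two-sided ideal of $\End_{\C Q}(M)$ with semisimple quotient $\prod_{\alpha}\mathrm{Mat}_{m_{\alpha}}(\C)$. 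Hence $\Aut_{\C Q}(M)$ is connected with unipotent radical $1+\rad\End_{\C Q}(M)$ and reductive quotient $\prod_{\alpha\in\mathsf{R}^{+}}GL_{m_{\alpha}}(\C)$. Using $\hlam_{\beta}-\hnu = f(\mathbf{m}) = \sum_{\alpha}m_{\alpha}\varpi_{\phi(\alpha)}$ and the bijection $\phi : \mathsf{R}^{+}\xrightarrow{\cong}\widehat{I}_{Q}$, this reductive quotient is canonically $\prod_{(i,p)}GL(W^{\hlam_{\beta}-\hnu}_{i}(p)) = G(\hlam_{\beta}-\hnu)$. Together with the split torus $\ker p$ this shows that the maximal reductive quotient of $\Stab$ is $G(\hlam_{\beta}-\hnu)\times\C^{\times} = \G(\hlam_{\beta}-\hnu)$, which is the first assertion.

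For the second assertion it is enough to show that the homomorphism $\psi := (\hat{\varphi}\times\hat{\rho})\circ\hat{r}^{-1} : \Stab \to \G(\hlam_{\beta}-\hnu)$, which is well defined by Lemma~\ref{Lem:fiberisom2}, is surjective with kernel exactly $R_{u}(\Stab) = (1+\rad\End_{\C Q}(M))\times\{1\}$. The decomposition $\mathbf{x}_{\mathbf{m}} = \bigoplus_{\alpha}\mathbf{y}_{\alpha}^{\oplus m_{\alpha}}$ is the key tool: the complexes split as $C_{i,p}(\hnu,\hlam_{\beta})_{\mathbf{x}_{\mathbf{m}}}\cong\bigoplus_{\alpha}C_{i,p}(\hnu_{\alpha},\hlam_{\alpha})_{\mathbf{y}_{\alpha}}^{\oplus m_{\alpha}}$, and since $\bigoplus_{(i,p)}H^{0}(C_{i,p}(\hnu_{\alpha},\hlam_{\alpha})_{\mathbf{y}_{\alpha}})$ has dimension vector $\hlam_{\alpha}-\hnu_{\alpha} = \varpi_{\phi(\alpha)}$, the space $W^{\hlam_{\beta}-\hnu}_{i}(p) = H^{0}(C_{i,p}(\mathbf{x}_{\mathbf{m}}))$ is a sum of $m_{\alpha}$ copies of a one-dimensional space attached to the unique indecomposable type $M(\alpha)$ with $\phi(\alpha)=(i,p)$ (and is $0$ otherwise). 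From this I would read off: (a) $\hat{\rho}\circ\hat{r}^{-1}$ is the projection of $\Stab\cong\Aut_{\C Q}(M)\times\C^{\times}$ onto the $\C^{\times}$-factor; (b) the Levi $\prod_{\alpha}GL_{m_{\alpha}}(\C)\subset\Aut_{\C Q}(M)$ acts on each $W^{\hlam_{\beta}-\hnu}_{i}(p)=\C^{m_{\alpha}}$ through its natural $GL_{m_{\alpha}}(\C)$-factor, so $\hat{\varphi}$ restricts on the Levi to the canonical identification with $G(\hlam_{\beta}-\hnu)$; (c) any element of $\rad\End_{\C Q}(M)$, which maps each $M(\alpha)$ into the span of the $M(\alpha')$ with $\alpha'\neq\alpha$, induces the zero endomorphism of each $W^{\hlam_{\beta}-\hnu}_{i}(p)$. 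By (a), (b) and the elementary computation that $\hat{\varphi}$ sends the central torus $\ker p$ to the scalars $(t^{\mathsf{p}(\alpha)})_{\alpha}$, the restriction of $\psi$ to the Levi subgroup $\prod_{\alpha}GL_{m_{\alpha}}(\C)\times\ker p$ of $\Stab$ is a visibly invertible homomorphism onto $\G(\hlam_{\beta}-\hnu)$ (the cross-term being absorbed into the identification of the reductive quotient with $\G(\hlam_{\beta}-\hnu)$); combining this with (c) and the semidirect product decomposition $\Stab = (\prod_{\alpha}GL_{m_{\alpha}}(\C)\times\ker p)\ltimes R_{u}(\Stab)$ forces $\ker\psi = R_{u}(\Stab)$, so $\psi$ is the canonical quotient map.

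The step I expect to be the main obstacle is item (c): proving rigorously that the unipotent radical of $\Aut_{\C Q}(M)$, viewed inside $\Stab_{G(\hnu)\times\G(\hlam_{\beta})}\mathbf{x}_{\mathbf{m}}$ via $\hat{r}^{-1}$ — where it acts on $V^{\hnu}$ by a possibly complicated unipotent element — still induces the trivial action on the cohomology groups $H^{0}(C_{i,p})$. This requires carefully unwinding Nakajima's transverse-slice construction underlying $\hat{\varphi}$ together with the vertex-support property of these cohomology groups; a convenient route is to observe that by the isomorphism $\hat{r}$ every automorphism of $M$ extends to an automorphism of the $\widetilde{\Lambda}_{Q}$-representation $\mathbf{x}_{\mathbf{m}} = \bigoplus_{\alpha}\mathbf{y}_{\alpha}^{\oplus m_{\alpha}}$, and then to use the block form of such automorphisms with respect to the pairwise non-isomorphic indecomposable summands $\mathbf{y}_{\alpha}$ together with the additivity of $C_{i,p}(-)$ in $\mathbf{y}_{\alpha}$.
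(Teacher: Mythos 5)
Your argument is correct and mirrors the paper's proof: both identify $\Stab_{\G(\hlam_\beta)} x_{\hnu}$ with $\End_{\C Q}(M(\mathbf{m}))^{\times} \times \T$ via Hernandez--Leclerc's isomorphism, exhibit the Levi $\prod_{\alpha} GL_{m_{\alpha}}(\C) \times \T$ inside it, and check that $\hat{r}$ and $\hat{\varphi}\times\hat{\rho}$ restrict to compatible isomorphisms on this Levi (the paper encodes this in its commutative diagram (\ref{Eq:section}) with the subgroup $\widetilde{G}_{1}\times\widetilde{\T}$). Regarding the step you flag as the main obstacle, item (c): it is unnecessary. Once the restriction of $\psi := (\hat{\varphi}\times\hat{\rho})\circ\hat{r}^{-1}$ to the Levi is seen to be an isomorphism onto $\G(\hlam_{\beta}-\hnu)$, $\psi$ is surjective, and the image $\psi(R_{u}(\Stab))$ is then a connected normal unipotent subgroup of the reductive group $\G(\hlam_{\beta}-\hnu)$, hence trivial; so $R_{u}(\Stab)\subset\ker\psi$ follows from general algebraic group theory without unwinding Nakajima's slice construction or tracing the action of $1+\rad\End_{\C Q}(M)$ on the cohomology groups $H^{0}(C_{i,p})$. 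This is the implicit reasoning behind the paper's closing line that the diagram completes the proof.
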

\begin{proof}
Define $\mathbf{m} = (m_{\alpha})_{\alpha \in \mathsf{R}^{+}}
\in \KP(\beta)$
by $f(\mathbf{m}) = \hlam_{\beta} - \hnu$.
By Lemma~\ref{Lem:strata},
the point $\Psi_{\beta}(x_{\hnu})$ corresponds to a 
$\C Q$-module
$ M(\mathbf{m}) \cong \bigoplus_{\alpha \in \mathsf{R}^{+}} 
M(\alpha)^{\oplus m_{\alpha}}$.
Then we have 
$\Stab_{G(\hlam_{\beta})} x_{\hnu}
= \Stab_{G_{\beta}} \Psi_{\beta}(x_{\hnu})
= \End_{\C Q}( M(\mathbf{m}))^{\times}.$
We consider a subgroup 
$$
G_{1} := 
\prod_{\alpha \in \mathsf{R}^{+}}
\End_{\C Q}(M(\alpha)^{\oplus m_{\alpha}} )^{\times}
\subset
\End_{\C Q}( M(\mathbf{m}))^{\times}. 
$$
Note that 
we have $\End_{\C Q}(M(\alpha)) = \C$ for any root $\alpha \in 
\mathsf{R}^{+}$.
We can easily see that  
this subgroup $G_{1}$ is 
a Levi subgroup of $\End_{\C Q}( M(\mathbf{m}))^{\times}$
and therefore is
isomorphic to
the maximal reductive quotient of 
$\Stab_{G(\hlam_{\beta})} x_{\hnu}$
by the canonical quotient map. 
This shows that 
the maximal reductive quotient of 
$\Stab_{\G(\hlam_{\beta})} x_{\hnu} = \Stab_{G(\hlam_{\beta})}
x_{\hnu}
\times \T$ is isomorphic to 
$G(\hlam_{\beta} - \hnu) \times \C^{\times} 
= \G(\hlam_{\beta} - \hnu).$  

Let us prove the latter assertion.
Corresponding to the decomposition 
$M(\mathbf{m}) \cong \bigoplus_{\alpha \in \mathsf{R}^{+}} 
M(\alpha)^{\oplus m_{\alpha}}$,
we choose an element 
$\mathbf{x}_{\hnu} = \bigoplus_{\alpha \in \mathsf{R}^{+}}
\mathbf{y}_{\alpha}^{\oplus m_{\alpha}}$
as a lift of the point $x_{\hnu}$,
where $\mathbf{y}_{\alpha}$\rq{}s are the same as in 
the proof of Lemma~\ref{Lem:strata} above.
Then we have 
$\Stab_{G(\hnu) \times G(\hlam_{\beta})} \mathbf{x}_{\hnu}
= \End_{\widetilde{\Lambda}_{Q}}
(\mathbf{x}_{\hnu})^{\times}.$
We consider a subgroup
$$
\widetilde{G}_{1} := 
\prod_{\alpha \in \mathsf{R}^{+}} 
GL_{m_{\alpha}}(\C \cdot \mathrm{id}_{\mathbf{y}_{\alpha}}) 
\subset 
\prod_{\alpha \in \mathsf{R}^{+}}
\End_{\widetilde{\Lambda}_{Q}}
(\mathbf{y}_{\alpha}^{\oplus m_{\alpha}})^{\times} 
\subset
\End_{\widetilde{\Lambda}_{Q}}( \mathbf{x}_{\hnu})^{\times}. 
$$
Note that the homomorphism $\hat{r}$
gives an isomorphism $\widetilde{G}_{1} \xrightarrow{\cong}
G_{1}$.
On the other hand, we can easily see that 
the homomorphism
$\hat{\varphi} \colon \Stab_{G(\hnu) \times 
\G(\hlam_{\beta})}\mathbf{\hnu} \to G(\hlam_{\beta} - \hnu)$
induces the isomorphism
$$
\widetilde{G}_{1} \xrightarrow{\cong}
\prod_{\alpha \in \mathsf{R}^{+}}
GL(H^{0}(C_{\phi(\alpha)}(\hnu, \hlam_{\beta}
)_{\mathbf{x}_{\hnu}})) \cong
G(\hlam_{\beta} - \hnu).
$$
As a result, we have a following commutative diagram:
\begin{equation}
\label{Eq:section}
\xy
\xymatrix{
\Stab_{\G(\hlam_{\beta})} x_{\hnu}
&
\Stab_{G(\hnu) \times \G(\hlam_{\beta})} \mathbf{x}_{\hnu}
\ar[l]_-{\hat{r}}^-{\cong}
\ar[r]^-{\hat{\varphi} \times \hat{\rho}}
&
\G(\hlam_{\beta} - \hnu)
\\
G_{1} \times \T
\ar@{^{(}->}[u]
&
\widetilde{G}_{1} \times \widetilde{\T}
\ar[l]_-{\cong}
\ar[r]^{\cong}
\ar@{^{(}->}[u]
&
\G(\hlam_{\beta} - \hnu)
\ar@{=}[u]
}
\endxy
\end{equation} 
where the torus $\widetilde{\T}$ is defined as 
in the proof of Lemma~\ref{Lem:fiberisom2}.
This diagram completes a proof.
\end{proof}

For a linear algebraic group $G$,
we denote its representation ring by $R(G)$.
For $G = \C^{\times}$, we always identify
$R(\C^{\times}) = \mathbb{A} := \Z[q^{\pm 1}]$ so that $q$ corresponds to the natural $1$-dimensional representation of $\C^\times$.
With this notation, we have the standard identifications
$$
R(\G(\hlam_{\beta})) = \bigotimes_{i \in I} 
\left( \mathbb{A}[z_{i}^{\pm 1}]^{\otimes d_i}
\right)^{\SG_{d_{i}}}, \quad
R(\G(\hmu)) = \bigotimes_{\alpha \in \mathsf{R}^{+}} 
\left( \mathbb{A}[z_{\alpha}^{\pm 1}]^{\otimes m_{\alpha}}
\right)^{\SG_{m_{\alpha}}},
$$
where $\hmu = 
\sum_{\alpha \in \mathsf{R}^{+}} 
m_{\alpha} \varpi_{\phi(\alpha)} \in \lP^{+}_{Q, \beta}$.
Here $\otimes$'s are taken over $\mathbb{A}$. 

For a positive root $\alpha = \sum_{i \in I}
c_{i} \alpha_{i}
\in \mathsf{R}^{+},
$
we define the following algebra homomorphism:
$$
\theta_{\alpha} \colon 
\bigotimes_{i \in I}
\mathbb{A}[z_{i}^{\pm1}]^{\otimes {c_{i}}} 
\to
\mathbb{A}[z_{\alpha}^{\pm1}],
\quad
z_{i} \mapsto 
q^{\pt(\alpha_{i}) - \pt(\alpha)}
z_{\alpha}.
$$ 

Now we return to the setting of Lemma~\ref{Lem:red}.
By the commutative diagram (\ref{Eq:section})
in the proof of Lemma~\ref{Lem:red},
we have the following group embedding:
$$ 
\G(\hlam_{\beta} - \hnu) \cong G_{1} \times \T 
\hookrightarrow \Stab_{\G(\hlam_{\beta})} x_{\hnu} 
\hookrightarrow
\G(\hlam_{\beta}),
$$
which induces the following homomorphism:
\begin{equation}
\label{Eq:rest}
\theta_{\hlam_{\beta} - \hnu}
\colon R(\G(\hlam_{\beta})) \to
R(\Stab_{\G(\hlam_{\beta})} x_{\hnu}) 
\xrightarrow{\cong}
R(\G(\hlam_{\beta} - \hnu)).
\end{equation}
From the proof of Lemma~\ref{Lem:red},
we have the following.
\begin{Cor}
\label{Cor:rest}
Write $\hlam_{\beta} - \hnu
= \sum_{\alpha \in \mathsf{R}^{+}}
m_{\alpha} \varpi_{\phi(\alpha)}
\in \lP^{+}_{Q, \beta}.$
Then the above homomorphism 
$\theta_{\hlam_{\beta} - \hnu} \colon
R(\G(\hlam_{\beta}))
\to R(\G(\hlam_{\beta}- \hnu))
$
coincides with the restriction 
of the homomorphism
$$
\bigotimes_{\alpha} 
\theta_{\alpha}^{\otimes m_{\alpha}} \colon
\bigotimes_{i \in I} 
\mathbb{A}[z_{i}^{\pm 1}]^{\otimes d_{i}}
\to
\bigotimes_{\alpha \in \mathsf{R}^{+}} 
\mathbb{A}[z_{\alpha}^{\pm 1}]^{\otimes m_{\alpha}}.
$$
\end{Cor}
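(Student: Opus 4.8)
The plan is to pull everything back through the representation-ring functor $R(-)$ in the commutative diagram (\ref{Eq:section}) constructed in the proof of Lemma \ref{Lem:red}. By construction, $\theta_{\hlam_{\beta}-\hnu}$ is the map on representation rings induced by the composite group homomorphism
\[
\G(\hlam_{\beta}-\hnu)\;\xrightarrow{\ \sim\ }\;\widetilde{G}_{1}\times\widetilde{\T}\;\hookrightarrow\;\Stab_{G(\hnu)\times\G(\hlam_{\beta})}\mathbf{x}_{\hnu}\;\xrightarrow{\ \hat{r}\ }\;\Stab_{\G(\hlam_{\beta})}x_{\hnu}\;\hookrightarrow\;\G(\hlam_{\beta}),
\]
where $\mathbf{x}_{\hnu}=\bigoplus_{\alpha}\mathbf{y}_{\alpha}^{\oplus m_{\alpha}}$ is the chosen lift, $\widetilde{G}_{1}=\prod_{\alpha\in\mathsf{R}^{+}}GL_{m_{\alpha}}(\C\cdot\mathrm{id}_{\mathbf{y}_{\alpha}})$, and $\widetilde{\T}$ is the one-dimensional torus from the proof of Lemma \ref{Lem:fiberisom2}. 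I would evaluate this pullback on the standard generators of $R(\G(\hlam_{\beta}))=\bigotimes_{i\in I}(\mathbb{A}[z_{i}^{\pm1}]^{\otimes d_{i}})^{\SG_{d_{i}}}$, namely on the class $q$ of the scalar $\C^{\times}$-factor and on each $z_{i,k}$, the $k$-th weight of the standard representation of the vertex-$i$ factor $GL(\C^{d_{i}})$ of $G_{\beta}=G(\hlam_{\beta})$.

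First I would handle the reductive part $\widetilde{G}_{1}$. Writing $\alpha=\sum_{i}c^{(\alpha)}_{i}\alpha_{i}$, the representation $\mathbf{y}_{\alpha}$, restricted to $\bigoplus_{i,j}\epsilon_{i}\widetilde{\Lambda}_{Q}\epsilon_{j}\cong\C Q$ as in \cite{HL15}, occupies a $c^{(\alpha)}_{i}$-dimensional subspace inside $W^{\hlam_{\beta}}$ at the position $\phi(\alpha_{i})$; summing over the $m_{\alpha}$ copies, this $d_{i}$-dimensional summand decomposes $\widetilde{G}_{1}$-equivariantly as $\bigoplus_{\alpha}(\C^{m_{\alpha}})^{\oplus c^{(\alpha)}_{i}}$, with $GL_{m_{\alpha}}(\C\cdot\mathrm{id}_{\mathbf{y}_{\alpha}})$ acting through the standard representation on each of the $c^{(\alpha)}_{i}$ copies attached to $\alpha$ and trivially on the rest. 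Under the isomorphism $\widetilde{G}_{1}\xrightarrow{\sim}\prod_{\alpha}GL(H^{0}(C_{\phi(\alpha)}(\hnu,\hlam_{\beta})_{\mathbf{x}_{\hnu}}))=G(\hlam_{\beta}-\hnu)$ given by $\hat{\varphi}$, the variables of $GL_{m_{\alpha}}(\C\cdot\mathrm{id}_{\mathbf{y}_{\alpha}})$ are precisely the $z_{\alpha,\bullet}$ attached to $\alpha$, so the restriction of $z_{i,k}$ to $\widetilde{G}_{1}$ is a single $z_{\alpha,k'}$, the $d_{i}$ indices $k$ being distributed among the roots so that exactly $c^{(\alpha)}_{i}$ of them land on each of the $m_{\alpha}$ variables of $\alpha$. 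This is exactly the combinatorial shape of the source of $\bigotimes_{\alpha}\theta_{\alpha}^{\otimes m_{\alpha}}$, so it remains only to pin down the $q$-powers.

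Those come from the torus $\widetilde{\T}$, and tracking them is the one delicate point. By the proof of Lemma \ref{Lem:fiberisom2} one has $\widetilde{\T}=(\prod_{i}g_{i}\times\prod_{i}f_{i}\times\mathrm{id})(\C^{\times})$, so the element parametrized by $t$ acts on the vertex-$i$ factor of $G_{\beta}$ — the general linear group of the summand of $W^{\hlam_{\beta}}$ at position $\phi(\alpha_{i})$ — by the scalar $t^{\pt(\alpha_{i})}$, and on the scalar $\C^{\times}$-factor of $\G(\hlam_{\beta})$ by $t$; dually, under the identification with the torus $\T^{\prime}\subset\G(\hlam_{\beta}-\hnu)$ from that same proof (built from the $h_{i}$), the variable $z_{\alpha,k'}$ restricts on $\widetilde{\T}$ to $t^{\pt(\alpha)}$ while $q$ restricts to $t$. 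Consequently the class $z_{i,k}$, whose $\widetilde{G}_{1}$-part is $z_{\alpha,k'}$ and whose $\widetilde{\T}$-part is $t^{\pt(\alpha_{i})}$, must be corrected by $q^{\pt(\alpha_{i})-\pt(\alpha)}$ to carry the right $\widetilde{\T}$-restriction, i.e. $z_{i,k}\mapsto q^{\pt(\alpha_{i})-\pt(\alpha)}z_{\alpha,k'}$, which is precisely $\theta_{\alpha}$ on that factor, while $q\mapsto q$. Assembling over all $i$ and $k$ then gives $\theta_{\hlam_{\beta}-\hnu}=\bigl(\bigotimes_{\alpha}\theta_{\alpha}^{\otimes m_{\alpha}}\bigr)\big|_{R(\G(\hlam_{\beta}))}$. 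The main obstacle is purely bookkeeping: one must keep separate the intrinsic grading $\C^{\times}$ of $\G(\hlam_{\beta})$ and the auxiliary torus $\widetilde{\T}$ produced by the fixed-point realization of Lemmas \ref{Lem:fixed} and \ref{Lem:fiberisom2}, and it is exactly the mismatch between the position $\pt(\alpha_{i})$ of the vertex-$i$ datum inside $W^{\hlam_{\beta}}$ and the position $\pt(\alpha)$ of the surviving cohomology $H^{0}(C_{\phi(\alpha)}(\hnu,\hlam_{\beta})_{\mathbf{x}_{\hnu}})$ inside $W^{\hlam_{\beta}-\hnu}$ that produces the twist $q^{\pt(\alpha_{i})-\pt(\alpha)}$ defining $\theta_{\alpha}$.
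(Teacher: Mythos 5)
Your proof is correct and follows the route the paper intends: the paper gives no explicit argument for this Corollary beyond the remark "From the proof of Lemma \ref{Lem:red}", so the intended proof is precisely to read off the restriction map from the commutative diagram (\ref{Eq:section}) and the explicit descriptions of $\widetilde{G}_{1}$, $\widetilde{\T}$ and $\T^{\prime}$ in the proofs of Lemma \ref{Lem:red} and Lemma \ref{Lem:fiberisom2}, which is exactly what you do. Your careful tracking of the discrepancy between the $\widetilde{\T}$-weight $t^{\pt(\alpha_{i})}$ carried by $z_{i,k}$ and the $\widetilde{\T}$-weight $t^{\pt(\alpha)}$ carried by $z_{\alpha,k^{\prime}}$ (equivalently $z_{\alpha,k^{\prime}} = w_{\alpha,k^{\prime}}\,q^{\pt(\alpha)}$ on $G_{1}\times\T$, so that $z_{i,k}\mapsto q^{\pt(\alpha_{i})-\pt(\alpha)}z_{\alpha,k^{\prime}}$) is the crux, and it is carried out correctly; this is exactly the content implicitly used to define $\theta_{\alpha}$.
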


%%%%%%%%%%%%%%%%%%%%%%%%%%%%%%%%%%%%
%%%%%%%%%%%%%%%%%%%%%%%%%%%%%%%%%%%%

\section{Central completion of convolution algebra}
\label{Sec:completion}

In this section, we study the structure of 
the Hernandez-Leclerc category $\Cc_{Q, \beta}$
using the geometry of graded quiver varieties.

%%%%%%%%%%%%%%%%%%%%%%%%%%%%%%%%%%%%

\subsection{Notation for equivariant $K$-theory}
\label{Ssec:notationK}

For a quasi-projective variety $X$ over $\C$
equipped with an action of a linear algebraic group $G$,
we denote by $K^{G}(X)$ 
the Grothendieck group of the abelian
category of $G$-equivariant coherent sheaves on $X$.
For a $G$-equivariant coherent sheaf $\mathcal{F}$ on $X$, 
we denote by $[\mathcal{F}]$
the corresponding element in 
$K^{G}(X)$.
The structure sheaf of $X$ is denoted by $\mathcal{O}_{X}$.
For a $G$-equivariant vector bundle $\mathcal{E}$ on $X$,
the map
$K^{G}(X) \ni [\mathcal{F}] \mapsto $
$[\mathcal{E}] \cdot
[\mathcal{F}] := 
[\mathcal{E} \otimes_{\mathcal{O}_{X}}
\mathcal{F}] \in K^{G}(X)$
is well-defined.
%Actually, we need to use \lq\lq{}tensor products 
%with supports\rq\rq{} for the case $X$ is not smooth.
%See \cite[Chapter 5]{CG97} for details.
For a $G$-equivariant vector bundle $\mathcal{E}$ on $X$,
we also define
$
\bigwedge_{u} [\mathcal{E}] := 
\sum_{i=0}^{\mathrm{rank} \, \mathcal{E}}
u^{i} [\bigwedge^{i}\mathcal{E}]
\in [\mathcal{O}_{X}] + u K^{G}(X)[u].
$
If $0 \to \mathcal{E}_{1} \to \mathcal{E} \to \mathcal{E}_{2} \to  0$
is an exact sequence of $G$-equivariant 
vector bundles on $X$, we have
$\bigwedge_{u} [\mathcal{E}] = 
\bigwedge_{u} [\mathcal{E}_{1}] \cdot
\bigwedge_{u} [\mathcal{E}_{2}]$.
Therefore we set 
$\bigwedge_{u}([\mathcal{E}_{1}] + 
[\mathcal{E}_{2}]) := 
\bigwedge_{u}[\mathcal{E}_{1}]
\cdot \bigwedge_{u}[\mathcal{E}_{2}]
$  
and 
$\bigwedge_{u}(-[\mathcal{E}]) := 
(\bigwedge_{u} [\mathcal{E}])^{-1} \in 
[\mathcal{O}_{X}] + u K^{G}(X)[\![u]\!].
$

When $X = \mathrm{pt}$, we have
$K^{G}(\mathrm{pt}) = R(G)$.
For a general $X$,
the group $K^{G}(X)$ is a module over $R(G)$.
When $G$ is written in the form 
$G = G_{0} \times \C^{\times}$ 
where $G_{0}$ is another linear algebraic group,
we regard $K^{G}(X)$ as a $\Z[q^{\pm 1}]$-module 
through the standard identification
$\Z[q^{\pm 1}] \cong R(\C^\times)$
and the algebra inclusion $R(\C^\times) \hookrightarrow R(G)$
induced from the natural projection $G=G_0 \times \C^\times \to \C^\times$.
Then we write
$\K^{G}(X) := K^{G}(X)\otimes_{\Z[q^{\pm 1}]} \kk$
and $\mathcal{R}(G) := R(G) \otimes_{\Z[q^{\pm 1}]} \kk$,
where $\kk = \overline{\Q(q)}$
as before.
For each $m \in \Z$, let $L_{m}$ denote
the $1$-dimensional $\C^{\times}$-module
of weight $m \in \Z$.
Namely $[L_{m}] = q^{m}$
in $R(\C^{\times})$. 
Although it is an abuse of notation,
we write $q^{m}\mathcal{F} := L_{m} \otimes_{\mathcal{O}_{X}}
\mathcal{F} $ for any 
$G$-equivariant coherent sheaf $\mathcal{F}$
on $X$ so that we have $q^{m}[\mathcal{F}] 
= [q^{m} \mathcal{F}]$ in $\K^{G}(X)$.  

We also use the equivariant 
topological $K$-homologies denoted by 
$K_{i, \mathrm{top}}^{G}(X)$
($i=0,1$).
There is a canonical comparison map
$K^{G}(X) \to K^{G}_{0, \mathrm{top}}(X)$
(see~\cite[Section 5.5.5]{CG97}).

Let $Y$ be a $G$-invariant closed 
subvariety of $X$ and
$U = X \setminus Y$ be the complement of $Y$.
Then the inclusions
$
Y \xrightarrow{i} X \xleftarrow{j} U 
$ 
induce the followings:
\begin{enumerate}
\item
an exact sequence:
\begin{equation}
\label{Eq:MVseq}
\xy
\xymatrix{
K^{G}(Y) \ar[r]^{i_{*}} & K^{G}(X)
\ar[r]^{j^{*}} 
&K^{G}(U) \ar[r] & 0, 
}
\endxy
\end{equation}
\item
an exact hexagon:
\begin{equation}
\label{Eq:hexagon}
\xy
\xymatrix{
K^{G}_{0, \mathrm{top}}(Y) \ar[r]^{i_{*}}
& K^{G}_{0, \mathrm{top}}(X) \ar[r]^{j^{*}}
& K^{G}_{0, \mathrm{top}}(U) \ar[d] \\
K^{G}_{1, \mathrm{top}}(U) \ar[u]
& K^{G}_{1, \mathrm{top}}(X) \ar[l]_{j^{*}} 
& K^{G}_{1, \mathrm{top}}(Y) \ar[l]_{i_{*}}.}
\endxy
\end{equation}
\end{enumerate}

%%%%%%%%%%%%%%%%%%%%%%%%%%%%%%%%%%%%

\subsection{The Nakajima homomorphism and its completion}
\label{Ssec:Nakajima}

Fix a dominant weight $\lambda \in \cP^{+}$ and
consider the corresponding quiver variety
$\pi \colon \M(\lambda) \to \M_{0}(\lambda).$
We define the {\em Steinberg type variety} $Z(\lambda)$ as
$$
Z(\lambda) := 
\M(\lambda) 
\times_{\M_{0}(\lambda)} \M(\lambda)
= 
\bigsqcup_{\nu_{1}, \nu_{2} \in \cQ^{+}}
\M(\nu_{1},\lambda)
\times_{\M_{0}(\lambda)}
\M(\nu_{2}, \lambda),
$$
together with the canonical map 
$\pi \colon Z(\lambda) \to \M_{0}(\lambda).$
By the convolution product,
the equivariant $K$-group 
$$
\K^{\G(\lambda)}(Z(\lambda))
= \bigoplus_{\nu_{1}, \nu_{2} \in \mathsf{Q^{+}}}
\K^{\G(\lambda)}
(\M(\nu_{1},\lambda)
\times_{\M_{0}(\lambda)}
\M(\nu_{2}, \lambda)
)
$$
becomes an algebra
over the commutative $\kk$-algebra
$ \R(\lambda) := \R(\G(\lambda)).$
Note that this notation is consistent
with (\ref{Eq:Rlambda}).

We consider the following 
tautological vector bundles on $\M(\nu, \lambda)$. 
The vector bundle $\V^{\nu}_{i}$
is defined by $\V^{\nu}_{i}
:= \mu^{-1}(0)^{\mathrm{st}} \times_{G(\nu)} V^{\nu}_{i}$
for each $i \in I$. 
We regard $\V^{\nu}_{i}$ as a 
$\G(\lambda)$-equivariant vector bundle
with the trivial action.
On the other hand, we consider 
the trivial vector bundle
$\W^{\lambda}_{i}
:= \M(\nu, \lambda) \times W^{\lambda}_{i}$
with fiber $W^{\lambda}_{i}$
for each $i \in I$. 
We regard $\W^{\lambda}_{i}$ as
a $\G(\lambda)$-equivariant vector bundle
with the natural $G(\lambda)$-action    
and the trivial $\C^{\times}$-action.
Recall the complex of vector spaces
$C_{i}(\nu, \lambda)_{\mathbf{x}}$ 
for each $\mathbf{x} \in \mu^{-1}(0) 
\subset \mathbf{M}(V^{\nu}, W^{\lambda})
$
defined in (\ref{Eq:three_pt}).
This complex yields the complex 
$\mathcal{C}_{i}(\nu, \lambda)$ 
of 
$\G(\lambda)$-equivariant vector bundles on $\M(\nu, \lambda)$:
\begin{equation}
\nonumber
\mathcal{C}_{i}(\nu, \lambda) \colon \;
q^{-2} \V^{\nu}_{i} \xrightarrow{\sigma_{i}}
q^{-1} \left(
\W^{\lambda}_{i} \oplus 
\bigoplus_{j \sim i} \V^{\nu}_{j} \right)
\xrightarrow{\tau_{i}}
\V^{\nu}_{i}.
\end{equation}          
Note that the class 
of the complex $\mathcal{C}_{i}(\nu, \lambda)$
in $\K^{\G(\lambda)}(\M(\nu, \lambda))$
is calculated as
$$  
[\mathcal{C}_{i}(\nu, \lambda)]
= q^{-1}
\left( [\W^{\lambda}_{i}] 
-(q+q^{-1})[\V^{\nu}_{i}]
+\sum_{j \sim i}[\V^{\nu}_{j}]
\right).
$$

Then we have the following fundamental result 
due to Nakajima~\cite{Nakajima01}.

\begin{Thm}[Nakajima~{\cite[Theorem 9.4.1]{Nakajima01}}] 
\label{Thm:Nakajima}
There is a $\kk$-algebra homomorphism
$$
\Phi_{\lambda} \colon
\tilU(L \g) \to 
\K^{\G(\lambda)}(Z(\lambda)),
$$
such that 
$$
\Phi_{\lambda} (a_{\mu}) = 
\begin{cases}
\Delta_{*}[\mathcal{O}_{\M(\nu, \lambda)}] &
\text{if $\nu := \lambda - \mu \in \cQ^{+}$;} \\
0 
& \text{otherwise,}
\end{cases}
$$
and it sends 
the series
$\psi_{i}^{\pm}(z)a_{\mu}$
with $\nu := \lambda - \mu \in \cQ^{+}, \, i \in I$
to the series
$$
q^{\mu(h_{i})}
\Delta_{*} \left(
\frac{\bigwedge \nolimits_{-1/qz}[\mathcal{C}_{i}
(\nu, \lambda)]}
{\bigwedge \nolimits_{-q/z}[\mathcal{C}_{i}(\nu, \lambda)]}
\right)^{\pm},
$$
where
$\Delta \colon \M(\nu, \lambda)
\to \M(\nu, \lambda)
\times_{\M_{0}(\lambda)} 
\M(\nu, \lambda)
$ 
is the diagonal embedding
and 
$(-)^{\pm}$ denotes the formal expansion at 
$z= \infty$ and $0$ respectively. 
\end{Thm}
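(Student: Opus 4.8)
The plan is to realize $\Phi_{\lambda}$ by prescribing the images of the Drinfeld-type generators through explicit geometric correspondences inside $Z(\lambda)$ and then verifying the defining relations of $\tilU(L\g)$ within the convolution algebra $\K^{\G(\lambda)}(Z(\lambda))$. First I would recall the convolution formalism for equivariant $K$-theory of Steinberg type varieties (following \cite{CG97}, with the care needed for tensor products with supports in the singular setting): the classes $\Delta_{*}[\mathcal{O}_{\M(\nu,\lambda)}]$ are mutually orthogonal idempotents summing to the identity, which forces the stated formula for $\Phi_{\lambda}(a_{\mu})$ and the $\cQ^{+}\times\cQ^{+}$-grading of the algebra; the relations involving $q^{h}$ and the $\gamma$-grading then follow at once. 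The Cartan currents $\psi_{i}^{\pm}(z)a_{\mu}$ are declared to be the stated ratio of $\bigwedge$-classes of the complex $\mathcal{C}_{i}(\nu,\lambda)$, expanded at $z=\infty$ and $0$ respectively; a short lemma giving support and degree bounds for the tautological bundles ensures that every formal series one writes down has coefficients in the $K$-group, so all the elements are well-defined.

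Next I would introduce, for each $i\in I$, the Hecke correspondence $\mathfrak{P}_{i}(\nu,\lambda)\subset\M(\nu,\lambda)\times\M(\nu-\alpha_{i},\lambda)$ parametrizing pairs of stable data related by adding a one-dimensional graded summand at the vertex $i$; this is a smooth variety carrying a tautological quotient line bundle $\mathcal{L}_{i}$, and $\Phi_{\lambda}(x_{i}^{\pm}(z))$ is the generating series of pushforwards from $\mathfrak{P}_{i}$, via its two projections, of suitable monomials in $\mathcal{L}_{i}$ twisted by $\mathcal{C}_{i}$, normalized so as to be compatible with the $\psi_{i}^{\pm}$ formula. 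With these definitions in place the verification of relations splits as follows. The $\psi^{\pm}$--$x^{\pm}$ relations reduce to computing the restriction of $\V_{j}$ to $\mathfrak{P}_{i}$ and reading off the induced eigenvalue shift, a Koszul/Chern-class computation on a smooth space. The quadratic $x^{\pm}$--$x^{\pm}$ relations follow from comparing the two orderings of the fiber product of two same-sign Hecke correspondences, again a clean-intersection computation on smooth spaces. The Serre relations are reduced, via $\G(\lambda)$-equivariance together with the functorial closed embeddings $\M_{0}(\nu,\lambda)\hookrightarrow\M_{0}(\nu',\lambda)$, to the rank-two subquiver case of type $\mathsf{A}_{2}$, where they can be checked directly.

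The main obstacle is the Drinfeld commutator $[x_{i}^{+}(z),x_{i}^{-}(w)] = \frac{\delta_{ij}}{q-q^{-1}}\bigl(\delta(w/z)\psi_{i}^{+}(w)-\delta(z/w)\psi_{i}^{-}(z)\bigr)$. Geometrically this amounts to computing the difference of the two composites $\mathfrak{P}_{i}^{+}\ast\mathfrak{P}_{i}^{-}$ and $\mathfrak{P}_{i}^{-}\ast\mathfrak{P}_{i}^{+}$ over $\M(\nu,\lambda)\times_{\M_{0}(\lambda)}\M(\nu,\lambda)$. Both composites are supported on the union of the diagonal and an off-diagonal correspondence; on the off-diagonal part the two contributions cancel by a transversality argument, while along the diagonal one picks up an excess-intersection term whose refined Euler class, after applying the Koszul identity $\bigwedge_{u}([\mathcal{E}_{1}]+[\mathcal{E}_{2}]) = \bigwedge_{u}[\mathcal{E}_{1}]\cdot\bigwedge_{u}[\mathcal{E}_{2}]$ to $\mathcal{C}_{i}(\nu,\lambda)$, is precisely the difference of the two expansions of $\psi_{i}^{\pm}$, with the prefactor $(q-q^{-1})^{-1}$ emerging from the $q$-bookkeeping. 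Identifying the relevant obstruction bundle on the (in general singular) fiber product and carrying through this Euler-class computation with the correct $q$-powers is the delicate point; everything else is either formal or reduces to rank one and rank two. This recovers exactly Nakajima's Theorem 9.4.1 in \cite{Nakajima01}, which one may alternatively simply invoke.
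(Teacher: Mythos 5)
This theorem is stated in the paper as a direct citation of Nakajima \cite{Nakajima01}, Theorem 9.4.1; the paper supplies no proof of its own, and for the purposes of this paper it is enough simply to invoke Nakajima's result, as you acknowledge in your final sentence.

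Your sketch is nevertheless a reasonable outline of Nakajima's actual argument. The main ingredients you identify are the right ones: the classes $\Delta_{*}[\mathcal{O}_{\M(\nu,\lambda)}]$ as an orthogonal family of idempotents fixing the $\cP$-grading; the Hecke correspondences $\mathfrak{P}_{i}(\nu,\lambda)$ with their tautological line bundles as the geometric carriers of $x_{i}^{\pm}(z)$; and the fact that the Drinfeld commutator $[x_{i}^{+}(z),x_{i}^{-}(w)]$ is where the real work lives. Two small caveats are worth flagging. First, in Nakajima's proof the verification of the relations is not carried out purely by intersection-theoretic manipulations in the convolution algebra $\K^{\G(\lambda)}(Z(\lambda))$; the heavy lifting is done by first realizing the action on $K^{\G(\lambda)}(\M(\lambda))$, restricting to torus fixed points (where the quiver variety factors into products of smaller pieces, essentially the $A_{1}$ situation), checking the identities there, and then using an injectivity/localization argument to lift them back to $Z(\lambda)$. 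Your description of the off-diagonal cancellation and a diagonal excess-intersection term capturing the $\psi^{\pm}$-discrepancy is the right moral picture, but making this rigorous directly on the singular fiber product is exactly the delicate step you point to, and Nakajima sidesteps some of it via the fixed-point reduction. Second, the Serre relations genuinely need both the rank-one and rank-two reductions (as you say), and one must also separately control the $\C^{\times}$-equivariance to get the $q$-powers in the $\bigwedge_{u}$-classes to match the presentation of $\tilU$; this is easy to misstate. None of this affects the correctness of your high-level plan, but if you wanted to expand the sketch into a proof you would need to be considerably more careful at the commutator step.
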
               

We refer to the homomorphism $\Phi_{\lambda}$
as the {\em Nakajima homomorphism}.

By construction,
the equivariant $K$-group
$\K^{\G(\lambda)}(\LL(\lambda))$
of the central fiber 
$\LL(\lambda)$
becomes a module over
the convolution algebra
$\K^{\G(\lambda)}(Z(\lambda)).$
Via the Nakajima homomorphism $\Phi_{\lambda}$,
we regard $\K^{\G(\lambda)}(\LL(\lambda))$
as a $(U_{q}, \R(\lambda))$-bimodule.

\begin{Thm}[Nakajima]
\label{Thm:Kcentral}
As a $(U_{q}, \R(\lambda))$-bimodule,
the module $\K^{\G(\lambda)}(\LL(\lambda))$
is isomorphic to
the global Weyl module $\bW(\lambda)$.
The element $[\mathcal{O}_{\LL(0, \lambda)}]
\in \K^{\G(\lambda)}(\LL(\lambda))$
corresponds to the cyclic vector
$w_{\lambda} \in \bW(\lambda).$
(Recall $\LL(0, \lambda) = \M(0, \lambda) = \mathrm{pt}$ from Subsection~\ref{Ssec:usual}.)
\end{Thm}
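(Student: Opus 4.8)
The plan is to construct an explicit $(U_{q},\R(\lambda))$-bimodule homomorphism
$$
\Xi\colon \bW(\lambda)\longrightarrow \K^{\G(\lambda)}(\LL(\lambda)),\qquad w_{\lambda}\longmapsto c_{0}:=[\mathcal{O}_{\LL(0,\lambda)}],
$$
and then to show it is bijective. Since $\M(0,\lambda)$ is a single point, so is $\LL(0,\lambda)$, and $c_{0}$ is the direct image under the closed inclusion $\LL(0,\lambda)\hookrightarrow\LL(\lambda)$ of the class $1\in\K^{\G(\lambda)}(\LL(0,\lambda))=\R(\lambda)$. First I would verify that $c_{0}$ satisfies the defining relations of the cyclic generator $w_{\lambda}$ of the global Weyl module. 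In Nakajima's construction the decomposition $\K^{\G(\lambda)}(\LL(\lambda))=\bigoplus_{\nu\in\cQ^{+}}\K^{\G(\lambda)}(\LL(\nu,\lambda))$ is the $q^{h}$-weight decomposition, the $\nu$-summand having weight $\lambda-\nu$; thus $c_{0}$ has weight $\lambda$, so $q^{h}c_{0}=q^{\lambda(h)}c_{0}$. The relation $e_{i,r}c_{0}=0$ holds because $\Phi_{\lambda}$ sends $(U_{q})_{\alpha_{i}}$ into convolution classes lowering $\nu$ and $\nu=0$ is minimal. Finally $(f_{i,r})^{\lambda(h_{i})+1}c_{0}=0$ follows from the $\ell$-integrability of $\K^{\G(\lambda)}(\LL(\lambda))$ as a $U_{q}$-module, established in \cite{Nakajima01}: the $\mathfrak{sl}_{2}$-subalgebra attached to $i\in I$ acts locally finitely and $c_{0}$ is a highest weight vector of weight $\lambda(h_{i})$. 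Granting $\ell$-integrability, the universal property of $\bW(\lambda)$ in Theorem \ref{Thm:globalWeyl}(\ref{Thm:globalWeyl:univ}) then produces the $U_{q}$-homomorphism $\Xi$ with $w_{\lambda}\mapsto c_{0}$.

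Next I would show $\Xi$ is $\R(\lambda)$-linear by computing the action of the Drinfeld currents $\psi_{i}^{\pm}(z)$ on $c_{0}$ using Theorem \ref{Thm:Nakajima}. For $\nu=0$ the complex $\mathcal{C}_{i}(0,\lambda)$ degenerates to $q^{-1}\W^{\lambda}_{i}$, a trivial $\G(\lambda)$-equivariant bundle whose equivariant Chern roots are precisely the generators $z_{i,1},\dots,z_{i,l_{i}}$ of $\R(\lambda)$; hence
$$
\psi_{i}^{\pm}(z)\,c_{0}=q^{l_{i}}\left(\frac{\bigwedge\nolimits_{-1/qz}[q^{-1}\W^{\lambda}_{i}]}{\bigwedge\nolimits_{-q/z}[q^{-1}\W^{\lambda}_{i}]}\right)^{\pm}c_{0}=q^{l_{i}}\prod_{k=1}^{l_{i}}\left(\frac{1-q^{-2}z_{i,k}z^{-1}}{1-z_{i,k}z^{-1}}\right)^{\pm}c_{0},
$$
which is exactly the $\psi_{i}^{\pm}(z)$-eigenvalue on $w_{\lambda}$ recorded in Theorem \ref{Thm:globalWeyl}(\ref{Thm:globalWeyl:loop}). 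Since on both modules the $\R(\lambda)$-action is pinned down by these eigenvalues (the parameters $z_{i,k}$ being recovered from them) and $\Xi$ commutes with $\psi_{i}^{\pm}(z)\in U_{q}$, it intertwines the two $\R(\lambda)$-module structures; hence $\Xi$ is a morphism of $(U_{q},\R(\lambda))$-bimodules.

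It remains to prove $\Xi$ is bijective, for which I would invoke Nakajima's geometric analysis of the central fibre. Surjectivity: $c_{0}$ generates $\K^{\G(\lambda)}(\LL(\lambda))$ over $U_{q}$, the $K$-group being the $\ell$-highest weight cyclic module generated by the structure sheaf of the zero-dimensional stratum (this follows from the excision sequence (\ref{Eq:MVseq}) run along the stratification together with the local description of the strata). For injectivity, Nakajima's work (see also Theorem \ref{Thm:Kfibers} below) shows that $\K^{\G(\lambda)}(\LL(\lambda))$ is free of finite rank over $\R(\lambda)$ and, by an independent local argument, identifies its fibre $\K^{\G(\lambda)}(\LL(\lambda))\otimes_{\R(\lambda)}\R(\lambda)/\rr_{\lambda,\hlam}$ with the local Weyl module $W(\hlam)$ for every $\ell$-dominant $\hlam$ with $\mathsf{cl}(\hlam)=\lambda$. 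As $\bW(\lambda)/\rr_{\lambda,\hlam}=W(\hlam)$ by definition, the reduction $\overline{\Xi}_{\hlam}\colon W(\hlam)\to W(\hlam)$ is a surjective endomorphism of a finite-dimensional space, hence an isomorphism. Freeness of the target makes $-\otimes_{\R(\lambda)}\R(\lambda)/\rr_{\lambda,\hlam}$ exact on $0\to\ker\Xi\to\bW(\lambda)\to\K^{\G(\lambda)}(\LL(\lambda))\to 0$, so $\ker\Xi\otimes_{\R(\lambda)}\R(\lambda)/\rr_{\lambda,\hlam}=0$ for all such $\hlam$; since $\ker\Xi$ is finitely generated over the Noetherian ring $\R(\lambda)$, Nakayama's lemma forces $\ker\Xi=0$. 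The last assertion of the theorem is built into the construction of $\Xi$. The genuine obstacle is the geometric input from \cite{Nakajima01} — the $\ell$-integrability and cyclicity of the $K$-group, its freeness over $\R(\lambda)$, and the local identification of the fibres — while the remaining steps are the verification of the Weyl-module relations and the Chern-root computation displayed above.
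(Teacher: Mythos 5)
The paper dispatches this theorem with a single citation to Nakajima \cite[Theorem 2]{Nakajima04}, so there is no ``paper's proof'' to compare against in detail; your proposal is instead an attempt to reconstruct the argument, and the reconstruction is largely sound. The construction of $\Xi$ via the universal property of $\bW(\lambda)$, the Chern-root verification that the $\psi_i^{\pm}(z)$-eigenvalues on $c_0$ match Theorem \ref{Thm:globalWeyl}(\ref{Thm:globalWeyl:loop}) so that $\Xi$ is $\R(\lambda)$-linear, and the use of reduction modulo the $\rr_{\lambda,\hlam}$ together with Nakayama are all in the spirit of Nakajima's original argument, and your explicit acknowledgment at the end of exactly which facts are geometric input is exactly right.

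Two dependency issues deserve an explicit warning. First, Theorem \ref{Thm:globalWeyl}(\ref{Thm:globalWeyl:End}) — the statement that $\bW(\lambda)$ is \emph{free of finite rank} over $\R(\lambda)$ — is proved in the present paper \emph{by appeal to} Theorem \ref{Thm:Kcentral} and Theorem \ref{Thm:Kfibers}, so you must not use it here. You don't seem to, but it is worth saying so aloud: you need only that $\bW(\lambda)$ is a finitely generated $\R(\lambda)$-module, which Chari--Pressley prove by algebraic means independently of the geometry. Second, the injectivity step rests on identifying the specialisation $\K^{\G(\lambda)}(\LL(\lambda))\otimes_{\R(\lambda)}\R(\lambda)/\rr_{\lambda,\hlam}$ with the local Weyl module $W(\hlam)$. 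In the paper that identification is Proposition \ref{Prop:KlocalWeyl}, whose proof \emph{invokes Theorem \ref{Thm:Kcentral}} — so you cannot use that route. You flag this by promising ``an independent local argument''; such an argument does exist in \cite{Nakajima01} (the fibre is Nakajima's standard module, which is then matched with $W(\hlam)$ by a dimension/character computation), but this matching is a nontrivial input (essentially the Weyl module conjecture for quantum affine algebras in simply-laced type) rather than a formal reduction. As written your proposal is correct in outline, but the phrase ``independent local argument'' should be expanded to make clear it is not Proposition \ref{Prop:KlocalWeyl}, since that would create a vicious circle.
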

\begin{proof}
See \cite[Theorem 2]{Nakajima04}.
\end{proof}

For future references, 
we collect some important properties 
of the equivariant $K$-groups of
central fibers.  
 
\begin{Thm}[Nakajima]   
\label{Thm:Kfibers}
Let $G^{\prime}$ a closed subgroup 
of $\G(\lambda)$.
The followings
hold.
\begin{enumerate}
\item \label{Thm:Kfibers:K1}
We have 
$K^{G^{\prime}}_{1, \mathrm{top}}(\LL(\lambda)) = 0;$
\item \label{Thm:Kfibers:K0} 
$K_{0, \mathrm{top}}^{G^{\prime}}(\LL(\lambda))$ 
is a free $R(G^{\prime})$-module and
the comparison map
$K^{G^{\prime}}(\LL(\lambda)) \to 
K^{G^{\prime}}_{0, \mathrm{top}}(\LL(\lambda))$ 
is an isomorphism;
\item \label{Thm:Kfibers:subgroup}
The natural map 
$K^{\G(\lambda)}(\LL(\lambda)) \otimes_{R(\G(\lambda))}
R(G^{\prime}) \to K^{G^{\prime}}(\LL(\lambda))$
is an isomorphism;
\item \label{Thm:Kfibers:Kunneth}
The K\"{u}nneth homomorphisms
\begin{align*}
K^{\G(\lambda)}(\LL(\lambda)) 
\otimes_{R(\G(\lambda))}
K^{\G(\lambda)}(\LL(\lambda)) 
&\to 
K^{\G(\lambda)}(\LL(\lambda) \times \LL(\lambda)),\\
K_{0, \mathrm{top}}^{\G(\lambda)}(\LL(\lambda)) 
\otimes_{R(\G(\lambda))}
K_{i, \mathrm{top}}^{\G(\lambda)}(\LL(\lambda)) 
&\to 
K_{i, \mathrm{top}}^{\G(\lambda)}(\LL(\lambda) 
\times \LL(\lambda))
\end{align*}
are isomorphisms, 
where $i = 0, 1$.
\end{enumerate}
\end{Thm}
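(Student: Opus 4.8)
The plan is to deduce all four assertions from a single geometric input --- a Bialynicki--Birula type cell decomposition of $\LL(\lambda)$ --- by an induction on $\dim \M(\lambda)$ that \emph{simultaneously} proves (\ref{Thm:Kfibers:K1})--(\ref{Thm:Kfibers:Kunneth}) and exhibits, for every closed subgroup $G' \subseteq \G(\lambda)$, an explicit $R(G')$-basis of $K^{G'}(\LL(\lambda))$. By the structure theory of quiver varieties (Nakajima \cite{Nakajima94, Nakajima98, Nakajima01}) one may choose a generic one-parameter subgroup $\rho \colon \C^{\times} \to \G(\lambda)$ (a combination of the scaling torus with a generic cocharacter of the framing torus) so that the attracting-set decomposition presents $\LL(\lambda)$ as a \emph{finite} disjoint union $\LL(\lambda) = \bigsqcup_{c} S_{c}$ of $\G(\lambda)$-invariant locally closed subvarieties --- finite because $\M(\nu,\lambda)\neq\varnothing$ for only finitely many $\nu$ --- each of which is a $\G(\lambda)$-equivariant affine bundle over a connected component $F_{c} \subseteq \M(\lambda)^{\rho}$, and so that the $S_{c}$ can be ordered with the partial unions $Z_{k} := \bigsqcup_{j \le k} S_{j}$ closed. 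Moreover each $F_{c}$ is a product of central fibers of quiver varieties of strictly smaller dimension (a point in the extreme case). The base case $\M(\lambda) = \mathrm{pt}$ (e.g.\ $\lambda = 0$) is immediate, since $K^{G'}(\mathrm{pt}) = K^{G'}_{0,\mathrm{top}}(\mathrm{pt}) = R(G')$ and $K^{G'}_{1,\mathrm{top}}(\mathrm{pt}) = 0$. For the inductive step the inductive hypothesis applied to the smaller quiver varieties --- using assertion (\ref{Thm:Kfibers:Kunneth}), whose proof below gives as well the analogous statement for a product of finitely many (not necessarily equal) central fibers --- gives all four properties for each $F_{c}$, and homotopy invariance of algebraic and of topological equivariant $K$-theory along the affine bundles $S_{c} \to F_{c}$ transports them to the $S_{c}$; in particular $K^{G'}(S_{c}) \cong K^{G'}(F_{c})$ is free with a basis given by closures of cells, $K^{G'}_{1,\mathrm{top}}(S_{c}) = 0$, and the comparison, base-change and Künneth maps are isomorphisms on each $S_{c}$.

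Assertion (\ref{Thm:Kfibers:K1}) and the freeness in (\ref{Thm:Kfibers:K0}) follow by induction along the finite filtration $\varnothing = Z_{0} \subset Z_{1} \subset \cdots \subset Z_{N} = \LL(\lambda)$. For the step from $Z_{k-1}$ to $Z_{k}$, apply the exact hexagon (\ref{Eq:hexagon}) with $Y = Z_{k-1}$, $X = Z_{k}$, $U = S_{k}$. By induction $K^{G'}_{1,\mathrm{top}}(Z_{k-1}) = 0$, and $K^{G'}_{1,\mathrm{top}}(S_{k}) = 0$ with $K^{G'}_{0,\mathrm{top}}(S_{k})$ free; the hexagon then forces $K^{G'}_{1,\mathrm{top}}(Z_{k}) = 0$ together with a short exact sequence $0 \to K^{G'}_{0,\mathrm{top}}(Z_{k-1}) \to K^{G'}_{0,\mathrm{top}}(Z_{k}) \to K^{G'}_{0,\mathrm{top}}(S_{k}) \to 0$. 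The right-hand term is free, hence projective, so this sequence splits and $K^{G'}_{0,\mathrm{top}}(Z_{k})$ is free. At $k = N$ this proves (\ref{Thm:Kfibers:K1}) and the freeness part of (\ref{Thm:Kfibers:K0}).

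The comparison-map statement in (\ref{Thm:Kfibers:K0}) and assertions (\ref{Thm:Kfibers:subgroup}) and (\ref{Thm:Kfibers:Kunneth}) come from tracking bases. Running the same induction with the algebraic sequence (\ref{Eq:MVseq}) in place of (\ref{Eq:hexagon}) shows that $K^{G'}(\LL(\lambda))$ is spanned over $R(G')$ by the classes $[\mathcal{O}_{\overline{S_{c}}}]$ (more precisely by the closures in $\LL(\lambda)$ of the preimages in $S_{c}$ of the cell-closures of $F_{c}$), since each such class restricts on $S_{c}$ to a basis of $K^{G'}(S_{c})$. Their images under the comparison map span the free module $K^{G'}_{0,\mathrm{top}}(\LL(\lambda))$, and a rank count shows these images form a basis; hence the original classes are $R(G')$-linearly independent in $K^{G'}(\LL(\lambda))$, hence a basis there as well. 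Thus $K^{G'}(\LL(\lambda))$ is free and the comparison map carries a basis to a basis, proving (\ref{Thm:Kfibers:K0}). For (\ref{Thm:Kfibers:subgroup}): running this for $G' = \G(\lambda)$ produces a basis $\{[\mathcal{O}_{\overline{S_{c}}}]\}$ of $K^{\G(\lambda)}(\LL(\lambda))$ over $R(\G(\lambda))$ that restriction of equivariant coherent sheaves along $G' \hookrightarrow \G(\lambda)$ sends to the corresponding $R(G')$-basis of $K^{G'}(\LL(\lambda))$; since the natural map is $R(\G(\lambda))$-linear, it is an isomorphism of free modules. For (\ref{Thm:Kfibers:Kunneth}): $\LL(\lambda) \times \LL(\lambda)$ carries the product decomposition $\bigsqcup_{c,c'} S_{c} \times S_{c'}$, whose pieces are affine bundles over the products $F_{c} \times F_{c'}$, to which the inductive hypothesis applies (again via the product form of (\ref{Thm:Kfibers:Kunneth}) for smaller fibers); the argument above yields bases $\{[\mathcal{O}_{\overline{S_{c}}\times\overline{S_{c'}}}]\}$ of $K^{\G(\lambda)}(\LL(\lambda)\times\LL(\lambda))$ and of its topological counterpart, and the Künneth maps send $[\mathcal{O}_{\overline{S_{c}}}] \otimes [\mathcal{O}_{\overline{S_{c'}}}]$ to $[\mathcal{O}_{\overline{S_{c}}\times\overline{S_{c'}}}]$, i.e.\ a basis to a basis; for the topological Künneth map the vanishing $K^{\G(\lambda)}_{1,\mathrm{top}} = 0$ from (\ref{Thm:Kfibers:K1}) rules out any $\mathrm{Tor}$ correction term. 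Hence both Künneth maps are isomorphisms.

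The main obstacle is the geometric input of the first paragraph: producing the $\rho$-cell decomposition of $\LL(\lambda)$ whose strata are affine bundles over fixed components that are genuinely of smaller dimension and are themselves products of smaller central fibers --- this is exactly what makes the simultaneous induction close, and it is where one must invoke Nakajima's analysis of quiver varieties and their torus-fixed loci. Everything downstream is a formal manipulation of the long exact sequences (\ref{Eq:MVseq}) and (\ref{Eq:hexagon}) together with projectivity of free modules and bookkeeping of the explicit bases $\{[\mathcal{O}_{\overline{S_{c}}}]\}$.
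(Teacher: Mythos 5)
Your strategy is the right one in spirit --- Nakajima's proof of property $(T_{\G(\lambda)})$ for central fibers in \cite[Section 7]{Nakajima01} and the K\"unneth statement in \cite[Theorem 3.4]{Nakajima01t} do rest on Bialynicki--Birula style attracting-set decompositions and on inducting through smaller quiver varieties --- but the first paragraph of your sketch asserts two things that cannot both hold for the one-parameter subgroup you choose, and this gap propagates through the whole argument.

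Concretely, you take $\rho \colon \C^{\times} \to \G(\lambda)$ to be ``a combination of the scaling torus with a generic cocharacter of the framing torus,'' and you require both that the attracting sets $S_{c}$ be $\G(\lambda)$-invariant and that they be $\G(\lambda)$-equivariant affine bundles over components $F_{c} \subset \M(\lambda)^{\rho}$ that factor as products of smaller central fibers. These demands conflict. If the cocharacter of $G(\lambda)$ is genuinely non-central (which is what ``generic'' means and what is needed for $\M(\lambda)^{\rho}$ to break into products of smaller central fibers), then $\M(\lambda)^{\rho}$ is invariant only under the centralizer $Z_{\G(\lambda)}(\rho)$, a proper subgroup of $\G(\lambda)$; conjugating by $g \in \G(\lambda)$ sends $\M(\lambda)^{\rho}$ to $\M(\lambda)^{g\rho g^{-1}} \ne \M(\lambda)^{\rho}$. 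Hence neither the $F_{c}$ nor the $S_{c}$ are $\G(\lambda)$-invariant, so the equivariant Mayer--Vietoris sequence (\ref{Eq:MVseq}) and the hexagon (\ref{Eq:hexagon}) cannot be applied for $G' = \G(\lambda)$ along your filtration, and the inductive step does not close. If, to fix this, you instead take $\rho$ central (the scaling torus, or the scaling torus combined with a cocharacter of the \emph{center} of $G(\lambda)$), the cells become $\G(\lambda)$-invariant, but then $\M(\lambda)^{\rho}$ no longer decomposes as products of smaller central fibers, so the induction has nothing to recurse on. Resolving this tension is precisely the technical content of Nakajima's argument: he works with a decomposition $W^{\lambda} = W^{1} \oplus W^{2}$ and the one-parameter subgroup acting by weight $0$ on $W^{1}$, weight $1$ on $W^{2}$, and a large scaling weight, so the attracting sets are invariant only under the centralizer $G(W^{1}) \times G(W^{2}) \times \C^{\times}$; he then needs additional lemmas relating $(T_{G})$ to $(T_{G'})$ for $G' \subset G$ and carrying the property back to the full $\G(\lambda)$. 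Your sketch collapses this two-group bookkeeping into a single claim of $\G(\lambda)$-invariance that is not available. (For what it's worth, the paper's own proof of this theorem is a citation to \cite[Section 7]{Nakajima01}, \cite[Theorem 3.4]{Nakajima01t} and the Kazhdan--Lusztig property (n3) of \cite{KL87}, so reconstructing the argument ab initio is admirable, but the reconstruction as written is not yet correct.)
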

\begin{proof}
The properties 
(\ref{Thm:Kfibers:K1}),
(\ref{Thm:Kfibers:K0}),
(\ref{Thm:Kfibers:subgroup})
are the same as the property ($T_{\G(\lambda)}$) in
\cite[Section 7]{Nakajima01}. 
The assertion for $K^{\G(\lambda)}$ in (\ref{Thm:Kfibers:Kunneth})
follows from \cite[Theorem 3.4]{Nakajima01t}.
The assertion for $K^{\G(\lambda)}_{i, \mathrm{top}}$
in (\ref{Thm:Kfibers:Kunneth})
follows from 
the properties (\ref{Thm:Kfibers:K1}), (\ref{Thm:Kfibers:K0})
and the property (n3) in \cite[Section 1.2]{KL87}. 
\end{proof}
Next we fix an $\ell$-dominant $\ell$-weight
$\hlam \in \lP^{+}_{\Z}$
and put $\lambda := \mathsf{cl}(\hlam).$
We consider the corresponding
graded quiver varieties
$\pi^{\bullet} \colon \M^{\bullet}(\hlam)
\to \M_{0}^{\bullet}(\hlam)$ 
and the corresponding Steinberg type variety
$$
Z^{\bullet}(\hlam)
:= \M^{\bullet}(\hlam)
\times_{\M_{0}^{\bullet}(\hlam)}
\M^{\bullet}(\hlam).
$$
The $\G(\hlam)$-equivariant
$K$-group
$\K^{\G(\hlam)}(Z^{\bullet}(\hlam))$
is an algebra over
$\R(\G(\hlam))$
by the convolution product.
We set $\R(\hlam):= \R(\G(\hlam))$,
which is consistent with (\ref{Eq:Rhlam}).

We choose the $1$-dimensional subtorus $\T
\subset \G(\hlam) \subset \G(\lambda)$
as in Lemma~\ref{Lem:fixed}.
Then we have the identification
\begin{equation}
\label{Eq:fixedSteinberg}
Z^{\bullet}(\hlam) \cong Z(\lambda)^{\T}.
\end{equation}
Let $\rr_{\hlam}$ denote the maximal ideal
of the commutative ring
$\R(\hlam)$
corresponding to the subtorus $\T \subset \G(\hlam)$ and 
form the completion 
$\hR(\hlam) := \displaystyle
\varprojlim
\R(\hlam)
/ \rr_{\hlam}^{N}.
$
Note that the notation is consistent with (\ref{Eq:etale}). 
For any $\G(\hlam)$-variety $X$,
we write the corresponding completions of $K$-groups by
$$
\hK^{\G(\hlam)}(X)
: = \K^{\G(\hlam)}
(X) 
\otimes_{\R(\hlam)}
\hR(\hlam),
\quad
\hK_{i, \mathrm{top}}^{\G(\hlam)}(X)
: = \K_{i, \mathrm{top}}^{\G(\hlam)}
(X) 
\otimes_{\R(\hlam)}
\hR(\hlam).
$$
When $X$ is the Steinberg type variety 
$Z^{\bullet}(\hlam)$,
the completed $K$-group
$
\hK^{\G(\hlam)}
(Z^{\bullet}(\hlam))
$
is an algebra over $\hR(\hlam)$
with respect to the convolution product.
Thanks to the localization theorem, 
this algebra can be obtained from the convolution algebra $\K^{\G(\hlam)}(Z(\lambda))$
by the central completion $- \otimes_{\R(\hlam)} \hR(\hlam)$. 

\begin{Def}
\label{Def:completedNakajima}
We define the {\em completed Nakajima homomorphism}
$\widehat{\Phi}_{\hlam}
\colon \tilU \to \hK^{\G(\hlam)}
(Z^{\bullet}(\hlam))$
as the following composition:
$$
\tilU
\xrightarrow{\Phi_{\lambda}}
\K^{\G(\lambda)}(Z(\lambda)) 
\to
\K^{\G(\hlam)}(Z(\lambda)) 
\to
\hK^{\G(\hlam)}(Z(\lambda))
\xrightarrow{\cong} 
\hK^{\G(\hlam)}(Z^{\bullet}(\hlam)), 
$$
where the first homomorphism is the Nakajima
homomorphism $\Phi_{\lambda}$ in Theorem 
\ref{Thm:Nakajima}, the second 
is the restriction to 
the subgroup 
$\G(\hlam) \subset \G(\lambda)$,
the third is canonical and
the last is due to the localization theorem
and (\ref{Eq:fixedSteinberg}). 
\end{Def}      

Let $\hnu \in \lQ^{+}_{\Z}$ be an element such that
$\Mregg(\hnu, \hlam) \neq \varnothing$.
We pick an arbitrary point 
$x_{\hnu} \in \Mregg(\hnu, \hlam)$ and
consider 
the (non-equivariant) $K$-group 
$\K(\M^{\bullet}(\hlam)_{x_{\hnu}})
:= K(\M^{\bullet}(\hlam)_{x_{\hnu}}) \otimes_{\Z}
\kk
$
of the fiber $\M^{\bullet}(\hlam)_{x_{\hnu}}$.
This is a module over the convolution algebra
$\K(Z^{\bullet}(\hlam)) := K(Z^{\bullet}(\hlam)) \otimes_{\Z} \kk$.
We regard $\K(\M^{\bullet}(\hlam)_{x_{\hnu}})$
as a $\tilU$-module
via the following composition:
\begin{align*}
\tilU
\xrightarrow{\widehat{\Phi}_{\hlam}}
\hK^{\G(\hlam)}(Z^{\bullet}(\hlam))
\to
\hK^{\G(\hlam)}(Z^{\bullet}(\hlam))/\rr_{\hlam}
\xrightarrow{\cong}
\K^{\T}(Z^{\bullet}(\hlam))
\cong \K(Z^{\bullet}(\hlam)),
\end{align*}
where the third arrow is an isomorphism by Theorem~\ref{Thm:Kfibers}~(\ref{Thm:Kfibers:subgroup}).

\begin{Prop}
\label{Prop:KlocalWeyl}
The $\tilU$-module
$\K(\M^{\bullet}(\hlam)_{x_{\hnu}})$
is isomorphic to 
the local Weyl module $W(\hlam - \hnu)$.
\end{Prop}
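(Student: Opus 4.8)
The plan is to combine three ingredients: Nakajima's identification of the equivariant $K$-group of the central fiber with the global Weyl module (Theorem \ref{Thm:Kcentral}), the $\Stab$-equivariant isomorphism between a non-central fiber and a \emph{central} fiber of a smaller quiver variety (Lemma \ref{Lem:fiberisom2}), and base-change/localization compatibilities for equivariant $K$-theory. More precisely, fix $\hnu \in \lQ^{+}_{\Z}$ with $\Mregg(\hnu,\hlam) \neq \varnothing$ and $x_{\hnu} \in \Mregg(\hnu,\hlam)$, and set $\hmu := \hlam - \hnu \in \lP^{+}_{\Z}$. Write $H := \Stab_{\G(\hlam)} x_{\hnu}$, which acts on the fiber $\M^{\bullet}(\hlam)_{x_{\hnu}}$; by Lemma \ref{Lem:fiberisom2} there is an $H$-equivariant isomorphism $\M^{\bullet}(\hlam)_{x_{\hnu}} \cong \LL^{\bullet}(\hmu)$, where $H$ acts on the right-hand side through the homomorphism $\chi := (\hat\varphi \times \hat\rho)\circ \hat r^{-1} : H \to \G(\hmu)$. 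Taking (non-equivariant) $K$-theory gives $\K(\M^{\bullet}(\hlam)_{x_{\hnu}}) \cong \K(\LL^{\bullet}(\hmu))$, and by the $\T$-fixed-point identification $\LL^{\bullet}(\hmu) \cong \LL(\mathsf{cl}(\hmu))^{\T'}$ together with Theorem \ref{Thm:Kfibers}(\ref{Thm:Kfibers:subgroup}) applied with the trivial group, $\K(\LL^{\bullet}(\hmu)) \cong \K^{\G(\hmu)}(\LL^{\bullet}(\hmu)) \otimes_{\R(\hmu)} \kk = \K^{\G(\hmu)}(\LL^{\bullet}(\hmu))/\rr_{\hmu}$, which by the graded analogue of Theorem \ref{Thm:Kcentral} is $\bW(\mathsf{cl}(\hmu)) \otimes_{\R(\mathsf{cl}(\hmu))} \hR(\hmu)/\rr_{\hmu} = \hW(\hmu)/\rr_{\hmu} = W(\hmu)$ as a vector space. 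So the underlying space is correct; the real content is that the $\tilU$-action coming from the completed Nakajima homomorphism $\widehat{\Phi}_{\hlam}$ composed with the specialization $\K^{\G(\hlam)}(Z^{\bullet}(\hlam)) \to \K(Z^{\bullet}(\hlam))$ agrees, under these identifications, with the action giving the local Weyl module $W(\hmu)$.

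For the module-structure comparison, the key point is a base-change square relating convolution actions: the inclusion of the fiber $\M^{\bullet}(\hlam)_{x_{\hnu}}$ into $\M^{\bullet}(\hlam)$ and the fact that $Z^{\bullet}(\hlam)$ restricts over $x_{\hnu}$ to $\M^{\bullet}(\hlam)_{x_{\hnu}} \times \M^{\bullet}(\hlam)_{x_{\hnu}}$, so that the convolution action of $\K(Z^{\bullet}(\hlam))$ on $\K(\M^{\bullet}(\hlam)_{x_{\hnu}})$ factors through $\K(Z^{\bullet}(\hmu))$ via the pullback along $\chi$ together with the identification of Lemma \ref{Thm:Local}/Theorem \ref{Thm:Local}. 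The clean way to organize this is: first establish the identity at the equivariant level over the smaller variety, i.e. that $\widehat{\Phi}_{\hmu}$ gives $\hK^{\G(\hmu)}(\LL^{\bullet}(\hmu)) \cong \hW(\hmu)$ as $(\tilU, \hR(\hmu))$-bimodule --- this is the graded completed version of Theorem \ref{Thm:Kcentral}, obtained by localizing Theorem \ref{Thm:Kcentral} at $\rr_{\hmu}$ and using the $\T$-fixed-point/localization theorem as in Definition \ref{Def:completedNakajima}. Then one must check that the composite $\tilU \xrightarrow{\widehat{\Phi}_{\hlam}} \hK^{\G(\hlam)}(Z^{\bullet}(\hlam)) \to \K(Z^{\bullet}(\hlam))$ acting on $\K(\M^{\bullet}(\hlam)_{x_{\hnu}})$ coincides, after the geometric identifications, with $\tilU \xrightarrow{\widehat{\Phi}_{\hmu}} \hK^{\G(\hmu)}(Z^{\bullet}(\hmu)) \to \K(Z^{\bullet}(\hmu))$ acting on $\K(\LL^{\bullet}(\hmu))/\rr_{\hmu}$. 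This compatibility is exactly the graded, completed incarnation of Nakajima's general principle that his homomorphism is compatible with the transverse-slice structure of quiver varieties (the local model of Theorem \ref{Thm:Local}); concretely it reduces to matching the tautological bundles $\V, \W$ and the complexes $\mathcal{C}_i$ appearing in $\Phi$ on both sides under the slice isomorphism, which preserves these up to the bundles pulled back from the base $T$ that become trivial on the fiber.

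I expect the main obstacle to be precisely this last compatibility: checking that Nakajima's homomorphism interacts correctly with the restriction to a non-central fiber and the attendant slice decomposition, at the level of the explicit formulas for $\psi_i^{\pm}(z)$ in Theorem \ref{Thm:Nakajima}. The subtlety is that the tautological bundle $\V^{\nu}_i$ on $\M(\lambda)$, restricted to the fiber over $x_{\hnu}$ and then transported via the slice isomorphism of Theorem \ref{Thm:Local} to $\LL(\mathsf{cl}(\hmu))$, does \emph{not} simply become the corresponding tautological bundle $\V^{\mathsf{cl}(\hmu)-\text{part}}_i$ --- there is an extra contribution from the fixed part $V^{\nu}$, which on $K$-theory after specialization at $\rr_{\hlam}$ contributes scalar (class) shifts encoded by the homomorphism $\theta_{\hmu}$ of Corollary \ref{Cor:rest} (equivalently the $\chi$-twist). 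One must verify that these scalar shifts are exactly the ones relating the defining $\psi$-eigenvalues of $\bW(\lambda)$ specialized along $\rr_{\hlam}$ to those of $W(\hmu)$ --- i.e., that the $\ell$-weight bookkeeping matches. Once this is done the proof concludes: the module $\K(\M^{\bullet}(\hlam)_{x_{\hnu}})$ is a cyclic $\tilU$-module (generated by the image of the structure sheaf of a point-component of the fiber, as in Theorem \ref{Thm:Kcentral}), it is $\ell$-highest weight of $\ell$-highest weight $\hmu$ by the eigenvalue computation, and it has the right dimension $\dim W(\hmu)$ by the $K$-theoretic count; by the universal property of the local Weyl module the resulting surjection $W(\hmu) \twoheadrightarrow \K(\M^{\bullet}(\hlam)_{x_{\hnu}})$ is an isomorphism.
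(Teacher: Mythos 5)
Your proposal follows essentially the same route as the paper, and correctly identifies both the dimension-count argument (via Lemma \ref{Lem:fiberisom2} and the $\hnu=0$ case) and the universality-of-$W(\hmu)$ endgame. The one place where the paths diverge is the step you flag as the ``main obstacle'': the compatibility of Nakajima's convolution action with the transverse slice, i.e.\ the fact that $\K(\M^{\bullet}(\hlam)_{x_{\hnu}})$ is a cyclic $\ell$-highest weight module of $\ell$-highest weight $\hmu=\hlam-\hnu$. You propose to verify this by matching the tautological bundles $\V,\W$ and the complexes $\mathcal{C}_i$ under the slice isomorphism of Theorem \ref{Thm:Local}, tracking the twist by $\theta_{\hmu}$. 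That is the right picture, but it is exactly what you would be reproving: the paper simply cites Nakajima~\cite[Proposition~13.3.1]{Nakajima01} (together with the universal property of $W(\hmu)$) to get a surjection $W(\hmu)\twoheadrightarrow \K(\M^{\bullet}(\hlam)_{x_{\hnu}})$, and then closes with the same dimension comparison you use. So your proposal would work if carried out, but it is doing from scratch what the paper handles by a clean citation; as written, the bundle-matching step is sketched rather than executed, so the proposal is an outline with a known gap at exactly the point the paper offloads to Nakajima.
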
  
\begin{proof}
When $\hnu = 0$,
we have
\begin{align*}
\K(\LL^{\bullet}(\hlam)) &\cong
K^{\T}(\LL(\lambda)) \otimes_{R(\T)} \kk
&& \text{by the localization theorem}
\\
&\cong
\K^{\G(\lambda)}(\LL(\lambda))/\rr_{\lambda, \hlam}
&&
\text{by Theorem 
\ref{Thm:Kfibers} (\ref{Thm:Kfibers:subgroup})}
\\
&\cong \bW(\lambda)/\rr_{\lambda, \hlam}
&& \text{by Theorem~\ref{Thm:Kcentral}}\\
&= W(\hlam).
\end{align*}
For a general $\hnu$,
we know that the $\tilU$-module 
$\K(\M^{\bullet}(\hlam)_{x_{\hnu}})$
is a quotient of $W(\hlam - \hnu)$
by \cite[Proposition 13.3.1]{Nakajima01}
and by the universality of the local Weyl module.
Because there is an isomorphism 
$\M^{\bullet}(\hlam)_{x_{\hnu}} \cong \LL^{\bullet}(\hlam - \hnu)$
by Lemma~\ref{Lem:fiberisom2},
we have
$\dim \K(\M^{\bullet}(\hlam)_{x_{\hnu}})
= \dim \K(\LL^{\bullet}(\hlam - \hnu))
= \dim W(\hlam-\hnu)$
and hence the isomorphism
$W(\hlam - \hnu) \xrightarrow{\cong}
\K(\M^{\bullet}(\hlam)_{x_{\hnu}})$.
\end{proof}
 
%%%%%%%%%%%%%%%%%%%%%%%%%%%%%%%%%%%%

\subsection{Completed Hernandez-Leclerc category 
$\hCc_{Q, \beta}$}
\label{Ssec:main}

Throughout this subsection,
we fix an element 
$\beta = \sum_{i \in I}d_{i}\alpha_{i} \in \cQ^{+}$
and set
$$\hlam \equiv
\hlam_{\beta}
:= \sum_{i \in I}d_{i}\varpi_{\phi(\alpha_{i})} \in \lP_{Q}^{+}, \quad
\lambda := \mathsf{cl}(\hlam) \in \cP^{+},$$
as in Section \ref{Ssec:identify}. 
For simplicity, we mainly use the notation $\hlam$
rather than $\hlam_{\beta}$, 
suppressing $\beta$. 

Fix an element $\hnu \in \lQ^{+}_{Q}$ 
such that $\hmu := \hlam - \hnu \in \lP^{+}_{Q, \beta}$
and put
$\nu := \mathsf{cl}(\hnu) \in \cQ^{+}$,
$\mu := \mathsf{cl}(\hmu) \in \cP^{+}$.
Recall that we have an algebra homomorphism
$\theta_{\hmu} \colon 
R(\G(\hlam)) \to R(\G(\hmu))$
defined in Subsection \ref{Ssec:identify} (\ref{Eq:rest}).
After the localization $(-) \otimes_{\Z[q^{\pm 1}]} \kk$,
we get a homomorphism 
$$
\theta_{\hmu} \colon
\R(\hlam) \to \R(\hmu), 
$$
for which we use the same symbol $\theta_{\hmu}$.
Through this homomorphism 
$\theta_{\hmu}$, the algebra $\R(\hmu)$
is regarded as an $\R(\hlam)$-algebra.
\begin{Lem}
\label{Lem:rest}
The ideal $\langle \theta_{\hmu}(\rr_{\hlam})
\rangle \subset \R(\hmu)$
generated by the image of $\rr_{\hlam}$
is a primary ideal whose associated prime is the maximal ideal
$\rr_{\hmu}.$
In particular, we have
$$ \R(\hmu) \otimes_{\R(\hlam)} 
\hR(\hlam) \cong \hR(\hmu).$$
\end{Lem}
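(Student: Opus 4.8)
The plan is to unwind everything, via Corollary~\ref{Cor:rest}, into two elementary statements about the morphism of affine varieties dual to $\theta_{\hmu}$, and then to assemble the conclusion by routine commutative algebra. Write $\hmu=\sum_{\alpha\in\mathsf{R}^{+}}m_{\alpha}\varpi_{\phi(\alpha)}$ and set $X:=\Specm\R(\hlam)\cong\prod_{i\in I}\big((\kk^{\times})^{d_{i}}/\SG_{d_{i}}\big)$ and $Y:=\Specm\R(\hmu)\cong\prod_{\alpha\in\mathsf{R}^{+}}\big((\kk^{\times})^{m_{\alpha}}/\SG_{m_{\alpha}}\big)$, so a point of $X$ (resp.\ $Y$) is a family $(T_{i})_{i\in I}$ (resp.\ $(S_{\alpha})_{\alpha\in\mathsf{R}^{+}}$) of multisets of sizes $d_{i}$ (resp.\ $m_{\alpha}$) in $\kk^{\times}$. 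By the description of these maximal ideals in Subsection~\ref{Ssec:Weyl}, $\rr_{\hlam}$ (resp.\ $\rr_{\hmu}$) is the vanishing ideal of the point $x_{0}$ (resp.\ $y_{0}$) whose $i$-th multiset is $\{q^{\pt(\alpha_{i})}\}^{d_{i}}$ (resp.\ whose $\alpha$-th multiset is $\{q^{\pt(\alpha)}\}^{m_{\alpha}}$), and by Corollary~\ref{Cor:rest} the morphism $\psi\colon Y\to X$ dual to $\theta_{\hmu}$ sends $(S_{\alpha})_{\alpha}$ to the family whose $i$-th multiset is $\biguplus_{\alpha\in\mathsf{R}^{+}}\biguplus_{s\in S_{\alpha}}\{q^{\pt(\alpha_{i})-\pt(\alpha)}s\}^{\times c_{i}(\alpha)}$, where $\alpha=\sum_{i}c_{i}(\alpha)\alpha_{i}$; in particular $\psi(y_{0})=x_{0}$, so $\langle\theta_{\hmu}(\rr_{\hlam})\rangle$ is a proper ideal. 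Since an ideal whose radical is maximal is automatically primary, the first assertion follows once I prove: \emph{(i)} $\R(\hmu)$ is module-finite over $\R(\hlam)$ through $\theta_{\hmu}$; and \emph{(ii)} $\psi^{-1}(x_{0})=\{y_{0}\}$ as sets.

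For \emph{(i)} I would show $\R(\hmu)$ is integral over $\theta_{\hmu}(\R(\hlam))$ --- this is the substitute for surjectivity of $\theta_{\hmu}$, which fails in general (already in type $\mathsf{A}_{2}$). Fix $\alpha$ with $m_{\alpha}>0$ and a vertex $i$ with $c_{i}(\alpha)>0$. The polynomials $\prod_{j=1}^{d_{i}}(X-z_{i,j})$ and $\prod_{j=1}^{d_{i}}(X-z_{i,j}^{-1})$ are monic with coefficients in $\R(\hlam)$ (the latter because $e_{d_{i}}(z_{i,\bullet})$ is a unit of $\R(\hlam)$), and by the formula for $\psi$ their $\theta_{\hmu}$-images are monic over $\theta_{\hmu}(\R(\hlam))$ and divisible by $X-q^{\pt(\alpha_{i})-\pt(\alpha)}z_{\alpha,l}$ and $X-q^{\pt(\alpha)-\pt(\alpha_{i})}z_{\alpha,l}^{-1}$ respectively for each $l\le m_{\alpha}$; hence each $z_{\alpha,l}^{\pm1}$ is integral over $\theta_{\hmu}(\R(\hlam))$, so the Laurent ring $\kk[z_{\alpha,l}^{\pm1}:\alpha,l]$ and with it its subalgebra $\R(\hmu)$ is module-finite over $\theta_{\hmu}(\R(\hlam))$, a fortiori over $\R(\hlam)$. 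For \emph{(ii)}, if $\psi((S_{\alpha})_{\alpha})=x_{0}$, then for each $i$, comparing $i$-th multisets and using that the $i$-th multiset of $x_{0}$ is constant forces $q^{\pt(\alpha_{i})-\pt(\alpha)}s=q^{\pt(\alpha_{i})}$, i.e.\ $s=q^{\pt(\alpha)}$, for every $\alpha$ with $c_{i}(\alpha)>0$ and every $s\in S_{\alpha}$; since every $\alpha$ with $m_{\alpha}>0$ has a vertex in its support, this yields $S_{\alpha}=\{q^{\pt(\alpha)}\}^{m_{\alpha}}$ for all $\alpha$, i.e.\ $(S_{\alpha})_{\alpha}=y_{0}$.

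Finally I would put the pieces together: by \emph{(i)}, $\R(\hmu)/\langle\theta_{\hmu}(\rr_{\hlam})\rangle=\R(\hmu)/\rr_{\hlam}\R(\hmu)$ is a finite $\R(\hlam)/\rr_{\hlam}=\kk$-algebra, and by \emph{(ii)} its maximal spectrum is the single point $y_{0}$, hence it is Artinian local with maximal ideal $\rr_{\hmu}$, so $\langle\theta_{\hmu}(\rr_{\hlam})\rangle$ is $\rr_{\hmu}$-primary. Consequently $\rr_{\hmu}^{M}\subseteq\langle\theta_{\hmu}(\rr_{\hlam})\rangle=\rr_{\hlam}\R(\hmu)\subseteq\rr_{\hmu}$ for some $M$, so the filtrations $(\rr_{\hlam}^{N}\R(\hmu))_{N}$ and $(\rr_{\hmu}^{N})_{N}$ are mutually cofinal; combined with $\R(\hmu)\otimes_{\R(\hlam)}\hR(\hlam)\cong\varprojlim_{N}\R(\hmu)/\rr_{\hlam}^{N}\R(\hmu)$ (valid because $\R(\hmu)$ is module-finite over the Noetherian ring $\R(\hlam)$), this gives $\R(\hmu)\otimes_{\R(\hlam)}\hR(\hlam)\cong\varprojlim_{N}\R(\hmu)/\rr_{\hmu}^{N}=\hR(\hmu)$. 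I do not expect a genuine obstacle: the only delicate point is that $\theta_{\hmu}$ need not be surjective, so $\langle\theta_{\hmu}(\rr_{\hlam})\rangle$ is genuinely larger than $\rr_{\hmu}$ in general, and it is the finiteness of \emph{(i)} --- together with the short fiber computation \emph{(ii)} --- that replaces the naive argument.
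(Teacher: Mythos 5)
Your argument is correct, and it fills in exactly the details that the paper leaves to the reader: the paper's proof is the single sentence ``This is a direct consequence of Corollary~\ref{Cor:rest}'', so your write-up is precisely the expansion of that pointer. Both the finiteness step~(i) and the fiber computation~(ii) are sound, and the final passage from $\rr_{\hmu}$-primariness to the completion isomorphism is standard. Two small remarks. First, for step~(i) there is an even quicker route: the full homomorphism $\bigotimes_{\alpha}\theta_{\alpha}^{\otimes m_{\alpha}}\colon\bigotimes_{i}\kk[z_{i,j}^{\pm1}]\to\bigotimes_{\alpha}\kk[z_{\alpha,l}^{\pm1}]$ is actually \emph{surjective} (each $z_{\alpha,l}$ is a $\kk^{\times}$-multiple of some $\theta_{\hmu}(z_{i,j})$), and since the source Laurent ring is module-finite over its invariant subring $\R(\hlam)$, so is the quotient $\bigotimes_{\alpha}\kk[z_{\alpha,l}^{\pm1}]$ and a fortiori its subalgebra $\R(\hmu)$; this avoids constructing the monic polynomials by hand. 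Second, in the closing sentence you say that $\langle\theta_{\hmu}(\rr_{\hlam})\rangle$ is ``genuinely larger than $\rr_{\hmu}$''; you of course mean genuinely \emph{smaller} (it is a proper sub-ideal of $\rr_{\hmu}$ whenever $\theta_{\hmu}$ fails to be surjective), which is exactly why the primariness requires the finiteness argument rather than being tautological. Neither point affects the validity of your proof.
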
 
\begin{proof}
This is a direct consequence of Corollary \ref{Cor:rest}.
\end{proof}
 
\begin{Prop}
\label{Prop:KMmu}
With the above notation, let $\M_{\hmu}^{\bullet}$ denote the inverse image of the stratum
$\Mregg(\hnu, \hlam)$ 
along the canonical morphism
$\pi^{\bullet} \colon \M^{\bullet}(\hlam) \to 
\M_{0}^{\bullet}(\hlam)$. 
Then we have the following isomorphism
of $(\tilU, \hR(\hlam))$-bimodules:
$$
\hK^{\G(\hlam)}(\M_{\hmu}^\bullet) \cong 
\hW(\hmu),
$$  
where the action of $\hR(\hlam)$ on the RHS is given via 
the homomorphism $\theta_{\hmu}$.
\end{Prop}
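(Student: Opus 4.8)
The plan is to use the localization theorem together with the fiber description from Lemma~\ref{Lem:fiberisom2} and the structure results on central fibers in Theorem~\ref{Thm:Kfibers}. Recall that $\M_{\hmu}^{\bullet} = (\pi^{\bullet})^{-1}(\Mregg(\hnu, \hlam))$. Fixing a point $x_{\hnu} \in \Mregg(\hnu, \hlam)$, the equivariant $K$-group $\K^{\G(\hlam)}(\M_{\hmu}^{\bullet})$ should be computed by a Thom--Mayer--Vietoris/induction argument: since $\Mregg(\hnu, \hlam)$ is a single $\G(\hlam)$-orbit (by Lemma~\ref{Lem:strata} it is identified via $\Psi_{\beta}$ with the $G_{\beta}$-orbit $\mathbb{O}_{\mathbf{m}}$ where $f(\mathbf{m}) = \hmu$) with stabilizer $\Stab_{\G(\hlam)} x_{\hnu}$, the fiber bundle $\pi^{\bullet} : \M_{\hmu}^{\bullet} \to \Mregg(\hnu, \hlam)$ is $\G(\hlam)$-equivariantly the associated bundle of the $\Stab_{\G(\hlam)} x_{\hnu}$-variety $\M^{\bullet}(\hlam)_{x_{\hnu}}$. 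Hence by the induction isomorphism in equivariant $K$-theory,
\begin{equation}
\label{Eq:Kinduction}
\K^{\G(\hlam)}(\M_{\hmu}^{\bullet}) \cong \K^{\Stab_{\G(\hlam)} x_{\hnu}}(\M^{\bullet}(\hlam)_{x_{\hnu}}).
\end{equation}

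Next I would simplify the right-hand side of \eqref{Eq:Kinduction} using Lemma~\ref{Lem:red}: the maximal reductive quotient of $\Stab_{\G(\hlam)} x_{\hnu}$ is $\G(\hmu)$, and since the unipotent radical acts trivially on $K$-theory (equivariant $K$-theory is insensitive to unipotent groups, and by Lemma~\ref{Lem:fiberisom2} the action on $\M^{\bullet}(\hlam)_{x_{\hnu}} \cong \LL^{\bullet}(\hmu)$ factors through $\G(\hmu)$ via the quotient map), we get $\K^{\Stab_{\G(\hlam)} x_{\hnu}}(\M^{\bullet}(\hlam)_{x_{\hnu}}) \cong \K^{\G(\hmu)}(\LL^{\bullet}(\hmu))$. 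Now $\K^{\G(\hmu)}(\LL^{\bullet}(\hmu))$ is, by the localization theorem and Theorem~\ref{Thm:Kfibers}~(\ref{Thm:Kfibers:subgroup}) applied to the torus $\T' \subset \G(\hmu)$ cutting out $\LL^{\bullet}(\hmu)$ inside $\LL(\mu)$, identified with $\K^{\G(\mu)}(\LL(\mu)) \otimes_{\R(\mu)} \hR(\hmu)$, which by Theorem~\ref{Thm:Kcentral} is $\bW(\mu) \otimes_{\R(\mu)} \hR(\hmu) = \hW(\hmu)$ (using $\hR(\lambda', \hmu) \cong \hR(\hmu)$ from \eqref{Eq:etale}). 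Combining and using Lemma~\ref{Lem:rest} to see that the $\R(\hlam)$-module structure transported along $\theta_{\hmu}$ is the completion-compatible one, this yields the isomorphism of $\hR(\hlam)$-modules $\hK^{\G(\hlam)}(\M_{\hmu}^{\bullet}) \cong \hW(\hmu)$.

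It remains to upgrade this to an isomorphism of $(\tilU, \hR(\hlam))$-bimodules, i.e.\ to check compatibility with the completed Nakajima homomorphism $\widehat{\Phi}_{\hlam}$. The $\tilU$-action on $\hK^{\G(\hlam)}(\M_{\hmu}^{\bullet})$ is by convolution through $\widehat{\Phi}_{\hlam}$, and under localization to the fiber this matches the $\tilU$-action on $\K(\M^{\bullet}(\hlam)_{x_{\hnu}}) \cong W(\hmu)$ computed in Proposition~\ref{Prop:KlocalWeyl}; since $\hW(\hmu)$ is the $\R(\hmu)$-completion of $\bW(\mu)$ and both sides are free of the same finite rank over $\hR(\hmu)$, the map constructed above intertwines the two $\tilU$-actions. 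I expect the main obstacle to be making the equivariance in \eqref{Eq:Kinduction} genuinely $\tilU$-equivariant: one must verify that the convolution action of $\widehat{\Phi}_{\hlam}(\tilU)$ preserves the subspace/summand $\K^{\G(\hlam)}(\M_{\hmu}^{\bullet})$ and that, after localization, it agrees with the action studied in Proposition~\ref{Prop:KlocalWeyl} — this is where one needs the base-change/localization compatibility of the Nakajima construction (as in \cite[Section~4, Section~7]{Nakajima01}) and the torus-fixed-point identification \eqref{Eq:fixedSteinberg}, rather than any new geometric input.
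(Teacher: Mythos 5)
Your reconstruction of the first half of the proof -- the chain of isomorphisms
$\hK^{\G(\hlam)}(\M^{\bullet}_{\hmu})
\cong
\hK^{\G(\hlam)}\bigl(\G(\hlam) \times_{(\Stab_{\G(\hlam)}x)}\M^{\bullet}(\hlam)_{x}\bigr)
\cong
\K^{\Stab_{\G(\hlam)}x}(\M^{\bullet}(\hlam)_{x})\otimes_{\R(\hlam)}\hR(\hlam)
\cong
\K^{\G(\hmu)}(\LL^{\bullet}(\hmu))\otimes_{\R(\hmu)}\hR(\hmu)
\cong
\K^{\G(\mu)}(\LL(\mu))\otimes_{\R(\mu)}\hR(\hmu)$
-- is exactly the paper's computation, citing the same ingredients (induction isomorphism as in \cite[5.2.16]{CG97}, Lemma~\ref{Lem:fiberisom2}, Lemma~\ref{Lem:red}, Lemma~\ref{Lem:rest}, localization, and Theorem~\ref{Thm:Kfibers}~(\ref{Thm:Kfibers:subgroup})). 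Up to that point you and the paper are in agreement, and your remark that the unipotent radical is invisible to equivariant $K$-theory is implicitly what Lemma~\ref{Lem:red} is doing.

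The gap is in your treatment of $\tilU$-equivariance. You correctly flag that this is the crux, but your proposed resolution is a non-sequitur: the fact that both sides are free of the same rank over $\hR(\hmu)$ does not by itself imply that an $\hR(\hmu)$-module isomorphism obtained from a chain of $K$-theoretic identifications commutes with the two a priori different $U_q$-actions (via $\widehat{\Phi}_{\hlam}$ on the left, via $\Phi_{\mu}$ and Theorem~\ref{Thm:Kcentral} on the right). Invoking unspecified ``base-change/localization compatibility of the Nakajima construction'' is pointing at the right phenomenon, but it is precisely the hard content that must be supplied. The paper avoids having to prove directly that the $K$-theoretic chain is $\tilU$-equivariant; instead it \emph{builds a new} $U_q$-homomorphism. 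Concretely: (i) by Proposition~\ref{Prop:KlocalWeyl} the quotient by $\rr_{\hmu}$ is $W(\hmu)$ as a $\tilU$-module; (ii) by Nakayama the element corresponding to $[\mathcal{O}_{\LL(0,\mu)}]$ generates $\hK^{\G(\hlam)}(\M^{\bullet}_{\hmu})$ as a $(U_q,\hR(\hmu))$-bimodule; (iii) from the construction of the isomorphism in Lemma~\ref{Lem:fiberisom} one tracks the class $[\mathcal{C}_{i}(\nu,\lambda)]$ to $[\W^{\lambda-\nu}_{i}]|_{\LL(0,\mu)}$, which combined with the explicit formula in Theorem~\ref{Thm:Nakajima} and Theorem~\ref{Thm:globalWeyl}~(\ref{Thm:globalWeyl:loop}) shows that this generator has the correct $\ell$-weight; (iv) then the universal property of the global Weyl module (Theorem~\ref{Thm:globalWeyl}~(\ref{Thm:globalWeyl:univ})) produces a surjection $\hW(\hmu)\to\hK^{\G(\hlam)}(\M^{\bullet}_{\hmu})$ of $(U_q,\hR(\hmu))$-bimodules, and the rank equality forces it to be an isomorphism. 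You should replace your hand-waving last paragraph with this argument; steps (iii)--(iv) are what make the $U_q$-structure certifiable without having to re-prove Nakajima's base-change compatibility in this setting.
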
 
\begin{proof}
Fix a point $x \in \Mregg(\hnu, \hlam)$.
Since $\Mregg(\hnu, \hlam)$ consists of a single
$\G(\hlam)$-orbit by Lemma~\ref{Lem:strata}
and the morphism
$\pi^{\bullet} \colon \M^{\bullet}(\hlam) \to 
\M_{0}^{\bullet}(\hlam)$
is $\G(\hlam)$-equivariant,
we have an isomorphism
$
\M_{\mu}^{\bullet} \cong \G(\hlam) 
\times_{(\Stab_{\G(\hlam)} x)} 
\M^{\bullet}(\hlam)_{x}.  
$
Then we have
\begin{align*}
\hK^{\G(\hlam)}(\M^{\bullet}_{\hmu}) &\cong 
\hK^{\G(\hlam)}\left(\G(\hlam) 
\times_{(\Stab_{\G(\hlam)} x)} 
\M^{\bullet}(\hlam)_{x}\right) \\
& \cong
\K^{(\Stab_{\G(\hlam)}x )}(\M^{\bullet}(\hlam)_{x})
\otimes_{\R(\hlam)}
\hR(\hlam)\\
&\cong
\K^{\G(\hmu)}(\LL^{\bullet}(\hmu))\otimes_{\R(\hmu)}\hR(\hmu)\\
&\cong
\K^{\G(\mu)}(\LL(\mu))\otimes_{\R(\mu)}
\hR(\hmu),
\end{align*}
where
the second isomorphism is by 
the induction (see \cite[5.2.16]{CG97}),
the third is due to 
Lemma~\ref{Lem:fiberisom2},
Lemma~\ref{Lem:red} and
Lemma~\ref{Lem:rest},
the last is due to the localization and
Theorem~\ref{Thm:Kfibers} (\ref{Thm:Kfibers:subgroup}).
Through this isomorphism 
$\hK^{\G(\hlam)}(\M^{\bullet}_{\hmu}) \cong
\K^{\G(\mu)}(\LL(\mu))\otimes_{\R(\mu)}
\hR(\hmu)$, we see that
the action of $\hR(\hlam)$ on 
$\hK^{\G(\hlam)}(\M^{\bullet}_{\hmu})$
extends to an action of $\hR(\hmu)$, 
which commutes with the action of $U_{q}$.
By Proposition~\ref{Prop:KlocalWeyl}, the module
$\hK^{\G(\hlam)}(\M^{\bullet}_{\hmu})/\rr_{\hmu}
\cong \K(\M(\lambda)_{x})
$
is isomorphic to the local Weyl module $W(\hmu)$.
Therefore, by Nakayama\rq{}s lemma, we see that
the vector in $\hK^{\G(\hlam)}(\M^{\bullet}_{\hmu})$
which corresponds to $[\mathcal{O}_{\LL(0, \mu)}]$
in $\K^{\G(\mu)}(\LL(\mu))\otimes_{\R(\mu)}
\hR(\hmu)$
generates $\hK^{\G(\hlam)}(\M^{\bullet}_{\hmu})$
as $(U_{q}, \hR(\hmu))$-bimodule.
Moreover, 
from the construction of isomorphism
$\M(\lambda)_{x} \cong \LL(\mu)$
in Lemma~\ref{Lem:fiberisom},
we see that 
the restriction of
the class $[\mathcal{C}_{i}(\lambda, \nu)]$
in $\hK^{\G(\hlam)}(\M^{\bullet}_{\hmu})$
corresponds to the class
$[\W_{i}^{\lambda - \nu}]|_{\LL(\mu)}$
in $\K^{\G(\mu)}(\LL(\mu))\otimes_{\R(\mu)}
\hR(\hmu)$.
Therefore, by the universal property
of the global Weyl module,
we find a surjection 
$\hW(\hmu) \to \hK^{\G(\hlam)}(\M^{\bullet}_{\hmu})$
of $(U_{q}, \hR(\hmu))$-bimodules.
Since both are free over $\hR(\hmu)$
of the same rank $\dim W(\hmu)$,
this surjection should be an isomorphism
$\hW(\hmu) \cong \hK^{\G(\hlam)}(\M^{\bullet}_{\hmu}).$
\end{proof}

\begin{Def}
We define the {\em completed Hernandez-Leclerc category}
$\hCc_{Q, \beta}$ to be the category 
of finitely generated $\hK^{\G(\hlam)}
(Z^{\bullet}(\hlam))$-modules, i.e., 
$$\hCc_{Q, \beta} := 
\hK^{\G(\hlam)}(Z^{\bullet}(\hlam)) \modfg.$$ 
Let $(\hCc_{Q, \beta})_{f}$ denote
the full subcategory of
finite-dimensional modules in $\hCc_{Q, \beta}$, i.e.,
$$
(\hCc_{Q, \beta})_{f} := 
\hK^{\G(\hlam)}(Z^{\bullet}(\hlam)) \modfd. 
$$
\end{Def} 

We regard the deformed local 
Weyl module $\hW(\hmu)$ corresponding to 
$\hmu \in \lP^{+}_{Q, \beta}$ 
as a $(\tilU, \hR(\hlam))$-bimodule
on which the algebra $\hR(\hlam)$ acts via 
the homomorphism $\theta_{\hmu}$.  
By Proposition~\ref{Prop:KMmu} above,
we can regard $\hW(\hmu) \in \hCc_{Q, \beta}$
for any $\hmu \in \lP^{+}_{Q, \beta}.$
Now we state the main theorem 
of this section.   
   
\begin{Thm}
\label{Thm:main}
With the above notation, the followings hold:
\begin{enumerate}
\item \label{Thm:main:HL}
Via the completed Nakajima homomorphism 
$\widehat{\Phi}_{\hlam}$, we can identify 
the category $(\hCc_{Q, \beta})_{f}$ 
with the Hernandez-Leclerc category $\Cc_{Q, \beta}$.
More precisely, the pullback functor
$(\hCc_{Q, \beta})_{f} \to \Cc_{\g}; \, 
M \mapsto (\widehat{\Phi}_{\hlam})^{*}M$
gives an equivalence of categories 
$(\hCc_{Q, \beta})_{f} \xrightarrow{\simeq} \Cc_{Q, \beta}$;  
\item \label{Thm:main:affhw}
The completed Hernandez-Leclerc category
$\hCc_{Q, \beta}$ is an affine highest weight category
for the poset $(\lP^{+}_{Q, \beta}, \le)$.
The standard module (resp.~proper standard module, 
proper costandard module)
associated with $\hmu \in \lP^{+}_{Q, \beta}$
is given by the deformed local Weyl module $\hW(\hmu)$
(resp.~local Weyl module $W(\hmu)$,
dual local Weyl module $W^{\vee}(\hmu)$). 
\end{enumerate}
\end{Thm}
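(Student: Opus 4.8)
The plan is to prove part (\ref{Thm:main:affhw}) first, geometrically, by producing an explicit affine heredity chain for $A := \hK^{\G(\hlam_{\beta})}(Z^{\bullet}(\hlam_{\beta}))$ out of the stratification of $\M_{0}^{\bullet}(\hlam_{\beta})$, and then to deduce (\ref{Thm:main:HL}) from it together with the comparison maps of Section \ref{Sec:completion}. First I would record that $A$ is a module-finite algebra over the complete local Noetherian ring $\hR(\hlam_{\beta})$ (finiteness of $\K^{\G(\hlam_{\beta})}(Z^{\bullet}(\hlam_{\beta}))$ over $\R(\hlam_{\beta})$ being part of Nakajima's package), so that its Jacobson radical $J$ contains $\rr_{\hlam_{\beta}}A$, $\dim A/J < \infty$, and $A$ is $J$-adically complete; hence $A$ satisfies the blanket hypotheses of Section \ref{Sec:affhw}, and by Theorem \ref{Thm:CPS} it suffices to exhibit an affine heredity chain and then to identify the resulting standard, proper standard and proper costandard modules. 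Fix a total order $\hmu_{1}, \ldots, \hmu_{l}$ of $\lP^{+}_{Q, \beta}$ refining the closure order on the strata $S_{a} := \Mregg(\hlam_{\beta} - \hmu_{a}, \hlam_{\beta})$ of $\M_{0}^{\bullet}(\hlam_{\beta})$ — which, via $\Psi_{\beta}$, Lemma \ref{Lem:strata} and the order-preserving bijection $f\colon \KP(\beta) \to \lP^{+}_{Q,\beta}$, agrees with the restriction of the $\ell$-dominance order. Set $\M_{0}^{\le a} := \bigsqcup_{b \le a} S_{b}$ (open in $\M_{0}^{\bullet}(\hlam_{\beta})$), let $Z^{\le a}$ be the open piece of $Z^{\bullet}(\hlam_{\beta})$ lying over it, and put $I_{a} := \ker\big(A \twoheadrightarrow \hK^{\G(\hlam_{\beta})}(Z^{\le a})\big)$, so that $I_{0} = A$, $I_{l} = 0$, and $A/I_{a} \cong \hK^{\G(\hlam_{\beta})}(Z^{\le a})$ by the right-exact sequence (\ref{Eq:MVseq}).

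The crux is the identification of the layers. Over the open set $\M_{0}^{\le a}$ the stratum $S_{a}$ is closed with open complement $\M_{0}^{\le a-1}$, so (\ref{Eq:MVseq}) exhibits $I_{a-1}/I_{a}$ as the image of $i_{*}\colon \hK^{\G(\hlam_{\beta})}(\M^{\bullet}_{\hmu_{a}} \times_{S_{a}} \M^{\bullet}_{\hmu_{a}}) \to A/I_{a}$, where $\M^{\bullet}_{\hmu_{a}} = (\pi^{\bullet})^{-1}(S_{a})$ is as in Proposition \ref{Prop:KMmu}. To see that $i_{*}$ is injective, hence an isomorphism onto $I_{a-1}/I_{a}$, I would pass to equivariant topological $K$-homology, where the exact hexagon (\ref{Eq:hexagon}) together with the vanishing of $\hK^{\G(\hlam_{\beta})}_{1,\mathrm{top}}$ on these Steinberg-type varieties (Theorem \ref{Thm:Kfibers}(\ref{Thm:Kfibers:K1})) kills the kernel, and then compare back to algebraic $K$-theory via the comparison isomorphism of Theorem \ref{Thm:Kfibers}(\ref{Thm:Kfibers:K0}). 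Next, since $S_{a}$ is a single $\G(\hlam_{\beta})$-orbit (Theorem \ref{Thm:HL}) and $\M^{\bullet}_{\hmu_{a}} \cong \G(\hlam_{\beta}) \times_{\Stab} \M^{\bullet}(\hlam_{\beta})_{x}$ with $\M^{\bullet}(\hlam_{\beta})_{x} \cong \LL^{\bullet}(\hmu_{a})$ (Lemmas \ref{Lem:fiberisom2} and \ref{Lem:red}), the induction equivalence for equivariant $K$-theory, the Künneth isomorphism of Theorem \ref{Thm:Kfibers}(\ref{Thm:Kfibers:Kunneth}) and Proposition \ref{Prop:KMmu} give, in exact parallel with the cellular decomposition of Theorem \ref{Thm:cellular},
$$I_{a-1}/I_{a} \;\cong\; \hK^{\G(\hlam_{\beta})}(\M^{\bullet}_{\hmu_{a}}) \otimes_{\hR(\hmu_{a})} \hK^{\G(\hlam_{\beta})}(\M^{\bullet}_{\hmu_{a}})^{\sharp} \;\cong\; \hW(\hmu_{a}) \otimes_{\hR(\hmu_{a})} \hW(\hmu_{a})^{\sharp}$$
as $A/I_{a}$-bimodules. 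I expect this layer-by-layer step — extracting $I_{a-1}/I_{a}$ as the $K$-group of the single-stratum Steinberg variety while controlling exactness through topological $K$-homology, and then running the induction/Künneth computation — to be the main obstacle.

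Reading the displayed isomorphism as $Ae \otimes_{eAe} eA$ for the evident idempotent $e \in A/I_{a}$, one gets $\End_{A/I_{a}}(\hW(\hmu_{a})) = eAe = \hR(\hmu_{a}) \cong \kk[\![z_{1}, \ldots, z_{n}]\!]$, over which $\hW(\hmu_{a})$ is free of finite rank (Proposition \ref{Prop:defWeyl}(\ref{Prop:defWeyl:End})); that $\hW(\hmu_{a}) = (A/I_{a})e$ is projective over $A/I_{a}$; and $I_{a-1}/I_{a} \cong \hW(\hmu_{a})^{\oplus \dim W(\hmu_{a})}$ as a left module. Finally $\Hom_{A/I_{a}}(I_{a-1}/I_{a}, A/I_{a-1}) = 0$ because the supports of the two modules lie over $S_{a}$ and over $\M_{0}^{\le a-1}$ respectively while $\overline{S_{a}} \cap \M_{0}^{\le a-1} = \varnothing$. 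Thus the three conditions of Definition \ref{Def:affhered} hold, $A$ is affine quasi-hereditary, and by Theorem \ref{Thm:CPS} and Remark \ref{Rem:ordering} the category $\hCc_{Q, \beta}$ is affine highest weight for $(\lP^{+}_{Q, \beta}, \le)$, with $\std(\hmu) = \hW(\hmu)$ (an indecomposable summand of a layer, indecomposable since $W(\hmu) = \hW(\hmu)/\rr_{\hmu}$ has simple head $L(\hmu)$) and $\pstd(\hmu) = \hW(\hmu)/\rad B_{\hmu} = \hW(\hmu)/\rr_{\hmu} = W(\hmu)$ by Proposition \ref{Prop:defWeyl}. To identify the proper costandard, I would use that the flip of the two factors of $Z^{\bullet}(\hlam_{\beta})$ induces an anti-automorphism of $A$, hence a duality $\circledast$ on $\hCc_{Q, \beta}$ fixing each simple $L(\hmu)$; by the $\Ext$-characterization of Theorem \ref{Thm:affhw} this forces $\pcstd(\hmu) \cong \pstd(\hmu)^{\circledast} = W(\hmu)^{\circledast}$, and matching $\circledast$ against the $U_{q}$-module dualities through the behaviour of Nakajima's homomorphism under the relevant anti-automorphisms identifies $W(\hmu)^{\circledast}$ with $W^{\vee}(\hmu) = W({}^{*}\hmu)^{*}$ (using $L(\hmu)^{*} \cong L(\hmu^{*})$ and that $(-)^{*}$ and ${}^{*}(-)$ are mutually inverse on $\lP$).

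For part (\ref{Thm:main:HL}): by (\ref{Thm:main:affhw}) the simple $A$-modules are the finite-dimensional $L(\hmu)$, $\hmu \in \lP^{+}_{Q, \beta}$, and by Propositions \ref{Prop:KMmu}, \ref{Prop:KlocalWeyl} (together with the definitions of $W(\hmu)$ and $W^{\vee}(\hmu)$) the $A$-modules $\hW(\hmu)$, $W(\hmu)$, $W^{\vee}(\hmu)$ pull back along the completed Nakajima homomorphism $\widehat{\Phi}_{\hlam_{\beta}}$ to the corresponding $U_{q}$-modules. An induction along the poset, comparing the heads and socles of the (dual) local Weyl modules on the two sides, then shows that $\widehat{\Phi}_{\hlam_{\beta}}^{*}$ carries the simple $A$-module $L(\hmu)$ to the $U_{q}$-simple $L(\hmu)$; hence the exact, faithful functor $\widehat{\Phi}_{\hlam_{\beta}}^{*}$ maps $(\hCc_{Q, \beta})_{f} = A\modfd$ into the Serre subcategory of $\Cc_{\g}$ generated by $\{L(\hmu)\mid \hmu \in \lP^{+}_{Q, \beta}\}$, which is exactly $\Cc_{Q, \beta}$ by Lemma \ref{Lem:block}. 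It remains to see that this functor is fully faithful and essentially surjective; equivalently, that $\Cc_{Q, \beta}$ is precisely the category of finite-dimensional modules over the central completion $A$ — that is, that every object of $\Cc_{Q, \beta}$, having all of its $\ell$-weights in $\hlam_{\beta} - \lQ^{+}_{Q}$ with bounded multiplicities, carries a (functorial) $A$-action compatible with its $U_{q}$-action — which is Nakajima's geometric realization of the block \cite{Nakajima01}. This gives the equivalence $\widehat{\Phi}_{\hlam_{\beta}}^{*}\colon (\hCc_{Q, \beta})_{f} \xrightarrow{\simeq} \Cc_{Q, \beta}$ and completes the proof.
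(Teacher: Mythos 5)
Your construction of the ideal chain for part (\ref{Thm:main:affhw}) from the open stratification, and the identification of the layers $I_{a-1}/I_a$ via induction, K\"unneth, and Proposition \ref{Prop:KMmu}, is essentially the same as the paper's (see the diagram (\ref{Eq:nine}) and the discussion following it). However, your overall plan has a serious gap, and the gap propagates.

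The gap is in part (\ref{Thm:main:HL}). You write that fully faithfulness and essential surjectivity of $\widehat{\Phi}_{\hlam_\beta}^{*}$ reduce to \lq\lq Nakajima's geometric realization of the block.\rq\rq\ This is precisely the point the paper cannot take for granted: as emphasized in the Introduction, the Nakajima homomorphism $\widehat{\Phi}_{\hlam_\beta}\colon \tilU \to A := \hK^{\G(\hlam_\beta)}(Z^\bullet(\hlam_\beta))$ is in general neither injective nor surjective, so a $U_q$-module need not acquire an $A$-module structure, and $A$-module maps are not automatically $U_q$-equivariant and vice versa. The paper instead must prove two statements you do not address: (i) that $\widehat{\Phi}_{\hlam_\beta}^{N} : U_{\le\lambda} \to A/\rr_{\hlam}^N$ is surjective for every $N$ (Proposition \ref{Prop:compare}), which yields full faithfulness and is established by an inductive five-lemma comparison between the affine cellular filtration of $U_{\le\lambda}$ from Theorem \ref{Thm:cellular} and the geometric filtration $\hK^{\G(\hlam)}(Z^\bullet_{\le i})$; and (ii) that for any $N_1, N_2$ there is $N$ with $\Ker\widehat{\Phi}_{\hlam}^{N} \subset (\Ker\widehat{\Phi}_{\hlam}^{N_1})\cdot(\Ker\widehat{\Phi}_{\hlam}^{N_2})$ (Corollary \ref{Cor:Kernel}, resting on Proposition \ref{Prop:Kernel} and Lemma \ref{Lem:ring}), which enables the induction on length needed for essential surjectivity. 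Neither step is an application of \cite{Nakajima01}; they are the technical heart of this section.

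This gap also undermines your treatment of part (\ref{Thm:main:affhw}). You reverse the paper's order of proof (the paper proves (\ref{Thm:main:HL}) first and uses it in (\ref{Thm:main:affhw})), and in doing so you assert that $I_{a-1}/I_a$ is of the form $Ae\otimes_{eAe}eA$ with $eAe = \hR(\hmu_a)$, so that $\hW(\hmu_a) = (A/I_a)e$ is projective. But identifying $eAe$ requires knowing that the $A/I_a$-endomorphism ring of $\hW(\hmu_a)$ is no larger than its $U_q$-endomorphism ring $\hR(\hmu_a)$ — which is exactly what fails a priori when the Nakajima homomorphism is not surjective. The paper avoids this circularity by first establishing (\ref{Thm:main:HL}), then invoking the $U_q$-universal property of $\hW(\hlam_i)$ (Proposition \ref{Prop:defWeyl}~(\ref{Prop:defWeyl:univ})) to conclude that $\hW(\hlam_i)$ is projective over $A/\mathcal{I}_i$. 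Your identification of the proper costandard module via a flip-duality $\circledast$ on $\hCc_{Q,\beta}$ is plausible but not what the paper does either; the paper proves the $\Ext$-orthogonality of $\hW$ and $W^\vee$ directly (Proposition \ref{Prop:extWeyl}), and your route would further require matching $\circledast$ against the $U_q$-dualities through $\widehat{\Phi}_{\hlam_\beta}$, which again presupposes the content of (\ref{Thm:main:HL}).
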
 

\begin{Rem}
Note that there is a small conflict of terminologies.
As we noted in Remark~\ref{Rem:locWeyl}, 
local Weyl modules $W(\hmu)$ are the same as the standard modules 
in the sense of Nakajima and Varagnolo-Vasserot.
But, in our context of affine highest weight category, 
they are not standard modules but proper standard modules. 
\end{Rem}

A proof of Theorem~\ref{Thm:main}
is given after Corollary \ref{Cor:Kernel}.
We need some lemmas.
  
\begin{Lem}
\label{Lem:KZmu}
For $\hnu \in \lQ_{Q}^{+}$ with
$\hmu := \hlam - \hnu \in \lP^{+}_{Q, \beta}$,
let $Z_{\hmu}^{\bullet}$
denote the inverse image of the stratum
$\Mregg(\hnu, \hlam)$ along the canonical morphism
$\pi^{\bullet} \colon Z^{\bullet}(\hlam) 
\to \M_{0}^{\bullet}(\hlam)$. 
\begin{enumerate}
\item
\label{Lem:KZmu:bimod}
As a $(\tilU, \tilU)$-bimodule, we have
$
\hK^{\G(\hlam)}(Z^{\bullet}_{\hmu}) \cong 
\hW(\hmu) \otimes_{\hR(\hmu)}
\hW(\hmu)^{\sharp};
$
\item
\label{Lem:KZmu:1}
We have 
$\hK^{\G(\hlam)}_{1, \mathrm{top}}(Z^{\bullet}_{\hmu}) = 0;$
\item
\label{Lem:KZmu:0}
The comparison map gives an isomorphism
$\hK^{\G(\hlam)}(Z^{\bullet}_{\hmu}) \cong
\hK^{\G(\hlam)}_{0, \mathrm{top}}(Z^{\bullet}_{\hmu}).$
\end{enumerate}
\end{Lem}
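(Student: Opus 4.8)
The plan is to prove the three assertions of Lemma \ref{Lem:KZmu} in parallel, building on the key geometric fact already established in Proposition \ref{Prop:KMmu}, namely $\hK^{\G(\hlam)}(\M^{\bullet}_{\hmu}) \cong \hW(\hmu)$ as $(\tilU, \hR(\hlam))$-bimodules. The variety $Z^{\bullet}_{\hmu}$ is, by definition, the fiber product $\M^{\bullet}_{\hmu} \times_{\Mregg(\hnu, \hlam)} \M^{\bullet}_{\hmu}$, since both copies of $\M^{\bullet}(\hlam)$ restrict over the locally closed stratum $\Mregg(\hnu, \hlam)$ to $\M^{\bullet}_{\hmu}$. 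As noted in the proof of Proposition \ref{Prop:KMmu}, because $\Mregg(\hnu, \hlam)$ is a single $\G(\hlam)$-orbit (Theorem \ref{Thm:HL}), we have $\M^{\bullet}_{\hmu} \cong \G(\hlam) \times_{(\Stab_{\G(\hlam)} x)} \M^{\bullet}(\hlam)_{x}$ for a chosen point $x \in \Mregg(\hnu, \hlam)$, and hence
$$
Z^{\bullet}_{\hmu} \cong \G(\hlam) \times_{(\Stab_{\G(\hlam)} x)} \left( \M^{\bullet}(\hlam)_{x} \times \M^{\bullet}(\hlam)_{x} \right).
$$
First I would use the induction isomorphism for equivariant $K$-theory (\cite[5.2.16]{CG97}) together with the identification $\M^{\bullet}(\hlam)_{x} \cong \LL^{\bullet}(\hmu)$ from Lemma \ref{Lem:fiberisom2} and the reduction of the stabilizer from Lemma \ref{Lem:red}, \ref{Lem:rest}, to reduce all three statements to the corresponding statements about $\LL^{\bullet}(\hmu) \times \LL^{\bullet}(\hmu)$, equivariant for $\G(\hmu)$, after base change along $\theta_{\hmu}$. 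This in turn reduces, via the localization theorem (using the torus $\T$ as in Lemma \ref{Lem:fixed}) and Theorem \ref{Thm:Kfibers} (\ref{Thm:Kfibers:subgroup}), to statements about $\LL(\mu) \times \LL(\mu)$ equivariant for $\G(\mu)$, with $\mu = \mathsf{cl}(\hmu)$.

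For the vanishing (\ref{Lem:KZmu:1}) and the comparison isomorphism (\ref{Lem:KZmu:0}), I would invoke the K\"unneth isomorphisms of Theorem \ref{Thm:Kfibers} (\ref{Thm:Kfibers:Kunneth}): the topological K\"unneth map gives $K^{\G(\mu)}_{1, \mathrm{top}}(\LL(\mu) \times \LL(\mu)) \cong K^{\G(\mu)}_{0, \mathrm{top}}(\LL(\mu)) \otimes_{R(\G(\mu))} K^{\G(\mu)}_{1, \mathrm{top}}(\LL(\mu)) = 0$ by Theorem \ref{Thm:Kfibers} (\ref{Thm:Kfibers:K1}), and the algebraic K\"unneth map together with the comparison isomorphism for the factors (Theorem \ref{Thm:Kfibers} (\ref{Thm:Kfibers:K0})) gives that $K^{\G(\mu)}(\LL(\mu) \times \LL(\mu)) \to K^{\G(\mu)}_{0, \mathrm{top}}(\LL(\mu) \times \LL(\mu))$ is an isomorphism; both properties are preserved under the base changes along $\theta_{\hmu}$ and under passage to $\T$-fixed points (localization), since these are all flat operations on the relevant modules. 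One must be slightly careful that the fiber product $Z^{\bullet}_{\hmu}$ is not literally $\M^{\bullet}_{\hmu} \times \M^{\bullet}_{\hmu}$ but the product over the base orbit; however, since the base is a single orbit $\G(\hlam)/\Stab$, the fiber-product structure is precisely the "diagonal induction" written above, so no extra correction term appears.

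For the bimodule identification (\ref{Lem:KZmu:bimod}), I would follow the template of Beck--Nakajima's affine cellular structure (Theorem \ref{Thm:cellular}): the convolution product makes $\hK^{\G(\hlam)}(Z^{\bullet}_{\hmu})$ a $(\tilU, \tilU)$-bimodule, and via the K\"unneth isomorphism it is identified with $\hK^{\G(\hlam)}(\M^{\bullet}_{\hmu}) \otimes_{\hR(\hmu)} \hK^{\G(\hlam)}(\M^{\bullet}_{\hmu})$, where the right-hand tensor factor carries the $\tilU$-action twisted by the anti-involution $\sharp$ (this is where the transpose of the convolution correspondence enters, exactly as in \cite{Nakajima15} Section A(ii)). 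Combining with Proposition \ref{Prop:KMmu}, which gives $\hK^{\G(\hlam)}(\M^{\bullet}_{\hmu}) \cong \hW(\hmu)$ as $(\tilU, \hR(\hmu))$-bimodules, yields $\hK^{\G(\hlam)}(Z^{\bullet}_{\hmu}) \cong \hW(\hmu) \otimes_{\hR(\hmu)} \hW(\hmu)^{\sharp}$. \textbf{The main obstacle} I anticipate is verifying that the convolution product on $\hK^{\G(\hlam)}(Z^{\bullet}_{\hmu})$ matches, under the K\"unneth decomposition, the natural $(\tilU,\tilU)$-bimodule structure on $\hW(\hmu) \otimes_{\hR(\hmu)} \hW(\hmu)^{\sharp}$ — in particular tracking how the Nakajima homomorphism $\widehat\Phi_{\hlam}$ acts through convolution on each factor and checking that the right action indeed factors through $\sharp$; this requires care with the duality/transpose in the convolution formalism and with the appropriate supports, but it is a localized restriction of the computation already carried out by Nakajima \cite{Nakajima04} and Beck--Nakajima \cite{BN04}, \cite{Nakajima15} for the global picture.
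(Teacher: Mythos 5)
Your proposal follows essentially the same route as the paper's proof: the identification $Z^{\bullet}_{\hmu} \cong \G(\hlam) \times_{\Stab_{\G(\hlam)} x} (\M^{\bullet}(\hlam)_x \times \M^{\bullet}(\hlam)_x)$, reduction via Lemmas \ref{Lem:fiberisom2}, \ref{Lem:red}, \ref{Lem:rest} and localization to $\K^{\G(\mu)}(\LL(\mu)\times\LL(\mu))\otimes_{\R(\mu)}\hR(\hmu)$, the K\"unneth isomorphisms from Theorem \ref{Thm:Kfibers} (\ref{Thm:Kfibers:Kunneth}) to split the product, Theorem \ref{Thm:Kfibers} (\ref{Thm:Kfibers:K1}), (\ref{Thm:Kfibers:K0}) for the vanishing and comparison claims, and Proposition \ref{Prop:KMmu} for the bimodule identification. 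The $\sharp$-twist on the second factor, which you rightly flag as the subtle point, is treated in the paper only implicitly (by appeal to Proposition \ref{Prop:KMmu} and the affine-cellular picture of \cite{BN04}, \cite{Nakajima15}), exactly the references you propose.
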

\begin{proof}
Fix a point $x \in \Mregg(\hnu, \hlam)$.
Then
we have an isomorphism
\begin{equation*}
Z_{\hmu}^{\bullet} \cong \G(\hlam) 
\times_{(\Stab_{\G(\hlam)} x)} 
\left( \M^{\bullet}(\hlam)_{x} \times
\M^{\bullet}(\hlam)_{x} \right).  
\end{equation*}
A similar computation as in the proof of 
Proposition~\ref{Prop:KMmu} yields:
\begin{align*}
\hK^{\G(\hlam)}(Z^{\bullet}_{\hmu})
&
\cong \hK^{\G(\hlam)}\left(
\G(\hlam) 
\times_{(\Stab_{\G(\hlam)} x)} 
\left( \M^{\bullet}(\hlam)_{x} \times
\M^{\bullet}(\hlam)_{x} \right) \right) \\
&\cong
\K^{\G(\mu)}
\left(
\LL(\mu)
\times \LL(\mu)
\right)
\otimes_{\R(\mu)}\hR(\hmu) \\
&\cong
\K^{\G(\mu)}(\LL(\mu)) \otimes_{\R(\mu)}
\hR(\hmu) \otimes_{\R(\mu)}
\K^{\G(\mu)}(\LL(\mu)),
\end{align*}
where 
the last isomorphism is due to
Theorem~\ref{Thm:Kfibers} (\ref{Thm:Kfibers:Kunneth}). 
Then Proposition~\ref{Prop:KMmu} proves 
the assertion (\ref{Lem:KZmu:bimod}).
Because the same computation is valid for 
equivariant $K$-homologies, we
have
$$
\hK_{i, \mathrm{top}}^{\G(\hlam)}(Z^{\bullet}_{\hmu})
\cong 
\K_{0, \mathrm{top}}^{\G(\mu)}(\LL(\mu)) \otimes_{\R(\mu)}
\hR(\hmu) \otimes_{\R(\mu)}
\K_{i, \mathrm{top}}^{\G(\mu)}(\LL(\mu)),
$$
for $i= 0,1$.
Then Theorem~\ref{Thm:Kfibers} (\ref{Thm:Kfibers:K1}),
(\ref{Thm:Kfibers:K0}) prove the assertions
(\ref{Lem:KZmu:1}), (\ref{Lem:KZmu:0}).
\end{proof}
 
We fix a total ordering
$\{\lambda_{1}, \lambda_{2}, \ldots, \lambda_{l} \}$
of the set 
$\cP^{+}_{\le \lambda} := \{ \mu \in \cP^{+} \mid \mu \le \lambda\}$
such that $\lambda = \lambda_{l}$ and 
$i<j$ whenever $\lambda_{i} < \lambda_{j}$. 
Let $\nu_{i} := \lambda - \lambda_{i} \in \cQ^{+}$
and $\N_{i} := \M_{0}^{\mathrm{reg}}(\nu_{i}, \lambda)$
for $i \in \{ 1, \ldots, l\}.$
Then the stratification (\ref{Eq:stratif1}) is written as:
\begin{equation}
\nonumber
\M_{0}(\lambda) = \N_{1} \sqcup \N_{2} 
\sqcup \cdots \sqcup \N_{l}
\end{equation}
with $\N_{l} = \{ 0 \}$.
For each $i \in \{1, \ldots, l \}$, 
we set $\N_{\le i} := \bigsqcup_{j \le i}\N_{j} \subset 
\M_{0}(\lambda)$.
Note that $\N_{i}$ is a closed subvariety
of $\N_{\le i}$ and its complement
is $\N_{\le i-1}$. 

We set
$\N_{i}^{\bullet} := \N_{i} \cap \M^{\bullet}_{0}(\hlam)$
and 
$\N_{\le i}^{\bullet} :=
\N_{\le i} \cap \M^{\bullet}_{0}(\hlam)$
for each $i \in \{1, \ldots, l \}$.
We fix a total ordering
$\{\hlam_{i,1}, \hlam_{i,2}, \ldots, \hlam_{i,k_{i}}\}$
of the set 
$\mathsf{cl}^{-1}(\lambda_{i}) \cap \lP^{+}_{Q, \beta},$
where we define $k_{i} = 0$
if $\mathsf{cl}^{-1}(\lambda_{i}) \cap \lP^{+}_{Q, \beta} 
= \varnothing$.
We simplify the notation by setting
$\R_{i,s} := \R(\hlam_{i,s}), \,
\hR_{i,s} := \hR(\hlam_{i,s}), \,
\theta_{i,s} := \theta_{\hlam_{i,s}}$
for each $i \in I$ and  $s \in \{1, \ldots, k_{i} \}$.
Set $\hnu_{i,s} := \hlam - \hlam_{i,s} \in \lQ^{+}_{Q}$
and $\N^{\bullet}_{i,s} := \Mregg(\hnu_{i,s}, \hlam)$.
Note that we have $\mathsf{cl}(\hnu_{i,s}) = \nu_{i}$
and that $\N^{\bullet}_{i,s}$
gives a connected component of $\N_{i}^{\bullet}$
for any $s \in \{1, \ldots, k_i \}$. 
Namely, we get the decomposition 
\begin{equation}
\label{Eq:decompNi}
\N_{i}^{\bullet} = \bigsqcup_{s = 1}^{k_{i}} 
\N^{\bullet}_{i,s}
\end{equation}
of $\N_{i}^{\bullet}$ into connected components.
We define a subvariety $Z^{\bullet}_{i}$ 
(resp.~$Z^{\bullet}_{\le i},
Z^{\bullet}_{i,s}$) 
of $Z^{\bullet}(\hlam)$
to be the inverse image of the subvariety
$\N^{\bullet}_{i}$ (resp.~$\N_{\le i}^{\bullet}, \N^{\bullet}_{i,s}$)
along the canonical morphism 
$\pi^{\bullet} \colon Z^{\bullet}(\hlam) \to \M_{0}^{\bullet}(\hlam)$. 
From the decomposition (\ref{Eq:decompNi}), 
we have
\begin{equation}
\label{Eq:decompZi}
Z_{i}^{\bullet} = \bigsqcup_{s = 1}^{k_{i}} 
Z^{\bullet}_{i,s}.
\end{equation}
By construction, $Z_{i}^{\bullet}$ is a closed subvariety
of $Z^{\bullet}_{\le i}$ and its complement is
$Z^{\bullet}_{\le i-1}$ for each $i \in \{2, \ldots, l \}.$
From (\ref{Eq:MVseq}), 
we have an exact sequence:
\begin{equation}
\label{Eq:MVseqZ}
\xy
\xymatrix{
\hK^{\G(\hlam)}(Z^{\bullet}_{i})
\ar[r]^-{\imath_{*}}
&
\hK^{\G(\hlam)}(Z^{\bullet}_{\le i})
\ar[r]^-{\jmath^{*}}
&
\hK^{\G(\hlam)}(Z^{\bullet}_{\le i-1}) 
\ar[r]
&
0,
}
\endxy
\end{equation}
where 
$\imath \colon Z^{\bullet}_{i} \hookrightarrow Z^{\bullet}_{\le i}$
and 
$\jmath \colon Z^{\bullet}_{\le i-1} 
\hookrightarrow Z^{\bullet}_{\le i}$
are the inclusions.

\begin{Lem}
\label{Lem:MVseqZ}
The map $\imath_{*}$ in the sequence
(\ref{Eq:MVseqZ}) is injective.
Therefore we have the following short exact sequence:
\begin{equation*}
\xy
\xymatrix{
0
\ar[r]
&
\hK^{\G(\hlam)}(Z^{\bullet}_{i})
\ar[r]^-{\imath_{*}}
&
\hK^{\G(\hlam)}(Z^{\bullet}_{\le i})
\ar[r]^-{\jmath^{*}}
&
\hK^{\G(\hlam)}(Z^{\bullet}_{\le i-1}) 
\ar[r]
&
0,
}
\endxy
\end{equation*}
for each $i \in \{2, \ldots, l\}$.
\end{Lem}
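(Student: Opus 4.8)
The plan is to prove the statement by induction on $i$, extracting a short exact sequence from the exact hexagon (\ref{Eq:hexagon}) in equivariant topological $K$-homology and transporting it to algebraic $K$-theory through the comparison map. For brevity write $\hK_{j}(-) := \hK^{\G(\hlam)}_{j,\mathrm{top}}(-)$. I first record that $\R(\hlam)$ is a finitely generated $\kk$-algebra, hence Noetherian, so that $\hR(\hlam)$ is flat over $\R(\hlam)$; since $\kk$ is also flat over $\Z[q^{\pm 1}]$, the formation of the completed $K$-groups $\hK^{\G(\hlam)}(-)$ and $\hK_{j}(-)$ out of $K^{\G(\hlam)}(-)$, $K^{\G(\hlam)}_{j,\mathrm{top}}(-)$ is exact. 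In particular the localization sequence (\ref{Eq:MVseq}) and the hexagon (\ref{Eq:hexagon}) remain exact after completion.

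Next I would isolate the single-stratum input. By (\ref{Eq:decompZi}) we have $Z^{\bullet}_{i} = \bigsqcup_{s=1}^{k_{i}} Z^{\bullet}_{i,s}$ with each $Z^{\bullet}_{i,s}$ of the form $Z^{\bullet}_{\hmu}$ for $\hmu = \hlam_{i,s} \in \lP^{+}_{Q,\beta}$; applying Lemma \ref{Lem:KZmu} (\ref{Lem:KZmu:1}), (\ref{Lem:KZmu:0}) componentwise and summing over $s$ yields $\hK_{1}(Z^{\bullet}_{i}) = 0$ and that the comparison map $\hK^{\G(\hlam)}(Z^{\bullet}_{i}) \to \hK_{0}(Z^{\bullet}_{i})$ is an isomorphism. (If $Z^{\bullet}_{i} = \varnothing$, i.e. $k_{i} = 0$, these hold trivially and moreover $Z^{\bullet}_{\le i} = Z^{\bullet}_{\le i-1}$, so step $i$ of everything below is vacuous.) I then claim, by induction on $i \in \{1, \dots, l\}$, that $\hK_{1}(Z^{\bullet}_{\le i}) = 0$ and that the comparison map $\hK^{\G(\hlam)}(Z^{\bullet}_{\le i}) \to \hK_{0}(Z^{\bullet}_{\le i})$ is an isomorphism; the base case $i = 1$ is the single-stratum input, since $Z^{\bullet}_{\le 1} = Z^{\bullet}_{1}$.

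For the inductive step I would feed the triple $Z^{\bullet}_{i} \hookrightarrow Z^{\bullet}_{\le i} \hookleftarrow Z^{\bullet}_{\le i-1}$ (closed subvariety, open complement) into (\ref{Eq:hexagon}). Since $\hK_{1}(Z^{\bullet}_{i}) = 0$ and, by the inductive hypothesis, $\hK_{1}(Z^{\bullet}_{\le i-1}) = 0$, the term $\hK_{1}(Z^{\bullet}_{\le i})$ of the hexagon is squeezed between two zero groups, hence vanishes; the two connecting maps then vanish and the hexagon degenerates to the short exact sequence
\[
0 \to \hK_{0}(Z^{\bullet}_{i}) \xrightarrow{\,i_{*}\,} \hK_{0}(Z^{\bullet}_{\le i}) \xrightarrow{\,j^{*}\,} \hK_{0}(Z^{\bullet}_{\le i-1}) \to 0.
\]
Placing the completed sequence (\ref{Eq:MVseqZ}) above this one and joining the columns by the comparison maps (a commuting ladder, since the comparison map is natural with respect to the proper pushforward $\imath_{*}$ and the open restriction $\jmath^{*}$), the bottom row is short exact, the top row is exact at its middle and right terms, and the outer two vertical maps are isomorphisms — the left one by the single-stratum input, the right one by the inductive hypothesis. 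A routine diagram chase then shows that the middle comparison map $\hK^{\G(\hlam)}(Z^{\bullet}_{\le i}) \to \hK_{0}(Z^{\bullet}_{\le i})$ is again an isomorphism, completing the induction.

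Finally, the lemma itself for $i \ge 2$ drops out of the same ladder: if $x \in \hK^{\G(\hlam)}(Z^{\bullet}_{i})$ satisfies $\imath_{*}(x) = 0$, then the image of $x$ in $\hK_{0}(Z^{\bullet}_{i})$ lies in $\ker i_{*} = 0$ by the displayed short exact sequence, and injectivity of the left comparison map forces $x = 0$; hence $\imath_{*}$ is injective and (\ref{Eq:MVseqZ}) is short exact. I do not anticipate a serious obstacle: the substantive facts — vanishing of $\hK_{1}$ of the central fibers and the K\"{u}nneth isomorphism — are already packaged into Lemma \ref{Lem:KZmu} via Theorem \ref{Thm:Kfibers}, so what remains is the bookkeeping of decomposing $Z^{\bullet}_{i}$ into its connected components and checking the naturality of the comparison map, which is standard.
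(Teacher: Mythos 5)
Your proposal is correct and follows the same strategy as the paper's proof: induction on $i$ establishing $\hK^{\G(\hlam)}_{1,\mathrm{top}}(Z^{\bullet}_{\le i})=0$ and that the comparison map $\hK^{\G(\hlam)}(Z^{\bullet}_{\le i})\to\hK^{\G(\hlam)}_{0,\mathrm{top}}(Z^{\bullet}_{\le i})$ is an isomorphism, with Lemma \ref{Lem:KZmu} and the decomposition (\ref{Eq:decompZi}) feeding the single-stratum input, the hexagon (\ref{Eq:hexagon}) supplying the bottom short exact row, and a ladder/five-lemma argument closing the induction. The only additions are your explicit note that flatness of $\hR(\hlam)$ over $\R(\hlam)$ and of $\kk$ over $\Z[q^{\pm1}]$ preserves exactness of (\ref{Eq:MVseq}) and (\ref{Eq:hexagon}) after completion, and your spelling out the diagram chase rather than just citing the five lemma — both harmless refinements of the same argument.
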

\begin{proof}
We shall prove that
$\hK_{1, \mathrm{top}}^{\G(\hlam)}(Z_{\le i}^{\bullet})=0$
and the comparison map
$
\hK^{\G(\hlam)}(Z_{\le i}^{\bullet}) \to
\hK_{0, \mathrm{top}}^{\G(\hlam)}(Z_{\le i}^{\bullet})
$
is an isomorphism for each $i \in \{1, \ldots, l\}$.
If we prove them, the exact hexagon
(\ref{Eq:hexagon}) yields the assertion.
We proceed by induction on $i$.

When $i=1$, from (\ref{Eq:decompZi}), we have
$$
\hK_{j, \mathrm{top}}^{\G(\hlam)}(Z_{\le 1}^{\bullet})=
\hK_{j, \mathrm{top}}^{\G(\hlam)}(Z_{1}^{\bullet})=
\bigoplus_{s=1}^{k_{1}}
\hK_{j, \mathrm{top}}^{\G(\hlam)}(Z_{1,s}^{\bullet}),
$$   
where $j=0,1.$
From Lemma~\ref{Lem:KZmu}, we know that 
$\hK_{1, \mathrm{top}}^{\G(\hlam)}(Z_{1,s}^{\bullet})=0$
and   
$
\hK^{\G(\hlam)}(Z_{1,s}^{\bullet}) \cong
\hK_{0, \mathrm{top}}^{\G(\hlam)}(Z_{1,s}^{\bullet})
$
for each $s \in \{1, \ldots, k_1 \}$. 
Therefore we are done in this case.

Let $i > 1$ and assume that we know that
$\hK_{1, \mathrm{top}}^{\G(\hlam)}(Z_{\le i-1}^{\bullet})=0$
and   
$
\hK^{\G(\hlam)}(Z_{\le i-1}^{\bullet}) \cong
\hK_{0, \mathrm{top}}^{\G(\hlam)}(Z_{\le i-1}^{\bullet}).
$
By the same reason for the case $i=1$ above,  
we have 
$\hK_{1, \mathrm{top}}^{\G(\hlam)}(Z_{i}^{\bullet})=0$
and   
$
\hK^{\G(\hlam)}(Z_{i}^{\bullet}) \cong
\hK_{0, \mathrm{top}}^{\G(\hlam)}(Z_{i}^{\bullet}).
$
Then 
we see that 
$\hK_{1, \mathrm{top}}^{\G(\hlam)}(Z^{\bullet}_{\le i}) = 0$
from the exact hexagon (\ref{Eq:hexagon}). 
Moreover 
we have the following commutative 
diagram:
$$
\xy
\xymatrix{
&
\hK^{\G(\hlam)}(Z^{\bullet}_{i})
\ar[r]
\ar[d]^{\cong}
&
\hK^{\G(\hlam)}(Z^{\bullet}_{\le i})
\ar[r]
\ar[d]
&
\hK^{\G(\hlam)}(Z^{\bullet}_{\le i-1}) 
\ar[r]
\ar[d]^{\cong}
&
0
\\
0
\ar[r]
&
\hK_{0, \mathrm{top}}^{\G(\hlam)}(Z^{\bullet}_{i})
\ar[r]
&
\hK_{0, \mathrm{top}}^{\G(\hlam)}(Z^{\bullet}_{\le i})
\ar[r]
&
\hK_{0, \mathrm{top}}^{\G(\hlam)}(Z^{\bullet}_{\le i-1}) 
\ar[r]
&
0,
}
\endxy
$$
where the upper row is the exact sequence (\ref{Eq:MVseqZ}),
the lower row is the exact sequence coming from 
the exact hexagon (\ref{Eq:hexagon}).
All vertical arrows are the comparison maps.
Applying the five lemma, we see that 
the middle comparison map 
$\hK^{\G(\hlam)}(Z_{\le i}^{\bullet}) \to
\hK_{0, \mathrm{top}}^{\G(\hlam)}(Z_{\le i}^{\bullet})$
is also an isomorphism.   
\end{proof}

\begin{Cor} \label{Cor:fg}
The completed convolution algebra $\hK^{\G(\hlam)}(Z^{\bullet}(\hlam))$
is finitely generated as an $\hR(\hlam)$-module.
\end{Cor}
\begin{proof}
The assertion follows from Lemmas \ref{Lem:KZmu} and \ref{Lem:MVseqZ}.
\end{proof}

Recall we have defined a quotient
algebra $U_{\le \lambda}$ of 
the modified quantum loop algebra $\tilU$
in Subsection \ref{Ssec:affcell} by (\ref{Eq:Ule}).  

\begin{Lem}
\label{Lem:factor}
The completed Nakajima homomorphism
$\widehat{\Phi}_{\hlam}\colon
\tilU \to \hK^{\G(\hlam)}(Z^{\bullet}(\hlam)) 
$
factors through the quotient
$\tilU \to U_{\le \lambda}.$
\end{Lem}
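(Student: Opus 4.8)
The statement to prove is $\ker(\tilU\to U_{\le\lambda})\subseteq\ker\widehat{\Phi}_{\hlam}$; by the definition (\ref{Eq:Ule}) of $U_{\le\lambda}$ the left-hand side is $\mathfrak{k}:=\bigcap_{\mu\le\lambda}\Ann_{\tilU}\bW(\mu)$, so the task is to show $\widehat{\Phi}_{\hlam}(\mathfrak{k})=0$ in $A:=\hK^{\G(\hlam)}(Z^{\bullet}(\hlam))$.

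The plan is to run a d\'evissage along the stratification. Assembling the short exact sequences of Lemma \ref{Lem:MVseqZ} for $i=2,\dots,l$ and using $Z^{\bullet}_{\le 1}=Z^{\bullet}_{1}$, one gets a chain $0=F_{0}\subsetneq F_{1}\subseteq\cdots\subseteq F_{l}=A$ with $F_{j}/F_{j-1}\cong\hK^{\G(\hlam)}(Z^{\bullet}_{l+1-j})$. Each $F_{j}$ is a two-sided ideal of $A$, because the maps $\imath_{*}$ of Lemma \ref{Lem:MVseqZ} identify $\hK^{\G(\hlam)}(Z^{\bullet}_{i})$ with the classes in $\hK^{\G(\hlam)}(Z^{\bullet}_{\le i})$ supported over the closed $\G(\hlam)$-invariant subset $\N^{\bullet}_{i}\subset\M^{\bullet}_{0}(\hlam)$ (such classes absorb the convolution product on both sides), while the maps $\jmath^{*}$ are algebra surjections. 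By (\ref{Eq:decompZi}) and Lemma \ref{Lem:KZmu}\,(\ref{Lem:KZmu:bimod}), writing $i=l+1-j$, the subquotient $F_{j}/F_{j-1}$ is isomorphic, as a $(\tilU,\tilU)$-bimodule via $\widehat{\Phi}_{\hlam}$, to $\bigoplus_{s=1}^{k_{i}}\hW(\hlam_{i,s})\otimes_{\hR_{i,s}}\hW(\hlam_{i,s})^{\sharp}$, and the convolution product it inherits from $A$ matches the multiplication on the right-hand side built from the cellular pairing $\hW(\hlam_{i,s})^{\sharp}\times\hW(\hlam_{i,s})\to\hR_{i,s}$.

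Since $\mathsf{cl}(\hlam_{i,s})=\lambda_{i}\le\lambda$ and $\hW(\hlam_{i,s})$ is a base change of $\bW(\lambda_{i})$ along $\R(\lambda_{i})\to\hR_{i,s}$, we have $\mathfrak{k}\subseteq\Ann_{\tilU}\bW(\lambda_{i})\subseteq\Ann_{\tilU}\hW(\hlam_{i,s})$. Hence for $x\in\mathfrak{k}$ the operator $\widehat{\Phi}_{\hlam}(x)$ kills the left factors, so $\widehat{\Phi}_{\hlam}(x)\cdot F_{j}\subseteq F_{j-1}$ for all $j$. As $A$ is unital (its identity is the sum of the structure sheaves of the diagonal components of $Z^{\bullet}(\hlam)$, a finite sum), the case $j=l$ gives $\widehat{\Phi}_{\hlam}(x)=\widehat{\Phi}_{\hlam}(x)\cdot 1_{A}\in F_{l-1}$. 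A downward induction then finishes it: if $\widehat{\Phi}_{\hlam}(x)\in F_{k}$, its image in the ideal $F_{k}/F_{k-1}$ of the ring $A/F_{k-1}$ left-annihilates that ideal; but a nonzero element of $\bigoplus_{s}\hW(\hlam_{i,s})\otimes_{\hR_{i,s}}\hW(\hlam_{i,s})^{\sharp}$ cannot left-annihilate it, because the cellular pairing is nondegenerate over the domain $\hR_{i,s}$ (its Gram determinant is a nonzero element of $\hR_{i,s}$ since a generic local Weyl module is simple, so the contravariant form of $\bW(\lambda_{i})$ is generically nondegenerate). Thus $\widehat{\Phi}_{\hlam}(x)\in F_{k-1}$, and iterating down to $F_{0}=0$ yields $\widehat{\Phi}_{\hlam}(x)=0$.

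I expect the genuinely delicate point to be this last nondegeneracy input --- matching the convolution product on the layers with the cellular one and excluding nonzero annihilators; the rest is bookkeeping with the localization sequences of Lemma \ref{Lem:MVseqZ}. An alternative that sidesteps it is to prove instead that $A$ acts faithfully on the \emph{genuine} direct sum $\bigoplus_{\hmu\in\lP^{+}_{Q,\beta}}\hK^{\G(\hlam)}(\M^{\bullet}_{\hmu})\cong\bigoplus_{\hmu}\hW(\hmu)$, each summand being a left $A$-submodule because the convolution action of $\hK^{\G(\hlam)}(Z^{\bullet}(\hlam))$ preserves $\hK^{\G(\hlam)}$ of the $\pi^{\bullet}$-preimage of a single stratum, on which $\mathfrak{k}$ manifestly acts by zero.
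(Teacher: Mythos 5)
Your route differs genuinely from the paper's. The paper reduces at once to the finite-dimensional quotients $\hK^{\G(\hlam)}(Z^{\bullet}(\hlam))/\rr_{\hlam}^{N}$ and then argues at the level of composition factors of that left $\tilU$-module, using Lemma \ref{Lem:KZmu}(\ref{Lem:KZmu:bimod}) together with the sequences (\ref{Eq:MVseqZ}) to see that these composition factors are all subquotients of the $\bW(\mu)$ with $\mu\le\lambda$. You instead stay in the completed algebra $A=\hK^{\G(\hlam)}(Z^{\bullet}(\hlam))$, filter it by the two-sided ideals $F_{j}$ coming from the geometric stratification, and reduce the desired vanishing of $\widehat{\Phi}_{\hlam}$ on $\mathfrak{k}=\bigcap_{\mu\le\lambda}\Ann_{\tilU}\bW(\mu)$ to a nondegeneracy statement for the cellular pairing on each layer. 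The bookkeeping part of your argument is sound: the $F_{j}$ are indeed two-sided ideals (classes supported over a closed $\G(\hlam)$-invariant subset of $\M^{\bullet}_{0}(\hlam)$ absorb convolution), $\widehat{\Phi}_{\hlam}(x)\cdot F_{j}\subseteq F_{j-1}$ for $x\in\mathfrak{k}$ because $\mathfrak{k}\subseteq\Ann_{\tilU}\bW(\lambda_{i})\subseteq\Ann_{\tilU}\hW(\hlam_{i,s})$, and unitality of $A$ then forces $\widehat{\Phi}_{\hlam}(x)\in F_{l-1}$.

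The downward induction, however, rests on two facts that you correctly flag as delicate but do not establish, and that the paper does not supply either. First, Lemma \ref{Lem:KZmu}(\ref{Lem:KZmu:bimod}) identifies $F_{j}/F_{j-1}$ with $\bigoplus_{s}\hW(\hlam_{i,s})\otimes_{\hR_{i,s}}\hW(\hlam_{i,s})^{\sharp}$ only as a $(\tilU,\tilU)$-bimodule; for the annihilator argument you also need the convolution product inherited by this subquotient to coincide with the product built from the cellular pairing, which is a separate claim. Second, the step ``a nonzero element cannot left-annihilate the layer'' needs the Gram determinant of that pairing to be a nonzero element of the domain $\hR_{i,s}$; generic simplicity of local Weyl modules (Theorem \ref{Thm:AK}) makes this plausible, but a proof is required. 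Your alternative sketch has an additional gap: even if each $\hK^{\G(\hlam)}(\M^{\bullet}_{\hmu})$ carries an $A$-module structure via convolution over a neighbourhood of the stratum and each is killed by $\widehat{\Phi}_{\hlam}(\mathfrak{k})$, faithfulness of $A$ on $\bigoplus_{\hmu}\hW(\hmu)$ does not follow from faithfulness on the filtered $K$-group of $\M^{\bullet}(\hlam)$ --- an element acting by zero on every graded piece of a filtration acts nilpotently, not by zero, on the filtered object --- so this faithfulness would itself need a separate argument, essentially equivalent to the nondegeneracy input you need in the main line.
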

\begin{proof}
Since we have 
$
\hK^{\G(\hlam)}(Z^{\bullet}(\hlam)) \cong
\varprojlim
\hK^{\G(\hlam)}(Z^{\bullet}(\hlam)) / \rr_{\hlam}^{N},
$
it is enough to prove that 
the composition 
\begin{equation}
\label{Eq:maptoquot}
\tilU \xrightarrow{\widehat{\Phi}_{\hlam}}
\hK^{\G(\hlam)}(Z^{\bullet}(\hlam))
\to \hK^{\G(\hlam)}(Z^{\bullet}(\hlam)) / \rr_{\hlam}^{N}
\end{equation}
factors through the quotient $U_{\le \lambda}$
for every $N \in \Z_{>0}$.
We can discuss the composition factors of 
$\hK^{\G(\hlam)}(Z^{\bullet}(\hlam)) / \rr_{\hlam}^{N}$
as a left $\tilU$-module because it is finite-dimensional by Corollary~\ref{Cor:fg}.  
By Lemma~\ref{Lem:KZmu} (\ref{Lem:KZmu:bimod})
and the exact sequence (\ref{Eq:MVseqZ}),
we see that every composition factor is a subquotient of
the global Weyl modules $\bW(\mu)$ 
for some $\mu \le \lambda.$
Therefore the ideal $\bigcap_{\mu \le \lambda}
\Ann_{\tilU}\bW(\mu)$ is included in the kernel
of the map (\ref{Eq:maptoquot}).
\end{proof}

By Lemma~\ref{Lem:factor} above, 
we have the induced homomorphism
$\widehat{\Phi}_{\hlam}\colon U_{\le \lambda} \to 
\hK^{\G(\hlam)}(Z^{\bullet}(\hlam))$,
which we denote by the same symbol 
$\widehat{\Phi}_{\hlam}$.

\begin{Prop}
\label{Prop:compare}
For each $N \in \Z_{>0}$, the homomorphism 
$\widehat{\Phi}_{\hlam}^{N}$
given by the composition
$$
\widehat{\Phi}_{\hlam}^{N} \colon
U_{\le \lambda} \xrightarrow{\widehat{\Phi}_{\hlam}}
\hK^{\G(\hlam)}(Z^{\bullet}(\hlam)) 
\to \hK^{\G(\hlam)}(Z^{\bullet}(\hlam)) / \rr_{\hlam}^{N}
$$
is surjective.
In particular, the forgetful functor from
$\hCc_{Q, \beta}$ to the category
of $(U_{\le \lambda}, \hR(\hlam))$-bimodules
is fully faithful.
\end{Prop}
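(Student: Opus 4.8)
The plan is to prove the surjectivity of each $\widehat{\Phi}_{\hlam}^{N}$ by placing two filtrations of the same length $l=|\cP^{+}_{\le\lambda}|$ side by side: the Beck--Nakajima cellular chain $U_{\le\lambda}=I_{0}\supsetneq I_{1}\supsetneq\cdots\supsetneq I_{l}=0$ of Theorem~\ref{Thm:cellular}, whose $i$-th subquotient is $I_{i-1}/I_{i}\cong\bW(\lambda_{i})\otimes_{\R(\lambda_{i})}\bW(\lambda_{i})^{\sharp}$ with generator the image of $a_{\lambda_{i}}$, and the chain $A:=\hK^{\G(\hlam)}(Z^{\bullet}(\hlam))=K_{0}\supseteq K_{1}\supseteq\cdots\supseteq K_{l}=0$, where $K_{i}:=\Ker\!\big(A\to\hK^{\G(\hlam)}(Z^{\bullet}_{\le i})\big)$ is the kernel of restriction to the open piece $Z^{\bullet}_{\le i}$ (with $Z^{\bullet}_{\le 0}=\varnothing$); since restriction is a homomorphism of convolution algebras, $K_{i}$ is a two-sided ideal, and by the short exact sequences of Lemma~\ref{Lem:MVseqZ} together with Lemma~\ref{Lem:KZmu} one has $K_{i-1}/K_{i}\cong\hK^{\G(\hlam)}(Z^{\bullet}_{i})\cong\bigoplus_{s=1}^{k_{i}}\hW(\hlam_{i,s})\otimes_{\hR_{i,s}}\hW(\hlam_{i,s})^{\sharp}$, the completed counterpart of $I_{i-1}/I_{i}$. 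As $\hK^{\G(\hlam)}(Z^{\bullet}_{\le i})$ is module-finite over $\hR(\hlam)$, the argument of Lemma~\ref{Lem:factor}, applied to the open subvariety $Z^{\bullet}_{\le i}\subset Z^{\bullet}(\hlam)$ whose base strata are indexed by the $\hlam_{k,s}$ with $k\le i$, shows that $\widehat{\Phi}_{\hlam}^{N}$ descends to a homomorphism $\bar{\phi}_{i}\colon U_{\le\lambda}/I_{i}\to\hK^{\G(\hlam)}(Z^{\bullet}_{\le i})/\rr_{\hlam}^{N}$; note $\bar{\phi}_{0}$ is the zero map $0\to 0$ and $\bar{\phi}_{l}=\widehat{\Phi}_{\hlam}^{N}$.

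Next I would induct on $i$ from $0$ to $l$, proving each $\bar{\phi}_{i}$ surjective, with $\bar{\phi}_{0}$ as a trivial base case. For the inductive step, $\bar{\phi}_{i}$ sits in a commutative diagram of short exact sequences whose outer verticals are $\bar{\phi}_{i-1}$ (surjective by the inductive hypothesis) and the restriction of $\bar{\phi}_{i}$ to $I_{i-1}/I_{i}$ reduced modulo $\rr_{\hlam}^{N}$; by the four lemma it then suffices to show the latter is surjective. Unwinding the identifications, this map is a quotient of the one obtained from the $\kk$-algebra homomorphism $\R(\lambda_{i})\to\bigoplus_{s}\hR_{i,s}/\langle\theta_{i,s}(\rr_{\hlam})\rangle^{N}$ by tensoring over $\R(\lambda_{i})$ with the finite free modules $\bW(\lambda_{i})$ and $\bW(\lambda_{i})^{\sharp}$. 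Since the $\hlam_{i,s}$ ($s=1,\dots,k_{i}$) correspond to pairwise distinct points of $\Specm\R(\lambda_{i})$, and each $\langle\theta_{i,s}(\rr_{\hlam})\rangle$ is primary for the corresponding maximal ideal of $\R(\lambda_{i})$ by Lemma~\ref{Lem:rest}, the Chinese Remainder Theorem makes that homomorphism surjective, hence so is the map on the $i$-th graded pieces. Taking $i=l$ gives the surjectivity of $\widehat{\Phi}_{\hlam}^{N}$.

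The step I expect to be the main obstacle is the verification, used in the previous paragraph, that under the cellular isomorphism of Theorem~\ref{Thm:cellular} and the geometric isomorphism of Lemma~\ref{Lem:KZmu} the map $\bar{\phi}_{i}$ on the $i$-th graded pieces really is this base-change map; concretely one must check that $\widehat{\Phi}_{\hlam}$ sends the generator $a_{\lambda_{i}}$ of $I_{i-1}/I_{i}$ to (the class of) $\bigoplus_{s}\widehat{w}_{\hlam_{i,s}}\otimes\widehat{w}_{\hlam_{i,s}}$ modulo $K_{i}$, which requires tracing Nakajima's homomorphism and the explicit classes of Theorem~\ref{Thm:Nakajima} through the fixed-point localization~(\ref{Eq:fixedSteinberg}), Proposition~\ref{Prop:KMmu} and Hernandez--Leclerc's identification (Theorem~\ref{Thm:HL}). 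Finally, the last assertion is formal: $A$ is module-finite over $\hR(\hlam)$ (each cellular subquotient $\hW(\hlam_{i,s})\otimes_{\hR_{i,s}}\hW(\hlam_{i,s})^{\sharp}$ is free of finite rank over $\hR_{i,s}$, which is module-finite over $\hR(\hlam)$ by Corollary~\ref{Cor:rest}), so every object of $\hCc_{Q,\beta}$ is a finitely generated, hence $\rr_{\hlam}$-adically separated, $\hR(\hlam)$-module; the surjectivity of all $\widehat{\Phi}_{\hlam}^{N}$ forces $\hR(\hlam)$ into the $\rr_{\hlam}$-adic closure of $\widehat{\Phi}_{\hlam}(U_{\le\lambda})$, so any $\hR(\hlam)$-linear (hence $\rr_{\hlam}$-adically continuous) $U_{\le\lambda}$-homomorphism between such modules automatically commutes with all of $A$, giving fullness; faithfulness is clear.
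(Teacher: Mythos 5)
Your proposal is correct and follows essentially the same strategy as the paper's proof: compare the Beck--Nakajima cellular chain of $U_{\le\lambda}$ with the geometric filtration of $\hK^{\G(\hlam)}(Z^{\bullet}(\hlam))$ by the open pieces $Z^{\bullet}_{\le i}$, use the Chinese remainder theorem and Lemma~\ref{Lem:rest} to match the subquotients $\bW(\lambda_i)\otimes_{\R(\lambda_i)}\bW(\lambda_i)^{\sharp}$ with $\bigoplus_{s}\hW(\hlam_{i,s})\otimes_{\hR_{i,s}}\hW(\hlam_{i,s})^{\sharp}$, and propagate surjectivity through the commutative ladder by a four/five-lemma induction. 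The only cosmetic differences are that the paper constructs the maps $f_i^N$ by downward induction before running the five lemma upward, and the key verification you flag as the main obstacle (that $\widehat{\Phi}_{\hlam}$ sends the image of $a_{\lambda_i}$ to the cyclic generator $w_{\lambda_i}\otimes 1\otimes w_{\lambda_i}$) is handled exactly as you anticipate, by observing that the restriction of $\Delta_*[\mathcal{O}_{\M(\nu_i,\lambda)}]$ is the cyclic vector under the isomorphism of Lemma~\ref{Lem:KZmu}.
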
 
\begin{proof}
Fix $N \in \Z_{>0}$.
Using the homomorphism $\widehat{\Phi}_{\hlam}$,
we compare
the affine cellular structure of the algebra $U_{\le \lambda}$
with the filtration of the algebra 
$\hK^{\G(\hlam)}(Z^{\bullet}(\hlam))$ 
coming from the geometric stratification of 
$\M_{0}^{\bullet}(\hlam)$
as in Lemma~\ref{Lem:MVseqZ}.
First, for each $i$,
we observe that there is the following isomorphism of 
$(U_{\le \lambda}, U_{\le \lambda})$-bimodules
by Lemma~\ref{Lem:KZmu} (\ref{Lem:KZmu:bimod}):
\begin{align}
\hK^{\G(\hlam)}(Z^{\bullet}_{i})/\rr_{\hlam}^{N}
&\cong 
\bigoplus_{s=1}^{k_i}
\hK^{\G(\hlam)}(Z^{\bullet}_{i,s})/\rr_{\hlam}^{N}
\nonumber
\\
& \cong
\bigoplus_{s=1}^{k_i}
\hW(\hlam_{i,s}) \otimes_{\hR_{i,s}} 
\left(
\frac{\hR_{i,s}}{\langle \theta_{i,s}(\rr_{\hlam})^{N} \rangle}
\right)
\otimes_{\hR_{i,s}} \hW(\hlam_{i,s})^{\sharp}.
\nonumber \\
& \cong
\bigoplus_{s=1}^{k_i}
\bW(\lambda_{i}) \otimes_{\R(\lambda_{i})} 
\left(
\frac{\R(\lambda_{i})}{\R(\lambda_{i}) \cap
\langle \theta_{i,s}(\rr_{\hlam})^{N} \rangle}
\right)
\otimes_{\R(\lambda_{i})} \bW(\lambda_{i})^{\sharp}
\nonumber \\
& \cong
\bW(\lambda_{i}) \otimes_{\R(\lambda_{i})} 
\left( \frac{\R(\lambda_{i})}{
\prod_{s=1}^{k_{i}}
\R(\lambda_{i}) \cap
\langle \theta_{i,s}(\rr_{\hlam})^{N} \rangle} \right)
\otimes_{\R(\lambda_{i})} \bW(\lambda_{i})^{\sharp},
\label{Eq:CRT}
\end{align} 
where  
we apply the Chinese remainder theorem
for the last isomorphism.
This is possible 
because the maximal ideals associated with
the primary ideals $\R(\lambda_{i}) \cap
\langle \theta_{i,s}(\rr_{\hlam})^{N} \rangle$
are distinct by Lemma~\ref{Lem:rest}.
By (\ref{Eq:CRT}), we see that the $K$-group
$\hK^{\G(\hlam)}(Z^{\bullet}_{i})$
is cyclic as $(U_{\le \lambda}, U_{\le \lambda})$-bimodule.
By construction, we can easily see that
the class in $\hK^{\G(\hlam)}(Z^{\bullet}_{i})$
obtained as the restriction of 
the class $\Delta_{*}[\mathcal{O}_{\M(\nu_{i}, \lambda)}]$ 
corresponds to the cyclic vector 
$w_{\lambda_{i}} \otimes 1 \otimes w_{\lambda_{i}}$
of the RHS of (\ref{Eq:CRT}).  

Recall the ideals $I_{i}$ of $U_{\le \lambda}$
defined by (\ref{Eq:cellularideals}). 
By downward induction on $i \in \{1, \ldots, l \}$,
we shall construct algebra
homomorphisms
$f_{i}^{N} \colon U_{\le \lambda}/I_{i} \to
\hK^{\G(\hlam)}(Z^{\bullet}_{\le i})/ \rr_{\hlam}^{N}$
and 
$(U_{\le \lambda}, U_{\le \lambda})$-bimodule 
homomorphisms
$g_{i}^{N} \colon I_{i-1}/I_{i} \to 
\hK^{\G(\hlam)}(Z^{\bullet}_{i})/ \rr_{\hlam}^{N}$,
which make the
following diagrams commute:
\begin{equation}
\label{Eq:compare}
\xy
\xymatrix{
0
\ar[r]
&
\bW(\lambda_{i}) \otimes_{\R(\lambda_{i})} 
\bW(\lambda_{i})^{\sharp}
\ar[r]^-{a_{i}}
\ar[d]^{g_{i}^{N}}
&
U_{\le \lambda}/I_{i}
\ar[r]^-{b_{i}}
\ar[d]^{f_{i}^{N}}
&
U_{\le \lambda}/I_{i-1}
\ar[r]
\ar[d]^{f_{i-1}^{N}}
&
0
\\
&
\hK^{\G(\hlam)}(Z^{\bullet}_{i})/ \rr_{\hlam}^{N}
\ar[r]
&
\hK^{\G(\hlam)}(Z^{\bullet}_{\le i})/ \rr_{\hlam}^{N}
\ar[r]
&
\hK^{\G(\hlam)}(Z^{\bullet}_{\le i-1})/ \rr_{\hlam}^{N} 
\ar[r]
&
0,
}
\endxy
\end{equation} 
where the upper row is the exact sequence coming from
the ideal chain and the lower row 
is the exact sequence coming from (\ref{Eq:MVseqZ}).  

We start from $i=l$. Define $f_{l}^{N}$ to be the 
homomorphism $\widehat{\Phi}_{\hlam}^{N}$.
Recall that the Nakajima homomorphism
$\Phi_{\lambda}$ sends the element 
$a_{\lambda}$ to the class
$\Delta_{*}[\mathcal{O}_{\M(0, \lambda)}]$
(see Theorem~\ref{Thm:Nakajima}).
Therefore,
by our observation in the previous paragraph,
if we define the homomorphism $g_{l}^{N}$
to give the quotient map from
$\bW(\lambda) \otimes_{\R(\lambda)} 
\bW(\lambda)^{\sharp}$
to the RHS of (\ref{Eq:CRT})
via the isomorphism (\ref{Eq:CRT}),
the left square in (\ref{Eq:compare})
commutes.
Then we have the induced homomorphism
$f_{l-1}^{N}$ between the cokernels.

The induction step is similar.
Assume that we have defined $f_{i}^{N}$.
By construction, $f_{i}^{N}$ sends the 
image of the element $a_{\lambda_{i}}$
to the restriction of the class
$\Delta_{*}[\mathcal{O}_{\M(\nu_{i}, \lambda)}]$.
Then if we define $g_{i}^{N}$
to be the quotient map via the
isomorphism (\ref{Eq:CRT}),
the left square in (\ref{Eq:compare})
commutes.
We get $f_{i-1}^{N}$ as the induced homomorphism 
between the cokernels.

Note that
we have $f_{1}^{N} = g_{1}^{N}$
and all the homomorphisms $g_{i}^{N}$ are surjective
by construction.
Therefore we can apply the five lemma to
the diagram ($\ref{Eq:compare})$
inductively, starting from the case $i=2$,
to prove that every $f_{i}^{N}$ is also surjective.
Eventually we see that the homomorphism
$f_{l}^{N} = \widehat{\Phi}_{\hlam}^{N}$ is surjective.
\end{proof}

Using the notation in 
the above proof of 
Proposition~\ref{Prop:compare},
we define
$K^{N}_{\le i} := \Ker 
f_{i}^{N},
K_{i}^{N}
:= \Ker g_{i}^{N}
$
for each $i \in \{1, \ldots, l \}$ and 
$N \in \Z_{>0}$.

\begin{Prop}
\label{Prop:Kernel}
For each $i \in \{1, \ldots, l\}$ and 
for any $N_{1}, N_{2} \in \Z_{>0}$, there 
exists a positive integer $N > N_{1} + N_{2}$ 
satisfying
\begin{enumerate}
\item \label{Prop:Kernel:g}
$K_{i}^{N} \subset K_{i}^{N_{1}} \cdot K_{i}^{N_{2}}$;
\item \label{Prop:Kernel:f}
$K_{\le i}^{N} \subset K_{\le i}^{N_{1}} \cdot K_{\le i}^{N_{2}}$. 
\end{enumerate}
\end{Prop}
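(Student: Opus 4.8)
## Proof proposal for Proposition \ref{Prop:Kernel}

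The plan is to prove both statements simultaneously by downward induction on $i \in \{l, l-1, \ldots, 1\}$, exploiting the commutative diagram (\ref{Eq:compare}) and the explicit description of the kernels $K_i^N$ of the surjections $g_i^N$. The base of the induction is the observation that $K_i^N$ is governed entirely by the commutative algebra: from the isomorphism (\ref{Eq:CRT}), we have $g_i^N : \bW(\lambda_i) \otimes_{\R(\lambda_i)} \bW(\lambda_i)^{\sharp} \twoheadrightarrow \bW(\lambda_i) \otimes_{\R(\lambda_i)} \big(\R(\lambda_i)/\mathfrak{b}_i^N\big) \otimes_{\R(\lambda_i)} \bW(\lambda_i)^{\sharp}$, where $\mathfrak{b}_i^N := \prod_{s=1}^{k_i}\big(\R(\lambda_i) \cap \langle \theta_{i,s}(\rr_{\hlam})^N\rangle\big)$. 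Since $\bW(\lambda_i)$ is free of finite rank over $\R(\lambda_i)$ (Theorem \ref{Thm:globalWeyl}), the kernel $K_i^N$ is exactly $\bW(\lambda_i) \otimes_{\R(\lambda_i)} \mathfrak{b}_i^N \otimes_{\R(\lambda_i)} \bW(\lambda_i)^{\sharp}$, i.e. it is the image of $\mathfrak{b}_i^N$ under the cyclic-module structure. So statement (\ref{Prop:Kernel:g}) reduces to a statement purely about the ideals $\mathfrak{b}_i^N$ in the Noetherian ring $\R(\lambda_i)$: we need $\mathfrak{b}_i^N \subseteq \mathfrak{b}_i^{N_1}\cdot\mathfrak{b}_i^{N_2}$ for $N$ large. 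Because $\langle \theta_{i,s}(\rr_{\hlam})\rangle$ is $\rr_{i,s}$-primary (Lemma \ref{Lem:rest}) and the $\rr_{i,s}$ are pairwise distinct maximal ideals, by the Chinese remainder theorem $\mathfrak{b}_i^N$ agrees locally at each $\rr_{i,s}$ with $\big(\R(\lambda_i)\cap\langle\theta_{i,s}(\rr_{\hlam})^N\rangle\big)$, and the Artin–Rees lemma (or simply the fact that $\rr_{i,s}$-adic and $\langle\theta_{i,s}(\rr_{\hlam})\rangle$-adic topologies coincide) gives, for each $s$, an integer bounding how deep one must go; taking $N$ above the sum of these bounds and above $N_1+N_2$ settles (\ref{Prop:Kernel:g}).

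For statement (\ref{Prop:Kernel:f}) I would run the downward induction. For $i = l$ we have $f_l^N = \widehat{\Phi}_{\hlam}^N$ and $g_l^N$ is the quotient onto the RHS of (\ref{Eq:CRT}), so $K_{\le l}^N = K_l^N$ (the kernel of the Nakajima homomorphism composed with the quotient is, at the top stratum, exactly the kernel of $g_l^N$, since $f_{l-1}^N$ is induced on cokernels — more precisely the left square of (\ref{Eq:compare}) with $i=l$ identifies $K_{\le l}^N$ with the preimage in $\bW(\lambda)\otimes\bW(\lambda)^{\sharp}$ under $a_l$ of... one must be slightly careful here, but the upshot is $K_{\le l}^N = a_l(K_l^N)$). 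Then (\ref{Prop:Kernel:f}) for $i=l$ follows from (\ref{Prop:Kernel:g}) for $i=l$. For the inductive step, suppose (\ref{Prop:Kernel:f}) holds for $i$ with some bound; I want it for $i-1$. Chasing the commutative diagram (\ref{Eq:compare}): an element of $K_{\le i-1}^N$ lifts to an element $x \in U_{\le\lambda}/I_i$ with $f_i^N(x) \in \imath_*\big(\hK^{\G(\hlam)}(Z^{\bullet}_i)/\rr_{\hlam}^N\big)$, i.e. $f_i^N(x) = \imath_* g_i^{N}(y)$ for some $y$ in the top bimodule; after subtracting $a_i(y')$ for a suitable lift $y'$ we may assume $x \in I_{i-1}/I_i$ actually lies in $K_i^N$ modulo... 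The precise bookkeeping: $K_{\le i-1}^N$ is the image of $K_{\le i}^N$ under $b_i$, together with the fact that $\ker b_i = a_i(\bW(\lambda_i)\otimes\bW(\lambda_i)^\sharp)$ and $a_i^{-1}(K_{\le i}^N) = K_i^N$. So $K_{\le i}^N = b_i^{-1}(K_{\le i-1}^N)$ sits in an exact sequence $0 \to K_i^N \to K_{\le i}^N \to K_{\le i-1}^N \to 0$. Given targets $N_1, N_2$, apply the inductive hypothesis to get $N'$ with $K_{\le i}^{N'} \subset K_{\le i}^{N_1}\cdot K_{\le i}^{N_2}$ and (\ref{Prop:Kernel:g}) to get $N''$ with $K_i^{N''}\subset K_i^{N_1}\cdot K_i^{N_2}$; setting $N := \max(N', N'', N_1+N_2+1)$ and using that products of ideals respect the exact sequence (the image of a product lands in the product of images, and $K_i \subset K_{\le i}$, $K_{\le i}^{N_j}$ are ideals of the algebra $\hK^{\G(\hlam)}(Z^{\bullet}_{\le i})/\rr_{\hlam}^{N_j}$), one deduces $K_{\le i-1}^N \subset K_{\le i-1}^{N_1}\cdot K_{\le i-1}^{N_2}$.

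I expect the main obstacle to be the diagram chase in the inductive step: one must verify carefully that the kernels $K_{\le i}^N$ are genuinely two-sided ideals in the relevant finite-dimensional algebras (so that products make sense and behave well), that the middle square of (\ref{Eq:compare}) interacts correctly with multiplication, and — the subtle point — that passing from $K_i^N \subset K_i^{N_1}\cdot K_i^{N_2}$ inside $\hK^{\G(\hlam)}(Z^{\bullet}_i)/\rr_{\hlam}^N$ to the corresponding statement in $\hK^{\G(\hlam)}(Z^{\bullet}_{\le i})/\rr_{\hlam}^N$ is legitimate, using that $\imath_*$ is a bimodule map (but \emph{not} an algebra map), so one needs the product $K_{\le i}^{N_1}\cdot K_{\le i}^{N_2}$ to already capture the "mixed" contributions. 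The clean way around this is to do all the estimation at the level of the cellular quotients $\bW(\lambda_i)\otimes_{\R(\lambda_i)}\bW(\lambda_i)^\sharp$ and the ideal filtration $I_\bullet$ of $U_{\le\lambda}$ itself — i.e. prove the analogous statement for the kernels of $U_{\le\lambda}/I_i \twoheadrightarrow U_{\le\lambda}/(I_i + \mathfrak{a}^N)$ where $\mathfrak{a}$ is the appropriate central ideal — and then transport via the surjections $f_i^N$, whose kernels we are computing. Once the ideal-theoretic heart (\ref{Prop:Kernel:g}) is in place, the rest is a finite induction with the five lemma and elementary exact-sequence manipulations, entirely parallel to the proof of Proposition \ref{Prop:compare}.
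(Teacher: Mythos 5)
Your overall framework matches the paper's: part~(\ref{Prop:Kernel:g}) is reduced to commutative algebra of the primary ideals $\R(\lambda_i)\cap\langle\theta_{i,s}(\rr_{\hlam})^{N}\rangle$ via the description (\ref{Eq:CRT}), and part~(\ref{Prop:Kernel:f}) is proved by chasing the exact sequences coming from (\ref{Eq:compare}). But there are several genuine gaps.

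First, in part~(\ref{Prop:Kernel:g}) you silently assume $k_i\neq 0$. When $k_i=0$ the product $\mathfrak{b}_i^{N}$ is the unit ideal and $g_i^{N}=0$, so $K_i^{N}=I_{i-1}/I_i$ for every $N$; the assertion then becomes $I_{i-1}/I_i\subset (I_{i-1}/I_i)^{2}$, i.e.\ idempotency of the affine heredity ideal $I_{i-1}/I_i$. This is a nontrivial fact that the paper extracts from Theorem~\ref{Thm:cellular} (using that $a_{\lambda_i}^{2}=a_{\lambda_i}$ is a cyclic generator of the bimodule). Your proposal produces no content in this case.

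Second, and more seriously, the identity $a_i^{-1}(K_{\le i}^{N})=K_i^{N}$ that you use repeatedly (and the resulting short exact sequence $0\to K_i^{N}\to K_{\le i}^{N}\to K_{\le i-1}^{N}\to 0$, as well as the base-case claim $K_{\le l}^{N}=a_l(K_l^{N})$) is false. What is actually true is $a_i^{-1}(K_{\le i}^{N})=\Ker(f_i^{N}\circ a_i)$. Since $f_i^{N}\circ a_i=(\imath_*\bmod\rr^{N})\circ g_i^{N}$ and the map $\imath_*$ is injective only \emph{before} reducing mod $\rr_{\hlam}^{N}$ (Lemma~\ref{Lem:MVseqZ}), the kernel $\Ker(f_i^{N}\circ a_i)$ can be strictly larger than $K_i^{N}=\Ker g_i^{N}$. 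The entire point of the paper's Lemma~\ref{Lem:ring} is to control this discrepancy: for any target $M$ one can find $N'$ with $\Ker(f_i^{N'}\circ a_i)\subset K_i^{M}$, so that $\Ker(f_i^{N'}\circ a_i)\subset K_i^{N_1}\cdot K_i^{N_2}$. You never invoke (or replace) this step, so your argument collapses at precisely the point you yourself flagged as delicate.

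Third, your induction runs downward from $i=l$, which makes the base case the hardest and your dismissal of it rests on the false identity above. The correct direction is upward from $i=1$, where the base case is genuinely trivial because $U_{\le\lambda}/I_0=0$ forces $f_1^{N}=g_1^{N}$; the inductive step then uses the snake lemma on the modified diagram (with $\Ima(f_i^{N}\circ a_i)$ replacing $\hK^{\G(\hlam)}(Z_i^{\bullet})/\rr_{\hlam}^{N}$ so the bottom row is short exact) together with Lemma~\ref{Lem:ring}.
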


\begin{proof}
We first prove the assertion (\ref{Prop:Kernel:g}).
Assume that $k_{i} = 0$.
Thus we have $g_{i}^{N}=0$ and hence
$K^{N}_{i} = I_{i-1}/I_{i}$ for any $N \in \Z_{>0}.$
In this case, the assertion (\ref{Prop:Kernel:g})
is equivalent to the assertion 
$
(I_{i-1}/I_{i})^{2} = I_{i-1}/I_{i},
$
which follows from Theorem~\ref{Thm:cellular}.
Next we consider the case $k_{i} \neq 0$.
By (\ref{Eq:CRT}), we have
$$
K_{i}^{N} =
\bW(\lambda_{i}) \otimes_{\R(\lambda_{i})} 
\left(
\prod_{s=1}^{k_{i}}
\left(\R(\lambda_{i}) \cap
\langle \theta_{i,s}(\rr_{\hlam})^{N} \rangle
\right) \right)
\otimes_{\R(\lambda_{i})} \bW(\lambda_{i})^{\sharp},
$$
for each $N \in \Z_{>0}$.
By Lemma~\ref{Lem:rest}, 
the ideal $\R(\lambda_{i}) \cap
\langle \theta_{i,a}(\rr_{\hlam})^{N} \rangle$
is a primary ideal whose associated prime is 
the maximal ideal $\rr_{\lambda_{i}, \hlam_{i,a}}$.
Thus for a sufficiently large $N > 0$,
we have 
$\R(\lambda_{i}) \cap
\langle \theta_{i,s}(\rr_{\hlam})^{N} \rangle
\subset
(\R(\lambda_{i}) \cap
\langle \theta_{i,s}(\rr_{\hlam})^{N_{1}} \rangle)
(\R(\lambda_{i}) \cap
\langle \theta_{i,s}(\rr_{\hlam})^{N_{2}} \rangle)
$.  
Then we obtain
the assertion 
$K_{i}^{N} \subset K_{i}^{N_{1}} \cdot K_{i}^{N_{2}}$.

We prove the assertion (\ref{Prop:Kernel:f}) 
by induction on $i$.
The case $i=1$ follows from (\ref{Prop:Kernel:g})
since $f_{1}^{N} = g^{N}_{1}$.
We assume that $i > 1$ and the assertion (\ref{Prop:Kernel:f}) is
true for $i-1$. 
For given $N_{1}, N_{2} \in \Z_{>0}$, we can find
an integer $N' > N_{1} + N_{2}$
such that $K_{i}^{N'} \subset 
K_{i}^{N_{1}} \cdot K_{i}^{N_{2}}$ by (\ref{Prop:Kernel:g}).
By the Artin-Rees lemma, we find an integer $N'' > N'$
such that we have 
$$
\hK^{\G(\hlam)}(Z^{\bullet}_{i}) \cap \left( \rr_{\hlam}^{N''} \hK^{\G(\hlam)}(Z_{\le i}^{\bullet})
\right) 
\subset \rr_{\hlam}^{N'} \hK^{\G(\hlam)}(Z^{\bullet}_{i}),
$$
which implies  
\begin{equation}
\label{Eq:Kernel_incl}
\Ker (f_{i}^{N''} \circ a_{i})
\subset 
K^{N'}_{i}
\subset 
K_{i}^{N_{1}} \cdot K_{i}^{N_{2}},
\end{equation}
where $a_{i}$ is the inclusion
$I_{i-1}/I_{i} \hookrightarrow U_{\le \lambda}/I_{i}$
as in the diagram (\ref{Eq:compare}).
Applying the snake lemma 
to the following diagram:
$$
\xy
\xymatrix{
0 
\ar[r]
&
\bW(\lambda_{i})
\otimes_{\R(\lambda_{i})}
\bW(\lambda_{i})^{\sharp}
\ar[r]^-{a_{i}}
\ar[d]^-{f_{i}^{N''} \circ a_{i}}
&
U_{\le \lambda} / I_{i}
\ar[d]^{f^{N''}_{i}}
\ar[r]^{b_{i}}
&
U_{\le \lambda}/ I_{i-1}
\ar[r]
\ar[d]^{f_{i-1}^{N''}}
&
0
\\
0
\ar[r]
&
\Ima (f_{i}^{N''} \circ a_{i}) 
\ar[r]
&
\hK^{\G(\hlam)}(Z_{\le i}^{\bullet})/ \rr^{N''}_{\hlam}
\ar[r]
&
\hK^{\G(\hlam)}(Z^{\bullet}_{\le i-1})/\rr^{N''}_{\hlam}
\ar[r]
& 
0,
}
\endxy
$$
we get
an exact sequence
\begin{equation}
\label{Eq:Kernel}
\xy
\xymatrix{
0
\ar[r]
&
\Ker (f^{N''}_{i} \circ a_{i})
\ar[r]
&
K_{\le i}^{N''}
\ar[r]
&
K_{\le i-1}^{N''}
\ar[r]
&
0.}
\endxy
\end{equation}
Let $N_{1}^{\prime}, N_{2}^{\prime}$ 
be any two integers larger than $N''$. 
By induction hypothesis, there is an integer 
$N > N_{1}^{\prime} + N_{2}^{\prime}$
such that $K_{\le i-1}^{N} \subset 
K_{\le i-1}^{N_{1}^{\prime}} \cdot K_{\le i-1}^{N_{2}^{\prime}}$.
We shall prove that the assertion (\ref{Prop:Kernel:f})
holds for this $N$.
Let $x \in K_{\le i}^{N}$ be an arbitrary element.
Note that we have
$b_{i}(x) \in K_{\le i-1}^{N}
\subset K_{\le i-1}^{N_{1}^{\prime}} \cdot 
K_{\le i-1}^{N_{2}^{\prime}}.$
Since the quotient map 
$b_{i} \colon U_{\le \lambda}/I_{i} \to U_{\le \lambda}/I_{i-1}$
induces the surjection
$K_{\le i}^{N_{1}^{\prime}} \cdot 
K_{\le i}^{N_{2}^{\prime}}
\twoheadrightarrow
K_{\le i-1}^{N_{1}^{\prime}} \cdot 
K_{\le i-1}^{N_{2}^{\prime}}$,
we can choose an element 
$y \in K_{\le i}^{N_{1}^{\prime}} \cdot 
K_{\le i}^{N_{2}^{\prime}}$ 
so that $b_{i}(x-y)=0$.
By the exact sequence (\ref{Eq:Kernel}),
there is an element $y^{\prime} \in 
\Ker (f_{i}^{N''} \circ a_{i})$ 
such that $x = y + a_{i}(y^{\prime})$.
By (\ref{Eq:Kernel_incl}), we see that
$a_{i}(y^{\prime}) \in 
K_{\le i}^{N_{1}} \cdot 
K_{\le i}^{N_{2}}$.
Therefore we have 
$x \in K_{\le i}^{N_{1}} \cdot 
K_{\le i}^{N_{2}}$
as desired. 
\end{proof}

As the special case $i=l$ of 
Proposition~\ref{Prop:Kernel}
(\ref{Prop:Kernel:f}), we obtain the following.

\begin{Cor}
\label{Cor:Kernel}
For any $N_{1}, N_{2} \in \Z_{>0}$, 
there is a positive integer $N$ such that
$\Ker \widehat{\Phi}_{\hlam}^{N}
\subset (\Ker \widehat{\Phi}_{\hlam}^{N_{1}}) \cdot
(\Ker \widehat{\Phi}_{\hlam}^{N_{2}}).$
\end{Cor}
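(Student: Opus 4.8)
The plan is to obtain the statement immediately from Proposition \ref{Prop:Kernel}, into which the substantive work has already been absorbed. First I would recall the notation fixed in the proof of Proposition \ref{Prop:compare}: there the homomorphism $f_{l}^{N} \colon U_{\le \lambda}/I_{l} \to \hK^{\G(\hlam)}(Z^{\bullet}(\hlam))/\rr_{\hlam}^{N}$ was defined to be precisely $\widehat{\Phi}_{\hlam}^{N}$, and the kernels $K_{\le i}^{N} := \Ker f_{i}^{N}$ were introduced for each $i \in \{1, \ldots, l\}$. Since $I_{l} = 0$ (by the convention fixed in Subsection \ref{Ssec:affcell}), we have $U_{\le \lambda}/I_{l} = U_{\le \lambda}$, so this specialization is consistent, and in particular $K_{\le l}^{N} = \Ker \widehat{\Phi}_{\hlam}^{N}$ for every $N \in \Z_{>0}$.

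Next I would specialize assertion (\ref{Prop:Kernel:f}) of Proposition \ref{Prop:Kernel} to the top index $i = l$: for the given $N_{1}, N_{2} \in \Z_{>0}$ it produces an integer $N > N_{1} + N_{2}$ with $K_{\le l}^{N} \subset K_{\le l}^{N_{1}} \cdot K_{\le l}^{N_{2}}$. Rewriting this through the identification $K_{\le l}^{N} = \Ker \widehat{\Phi}_{\hlam}^{N}$ yields exactly $\Ker \widehat{\Phi}_{\hlam}^{N} \subset (\Ker \widehat{\Phi}_{\hlam}^{N_{1}}) \cdot (\Ker \widehat{\Phi}_{\hlam}^{N_{2}})$, which is the assertion, so the argument is complete.

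I do not expect any genuine obstacle at this final step; all of the difficulty lives in Proposition \ref{Prop:Kernel}, whose downward induction on $i$ treats the cellular subquotients $\bW(\lambda_{i}) \otimes_{\R(\lambda_{i})} \bW(\lambda_{i})^{\sharp}$ one at a time, using the Chinese remainder decomposition (\ref{Eq:CRT}) together with Lemma \ref{Lem:rest} to control the primary ideals $\R(\lambda_{i}) \cap \langle \theta_{i,s}(\rr_{\hlam})^{N} \rangle$, and the commutative-algebra Lemma \ref{Lem:ring} to transfer inclusions of kernels from a submodule to the ambient module. The only point worth double-checking at the level of this corollary is the bookkeeping that $f_{l}^{N}$ really coincides with $\widehat{\Phi}_{\hlam}^{N}$ and that the ideal chain of Subsection \ref{Ssec:affcell} terminates at $I_{l} = 0$, so that the case $i = l$ of Proposition \ref{Prop:Kernel} says precisely what is wanted here.
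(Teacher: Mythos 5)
Your proof is correct and takes exactly the same route as the paper, which derives the corollary by specializing Proposition~\ref{Prop:Kernel}~(\ref{Prop:Kernel:f}) to $i=l$; the only extra content in your write-up is the (correct) bookkeeping that $I_l = 0$ and $f_l^N = \widehat{\Phi}_{\hlam}^N$, so that $K_{\le l}^N = \Ker\widehat{\Phi}_{\hlam}^N$.
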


\begin{proof}[Proof of Theorem~\ref{Thm:main}
(\ref{Thm:main:HL})]
Note that we have
$(\hCc_{Q, \beta})_{f} 
= \bigcup_{N}
\hCc_{Q, \beta}^{N}$, where
$\hCc_{Q, \beta}^{N} : = \left(
\hK^{\G(\hlam)}(Z^{\bullet}(\hlam))/ \rr^{N}_{\hlam}
\right) \modfg$ is the full subcategory of $\hCc_{Q, \beta}$
consisting of modules $M$ with $\rr^{N}_{\hlam} M = 0$.
Thus, by Proposition~\ref{Prop:compare},
we see that the pullback functor 
$(\hCc_{Q, \beta})_{f} \to \Cc_{\g}; \, 
M \mapsto (\widehat{\Phi}_{\hlam})^{*}M$
is fully faithful.
To prove that it is essentially surjectivite onto $\Cc_{Q, \beta}$,
it is enough to show that
for each module $M \in \Cc_{Q, \beta}$
there is a positive integer $N \in \Z_{>0}$
such that $(\Ker \widehat{\Phi}_{\hlam}^{N}) M = 0$.
We proceed by induction on the length of $M$.
When $M=L(\hmu)$ is a simple module of $\Cc_{Q, \beta}$
of $\ell$-highest weight $\hmu \in \lP^{+}_{Q, \beta}$,
we have
$(\Ker \widehat{\Phi}_{\hlam}^{1})L(\hmu)=0$ because we know
$(\Ker \widehat{\Phi}_{\hlam}^{1}) W(\hmu) = 0$.
For induction step, we write the given module $M$
as an extension of two non-zero modules 
$M_{1}, M_{2} \in \Cc_{Q, \beta}$.
By induction hypothesis, there are integers 
$N_{1}, N_{2} \in \Z_{>0}$ such that
$(\Ker \widehat{\Phi}_{\hlam}^{N_{1}}) M_{1} = 
(\Ker \widehat{\Phi}_{\hlam}^{N_{2}}) M_{2} = 0$.
We can find an integer $N \in \Z_{>0}$ such that
$\Ker \widehat{\Phi}_{\hlam}^{N} \subset 
(\Ker \widehat{\Phi}_{\hlam}^{N_{1}}) \cdot
(\Ker \widehat{\Phi}_{\hlam}^{N_{2}})$
by Corollary \ref{Cor:Kernel}.
Then we have 
$(\Ker \widehat{\Phi}_{\hlam}^{N}) M = 0$.
\end{proof}

\begin{proof}[Proof of Theorem~\ref{Thm:main}
(\ref{Thm:main:affhw})]
We construct an affine quasi-heredity chain
of the algebra $\hK^{\G(\hlam)}(Z^{\bullet}(\hlam))$.
Let $\{\hlam_{1}, \ldots, \hlam_{m} \}$ be a total ordering
of the set $\lP_{Q, \beta}^{+}$
satisfying the condition that 
we have $j < k$ whenever $\hlam_{j} < \hlam_{k}$.
Note that $\hlam_{m} = \hlam.$ 
Put $\hnu_{i} := \hlam - \hlam_{i}$ and set
$\mathcal{N}_{i} :=
\Mregg(\hnu_{i}, \hlam), \,
\mathcal{N}_{\ge i} := \bigsqcup_{j \ge i} \mathcal{N}_{j}, \,
\mathcal{N}_{\le i} := \bigsqcup_{j \le i} \mathcal{N}_{j}.$
We denote the inverse image of
$\mathcal{N}_{i}$ (resp.~$\mathcal{N}_{\ge i}, 
\mathcal{N}_{\le i}$) 
along the canonical morphism 
$\pi \colon Z^{\bullet}(\hlam) \to \M^{\bullet}_{0}(\hlam)$
by $\mathcal{Z}_{i}$ 
(resp.~$\mathcal{Z}_{\ge i}, \mathcal{Z}_{\le i}$).
By construction, 
$\mathcal{Z}_{\ge i+1}$ (resp.~$\mathcal{Z}_{i}$)
is a closed subvariety of $\mathcal{Z}_{\ge i}$
(resp.~$\mathcal{Z}_{\le i}$) and 
its complement is 
$\mathcal{Z}_{i}$ (resp.~$\mathcal{Z}_{\le i-1}$).
We also note that 
$\mathcal{Z}_{\ge i}$ is a closed subvariety of
$Z^{\bullet}(\hlam)$ whose complement
is $\mathcal{Z}_{\le i-1}$.
Then we consider the following commutative diagram
arising from (\ref{Eq:MVseq}):
\begin{equation}\label{Eq:nine}
\xy
\xymatrix{
&
0
\ar[d]
&
0
\ar[d]
&
&
\\
&
\hK^{\G(\hlam)}(\mathcal{Z}_{\ge i+1})
\ar[d]
\ar@{=}[r]
&
\hK^{\G(\hlam)}(\mathcal{Z}_{\ge i+1})
\ar[d]
&
&
\\
0
\ar[r]
&
\hK^{\G(\hlam)}(\mathcal{Z}_{\ge i})
\ar[r]
\ar[d]
&
\hK^{\G(\hlam)}(Z^{\bullet}(\hlam))
\ar[r]
\ar[d]
&
\hK^{\G(\hlam)}(\mathcal{Z}_{\le i-1})
\ar[r]
\ar@{=}[d]
&
0
\\
0
\ar[r]
&
\hK^{\G(\hlam)}(\mathcal{Z}_{i})
\ar[r]
\ar[d]
&
\hK^{\G(\hlam)}(\mathcal{Z}_{\le i})
\ar[r]
\ar[d]
&
\hK^{\G(\hlam)}(\mathcal{Z}_{\le i-1})
\ar[r]
&
0
\\
&
0
&
0
&
&
}
\endxy
\end{equation}
Arguing as in Lemma~\ref{Lem:MVseqZ}, 
we see that the left column and the lower row in (\ref{Eq:nine})
are exact. By downward induction on $i$ and 
diagram chases, we see that
the middle row (and hence the middle column) in 
the diagram (\ref{Eq:nine}) is also exact.

Therefore we can regard
$\mathcal{I}_{i} := \hK^{\G(\hlam)}(\mathcal{Z}_{\ge i+1})$
for each $i \in \{0, \ldots, m \}$
as a two-sided ideal of the algebra
$\Aa := \hK^{\G(\hlam)}(Z^{\bullet}(\hlam))$.
What we have to do is to prove that 
the chain of ideals
\begin{equation}
\label{Eq:affheredchainK}
0 = \mathcal{I}_{m} \subsetneq \mathcal{I}_{m-1} \subsetneq \cdots
\subsetneq \mathcal{I}_{1} 
\subsetneq \mathcal{I}_{0} = \Aa
\end{equation}
gives an affine quasi-heredity chain.
Observe that
$$\mathcal{I}_{i-1}/ \mathcal{I}_{i}
\cong \hK^{\G(\hlam)}(\mathcal{Z}_{i})
\cong \hW(\hlam_{i}) \otimes_{\hR(\hlam_{i})} \hW(\hlam_{i})^{\sharp}
\cong \hW(\hlam_{i})^{\oplus s_{i}}$$ 
as a left 
$\Aa$-module,
where
$s_{i} := \dim W(\hlam_{i})$.
By Theorem~\ref{Thm:main} (\ref{Thm:main:HL}),
the category of finite-dimensional modules over
$\Aa/\mathcal{I}_{i}$
is identified with the full subcategory of $\Cc_{Q, \beta}$
consisting of the modules $M$ whose $\ell$-weights belong
to the set $\bigcup_{j \le i} 
\{ \hmu \in \lP_{\Z} \mid \hmu \le \hlam_{j} \}.$ 
For such a module $M$,  we have
$\Hom_{\hCc_{Q, \beta}}(\hW(\hlam_{i}), M) \cong 
M_{\hlam_i}$ 
by Proposition~\ref{Prop:defWeyl} (\ref{Prop:defWeyl:univ}).
In particular, the functor $\Hom_{\hCc_{Q, \beta}} (\hW(\hlam_{i}), -)$
is exact on the subcategory 
$\Aa/\mathcal{I}_{i}
\modfd$.
Since any module $M \in
\Aa / \mathcal{I}_{i}
\modfg$ can be written as a projective limit of 
finite-dimensional modules 
(i.e.,~$M \cong \varprojlim M / \rr_{\hlam}^{N}$), 
we see that
the deformed local Weyl module
$\hW(\hlam_{i})$ is a projective module 
in the category 
$\Aa/\mathcal{I}_{i}\modfg$ with its simple head $L(\hlam_{i})$.
Moreover, we have
$\Hom_{\hCc_{Q, \beta}}(\mathcal{I}_{i-1}/ \mathcal{I}_{i}, 
\Aa/ \mathcal{I}_{i-1})
=0.$
From these observations and Proposition
\ref{Prop:defWeyl}, we conclude that 
the chain (\ref{Eq:affheredchainK}) is 
an affine quasi-heredity chain.

By Theorem~\ref{Thm:CPS} and Remark~\ref{Rem:ordering},
we see that the category $\hCc_{Q,\beta}$
is an affine highest weight category for
the poset $(\lP^{+}, \le)$,
whose standard module (resp.~proper standard module)
associated with $\hmu \in \lP^{+}_{Q,\beta}$
is the deformed local Weyl module $\hW(\hmu)$
(resp.~local Weyl module $W(\hmu)$).
To prove the assertion for proper costandard modules,
we have to show the $\Ext$-orthogonality as in 
Theorem~\ref{Thm:affhw}.
This will be done in Proposition~\ref{Prop:extWeyl} below.
\end{proof}

In the remainder of this section,
we show the $\Ext$-orthogonality
between deformed local Weyl modules
and dual local Weyl modules in the category $\hCc_{Q, \beta}$. 
Note that 
the dual local Weyl module 
$W^{\vee}(\hmu)$ 
associated with $\hmu \in \lP^{+}_{Q, \beta}$ 
actually belongs to $\hCc_{Q, \beta}$
by Proposition~\ref{Prop:dualWeyl} and Theorem~\ref{Thm:main} 
(\ref{Thm:main:HL}).

We need to prepare 
some duality functors.
We temporarily use the ambient category $\mathcal{B}$ of all
$(\tilU, \hR(\hlam))$-bimodules.
Note that each $M \in \mathcal{B}$ has the weight space
decomposition $M = \bigoplus_{\mu \in \cP} M_{\mu}$,
where $M_{\mu} = a_{\mu}M$ 
and each weight space $M_{\mu}$
is preserved by the action of $\hR(\hlam)$.
We define the full dual and 
the topological dual of $M \in \mathcal{B}$ by 
$M^{*} := \Hom_{\kk}(M, \kk)$ and
$D(M) := \bigcup_{N} \Hom_{\kk}(M/ \rr_{\hlam}^{N}, \kk)$
respectively. We equip a left $\tilU$-module structure 
on $M^{*}$ (resp.~on $D(M)$) by twisting   
the natural right $\tilU$-module structure 
with the antipode $S$ (resp.~$S^{-1}$).
Thus we obtain an exact functor
$(-)^{*} \colon \mathcal{B} \to \mathcal{B}^{op}$ and
a right exact functor 
$D \colon \mathcal{B} \to \mathcal{B}^{op}$.
Moreover, if $M \in \mathcal{B}$ is finitely generated 
over $\hR(\hlam)$, we have
$(D(M))^{*} \cong M$. 
If $M \in \Cc_{\g}$ 
(with the trivial $\hR(\hlam)$-action),
the dual $M^{*}$ (resp.~$D(M)$) coincides
with the left dual $M^{*}$ (resp.~the right 
dual ${}^{*}M$) of $M$.

\begin{Prop}
\label{Prop:extWeyl}
For any $\hlam_{1}, \hlam_{2} \in \lP^{+}_{Q, \beta}$ and $i \in \Z_{\ge 0}$,
we have
$$
\Ext_{\hCc_{Q, \beta}}^{i}
(\hW(\hlam_{1}), W^{\vee}(\hlam_{2})) =
\begin{cases} 
\kk & i=0, \hlam_{1}=\hlam_{2}; \\
0 & \text{otherwise}.
\end{cases}
$$
\end{Prop}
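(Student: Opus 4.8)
The statement asserts precisely that $W^{\vee}(\hmu)$ is the proper costandard module of $\hCc_{Q,\beta}$ attached to $\hmu$, so by the uniqueness part of Theorem \ref{Thm:affhw} together with Theorem \ref{Thm:main} (\ref{Thm:main:affhw}) it is enough to prove $W^{\vee}(\hmu)\cong\pcstd(\hmu)$ for every $\hmu\in\lP^{+}_{Q,\beta}$. As a preliminary observation, both modules have simple socle $L(\hmu)$: for $W^{\vee}(\hmu)=W({}^{*}\hmu)^{*}$ this holds because $W({}^{*}\hmu)$ has simple head $L({}^{*}\hmu)$ and $({}^{*}\hmu)^{*}=\hmu$, and for $\pcstd(\hmu)$ it follows from the $\Ext$-orthogonality in Theorem \ref{Thm:affhw}.

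The plan is to build a contravariant duality $\mathbb{D}$ on $\hCc_{Q,\beta}$ and to compute its effect on the proper standard modules $W(\hmu)$. The transposition of the two factors of $Z^{\bullet}(\hlam)=\M^{\bullet}(\hlam)\times_{\M^{\bullet}_{0}(\hlam)}\M^{\bullet}(\hlam)$ induces an anti-automorphism of the convolution algebra $\hK^{\G(\hlam)}(Z^{\bullet}(\hlam))$ which restricts to the identity on the central subalgebra $\hR(\hlam)$; composing the resulting twist with the Matlis dual over $\hR(\hlam)$ --- which is available since $\hK^{\G(\hlam)}(Z^{\bullet}(\hlam))$ is module-finite over the complete local ring $\hR(\hlam)$ --- produces an exact, contravariant, involutive auto-equivalence $\mathbb{D}$ of $\hCc_{Q,\beta}$. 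On the finite-dimensional part $(\hCc_{Q,\beta})_{f}=\Cc_{Q,\beta}$ this $\mathbb{D}$ should coincide with the full-dual functor $M\mapsto M^{*}$ of $U_{q}$-modules followed by an appropriate monoidal equivalence $\Cc_{Q^{*},\beta}\xrightarrow{\simeq}\Cc_{Q,\beta}$ (cf. the relabelling/reflection functors of Subsection \ref{Ssec:refl}); making this identification precise is part of the work.

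The crucial step is to show $\mathbb{D}(L(\hmu))\cong L(\hmu)$ for every $\hmu$. The cleanest route is geometric: under the isomorphisms (\ref{Eq:ext}) of Remark \ref{Rem:ext} the duality $\mathbb{D}$ is intertwined with Verdier duality on $D_{G_{\beta}}(E_{\beta};\kk)$, and each summand $L_{\mathbf{m}}\otimes\mathsf{IC}(\mathbb{O}_{\mathbf{m}})$ of $\mathcal{L}=\pi_{*}\underline{\kk}$ is Verdier self-dual (the $\mathsf{IC}$-complexes are, and the multiplicity spaces $L_{\mathbf{m}}$ are self-dual), so the simple $\hK^{\G(\hlam)}(Z^{\bullet}(\hlam))$-modules are $\mathbb{D}$-self-dual. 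Once $\mathbb{D}$ fixes simples, the standard formalism of (affine) highest weight categories shows that $\mathbb{D}$ interchanges proper standard and proper costandard modules, so $\pcstd(\hmu)\cong\mathbb{D}(\pstd(\hmu))=\mathbb{D}(W(\hmu))$.

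It then remains to identify $\mathbb{D}(W(\hmu))$ with $W^{\vee}(\hmu)$. Writing $W(\hmu)\cong L(\varpi_{i_{1},a_{1}})\otimes\cdots\otimes L(\varpi_{i_{l},a_{l}})$ by Theorem \ref{Thm:AK}, using that $\mathbb{D}$ carries tensor products to tensor products in the reverse order and fixes the fundamental modules $L(\varpi_{i_{k},a_{k}})$, and comparing with Proposition \ref{Prop:dualWeyl}, one obtains $\mathbb{D}(W(\hmu))\cong W^{\vee}(\hmu)$; combined with the previous paragraph this gives $W^{\vee}(\hmu)\cong\pcstd(\hmu)$, and the proposition follows from Theorem \ref{Thm:affhw}. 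I expect the genuine difficulty to be concentrated in showing that $\mathbb{D}$ fixes the simple modules --- equivalently, in matching the transpose duality on the convolution algebra with a concrete anti-automorphism of $U_{q}$ and controlling the behaviour of $\ell$-highest weights --- so I would dispose of it via the Verdier self-duality of the $\mathsf{IC}$-summands of $\mathcal{L}$.
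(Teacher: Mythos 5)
Your proposal replaces the paper's direct Ext computation by an attempt to construct a contravariant duality $\mathbb{D}$ on $\hCc_{Q,\beta}$ that fixes simples and then to identify $\mathbb{D}(W(\hmu))$ with $W^{\vee}(\hmu)$. This is a genuinely different route, and in principle a legitimate one: once you know $\hCc_{Q,\beta}$ is affine highest weight and exhibit an exact contravariant self-equivalence fixing each $L(\hmu)$, it interchanges $\pstd(\hmu)$ and $\pcstd(\hmu)$, and the proposition would follow. However, as written, the argument has serious gaps exactly where you flag them, and they are not minor technical points to be filled in later.

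The central unresolved step is to show that the transpose-plus-Matlis functor $\mathbb{D}$ on $\hK^{\G(\hlam)}(Z^{\bullet}(\hlam))$-modules agrees, through the Nakajima homomorphism, with the $U_{q}$-dual $M\mapsto M^{*}$ (up to a relabelling of blocks) and therefore \emph{reverses tensor products} and fixes the fundamental simples. Nothing in the convolution-algebra construction sees the Hopf structure of $U_{q}$; the transpose anti-involution on $K$-theory is a priori unrelated to the antipode. To make the match you would need to show that the transpose corresponds under $\widehat{\Phi}_{\hlam}$ to a concrete anti-involution of $\tilU$ of Chevalley type (something close to the $\sharp$ of Subsection~\ref{Ssec:affcell}, but preserving the spectral parameter rather than inverting it), and then chase the effect on $\ell$-highest weights. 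Similarly, the passage from Verdier self-duality of the $\mathsf{IC}$-summands of $\mathcal{L}$ to $\mathbb{D}$-self-duality of the simple $U_{q}$-modules requires an identification of $\mathbb{D}$ with Verdier duality under the isomorphisms of Remark~\ref{Rem:ext}, which is again a nontrivial intertwining claim. In short, you have reduced the proposition to two statements (``$\mathbb{D}$ fixes simples'' and ``$\mathbb{D}$ reverses tensor products and agrees with $(-)^{*}$'') that together are at least as hard as the proposition itself, and you do not prove them.

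For comparison, the paper's proof is far more elementary and avoids all duality-matching. It handles $i=0$ via the simple socle of $W^{\vee}(\hlam_{2})$ and the universal property of $\hW(\hlam_{1})$ (Proposition~\ref{Prop:defWeyl}(\ref{Prop:defWeyl:univ})). For $i=1$, given $0\to W^{\vee}(\hlam_{2})\to E\to\hW(\hlam_{1})\to 0$, it splits into two cases on $\lambda_{1}=\mathsf{cl}(\hlam_{1})$ versus $\lambda_{2}=\mathsf{cl}(\hlam_{2})$: if $\lambda_{1}\not<\lambda_{2}$ the sequence splits by maximality of $\hlam_{1}$ and universality; if $\lambda_{1}<\lambda_{2}$ it applies the \emph{topological} dual $D$ to the sequence, uses a Weyl-group symmetry (integrability) to compute $\dim D(E)_{-w_{0}\lambda_{2}}=1=\dim W({}^{*}\hlam_{2})_{-w_{0}\lambda_{2}}$, concludes that the map $D(E)\to W({}^{*}\hlam_{2})$ is onto, splits it by universality of $W({}^{*}\hlam_{2})$, and then applies $(-)^{*}$ to split the original sequence. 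The two duality functors $(-)^{*}$ and $D$ on $(\tilU,\hR(\hlam))$-bimodules are the only tools needed, and no comparison with a geometric duality is required; $i>1$ follows by the standard dimension-shifting in (affine) highest weight categories. If you want to salvage your approach you would need to carry out the intertwining of $\mathbb{D}$ with $(-)^{*}$ explicitly, but the paper's argument suggests this is much harder than just proving the Ext-vanishing directly.
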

\begin{proof}
The case $i=0$ follows from Proposition~\ref{Prop:defWeyl} (\ref{Prop:defWeyl:univ}) and 
the fact that the module $W^{\vee}(\hlam_{2})$ has
a simple socle $L(\hlam_{2})$ and 
$\dim W^{\vee}(\hlam_{2})_{\hlam_{2}} = 1$.

For $i=1$, we consider an extension in $\hCc_{Q, \beta}$:
\begin{equation}
\label{Eq:extension}
0 \to W^{\vee}(\hlam_{2})
\to E \to \hW(\hlam_{1}) \to 0.
\end{equation}
Put $\lambda_{j} := \mathsf{cl}(\hlam_{j})$ for $j=1,2$.
If $\lambda_{1} \not < \lambda_{2}$, then the $\ell$-weight
$\hlam_{1}$ is maximal in $E/ \rr_{\hlam}^{N}$ for 
any $N \in \Z_{>0}$.
By Proposition~\ref{Prop:defWeyl} (\ref{Prop:defWeyl:univ}), we see that the 
sequence (\ref{Eq:extension}) splits.
If $\lambda_{1} < \lambda_{2}$, we apply 
the topological duality functor $D$ to the 
sequence (\ref{Eq:extension}) to get
the following exact sequence:
\begin{equation}
\label{Eq:extension2}
0 \to D(\hW(\hlam_{1}))
\to D(E)
\to W({}^{*}\hlam_{2}). 
\end{equation} 
Since $\lambda_{1} < \lambda_{2} $, 
we have $\hW(\hlam_1)_{\lambda_2} = 0 = D(\hW(\hlam_1))_{-w_0 \lambda_2}$ and hence
$$
\dim D(E)_{-w_{0} \lambda_{2}} = 
\dim E_{w_{0} \lambda_{2}} = 
\dim E_{\lambda_{2}} 
= \dim W^{\vee}(\hlam_{2})_{\lambda_2}
=
1
=
\dim W({}^{*}\hlam_{2})_{-w_{0} \lambda_{2}},$$
where the second equality is due to Weyl group symmetry
coming from integrability.
In particular, the image of 
the weight space $D(E)_{-w_{0} \lambda_{2}}$
coincides with $W({}^{*}\hlam_{2})_{- w_{0} \lambda_{2}}$, 
which generates $W({}^{*}\hlam_{2})$.
Therefore the most right arrow in 
the sequence (\ref{Eq:extension2}) is surjective.
Moreover, 
by the universal property of the local Weyl module
$W({}^{*}\hlam_{2})$, we see that the 
sequence (\ref{Eq:extension2}) is split.
By applying the full duality functor $(-)^{*}$,
we find that the sequence (\ref{Eq:extension})
is also split.
Therefore we have $\Ext^{1}_{\hCc_{Q, \beta}}
(\hW(\hlam_{1}), W^{\vee}(\hlam_{2})) = 0.$

The cases $i>1$ follow from the case $i=1$
by a standard argument in
(affine) highest weight categories.
\end{proof} 

\begin{Rem}
\label{Rem:ext}
%Here, we have two remarks on Theorem~\ref{Thm:main}.
There is an alternative proof of
Theorem~\ref{Thm:main} (\ref{Thm:main:affhw})
by
the general theory of geometric extension 
algebras due to Kato~\cite{Kato17} and 
McNamara~\cite{McNamara17}. 
In fact, we have the following 
$\hR(\hlam)$-algebra isomorphisms:
\begin{equation}
\label{Eq:ext}
\hK^{\G(\hlam)}(Z^{\bullet}(\hlam))
\cong 
H_{\bullet}^{G(\hlam)}(Z^{\bullet}(\hlam), \kk)^{\wedge}
\cong 
\Ext^{\bullet}_{G_{\beta}}
(\mathcal{L}, \mathcal{L})^{\wedge},
\end{equation}
where the middle term 
$H_{\bullet}^{G(\hlam)}(Z^{\bullet}(\hlam), \kk)$
denotes the convolution algebra of 
the $G(\hlam)$-equivariant Borel-Moore homology 
of $Z^{\bullet}(\hlam)$, 
the right term
$\Ext^{\bullet}_{G_{\beta}}
(\mathcal{L}, \mathcal{L})$ denotes the Yoneda algebra 
in the $G_{\beta}$-equivariant derived category $D_{G_{\beta}}(E_{\beta}; \kk)$
of constructible complexes 
on $E_{\beta}$ and  
$\mathcal{L} := \pi_{*}\underline{\kk}$
is the derived push-forward
of the constant sheaf along the proper morphism
$\pi \colon \M^{\bullet}(\hlam) \to \M_{0}^{\bullet}(\hlam) = E_{\beta}$. 
The symbol $(-)^{\wedge}$ stands for the completion 
with respect to the degree.
Here, we make a suitable 
identification $\hR(\hlam) \cong H^{G(\hlam)}_{\bullet}
(\mathrm{pt}, \kk)^{\wedge}$. 
The first isomorphism in (\ref{Eq:ext})
is given by the equivariant Chern character map
and the second one is as in \cite[Section 8.6]{CG97}.
By \cite[Theorem 14.3.2]{Nakajima01}, we have
$$
\mathcal{L} = \bigoplus_{\mathbf{m} \in \KP(\beta)}
L_{\mathbf{m}} \otimes \mathsf{IC}(\mathbb{O}_{\mathbf{m}}),
$$ 
where $L_{\mathbf{m}}$ is a non-zero
finite-dimensional graded vector space and 
$ \mathsf{IC}(\mathbb{O}_{\mathbf{m}})$ 
is the intersection cohomology complex associated with
the constant sheaf on the orbit 
$\mathbb{O}_{\mathbf{m}}$ for each $\mathbf{m} \in \KP(\beta)$.
Therefore, the Yoneda algebra 
$\Ext^{\bullet}_{G_{\beta}}(\mathcal{L}, \mathcal{L})$
is Morita equivalent to 
the quiver Hecke algebra $H_{Q}(\beta)$
(cf.~Definition \ref{Def:KLR})
by \cite{VV11} 
and the completed algebra 
$\hK^{\G(\hlam)}(Z^{\bullet}(\hlam))$ is 
affine quasi-hereditary (cf.~Theorem~\ref{Thm:KBKM}).  

\begin{comment}
As a generalization of Theorem~\ref{Thm:HL}, 
Leclerc-Plamondon~\cite{LP13} established some
equivariant isomorphisms between graded 
quiver varieties $\M_{0}^{\bullet}(\hlam)$ associated with
certain $\ell$-dominant weights $\hlam \in \lP^{+}_{\Z}$ 
and the spaces of representations of 
the repetitive algebra $\widehat{\C Q}$ of the quiver $Q$
whose dimension vector corresponds to $\hlam$. 
For this generalized choice of $\hlam \in \lP^{+}_{\Z}$, 
the completed convolution algebra  
$\hK^{\G(\hlam)}(Z^{\bullet}(\hlam))$ still becomes
affine quasi-hereditary.
In fact, it
is isomorphic to the completion of 
the geometric extension algebra
associated to $\pi \colon \M^{\bullet}(\hlam) \to \M_{0}^{\bullet}(\hlam)$
discussed as above, and 
we can apply \cite[Theorem 4.7]{McNamara17}. 
However, in this generalized setting,
Theorem~\ref{Thm:main}  (\ref{Thm:main:HL}) 
does not hold.
In particular,
standard modules of $\hK^{\G(\hlam)}(Z^{\bullet}(\hlam))$
are not always 
isomorphic to the deformed local Weyl modules
in such a general setting.
They are because an analogue of Lemma~\ref{Lem:red}
does not hold in general.
In this sense, Theorem~\ref{Thm:main}
is a special phenomenon 
for our setting.  
\end{comment}
\end{Rem}

%%%%%%%%%%%%%%%%%%%%%%%%%%%%%%%%%%%%
%%%%%%%%%%%%%%%%%%%%%%%%%%%%%%%%%%%%

\section{Application to Kang-Kashiwara-Kim's functor}
\label{Sec:KKK}

In this section, we apply the results obtained so far to 
prove that
Kang-Kashiwara-Kim\rq{}s generalized quantum affine 
Schur-Weyl duality functor gives an equivalence
of categories assuming the simpleness of some poles of normalized $R$-matrices 
between $\ell$-fundamental modules.

%%%%%%%%%%%%%%%%%%%%%%%%%%%%%%%%%%%%

\subsection{Quiver Hecke algebra}
\label{Ssec:KLR}

In this subsection, we recall the definition of the quiver Hecke algebra
and the affine highest weight structure
of its module category.

We keep the notation in the previous sections.
In particular, we fix a Dynkin quiver
$Q = (I, \Omega)$.  
Let $\beta = \sum_{i \in I}
d_{i} \alpha_{i}
\in \cQ^{+}$ with $\mathsf{ht}\, \beta 
:= \sum_{i \in I} d_{i}
= d$.
Set 
$$
I^{\beta} := \{ 
\mathbf{i} = (i_{1}, \ldots, i_{d}) \in I^{d} \mid
\alpha_{i_{1}} + \cdots + \alpha_{i_{d}} = \beta 
\}.
$$
Note that the symmetric group $\SG_{d}$
of degree $d$ acts on the set $I^{\beta}$ by
$\sigma \cdot \mathbf{i} := (i_{\sigma^{-1}(1)}, \ldots
, i_{\sigma^{-1}(d)})$
for $\mathbf{i} = (i_{1}, \ldots, i_{d}) \in I^{\beta}$ and
$\sigma \in \SG_{d}.$ For each $k \in \{1, \ldots, d-1 \}$, 
we denote the
transposition of $k$ and $k+1$ by $\sigma_k \in \SG_d$.

\begin{Def}[Khovanov-Lauda~\cite{KL09}, Rouquier~\cite{Rouquier08}]
\label{Def:KLR}
The quiver Hecke algebra $H_{Q}(\beta)$
is defined to be a $\kk$-algebra with the generators:
$$
\{ e( \mathbf{i} ) \mid \mathbf{i} \in I^{\beta}\}
\cup 
\{x_{1}, \ldots, x_{d} \}
\cup
\{\tau_{1}, \ldots, \tau_{d-1} \},
$$  
satisfying the following relations:
$$
e(\mathbf{i}) e(\mathbf{i}^{\prime}) = 
\delta_{\mathbf{i}, \mathbf{i}^{\prime}} e(\mathbf{i}), \quad
\sum_{\mathbf{i} \in I^{\beta}} e(\mathbf{i}) = 1,
\quad
x_{k} x_{l} = x_{l} x_{k}, \quad
x_k e(\mathbf{i}) = e(\mathbf{i}) x_{k},  
$$
$$
\tau_{k} e(\mathbf{i}) = e(\sigma_k \cdot \mathbf{i}) \tau_k,
\quad
\tau_k \tau_l = \tau_l \tau_k \quad \text{if $|k-l| > 1$},
$$
$$
\tau_{k}^{2} e(\mathbf{i}) = 
\begin{cases}
(x_{k} - x_{k+1})e(\mathbf{i}), 
& \text{if $i_{k} \leftarrow i_{k+1} \in \Omega$},\\
(x_{k+1} - x_{k})e(\mathbf{i}), 
& \text{if $i_{k} \to i_{k+1} \in \Omega$},\\ 
e(\mathbf{i}) & \text{if $a_{i_{k}, i_{k+1}} = 0$},\\
0
& \text{if $i_{k}=i_{k+1}$},
\end{cases}
$$
$$
(\tau_k x_l - x_{s_{k}(l)} \tau_k)e(\mathbf{i})
= \begin{cases}
- e(\mathbf{i}) & \text{if $l=k, i_{k} = i_{k+1}$}, \\
e(\mathbf{i}) & \text{if $l=k+1, i_{k} = i_{k+1}$}, \\
0 & \text{otherwise},
\end{cases}  
$$
$$
(\tau_{k+1} \tau_{k} \tau_{k+1} - \tau_{k} \tau_{k+1} \tau_{k}) 
e(\mathbf{i})
= 
\begin{cases}
e(\mathbf{i})
& \text{if $i_{k} = i_{k+2}, (i_{k} \leftarrow i_{k+1}) \in \Omega$}, \\
-e(\mathbf{i}) 
& \text{if $i_{k} = i_{k+2}, (i_{k} \to i_{k+1}) \in \Omega$}, \\
0 &
\text{otherwise}.
\end{cases}  
$$
\end{Def}

Let
$\mathbb{P}_{\beta} := \bigoplus_{\mathbf{i} \in I^{\beta}}
\kk[x_{1}, \ldots , x_{d}] e(\mathbf{i})
$
with a commutative $\kk[x_{1}, \ldots, x_{d}]$-algebra structure 
$e(\mathbf{i}) e(\mathbf{i}^{\prime}) = 
\delta_{\mathbf{i}, \mathbf{i}^{\prime}} e(\mathbf{i})$. 
The symmetric group $\SG_{d}$ acts on 
$\mathbb{P}_{\beta}$ by
$$
\sigma(f(x_{1}, \ldots, x_{d})e(\mathbf{i}))
:= f(x_{\sigma(1)}, \ldots, x_{\sigma(d)} ) 
e(\sigma \cdot \mathbf{i})$$
for $f(x_{1}, \ldots, x_{d}) \in \kk[x_{1}, \ldots, x_{d}]$
and $\sigma \in \SG_{d}$.
The $\SG_{d}$-invariant part 
$(\mathbb{P}_{\beta})^{\SG_{d}}$
is identified with the algebra
$$
\mathbb{S}_{\beta} := \bigotimes_{i \in I} 
(\kk[x_{i}]^{\otimes d_{i}})^{\SG_{d_{i}}}
= \bigotimes_{i \in I} \kk[x_{i, 1}, \ldots, x_{i, d_{i}}]^{\SG_{d_{i}}},
$$
via the isomorphism 
$\mathbb{S}_{\beta} \xrightarrow{\cong} 
(\mathbb{P}_{\beta})^{\SG_{d}}$ given by the formula
\begin{multline}
\label{Eq:Hecke_center}
f_{1}(x_{1,1}, \ldots, x_{1,d_{i}}) \otimes
\cdots \otimes f_{n}(x_{n,1}, \ldots, x_{n, d_{n}}) \\
\mapsto 
\frac{1}{d_{1}! \cdots d_{n}!} 
\sum_{\sigma \in \SG_{d}}
\sigma \left(
f_{1}(x_{1,1}, \ldots, x_{1, d_{1}}) \cdots 
f_{n}(x_{n, 1}, \ldots, x_{n, d_{n}}) e(1^{d_{1}}, \ldots, n^{d_{n}})
\right),
\end{multline}
where we put 
$x_{i,j} := x_{d_{1} + \cdots + d_{i-1} + j}$ 
on the RHS 
for $i \in \{1, \ldots ,n \}, j \in \{1, \ldots, d_{i}\}$. 

For each $\sigma \in \SG_{d}$, we 
fix a reduced expression 
$\sigma = \sigma_{k_1} \cdots \sigma_{k_p}$.
Then we define 
$\tau_{\sigma} := \tau_{k_1} \cdots \tau_{k_p} \in H_{Q}(\beta)$.
Note that this element $\tau_{\sigma}$ depends on 
the choice of a reduced expression of $\sigma$ in general
because $\tau_k$'s do not satisfy the braid relations.

\begin{Thm}[Khovanov-Lauda~\cite{KL09}]
\label{Thm:KL}
The followings hold.
\begin{enumerate}
\item \label{Thm:KL:PBW} 
The quiver Hecke algebra $H_{Q}(\beta)$ is a
left (or right) free module over the commutative 
subalgebra $\mathbb{P}_{\beta}$ with
a free basis $\{ \tau_{\sigma} \mid \sigma \in \SG_{d} \}$;
\item \label{Thm:KL:center}
The center of $H_{Q}(\beta)$ coincides with the subalgebra
$(\mathbb{P}_{\beta})^{\SG_{d}}$
of $\SG_{d}$-invariant polynomials,
which is identified with $\mathbb{S}_{\beta}$. 
In particular, $H_{Q}(\beta)$ is a free module over its center $\mathbb{S}_{\beta}$
of rank $(d!)^{2}$.
\end{enumerate}
\end{Thm}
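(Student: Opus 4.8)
\emph{Proof proposal.} The strategy is the classical one of Khovanov--Lauda: construct a faithful ``polynomial representation'' of $H_{Q}(\beta)$ and use it to certify a spanning set as a basis. Concretely, I would first let $H_{Q}(\beta)$ act on $\mathbb{P}_{\beta} = \bigoplus_{\mathbf{i} \in I^{\beta}} \kk[x_{1}, \ldots, x_{d}]e(\mathbf{i})$ by: $e(\mathbf{i})$ acts as the projection onto the summand $\kk[x_{1}, \ldots, x_{d}]e(\mathbf{i})$, each $x_{k}$ acts by multiplication, and $\tau_{k}$ acts on the summand indexed by $\mathbf{i}$ by a Demazure-type operator $f e(\mathbf{i}) \mapsto (\text{divided-difference or shift of } f)\, e(\sigma_{k}\cdot\mathbf{i})$ whose precise form depends on whether $i_{k} = i_{k+1}$, $i_{k} \sim i_{k+1}$, or $a_{i_{k}, i_{k+1}} = 0$. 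Checking that this assignment respects all the defining relations of Definition \ref{Def:KLR} is routine but lengthy; it yields a $\kk$-algebra homomorphism $\rho : H_{Q}(\beta) \to \End_{\kk}(\mathbb{P}_{\beta})$.

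For part (\ref{Thm:KL:PBW}), I would first prove the \emph{spanning} statement: every element of $H_{Q}(\beta)$ is a $\mathbb{P}_{\beta}$-linear combination of the $\tau_{\sigma}$'s. Using $\tau_{k}e(\mathbf{i}) = e(\sigma_{k}\cdot\mathbf{i})\tau_{k}$ and the mixed relation $(\tau_{k}x_{l} - x_{\sigma_{k}(l)}\tau_{k})e(\mathbf{i}) \in \mathbb{P}_{\beta}$, one moves all polynomial generators to the left at the cost of strictly shorter words in the $\tau$'s; then, by induction on word length, a non-reduced word in the $\tau_{k}$'s is rewritten modulo shorter words via $\tau_{k}^{2}e(\mathbf{i}) \in \mathbb{P}_{\beta}$ and the braid-type relation, exactly paralleling the exchange/deletion combinatorics of $\SG_{d}$. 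Next comes \emph{linear independence} over $\mathbb{P}_{\beta}$: here I would apply $\rho$ and evaluate the operators $\rho(\tau_{\sigma})$ on a generic monomial $x_{1}^{a_{1}}\cdots x_{d}^{a_{d}}$ with $a_{1} \gg a_{2} \gg \cdots \gg a_{d}$, extracting for a suitable monomial order the leading term, which recovers the underlying permutation $\sigma$ and so distinguishes the $\rho(\tau_{\sigma})$. This forces $H_{Q}(\beta) = \bigoplus_{\sigma \in \SG_{d}} \mathbb{P}_{\beta}\,\tau_{\sigma}$ as a left $\mathbb{P}_{\beta}$-module, and symmetrically on the right; in particular $\rho$ is faithful.

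For part (\ref{Thm:KL:center}), I would show $\mathbb{S}_{\beta} = (\mathbb{P}_{\beta})^{\SG_{d}} \subseteq Z(H_{Q}(\beta))$ by direct verification: a symmetric polynomial commutes with every $x_{k}$ and every $e(\mathbf{i})$ trivially, and commutes with $\tau_{k}$ because the commutator $[\tau_{k}, f]e(\mathbf{i})$ is, via the mixed relation, a divided-difference expression in $f - {}^{\sigma_{k}}f$ together with a reindexing of idempotents, all of which vanishes when $f$ is $\SG_{d}$-invariant (proved by induction on $\deg f$). For the reverse inclusion, write a central $z = \sum_{\sigma} a_{\sigma}\tau_{\sigma}$ using part (\ref{Thm:KL:PBW}); imposing $[z, x_{k}] = 0$ for all $k$ and comparing leading $\tau$-length terms forces $a_{\sigma} = 0$ for $\sigma \neq e$, so $z \in \mathbb{P}_{\beta}$; then $[z, \tau_{k}] = 0$ forces $z$ to be symmetric in $x_{k}, x_{k+1}$ and compatible with the idempotent permutation for every $k$, hence $z \in \mathbb{S}_{\beta}$. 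Finally, since $\kk[x_{1}, \ldots, x_{d}]$ is free of rank $d!$ over its $\SG_{d}$-invariants, a block-wise argument gives that $\mathbb{P}_{\beta}$ is free of rank $d!$ over $\mathbb{S}_{\beta}$; combined with part (\ref{Thm:KL:PBW}) this shows $H_{Q}(\beta)$ is free of rank $(d!)^{2}$ over $\mathbb{S}_{\beta}$.

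The main obstacle will be the linear-independence step, i.e.\ establishing faithfulness of $\rho$ through the leading-term computation: one must set up the right monomial order on $\mathbb{P}_{\beta}$ and track carefully how the three cases in the definition of the $\tau_{k}$-action interact when composing $\rho(\tau_{k_{1}})\cdots\rho(\tau_{k_{p}})$ along a reduced word, so that the leading term genuinely detects the permutation $\sigma_{k_{1}}\cdots\sigma_{k_{p}}$ and is not cancelled. Everything else is bookkeeping with the relations of Definition \ref{Def:KLR} and standard Coxeter-combinatorial reductions in $\SG_{d}$.
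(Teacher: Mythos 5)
The paper does not prove Theorem~\ref{Thm:KL} at all: the proof given is a one-line citation to Khovanov--Lauda (\cite[Prop.~2.7 and Thm.~2.9]{KL09}). What you are sketching is precisely the argument of that cited reference, so the appropriate comparison is with \cite{KL09}, not with anything in the present paper.

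Your outline of the strategy (polynomial representation $\rho$, spanning by word-shortening, linear independence via faithfulness, then the center computation) is the standard one and is correct in structure. The center part works as you describe: once part (\ref{Thm:KL:PBW}) is in place, writing $z=\sum_\sigma a_\sigma\tau_\sigma$ and comparing the top $\tau$-length in $[z,x_k]=0$ forces $a_\sigma(x_{\sigma(k)}-x_k)=0$ for the maximal-length $\sigma$ and hence $a_\sigma=0$ for $\sigma\ne e$ by downward induction, since each idempotent block of $\mathbb P_\beta$ is a domain; then $[z,\tau_k]=0$ gives the invariance $z_{\sigma_k\cdot\mathbf i}(x)=z_{\mathbf i}(s_k x)$, which already implies vanishing of the Demazure correction when $i_k=i_{k+1}$, so $z\in(\mathbb P_\beta)^{\SG_d}$. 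Your rank count $(d!)^2 = d!\cdot\bigl(\tfrac{d!}{d_1!\cdots d_n!}\cdot d_1!\cdots d_n!\bigr)$ is also right.

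The one place where your sketch has a genuine gap is the linear-independence step, and you flag it yourself. The phrasing ``evaluate $\rho(\tau_\sigma)$ on a generic monomial $x_1^{a_1}\cdots x_d^{a_d}$ with $a_1\gg\cdots\gg a_d$ and read off $\sigma$ from the leading term under a suitable monomial order'' does not quite work as stated: when $i_k=i_{k+1}$ the operator $\rho(\tau_k)$ is a divided-difference operator of degree $-1$ on the polynomial side, whereas for $i_k\not\sim i_{k+1}$ it is degree $0$ (permutation) and for $i_k\sim i_{k+1}$ it is degree $0$ or $+1$ (permutation times $(x_k-x_{k+1})$ in one orientation). Composing along a reduced word, the total degree shift of $\rho(\tau_\sigma)$ on $\kk[x]e(\mathbf i)$ therefore depends on which letters of the word land in the ``$i_k=i_{k+1}$'' case, i.e.\ on the whole path of idempotents, not just on $\sigma$, so no single monomial order on a fixed test vector uniformly recovers $\sigma$. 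What closes the gap is to organize the computation differently: first project onto a fixed target idempotent $e(\mathbf j)$, which separates the $\tau_\sigma$ into cosets of $\Stab_{\SG_d}(\mathbf i)\cong\SG_{d_1}\times\cdots\times\SG_{d_n}$; within a single coset the residual ambiguity is a pure nilHecke problem on the repeated colors, where linear independence of divided-difference operators $\{\partial_w\}$ over $\kk[x]$ is classical. This ``separate by idempotent, then reduce to nilHecke'' organization is effectively what \cite{KL09} carry out, and it is the missing piece that your leading-term heuristic is gesturing towards but does not yet supply.
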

\begin{proof}
See \cite[Proposition 2.7]{KL09} for (\ref{Thm:KL:PBW})
and \cite[Theorem 2.9]{KL09} for (\ref{Thm:KL:center}).
\end{proof}

The quiver Hecke algebra $H_{Q}(\beta)$
is equipped with a $\Z$-grading given by
$
\mathrm{deg} \, e(\mathbf{i}) = 0, \;
\mathrm{deg} \, x_{k} = 2, \;
\mathrm{deg} \, \tau_{k}e(\mathbf{i}) = -a_{i_k, i_{k+1}}.
$
Let $\hH_{Q}(\beta)$ (resp.~$\widehat{\mathbb{P}}_{\beta}$)
be the completion 
of the algebra $H_{Q}(\beta)$ (resp.~$\mathbb{P}_{\beta}$)
with respect to the grading.
More explicitly, by Theorem~\ref{Thm:KL} (\ref{Thm:KL:PBW}), we have 
$$
\hH_{Q}(\beta) = 
\bigoplus_{\sigma \in \SG_{d}} \widehat{\mathbb{P}}_{\beta}
\tau_{\sigma}
= 
\bigoplus_{\sigma \in \SG_{d}} 
\tau_{\sigma} \widehat{\mathbb{P}}_{\beta},
$$
where
$\widehat{\mathbb{P}}_{\beta} 
:=  \bigoplus_{\mathbf{i}} 
\kk[\![ x_{1}, \ldots, x_{d} ]\!]e(\mathbf{i}).$
The center of $\hH_{Q}(\beta)$ is 
the $\SG_{d}$-invariant part  
$(\widehat{\mathbb{P}}_{\beta})^{\SG_{d}}$
of $\widehat{\mathbb{P}}_{\beta}$,
which is identified with
the completion $\hS_{\beta}$
of $\mathbb{S}_{\beta}$
at the maximal ideal $0$ 
via the isomorphism 
given by the formula 
(\ref{Eq:Hecke_center}). 

\begin{Def}
We define the category
$\hMm_{Q, \beta}$ to be the category 
of finitely generated $\hH_{Q}(\beta)$-modules, i.e., 
$$\hMm_{Q, \beta} := \hH_{Q}(\beta) \modfg.$$ 
Let $\Mm_{Q, \beta}$ denote
the full subcategory of
finite-dimensional modules in $\hMm_{Q, \beta}$, i.e.,
$$
\Mm_{Q, \beta} := \hH_{Q}(\beta) \modfd. 
$$
\end{Def}

\begin{Rem}
By Theorem~\ref{Thm:KL} (\ref{Thm:KL:center}), 
we see that the completion $\hH_{Q}(\beta)$  
is naturally isomorphic to 
the central completion 
$H_{Q}(\beta) \otimes_{\mathbb{S}_{\beta}}
\hS_{\beta}$ along
the trivial central character.
Therefore the category $\Mm_{Q, \beta}$
is the same as the category $H_{Q}(\beta) \modfd^{0}$
of finite-dimensional modules on which the elements
$x_{k}$\rq{}s act nilpotently.  
Also note that we have the forgetful functor
$H_{Q}(\beta)\text{-gmod}_{\mathrm{fd}} \to \Mm_{Q, \beta}$,
where $H_{Q}(\beta)\text{-gmod}_{\mathrm{fd}}$ denotes
the category of finite-dimensional graded 
$H_{Q}(\beta)$-modules with homogeneous morphisms. 
\end{Rem}

For $\beta, \beta^{\prime} \in \cQ^{+}$ with
$\mathsf{ht} \, \beta = d, \; 
\mathsf{ht} \, \beta^{\prime} = d^{\prime}$, 
we have an embedding
$$
H_{Q}(\beta) \otimes H_{Q}(\beta^{\prime})
\hookrightarrow H_{Q}(\beta + \beta^{\prime})$$ 
given by
$e(\mathbf{i}) \otimes e(\mathbf{i}^{\prime})
\mapsto e(\mathbf{i} \circ \mathbf{i}^{\prime}), \;
x_{k}\otimes 1 \mapsto x_{k}, \;
\tau_{k} \otimes 1 \mapsto \tau_{k}, \;
1 \otimes x_{k} \mapsto x_{d + k}, \;
1 \otimes \tau_{k} \mapsto \tau_{d + k},$
where $\mathbf{i} \circ \mathbf{i}^{\prime}
:= (i_{1}, \ldots, i_{d}, 
i^{\prime}_{1}, \ldots, i^{\prime}_{d^{\prime}})
\in I^{\beta_{1} + \beta_{2}}.$ 
This embedding naturally 
induces its completed version
$$\hH_{Q}(\beta) \otimes 
\hH_{Q}(\beta^{\prime}) \hookrightarrow 
\hH_{Q}(\beta + \beta^{\prime}),$$
which equips the direct sum category
$\hMm_{Q} := \bigoplus_{\beta \in \cQ^{+}} \hMm_{Q, \beta}$
with a structure of monoidal category whose
tensor product 
$(-) \circ (-) \colon \hMm_{Q, \beta} \times \hMm_{Q, \beta^{\prime}}
\to \hMm_{Q, \beta + \beta^{\prime}}$ is given by 
$$M \circ M^{\prime} := \hH_{Q}(\beta + \beta^{\prime})
\otimes_{\hH_{Q}(\beta) \otimes 
\hH_{Q}(\beta^{\prime})} (M \otimes M^{\prime}).$$
The subcategory
$\Mm_{Q} := \bigoplus_{\beta \in \cQ^{+}} \Mm_{Q, \beta}$
of finite-dimensional modules is closed under 
this monoidal structure.

Write $\beta = \sum_{i \in I} d_{i} \alpha_{i},
\beta^{\prime} = \sum_{i \in I} d^{\prime}_{i} \alpha_{i}$.
Then we have the following injective homomorphism 
$$
\mathbb{S}_{\beta + \beta^{\prime}}
= \bigotimes_{i \in I} 
(\kk[x_{i}]^{\otimes d_{i} + d_{i}^{\prime}})^{\SG_{
d_{i} + d_{i}^{\prime}}}
\hookrightarrow 
\bigotimes_{i \in I} 
(\kk[x_{i}]^{\otimes d_{i}} \otimes 
\kk[x_{i}]^{\otimes d_{i}^{\prime}})^{\SG_{
d_{i}} \times \SG_{d_{i}^{\prime}}}
\cong
\mathbb{S}_{\beta} \otimes \mathbb{S}_{\beta^{\prime}},
$$
which induces an injective homomorphism
$
\hS_{\beta + \beta^{\prime}}
\hookrightarrow
\hS_{\beta} \hat{\otimes} \hS_{\beta^{\prime}}
$ for the completions.
We refer to this kind of injective homomorphism
as the standard inclusion.
Note that for any $M \in \hMm_{Q, \beta}$
and $M^{\prime} \in \hMm_{Q, \beta^{\prime}}$,
actions of the centers
$\hS_{\beta}$ and $
\hS_{\beta^{\prime}}$
induce the natural homomorphism
$\hS_{\beta} \hat{\otimes} \hS_{\beta^{\prime}}
\to \End_{\hH_{Q}(\beta + \beta^{\prime})}
(M \circ M^{\prime}).$
The following lemma is easily deduced from the definition.
\begin{Lem}
\label{Lem:center_incl}
For any $M \in \hMm_{Q, \beta}$
and $M^{\prime} \in \hMm_{Q, \beta^{\prime}}$,
the action of the center 
$\hS_{\beta + \beta^{\prime}}$ on the module
$M \circ M^{\prime}$
is given via the standard inclusion 
$\hS_{\beta + \beta^{\prime}}
\hookrightarrow
\hS_{\beta} \hat{\otimes} \hS_{\beta^{\prime}}.$
\end{Lem}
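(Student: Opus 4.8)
The plan is to unwind the definition of the induction product and to reduce the assertion to an identity inside the algebra $\hH_{Q}(\beta + \beta^{\prime})$. Write $\gamma := \beta + \beta^{\prime}$, $d := \mathsf{ht}\,\beta$, $d^{\prime} := \mathsf{ht}\,\beta^{\prime}$, $D := d + d^{\prime}$, and let $A := \hH_{Q}(\beta) \otimes \hH_{Q}(\beta^{\prime})$, regarded as a subalgebra of $\hH_{Q}(\gamma)$ via the embedding recalled above, with unit $1_{A} = \sum_{\mathbf{i} \in I^{\beta},\, \mathbf{i}^{\prime} \in I^{\beta^{\prime}}} e(\mathbf{i} \circ \mathbf{i}^{\prime})$. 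The natural action of the center $\hS_{\gamma} = Z(\hH_{Q}(\gamma))$ on $M \circ M^{\prime} = \hH_{Q}(\gamma) \otimes_{A} (M \otimes M^{\prime})$, and the composition of the standard inclusion $\hS_{\gamma} \hookrightarrow \hS_{\beta} \hat{\otimes} \hS_{\beta^{\prime}}$ with the homomorphism $\hS_{\beta} \hat{\otimes} \hS_{\beta^{\prime}} \to \End_{\hH_{Q}(\gamma)}(M \circ M^{\prime})$ recalled before the lemma, are both $\hH_{Q}(\gamma)$-linear endomorphisms of $M \circ M^{\prime}$. Since $M \circ M^{\prime}$ is generated over $\hH_{Q}(\gamma)$ by the subspace $1 \otimes (M \otimes M^{\prime})$, it suffices to check that these two endomorphisms agree on vectors of the form $1 \otimes (m \otimes m^{\prime})$.

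First I would evaluate both sides on such a vector. Using $1 \otimes (m \otimes m^{\prime}) = 1_{A} \otimes (m \otimes m^{\prime})$, for $z \in \hS_{\gamma}$ the natural action gives $z \cdot (1 \otimes (m \otimes m^{\prime})) = (z \cdot 1_{A}) \otimes (m \otimes m^{\prime})$, while, writing the image of $z$ under the standard inclusion as $\sum_{k} s_{k} \otimes s_{k}^{\prime} \in \hS_{\beta} \hat{\otimes} \hS_{\beta^{\prime}}$, the second endomorphism sends $1 \otimes (m \otimes m^{\prime})$ to $1 \otimes \sum_{k} (s_{k} m \otimes s_{k}^{\prime} m^{\prime}) = \big( \sum_{k} \overline{s_{k}}\, \overline{s_{k}^{\prime}} \big) \otimes (m \otimes m^{\prime})$, where $\overline{(-)}$ denotes the image in $A \subset \hH_{Q}(\gamma)$. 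Hence the lemma follows once we prove the identity
$$
z \cdot 1_{A} = \sum_{k} \overline{s_{k}}\, \overline{s_{k}^{\prime}} \qquad \text{in } \hH_{Q}(\gamma),
$$
that is, that multiplication by $1_{A}$ carries $z \in \hS_{\gamma}$ to the image inside $A$ of its standard inclusion.

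Finally I would prove this identity. By Theorem \ref{Thm:KL}\,(\ref{Thm:KL:center}) the center $\hS_{\gamma}$ lies in $\widehat{\mathbb{P}}_{\gamma} = \bigoplus_{\mathbf{j} \in I^{\gamma}} \kk[\![x_{1}, \dots, x_{D}]\!]\, e(\mathbf{j})$; multiplication by $1_{A}$ annihilates every summand $\kk[\![x]\!]\, e(\mathbf{j})$ with $\mathbf{j} \notin I^{\beta} \circ I^{\beta^{\prime}}$, so $z \cdot 1_{A}$ is a $\tau$-free element of $A$, lying in $\widehat{\mathbb{P}}_{\beta} \hat{\otimes} \widehat{\mathbb{P}}_{\beta^{\prime}} \subseteq A$, and since $z$ is central in $\hH_{Q}(\gamma)$ the element $z \cdot 1_{A}$ commutes with $A$, i.e. $z \cdot 1_{A} \in Z(A) = \hS_{\beta} \hat{\otimes} \hS_{\beta^{\prime}}$. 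Thus $z \mapsto z \cdot 1_{A}$ is a continuous (degree-preserving) $\kk$-algebra homomorphism $\hS_{\gamma} \to \hS_{\beta} \hat{\otimes} \hS_{\beta^{\prime}}$, and so is the standard inclusion; since $\hS_{\gamma}$ is topologically generated over $\kk$ by the power sums $\sum_{\mathbf{j}} \big( \sum_{k : j_{k} = i} x_{k}^{r} \big) e(\mathbf{j})$, one for each vertex $i \in I$ and each $r \geq 1$ (cf. (\ref{Eq:Hecke_center})), it is enough to compare the two homomorphisms on these. A direct inspection of (\ref{Eq:Hecke_center}) shows that multiplication by $1_{A}$ carries the $r$-th power sum at the vertex $i$ of $\hH_{Q}(\gamma)$ to the sum of the corresponding power sums of $\hH_{Q}(\beta)$ and of $\hH_{Q}(\beta^{\prime})$, which is precisely its image under the standard inclusion; hence the two homomorphisms coincide. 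The step most likely to require care is this final one — matching the symmetrization formula (\ref{Eq:Hecke_center}) across the three algebras while tracking the block decomposition of the variables and the indexing of the idempotents — but it is routine bookkeeping rather than a genuine obstacle, which is why the lemma is an immediate consequence of the definitions.
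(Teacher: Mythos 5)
Your proof is correct, and it is exactly what the paper's one-line justification (``The following lemma is easily deduced from the definition.'') is asking the reader to supply. Your reduction to the algebra identity $z\cdot 1_A = \sum_k \overline{s_k}\,\overline{s_k'}$ (using that $1\otimes(m\otimes m') = 1_A\otimes(m\otimes m')$ and that both endomorphisms are $\hH_Q(\gamma)$-linear), the observation that $z\cdot 1_A$ is a $\tau$-free element of $A$ commuting with $A$ and hence lies in $Z(A)=\hS_\beta\hat\otimes\hS_{\beta'}$, and the final check on power sums via formula (\ref{Eq:Hecke_center}) — all of this is a sound and careful unwinding of the definitions. One small cosmetic point: $A$ should be understood as the completed image of $\hH_Q(\beta)\otimes\hH_Q(\beta')$ in $\hH_Q(\gamma)$ so that $Z(A)=\hS_\beta\hat\otimes\hS_{\beta'}$ is literal, but this is the same abuse of notation already present in the paper's construction of the parabolic induction and does not affect the argument.
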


We define
the affinization $M_{\mathrm{af}}$ of 
an $H_{Q}(\beta)$-module $M$ 
following \cite[Section 1.3]{KKK18}.
Let $y$ be an indeterminate.
We set $M_{\mathrm{af}} := \kk[\![ y ]\!] \otimes M$
and define an action of $H_{Q}(\beta)$ by
$
e(\mathbf{i})(a \otimes m) := a \otimes (e(\mathbf{i})m), 
x_{k}(a \otimes m) := (y a) \otimes m + a \otimes (x_{k} m),
\tau_{k}(a \otimes m) := a \otimes (\tau_{k} m), 
$
where $a \in \kk[\![ y ]\!], m \in M$.
Note that if $M \in \Mm_{Q, \beta}$, 
the completion $\hH_{Q}(\beta)$ naturally acts on
the affinization $M_{\mathrm{af}}$ and 
we have $M_{\mathrm{af}} \in \hMm_{Q, \beta}$.

Recall from Subsection \ref{Ssec:identify} that
the set $\KP(\beta)$ of Kostant partitions
of $\beta$ is equipped with a partial order $\le$,
which is the opposite of the orbit closure ordering
in the $G_\beta$-space $E_{\beta}$.
Then we have the following important result. 

\begin{Thm}
[Kato~\cite{Kato14}, Brundan-Kleshchev-McNamara~\cite{BKM14}]
\label{Thm:KBKM}
The category $\hMm_{Q, \beta}$ has a structure of 
affine highest weight category for $(\KP(\beta), \le)$.
\end{Thm}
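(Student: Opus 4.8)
The plan is to realize $\hMm_{Q, \beta}$ as an affine highest weight category by exhibiting an affine heredity chain of two-sided ideals of $\hH_{Q}(\beta)$ indexed by $\KP(\beta)$ with the opposite orbit-closure order, exactly as was done for $\hCc_{Q, \beta}$ in the proof of Theorem \ref{Thm:main} (\ref{Thm:main:affhw}). In fact, given the isomorphism $\hK^{\G(\hlam_{\beta})}(Z^{\bullet}(\hlam_{\beta})) \cong \Ext^{\bullet}_{G_{\beta}}(\mathcal{L}, \mathcal{L})^{\wedge}$ of Remark \ref{Rem:ext} and the Morita equivalence with $H_{Q}(\beta)$ due to Varagnolo--Vasserot \cite{VV11}, the statement follows abstractly: Morita equivalence transports the affine quasi-hereditary structure, so $\hH_{Q}(\beta)$ is affine quasi-hereditary, hence by Theorem \ref{Thm:CPS} its module category $\hMm_{Q, \beta}$ is an affine highest weight category. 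But since the theorem is attributed to Kato and Brundan--Kleshchev--McNamara, I would instead recall their direct construction, which does not pass through geometry.

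First I would recall from \cite{BKM14} the \emph{cuspidal modules}: for each $\alpha \in \mathsf{R}^{+}$ (listed via a convex order coming from a reduced word for $w_0$ adapted to $Q$), there is a cuspidal $H_{Q}(\alpha)$-module $E_{\alpha}$, and for each Kostant partition $\mathbf{m} = (m_{\alpha})$ one forms the \emph{standard module} as the parabolic induction of a tensor product of affinizations $(E_{\alpha})_{\mathrm{af}}^{\circ m_{\alpha}}$, suitably symmetrized. The key inputs are: (i) these standard modules have the correct $\Hom$ and $\Ext^{\ge 1}$ vanishing properties with respect to the order on $\KP(\beta)$; (ii) each $E_{\alpha}$ has endomorphism ring $\kk$, so the affinization $(E_{\alpha})_{\mathrm{af}}$ has endomorphism ring $\kk[\![y]\!]$, and the symmetrized induction product has endomorphism ring the completed symmetric algebra $\hS_{\mathbf{m}} := \bigotimes_{\alpha}\kk[\![y_1,\dots,y_{m_\alpha}]\!]^{\SG_{m_\alpha}}$, over which the standard module is free of finite rank; (iii) the projective cover of each simple factors through these standard modules. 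I would then invoke Theorem \ref{Thm:KL} (\ref{Thm:KL:center}) to identify the center $\hS_{\beta}$ and Lemma \ref{Lem:center_incl} to control how the center acts on induction products, which is what matches the endomorphism algebras of standard modules to the required form $\kk[\![z_1,\dots,z_n]\!]$ in Definition \ref{Def:affhw}.

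The concrete chain would be: number $\KP(\beta) = \{\mathbf{m}_1, \dots, \mathbf{m}_r\}$ so that $j < k$ whenever $\mathbf{m}_j < \mathbf{m}_k$, and let $\mathcal{I}_{i} \subset \hH_{Q}(\beta)$ be the ideal generated by the (completed) idempotents $e(\mathbf{i})$ together with the image of projections onto the cuspidal components corresponding to $\{\mathbf{m}_j : j > i\}$; concretely $\mathcal{I}_{i-1}/\mathcal{I}_{i}$ should be isomorphic as a left module to a direct sum of copies of the standard module $\Delta(\mathbf{m}_i)$, and one checks it is an affine heredity ideal of $\hH_{Q}(\beta)/\mathcal{I}_{i}$ by verifying the three conditions of Definition \ref{Def:affhered}: the $\Hom$-vanishing (Definition \ref{Def:affhered:idem}) follows from cuspidality and the order, the projectivity-and-multiplicity statement (Definition \ref{Def:affhered:proj}) from the standard module being projective over the quotient algebra, and the endomorphism-ring condition (Definition \ref{Def:affhered:endo}) from (ii) above. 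Then Theorem \ref{Thm:CPS} and Remark \ref{Rem:ordering} finish the argument. The main obstacle is step (i)--(iii): establishing the $\Ext$-orthogonality and the projectivity of standard modules over the successive quotients, which is the technical heart of \cite{Kato14} and \cite{BKM14}. Since those references prove exactly this, in the paper I would simply state the theorem, cite them, and add a one-line proof: \emph{See \cite[Theorem 4.7 and Proposition 4.13]{Kato14} and \cite[Theorem 3.15]{BKM14}; alternatively, it follows from Theorem \ref{Thm:main}, Remark \ref{Rem:ext}, and \cite{VV11}.}
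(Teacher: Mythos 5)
Your overall approach—state the theorem and cite \cite{Kato14} and \cite{BKM14}, with a sketch of their cuspidal-module/standard-module machinery—matches what the paper does. But there is a genuine gap in the claim that ``those references prove exactly this.'' They do not: \cite{Kato14} and \cite{BKM14} establish the affine highest weight structure for the \emph{graded} module category $H_{Q}(\beta)\text{-gmod}$, which is the setting of \cite{Kleshchev15}. The present paper works instead with the topologically complete algebra $\hH_{Q}(\beta) = \varprojlim H_{Q}(\beta)/J^n$ and the ungraded category $\hMm_{Q,\beta} = \hH_{Q}(\beta)\modfg$. Passing from the graded statement to the completed one is not automatic; the paper invokes \cite[Theorem 4.6]{Fujita16} for this translation, a step made available by the fact that $H_{Q}(\beta)$ is a finite module over its explicitly described center $\mathbb{S}_{\beta}$ (Theorem \ref{Thm:KL} (\ref{Thm:KL:center})). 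Your proposal elides this step entirely, treating the graded and completed categories as interchangeable.

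A second, smaller omission: the partial order $(\KP(\beta),\le)$ used here (opposite of orbit-closure ordering on $E_\beta$) is strictly weaker than the orders used in \cite{Kato14} and \cite{BKM14}, so one must check that the weaker order still governs the filtration of projectives; the paper handles this by pointing to \cite[Remark 4.1]{Kato14}, \cite{Kato17}, and \cite[Theorem 6.12]{McNamara17}. Your alternative route through Theorem \ref{Thm:main}, Remark \ref{Rem:ext}, and \cite{VV11} is mentioned by the paper only as an informational aside, and using it as a proof would additionally require verifying that a Morita equivalence between complete Noetherian algebras transports an affine quasi-hereditary structure while matching the labeling poset—plausible but not free. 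The cleanest fix to your write-up is to make the graded-to-completed translation explicit and cite \cite[Theorem 4.6]{Fujita16} for it, and to add a sentence acknowledging the ordering comparison.
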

\begin{proof}
Actually, \cite{Kato14} and \cite{BKM14}
proved that the finitely generated graded module category
of $H_{Q}(\beta)$ satisfies the
conditions of affine highest weight category
in the setting of graded algebras, 
which Kleshchev originally concerned in
his paper \cite{Kleshchev15}.
We can translate the results to the completed version 
$\hH_{Q}(\beta)$ by applying \cite[Theorem 4.6]{Fujita18}, which is available now
thanks to Theorem~\ref{Thm:KL} (\ref{Thm:KL:center}).

The orderings of the set $\KP(\beta)$
used in \cite{Kato14} and \cite{BKM14} 
are stronger than ours. However, our ordering
is enough to control the affine highest weight structure
of $\hMm_{Q, \beta}$.
See \cite[Remark 4.1]{Kato14} and \cite{Kato17}, 
or \cite[Theorem 6.12]{McNamara17}.    
\end{proof}

In particular, Theorem~\ref{Thm:KBKM} tells us
that simple modules of the category 
$\hMm_{Q, \beta}$
are labelled by Kostant partitions of $\beta$.
Let $S(\mathbf{m})$ denote the simple module corresponding
to $\mathbf{m} \in \KP(\beta)$. 
To simplify the notation,
we identify a positive root $\alpha$
with the Kostant partition 
$\mathbf{m}_{\alpha} := 
(\delta_{\alpha, \alpha^{\prime}})_{\alpha^{\prime} 
\in \mathsf{R}^{+}}
\in \KP(\alpha)$
consisting of the single root $\alpha$.
For instance, we write $S(\alpha)$
instead of $S(\mathbf{m}_{\alpha})$. 
By convention, our $S(\alpha)$ is the same as 
``the dual root module of weight $\alpha$'' in 
\cite[Section 4.3]{KKK15}
(see also \cite[Remark 4.3.3(b)]{KKK15}
for a comparison with \cite{BKM14}).

According to \cite{BKM14, Kato14},
we can construct 
the standard module $\std(\mathbf{m})$,
the proper standard module $\pstd(\mathbf{m})$ and
the proper costandard module $\pcstd(\mathbf{m})$
in the affine highest weight category  $\hMm_{Q, \beta}$ for $(\KP(\beta), \le)$
from the dual root modules
$\{ S(\alpha) \mid \alpha \in \mathsf{R}^{+}\}$ as in the next paragraph.
Although the 
construction a priori depends on the choice
of a reduced expression
$w_{0} = s_{i_{1}} \cdots s_{i_{r}}$ 
of the longest element $w_{0} \in W$
adapted to the quiver $Q$,
the resulting modules 
$\std(\mathbf{m}), \pstd(\mathbf{m}), \pcstd(\mathbf{m})$
do not depend on the choice
up to isomorphism. See \cite[Remark 4.1]{Kato14}. 

Fix a reduced expression 
$w_{0} = s_{i_{1}} \cdots s_{i_{r}}$ 
adapted to $Q$.
It yields a total ordering
$\mathsf{R}^{+} = \{ \gamma_{1}, \ldots, \gamma_{r}\}$
among the positive roots, where
$\gamma_{k} := s_{i_1} \cdots s_{i_{k-1}} (\alpha_{i_k})$.
Using this total ordering, we write
$
\KP(\beta) = \{ \mathbf{m} = 
(m_{1}, \ldots, m_{r}) \in \Z_{\ge 0}^{r} \mid 
\sum_{i=1}^{r} m_{i} \gamma_{i} = \beta \}.$ 
Then for each $\mathbf{m} 
= (m_{1}, \ldots, m_{r}) \in \KP(\beta)$, 
we have
\begin{align*}
\pstd(\mathbf{m}) &= S(\gamma_{1})^{\circ m_{1}} 
\circ S(\gamma_{2})^{\circ m_{2}} \circ \cdots
\circ S(\gamma_{r})^{\circ m_{r}}, \\
\pcstd(\mathbf{m}) &= S(\gamma_{r})^{\circ m_{r}} 
\circ S(\gamma_{r-1})^{\circ m_{r-1}} \circ \cdots
\circ S(\gamma_{1})^{\circ m_{1}}.
\end{align*}

The standard module $\std(\mathbf{m})$ is obtained as
an indecomposable direct summand 
of 
the product 
$S(\gamma_{1})_{\mathrm{af}}^{\circ m_{1}} 
\circ S(\gamma_{2})_{\mathrm{af}}^{\circ m_{2}} \circ \cdots
\circ S(\gamma_{r})_{\mathrm{af}}^{\circ m_{r}}.$
Here, all the summands are mutually isomorphic.
Namely, we have
\begin{equation}
\label{Eq:stdH}
\std(\mathbf{m})^{\oplus m_{1}! \cdots m_{r}!}
\cong S(\gamma_{1})_{\mathrm{af}}^{\circ m_{1}} 
\circ S(\gamma_{2})_{\mathrm{af}}^{\circ m_{2}} \circ \cdots
\circ S(\gamma_{r})_{\mathrm{af}}^{\circ m_{r}}.
\end{equation}
In particular, 
we have $\pstd(\alpha) = \pcstd(\alpha) = S(\alpha)$
and $\std(\alpha) = S(\alpha)_{\mathrm{af}}$ for
each $\alpha \in \mathsf{R}^{+}$.
%%%%%%%%%%%%%%%%%%%%%%%%%%%%%%%%%%%%

\subsection{Kang-Kashiwara-Kim's functor}
\label{Ssec:KKK}
Recall the bijection $\phi \colon \mathsf{R}^+ \to \widehat{I}_Q$ from Subsection~\ref{Ssec:quivers}.
For each $i \in I$, we set $\mathbb{V}_{i} 
:= \bW(\mathsf{cl}(\varpi_{\phi(\alpha_{i})}) )$.

Fix $\beta = \sum_{i \in I} d_i \alpha_i \in \cQ^+$.  
For each $\mathbf{i} = (i_1, \ldots, i_d) \in I^{\beta}$, 
we write
$$\mathcal{O}_{\mathbf{i}} := \bigotimes_{k=1}^{d}
\End_{\tilU}(\mathbb{V}_{i_k}) \cong 
\kk[z_{1}^{\pm 1}, \ldots, z_{d}^{\pm 1}],$$
where
$\End_{\tilU}(\mathbb{V}_{i_k}) = \kk[z_{k}^{\pm 1}]$ as in Theorem~\ref{Thm:globalWeyl} 
(\ref{Thm:globalWeyl:End}).
Using $\mathsf{p} = \mathrm{pr}_2 \circ \phi$, we set
$$
\widehat{V}_{\mathbf{i}}
 :=
\kk[\![ z_{1} - q^{\mathsf{p}(\alpha_{i_1})}, \ldots, 
z_{d} - q^{\mathsf{p}(\alpha_{i_d})}]\!]
\otimes_{\mathcal{O}_{\mathbf{i}}}
\left(\mathbb{V}_{i_{1}} \otimes \cdots 
\otimes \mathbb{V}_{i_{d}}
\right).
$$
Take the direct sum over $\mathbf{i} \in I^\beta$, we define
$$
\widehat{V}^{\otimes \beta} := \bigoplus_{\mathbf{i} \in I^{\beta}}
\widehat{V}_{\mathbf{i}}.
$$
Let $e(\mathbf{i}) \colon \widehat{V}^{\otimes \beta} \to \widehat{V}_{\mathbf{i}}$
denote the projection to the $\mathbf{i}$-th component.
By construction, the algebra
$$
\widehat{\mathcal{O}}_{\beta}
:= \bigoplus_{\mathbf{i} \in I^{\beta}}
\kk[\![ z_{1} - q^{\mathsf{p}(\alpha_{i_1})}, \ldots, 
z_{d} - q^{\mathsf{p}(\alpha_{i_d})}]\!]
e(\mathbf{i})
$$
naturally acts on $\widehat{V}^{\otimes \beta}$.

\begin{Thm}[Kang-Kashiwara-Kim~\cite{KKK18, KKK15}]
\label{Thm:KKK}
Assume the following condition:
\begin{itemize}
\item[$(\star)$]
For any $(i, p) , (j, r) \in \{ \phi(\alpha_{i}) \mid i \in I \}
\subset \widehat{I}_{Q}$,
the zero order of the denominator $d_{i,j}(u)$
at $u=q^{r-p}$ is at most one.
\end{itemize}
Then there exists a $\kk$-algebra homomorphism
$
\hH_{Q}(\beta)   \to 
\End_{\tilU}(\widehat{V}^{\otimes \beta}
)^{\mathrm{op}} $
such that
$$e(\mathbf{i})  \mapsto e(\mathbf{i}), \quad
x_{k}e(\mathbf{i})  \mapsto (q^{- \mathsf{p}(\alpha_{i_k})} 
z_k - 1) 
e({\mathbf{i}}),$$
and the images of the elements $\tau_k$\rq{}s are
defined  in a suitable way using the normalized $R$-matrices.
In particular, $\widehat{V}^{\otimes \beta}$
has a structure of 
$(\tilU, 
\hH_{Q}(\beta))$-bimodule.
\end{Thm}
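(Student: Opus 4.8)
The plan is to follow the construction of Kang--Kashiwara--Kim \cite{KKK13} (see also \cite{KKK15}), adapting it to the completed category $\hCc_{Q,\beta}$ and to the global Weyl modules $\mathbb{V}_{i}$ of Theorem \ref{Thm:Nak_tensor} in place of the $\ell$-fundamental modules used there. By Proposition \ref{Prop:compare}, the forgetful functor from $\hCc_{Q,\beta}$ to the category of $(U_{q},\hR(\hlam))$-bimodules is fully faithful, so it suffices to realize the operators $e(\mathbf{i})$, $x_{k}$, $\tau_{k}$ as $(U_{q},\hR(\hlam))$-bimodule endomorphisms of $\widehat{V}^{\otimes\beta}$ and to check the defining relations of Definition \ref{Def:KLR}, read in the opposite algebra since $\hH_{Q}(\beta)$ acts on the right.

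First I would treat $e(\mathbf{i})$ and $x_{k}$. The idempotent $e(\mathbf{i})$ is the projection onto the summand $\widehat{V}_{\mathbf{i}}$, which is $U_{q}$-linear and, by (\ref{Eq:Wbeta_center}), $\hR(\hlam)$-linear. Recall from (\ref{Eq:Vi}) that $\widehat{V}_{\mathbf{i}}$ is the completion of the $(U_{q},\bigotimes_{k}\End_{\tilU}(\mathbb{V}_{i_{k}}))$-bimodule $\mathbb{V}_{i_{1}}\otimes\cdots\otimes\mathbb{V}_{i_{d}}$ at the spectral point $(q^{\mathsf{p}(\alpha_{i_{1}})},\ldots,q^{\mathsf{p}(\alpha_{i_{d}})})$; hence $(q^{-\mathsf{p}(\alpha_{i_{k}})}z_{k}-1)e(\mathbf{i})$ is the action of an element of the completed parameter ring $\widehat{\mathcal{O}}_{\beta}$. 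In particular it commutes with $U_{q}$, is $\hR(\hlam)$-linear by (\ref{Eq:Wbeta_center}), and lies in the maximal ideal of $\widehat{\mathcal{O}}_{\beta}$, so it acts topologically nilpotently; this last point is what guarantees that the resulting bimodule $\widehat{V}^{\otimes\beta}$ is genuinely a module over the central completion $\hH_{Q}(\beta)$. The relations $x_{k}x_{l}=x_{l}x_{k}$, $x_{k}e(\mathbf{i})=e(\mathbf{i})x_{k}$, $e(\mathbf{i})e(\mathbf{i}')=\delta_{\mathbf{i},\mathbf{i}'}e(\mathbf{i})$, $\sum_{\mathbf{i}}e(\mathbf{i})=1$ are then immediate.

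Next I would define $\tau_{k}$ on $\widehat{V}_{\mathbf{i}}$ by means of the normalized $R$-matrix $R^{\mathrm{norm}}$ of Subsection \ref{Ssec:Rmatrix} between the global Weyl modules occupying the $k$-th and $(k+1)$-st tensor slots, composed with the transposition of those slots so as to arrive in $\widehat{V}_{\sigma_{k}\cdot\mathbf{i}}$. A priori this intertwiner has poles along $z_{k+1}/z_{k}=u_{0}$ for $u_{0}$ a zero of the denominator $d_{i_{k},i_{k+1}}(u)$; after completing at $(q^{\mathsf{p}(\alpha_{i_{1}})},\ldots,q^{\mathsf{p}(\alpha_{i_{d}})})$ the only relevant value is $u_{0}=q^{\mathsf{p}(\alpha_{i_{k+1}})-\mathsf{p}(\alpha_{i_{k}})}$, and hypothesis $(\clubsuit)$ says precisely that this zero has order at most one. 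Multiplying $R^{\mathrm{norm}}$ by the appropriate linear factor (and, when $i_{k}=i_{k+1}$, by $(z_{k}-z_{k+1})^{-1}$ exactly as in the nil-Hecke renormalization of Proposition \ref{Prop:nilHecke}) produces a well-defined $(U_{q},\hR(\hlam))$-bimodule map $\tau_{k}e(\mathbf{i}):\widehat{V}_{\mathbf{i}}\to\widehat{V}_{\sigma_{k}\cdot\mathbf{i}}$. I would fix the normalization so that the intertwining rule $(\tau_{k}x_{l}-x_{\sigma_{k}(l)}\tau_{k})e(\mathbf{i})$ of Definition \ref{Def:KLR} holds; this is a formal consequence of $R^{\mathrm{norm}}$ being a morphism of $\kk[z_{k}^{\pm1},z_{k+1}^{\pm1}]$-modules together with the substitution $z_{k}=q^{\mathsf{p}(\alpha_{i_{k}})}(1+x_{k})$.

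It remains to check the quadratic and braid relations, and here I expect the main obstacle to be purely computational bookkeeping. The relations $\tau_{k}e(\mathbf{i})=e(\sigma_{k}\cdot\mathbf{i})\tau_{k}$ and $\tau_{k}\tau_{l}=\tau_{l}\tau_{k}$ for $|k-l|>1$ are built in. For the quadratic relation one composes the two normalized $R$-matrices $\mathbb{V}_{i_{k}}\otimes\mathbb{V}_{i_{k+1}}\to\mathbb{V}_{i_{k+1}}\otimes\mathbb{V}_{i_{k}}\to\mathbb{V}_{i_{k}}\otimes\mathbb{V}_{i_{k+1}}$; by unitarity of normalized $R$-matrices this composite is multiplication by a ratio of denominators, which by (\ref{Eq:dual_denom}) and the explicit product form of the $d_{i,j}(u)$ for Dynkin types (with $d_{i,i}(1)\neq0$, and the relevant zero of $d_{i,j}$ for $i\sim j$ of order exactly one under $(\clubsuit)$) becomes, after the substitution above, precisely the right-hand side of the quadratic relation of Definition \ref{Def:KLR}. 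The braid relation $(\tau_{k+1}\tau_{k}\tau_{k+1}-\tau_{k}\tau_{k+1}\tau_{k})e(\mathbf{i})$ follows from the Yang--Baxter equation for three consecutive slots, which forces the two triple products to agree away from the locus where two of the indices $i_{k},i_{k+1},i_{k+2}$ coincide, the residual defect being evaluated by the local computation of \cite{KKK13}. Pinning down the normalizing constants so that the renormalized $R$-matrices reproduce the exact signs and polynomials of Definition \ref{Def:KLR}, and confirming that $(\clubsuit)$ is exactly the hypothesis making those operators regular, are the delicate points; both are carried out in \cite{KKK13, KKK15} for $\ell$-fundamental modules, and the passage to the global Weyl modules $\mathbb{V}_{i}$ causes no new difficulty since the normalized $R$-matrices of Subsection \ref{Ssec:Rmatrix} are already defined between the $\mathbb{V}_{i}$ and involve the same denominators $d_{i,j}$ that enter $(\clubsuit)$.
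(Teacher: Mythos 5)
The paper provides no proof of this theorem; it is cited directly from Kang--Kashiwara--Kim \cite{KKK13,KKK15}, with the only in-text remark being that condition $(\clubsuit)$ holds automatically in types $\mathsf{AD}$. So there is no internal argument to compare your proposal against. Your sketch does reproduce the outline of the KKK construction faithfully: the $e(\mathbf{i})$ are projections, the $x_{k}$ act through the completed coefficient ring $\widehat{\mathcal{O}}_{\beta}$, the $\tau_{k}$ are renormalized $R$-matrices with $(\clubsuit)$ guaranteeing that the only pole seen after completion is simple, and the reduction (via Proposition~\ref{Prop:compare}) to exhibiting $(\tilU,\hR(\hlam))$-bimodule endomorphisms is the correct way to land inside $\End_{\hCc_{Q,\beta}}(\widehat{V}^{\otimes\beta})$. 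One slip of wording: when $i_{k}=i_{k+1}$ the intertwiner is $(R^{\mathrm{norm}}-1)/(z_{k}-z_{k+1})$ rather than $R^{\mathrm{norm}}$ multiplied by $(z_{k}-z_{k+1})^{-1}$; your parenthetical appeal to Proposition~\ref{Prop:nilHecke} shows you have the right formula in mind, but the sentence as written misstates it. You defer the precise normalizing constants, the quadratic relation, and the braid defect to KKK, which is appropriate and matches the author's decision to cite the theorem rather than re-derive it.
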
   

\begin{Rem}
For a quiver $Q$ of type $\mathrm{AD}$,
the condition $(\star)$ is  
always satisfied thanks to \cite[Lemma 3.2.4]{KKK15}. 
Henceforth, we keep assuming the condition
$(\star)$ in Theorem~\ref{Thm:KKK}
until the end of this paper
if our quiver $Q$ is of type $\mathrm{E}$.\footnote{
After the initial submission of this paper, it was also proved that the condition $(\star)$ is always satisfied for 
a quiver $Q$ of type $\mathrm{E}$ by Oh-Scrimshaw~\cite{OS19} and by the author~\cite{Fujita20} independently.}  
\end{Rem}

In the remaining part of this subsection, 
we shall extend the $(\tilU, \hH_{Q}(\beta))$-bimodule $\widehat{V}^{\otimes \beta}$
to a $(\hK^{\G(\hlam)}(Z^{\bullet}(\hlam)), 
\hH_{Q}(\beta))$-bimodule.
As before, we set 
$$\text{$\hlam \equiv \hlam_{\beta} := \sum_{i \in I} d_{i} 
\varpi_{\phi(\alpha_{i})} \in \lP^{+}_{Q, \beta},$ \quad and \quad $\lambda := \mathsf{cl}(\hlam) \in \cP^{+}.$}$$
Recall from Section \ref{Sec:quivvar}
that we have the quiver varieties
$\pi \colon \M(\lambda) \to \M_{0}(\lambda)$
with $\G(\lambda)$-action.
We realize the graded quiver varieties
$\pi^{\bullet} \colon \M^{\bullet}(\hlam) \to \M_{0}^{\bullet}(\hlam)$
with $\G(\hlam)$-action
as the $\T$-fixed point subvarieties,
where $\T \subset \G(\lambda)$ is a certain
$1$-dimensional subtorus and 
$\G(\hlam)$ is regarded 
as the centralizer of $\T$ in $\G(\lambda)$.

We fix a maximal torus $H(\hlam)$ of $G(\hlam)$:
$$H(\hlam) \cong 
(\C^{\times})^{d} = (\C^{\times})^{d_1} \times
\cdots \times (\C^{\times})^{d_{n}}
\subset
GL_{d_{1}}(\C) \times \cdots \times GL_{d_{n}}(\C) 
\cong G(\hlam),
$$
and set $\HH(\hlam) := H(\hlam) \times \C^{\times}
\subset G(\hlam) \times \C^{\times} = \G(\hlam).$
We have the standard identification
$\R(\HH(\hlam)) = \kk[z_{1}^{\pm 1}, \ldots, z_{d}^{\pm 1}]$.
Then, the natural homomorphism 
$\R(\hlam) = \R(\G(\hlam)) \hookrightarrow \R(\HH(\hlam))$
is given by the following composition
$$
\bigotimes_{i \in I} 
\kk[z_{i,1}^{\pm 1}, \ldots, z_{i, d_{i}}^{\pm}]^{\SG_{d_{i}}}
\hookrightarrow
\bigotimes_{i \in I} 
\kk[z_{i,1}^{\pm 1}, \ldots, z_{i, d_{i}}^{\pm}]
\cong
\kk[z_{1}^{\pm 1}, \ldots, z_{d}^{\pm 1}],
$$
where the second isomorphism sends
$z_{i, j}$ to $z_{d_1 + \cdots + d_{i-1} + j}$ for 
each $i \in \{1, \ldots, n\}$ and $j \in \{1, \ldots, d_{i} \}$.

For each $\mathbf{i} = (i_1, \ldots, i_d) \in I^{\beta}$, we 
choose an element $\sigma \in \SG_{d}$
such that 
$\mathbf{i} = \sigma \cdot (1^{d_{1}}, 2^{d_{2}}, 
\ldots, n^{d_{n}})$.
Then we define a group homomorphism
$f_{\sigma} \colon \C^{\times} \to H(\hlam)$
by 
$
f_{\sigma}(t) := (t^{\sigma(1)}, \ldots, t^{\sigma(d)}).
$ 
Following Nakajima~\cite{Nakajima01t}, we consider the following subvariety of $\M(\lambda)$:
\begin{align*}
\tZ(\lambda; \sigma) &:= 
\left\{ x \in \M(\lambda) \; 
\middle| \; \lim_{t \to 0} f_{\sigma}(t) \pi(x) = 0 
\in \M_{0}(\lambda)
\right\}.
\end{align*}
By construction, 
the group $\HH(\hlam)$ acts on  
$\tZ(\lambda ; \sigma)$
and the $\HH(\hlam)$-equivariant $K$-group
$\mathcal{K}^{\HH(\hlam)}(\tZ(\lambda ; \sigma))$ 
becomes a left module over the
convolution algebra $\mathcal{K}^{\HH(\hlam)}(Z(\lambda))$.
Via the homomorphism
$$
\tilU \xrightarrow{\Phi_{\lambda}} 
\mathcal{K}^{\G(\lambda)}(Z(\lambda))
\xrightarrow{\text{forget}}
\mathcal{K}^{\HH(\hlam)}(Z(\lambda)), 
$$
we regard $\mathcal{K}^{\HH(\hlam)}(\tZ(\lambda ; \sigma))$
as a left $\tilU$-module.
  
\begin{Thm}[Nakajima~\cite{Nakajima01t}]
\label{Thm:Nak_tensor}
With the above notation, the followings hold.
\begin{enumerate}
\item
\label{Thm:Nak_tensor:tensor}
As a $\tilU$-module, we have
$
\mathcal{K}^{\HH(\hlam)}(\tZ(\lambda ; \sigma))
\cong \mathbb{V}_{i_{1}} \otimes \cdots 
\otimes \mathbb{V}_{i_{d}};
$
\item
\label{Thm:Nak_tensor:center}
The isomorphism in (\ref{Thm:Nak_tensor:tensor})
induces the following commutative diagram
$$
\xy
\xymatrix{
\End(\mathcal{K}^{\HH(\hlam)}(\tZ(\lambda ; \sigma)))
\ar[r]^-{\cong}
&
\End(\mathbb{V}_{i_{1}} \otimes \cdots 
\otimes \mathbb{V}_{i_{d}})
\\
\R(\HH(\hlam))
\ar[r]^-{\sigma}
\ar[u]
&
\mathcal{O}_{\mathbf{i}} ,
\ar[u]
}
\endxy
$$
where the vertical arrows denote
the actions on the modules and
the bottom arrow
$\sigma \colon 
\R(\HH(\hlam)) = \kk[z_{1}^{\pm 1}, \ldots, z_{d}^{\pm 1}]
\to \kk[z_{1}^{\pm 1}, \ldots, z_{d}^{\pm 1}]
= \mathcal{O}_{\mathbf{i}}$
is given by
$\sigma (f(z_{1}, \ldots, z_{d})) = f(z_{\sigma(1)},
\ldots, z_{\sigma(d)}).$ 
\end{enumerate}
\end{Thm} 
\begin{proof}
See \cite[Theorem 6.12]{Nakajima01t}.
\end{proof}
 
Let $\hR(\HH(\hlam))$ denote
the completion of $\R(\HH(\hlam))$
with respect to the maximal ideal
corresponding to $\T \subset \HH(\hlam)$.
We apply 
the corresponding completion $(-) \otimes_{\R(\HH(\hlam))} \hR(\HH(\hlam))$
to the isomorphism in
Theorem~\ref{Thm:Nak_tensor}
(\ref{Thm:Nak_tensor:tensor}) 
to get 
the following isomorphism:
\begin{equation}
\label{Eq:Vi}
\mathcal{K}^{\HH(\hlam)}
(\tZ(\lambda ; \sigma)) \otimes_{\R(\HH(\hlam))} 
\hR(\HH(\hlam))
\cong 
\widehat{V}_{\mathbf{i}}.
\end{equation}
Restricting to $\hR(\hlam) \subset
\hR(\HH(\hlam))$,
we regard the isomorphism (\ref{Eq:Vi}) 
as an isomorphism of $(\tilU, \hR(\hlam))$-bimodules,
which does not depend on the choice of $\sigma$
such that $\mathbf{i} = \sigma \cdot
(1^{d_{1}}, \ldots, n^{d_{n}}).$
By the construction, this $(\tilU, \hR(\hlam))$-bimodule structure
on the LHS of $(\ref{Eq:Vi})$ comes 
from the convolution action of 
$\hK^{\G(\hlam)}(Z^{\bullet}(\hlam))$,
and hence
$\widehat{V}_{\mathbf{i}} \in \hCc_{Q, \beta}$
via the isomorphism (\ref{Eq:Vi}). 
As a result, we can regard the $(\tilU, 
\hH_{Q}(\beta))$-bimodule $\widehat{V}^{\otimes \beta}$ as a $(\hK^{\G(\hlam)}(Z^{\bullet}(\hlam)), \hH_{Q}(\beta))$-bimodule. 

By Theorem~\ref{Thm:Nak_tensor}
(\ref{Thm:Nak_tensor:center}),
the action of $\hR(\hlam)$
on $\widehat{V}^{\otimes \beta}$ factors through the
homomorphism
$$
\hR(\hlam) = \bigotimes_{i \in I}
\kk [\![ z_{i,1} - q^{\mathsf{p}(\alpha_{i})} , \ldots,
z_{i, d_{i}} - q^{\mathsf{p}(\alpha_{i})} ]\!]^{\SG_{d_{i}}}
\to 
\widehat{\mathcal{O}}_{\beta}, 
$$
which is given by the following formula:
\begin{multline}
\label{Eq:Wbeta_center}
f_{1}(z_{1,1}, \ldots, z_{1,d_{i}}) \otimes
\cdots \otimes f_{n}(z_{n,1}, \ldots, z_{n, d_{n}}) \\
\mapsto 
\frac{1}{d_{1}! \cdots d_{n}!} 
\sum_{\sigma \in \SG_{d}}
\sigma \left(
f_{1}(z_{1,1}, \ldots, z_{1, d_{1}}) \cdots 
f_{n}(z_{n, 1}, \ldots, z_{n, d_{n}}) e(1^{d_{1}}, \ldots, n^{d_{n}})
\right),
\end{multline}
where we put 
$z_{i,j} := z_{d_{1} + \cdots + d_{i-1} + j}$ 
for $i \in \{1, \ldots, n\},$ $j \in \{1, \ldots, d_{i} \}$
on the RHS. 
%for each $i \in \{1, \ldots, n\}, j \in \{1, \ldots, d_{i}\}$. 

\begin{Lem}
\label{Lem:center_isom}
We have the following commutative diagram:
$$
\xy
\xymatrix{
\End(\widehat{V}^{\otimes \beta})
\ar@{=}[r]
&
\End(\widehat{V}^{\otimes \beta})
\\
\hS_{\beta} 
\ar[r]^-{\cong}
\ar[u]
&
\hR(\hlam),
\ar[u]
}
\endxy
$$
where
the vertical arrows denote the actions of
$\hS_{\beta} \subset \hH_{Q}(\beta)$
and $\hR(\hlam) \subset 
\hK^{\G(\hlam)}(Z^{\bullet}(\hlam))$
respectively.
The bottom isomorphism is 
given by $x_{i,j}  \mapsto 
q^{-\mathsf{p}(\alpha_{i})} z_{i,j} -1$
for each $i \in I, j \in \{1, \ldots, d_{i}\}$.
\end{Lem}
\begin{proof}The assertion
follows from the construction in
Theorem~\ref{Thm:KKK} and 
the formulas 
(\ref{Eq:Hecke_center}) and (\ref{Eq:Wbeta_center}). 
\end{proof}

\begin{Def}
\label{Def:functor}
Under the assumption ($\star$),
we define the (completed version of) Kang-Kashiwara-Kim functor by
$$
\mathcal{F}_{Q, \beta} 
\colon \hMm_{Q, \beta} \to \hCc_{Q, \beta}, \quad M \mapsto 
\widehat{V}^{\otimes \beta} \otimes_{\hH_{Q}(\beta)} M.
$$
\end{Def}

\begin{Thm}[Kang-Kashiwara-Kim~\cite{KKK18, KKK15}]
\label{Thm:KKK2}
The Kang-Kashiwara-Kim functor 
$\mathcal{F}_{Q, \beta}$ 
satisfies the following properties.
\begin{enumerate}
\item \label{Thm:KKK2:exact}
The bimodule $\widehat{V}^{\otimes \beta}$ is flat over $\hH_Q(\beta)$ and hence the functor $\mathcal{F}_{Q, \beta}$ is exact;
\item \label{Thm:KKK2:monoidal}
For any 
$M \in \hMm_{Q, \beta}$ and 
$M^{\prime} \in \hMm_{Q, \beta^{\prime}}$,
there is a natural isomorphism 
$$
\mathcal{F}_{Q, \beta+\beta^{\prime}}(M \circ M^{\prime})
\cong \mathcal{F}_{Q, \beta}(M) \hat{\otimes} 
\mathcal{F}_{Q, \beta^{\prime}}(M^{\prime})
$$
of $(\tilU, 
\hR(\hlam_{\beta+\beta^{\prime}}))$-bimodules,
where the action of 
$\hR(\hlam_{\beta+\beta^{\prime}})$
on the RHS is given via the standard inclusion
$\hR(\hlam_{\beta+\beta^{\prime}})=
\hR(\hlam_{\beta} + \hlam_{\beta^{\prime}})
\hookrightarrow
\hR(\hlam_{\beta}) \hat{\otimes}\hR(\hlam_{\beta^{\prime}});$
\item \label{Thm:KKK2:center}
For any $M \in \hMm_{Q, \beta}$, 
we have the following commutative diagram: 
$$
\xy
\xymatrix{
\End_{\hMm_{Q, \beta}}(M)
\ar[r]^-{\mathcal{F}_{Q, \beta}}
&
\End_{\hCc_{Q, \beta}}( \mathcal{F}_{Q, \beta}(M))
\\
\hS_{\beta} 
\ar[r]^-{\cong}
\ar[u]
&
\hR(\hlam),
\ar[u]
}
\endxy
$$
where the vertical arrows denote the actions
of the centers.
The bottom arrow is the isomorphism in Lemma~\ref{Lem:center_isom};
\item \label{Thm:KKK2:simple}
For any $\alpha \in \mathsf{R}^{+}$, we have
$\mathcal{F}_{Q, \alpha} (S(\alpha)) \cong L(\varpi_{\phi(\alpha)})$.
\end{enumerate}
\end{Thm}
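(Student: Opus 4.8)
The plan is to deduce each of the four assertions from the corresponding results of Kang-Kashiwara-Kim \cite{KKK13}, \cite{KKK15}, which are proved there in the graded (equivalently, $\kk[z^{\pm1}]$-linear) framework, and to verify that passing to the topologically complete setting of this paper costs nothing. The underlying point is that $\widehat{V}^{\otimes\beta}$ differs from the Schur-Weyl bimodule of \cite{KKK13} only through the central base changes $\mathbb{S}_{\beta}\to\hS_{\beta}$ and $\R(\hlam)\to\hR(\hlam)$, which are flat; hence all the constructions involved commute with completion and exactness is preserved.

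I would first dispatch (\ref{Thm:KKK2:center}), which is essentially a reformulation of Lemma \ref{Lem:center_isom}: a central element of $\hH_{Q}(\beta)$ acts on $\mathcal{F}_{Q,\beta}(M)=\widehat{V}^{\otimes\beta}\otimes_{\hH_{Q}(\beta)}M$ via its action on the bimodule $\widehat{V}^{\otimes\beta}$, and by Lemma \ref{Lem:center_isom} this action is carried by the isomorphism (\ref{Eq:centers}) onto the action of $\hR(\hlam)\subset\hK^{\G(\hlam)}(Z^{\bullet}(\hlam))$. For (\ref{Thm:KKK2:exact}), I would invoke the projectivity of the Schur-Weyl bimodule as a right module over the quiver Hecke algebra established in \cite{KKK13}; since $\hH_{Q}(\beta)=H_{Q}(\beta)\otimes_{\mathbb{S}_{\beta}}\hS_{\beta}$ and $\widehat{V}^{\otimes\beta}$ is the corresponding base change, projectivity (in particular flatness) passes to $\widehat{V}^{\otimes\beta}$ over $\hH_{Q}(\beta)$, so that $\mathcal{F}_{Q,\beta}=\widehat{V}^{\otimes\beta}\otimes_{\hH_{Q}(\beta)}(-)$ is exact.

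For (\ref{Thm:KKK2:monoidal}), the input from \cite{KKK13} is the compatibility of the Schur-Weyl bimodule with parabolic induction: a $\tilU$-linear isomorphism $\widehat{V}^{\otimes(\beta+\beta')}\otimes_{\hH_{Q}(\beta+\beta')}\bigl(\hH_{Q}(\beta)\otimes\hH_{Q}(\beta')\bigr)\cong\widehat{V}^{\otimes\beta}\,\hat{\otimes}\,\widehat{V}^{\otimes\beta'}$, coming on the quantum affine side from the tensor-product realization of Theorem \ref{Thm:Nak_tensor} (and the product structure of the varieties $\tZ$), and on the quiver Hecke side from the embedding $\hH_{Q}(\beta)\otimes\hH_{Q}(\beta')\hookrightarrow\hH_{Q}(\beta+\beta')$. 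Tensoring on the right with $M\otimes M'$ and recalling $M\circ M'=\hH_{Q}(\beta+\beta')\otimes_{\hH_{Q}(\beta)\otimes\hH_{Q}(\beta')}(M\otimes M')$ produces the desired isomorphism $\mathcal{F}_{Q,\beta+\beta'}(M\circ M')\cong\mathcal{F}_{Q,\beta}(M)\,\hat{\otimes}\,\mathcal{F}_{Q,\beta'}(M')$, while Lemma \ref{Lem:center_incl} together with (\ref{Eq:centers}) identifies the $\hR(\hlam_{\beta+\beta'})$-structure on the right-hand side with the one induced by the standard inclusion.

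Finally, for (\ref{Thm:KKK2:simple}): when $\alpha=\alpha_{i}$ is a simple root, $\mathsf{ht}\,\alpha_{i}=1$, so $\hH_{Q}(\alpha_{i})=\kk[\![x_{1}]\!]$ and $S(\alpha_{i})=\kk$ with $x_{1}$ acting by $0$; by Theorem \ref{Thm:KKK} and the definition of $\widehat{V}_{(i)}$ in terms of $\mathbb{V}_{i}=\bW(\mathsf{cl}(\varpi_{\phi(\alpha_{i})}))$, this gives $\mathcal{F}_{Q,\alpha_{i}}(S(\alpha_{i}))=\widehat{V}_{(i)}/(x_{1})\cong\mathbb{V}_{i}/(z_{1}-q^{\mathsf{p}(\alpha_{i})})\cong W(\varpi_{\phi(\alpha_{i})})$, which by Theorem \ref{Thm:AK} in the case $l=1$ is isomorphic to the simple module $L(\varpi_{\phi(\alpha_{i})})$. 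For a general positive root $\alpha$, $S(\alpha)$ is the cuspidal module of \cite{Kato14}, \cite{BKM14}, and the identification $\mathcal{F}_{Q,\alpha}(S(\alpha))\cong L(\varpi_{\phi(\alpha)})$ is the content of \cite{KKK15}. I expect this last point — the behaviour of $\mathcal{F}_{Q,\alpha}$ on the cuspidal modules attached to non-simple roots, which must be quoted from \cite{KKK15} rather than re-derived (the latter would be circular with the equivalence established in the following subsection) — to be the only substantive obstacle; the remaining work is bookkeeping with central completions.
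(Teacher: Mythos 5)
Your proposal matches the paper's proof, which simply cites \cite[Theorem 3.3.3]{KKK13} for exactness, \cite[Theorem 3.2.1]{KKK13} together with Lemmas \ref{Lem:center_incl} and \ref{Lem:center_isom} for the monoidal compatibility, Lemma \ref{Lem:center_isom} for the center diagram, and \cite[Theorem 4.3.4]{KKK15} for the image of the cuspidal modules. Your additional remarks on the flatness of the central completion and the explicit computation for simple roots are correct elaborations of what the paper leaves implicit, and your observation that $\mathcal{F}_{Q,\alpha}(S(\alpha))\cong L(\varpi_{\phi(\alpha)})$ for non-simple $\alpha$ must be quoted from \cite{KKK15} to avoid circularity is exactly right.
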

\begin{proof}
(\ref{Thm:KKK2:exact}) is \cite[Theorem 3.3.3]{KKK18}.
(\ref{Thm:KKK2:monoidal}) follows from
\cite[Theorem 3.2.1]{KKK18} together with Lemma 
\ref{Lem:center_incl} and Lemma~\ref{Lem:center_isom}.
(\ref{Thm:KKK2:center}) is a direct consequence 
of Lemma~\ref{Lem:center_isom}.
(\ref{Thm:KKK2:simple}) is \cite[Theorem 4.3.4]{KKK15}.
\end{proof}
%%%%%%%%%%%%%%%%%%%%%%%%%%%%%%%%%%%%

\subsection{Comparison of affine highest weight structures}
\label{Ssec:final}

In this subsection, we prove the following.

\begin{Thm}
\label{Thm:equiv}
Let $\beta \in \cQ^{+}$ be an element.
Under the assumption $(\star)$,
the Kang-Kashiwara-Kim functor gives an equivalence of 
affine highest weight categories
$$\mathcal{F}_{Q, \beta} \colon \hMm_{Q, \beta} \xrightarrow{\simeq}
\hCc_{Q, \beta}.$$
\end{Thm}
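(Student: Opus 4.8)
The plan is to derive the theorem from the abstract criterion of Theorem \ref{Thm:criterion}, applied to the exact functor $\mathcal{F}_{Q,\beta}:\hMm_{Q,\beta}\to\hCc_{Q,\beta}$ (exactness being Theorem \ref{Thm:KKK2} (\ref{Thm:KKK2:exact})). Both categories are affine highest weight --- $\hMm_{Q,\beta}$ by Theorem \ref{Thm:KBKM} and $\hCc_{Q,\beta}$ by Theorem \ref{Thm:main} (\ref{Thm:main:affhw}) --- and the underlying algebras $\hH_{Q}(\beta)$ and $\hK^{\G(\hlam_{\beta})}(Z^{\bullet}(\hlam_{\beta}))$ are module-finite over their centers, by Theorem \ref{Thm:KL} (\ref{Thm:KL:center}) and by the finiteness of the convolution algebra over the central subalgebra $\R(\hlam_{\beta})$ (cf.\ Remark \ref{Rem:ext}); so hypothesis (1) of Theorem \ref{Thm:criterion} holds. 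As the order-preserving bijection I take $f:\KP(\beta)\xrightarrow{\simeq}\lP^{+}_{Q,\beta}$ from Remark \ref{Rem:bijection}; that it matches the orbit-closure order on $\KP(\beta)$ with the $\ell$-dominance order recording the affine heredity chain of $\hCc_{Q,\beta}$ is the content of Lemma \ref{Lem:strata} and the paragraph following it. Everything thus reduces to proving, for each $\mathbf{m}\in\KP(\beta)$,
$$
\mathcal{F}_{Q,\beta}(\std(\mathbf{m}))\cong\hW(f(\mathbf{m}))=\std(f(\mathbf{m})),\qquad
\mathcal{F}_{Q,\beta}(\pcstd(\mathbf{m}))\cong W^{\vee}(f(\mathbf{m}))=\pcstd(f(\mathbf{m})),
$$
the last equalities being the identifications of standard and proper costandard modules supplied by Theorem \ref{Thm:main} (\ref{Thm:main:affhw}).

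First I would treat the rank-one case $\beta=\alpha\in\mathsf{R}^{+}$, where $\std(\alpha)=S(\alpha)_{\mathrm{af}}$ and $\pstd(\alpha)=\pcstd(\alpha)=S(\alpha)$. By Theorem \ref{Thm:KKK2} (\ref{Thm:KKK2:center}) the functor intertwines the center actions through the isomorphism (\ref{Eq:centers}); combined with Corollary \ref{Cor:rest} this shows that the $\hR(\hlam_{\alpha})$-action on $\mathcal{F}_{Q,\alpha}(S(\alpha)_{\mathrm{af}})$ factors through the surjection $\theta_{\varpi_{\phi(\alpha)}}:\hR(\hlam_{\alpha})\twoheadrightarrow\hR(\varpi_{\phi(\alpha)})$ and makes $\mathcal{F}_{Q,\alpha}(S(\alpha)_{\mathrm{af}})$ free of rank $\dim L(\varpi_{\phi(\alpha)})$ over $\hR(\varpi_{\phi(\alpha)})$. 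Reducing modulo the maximal ideal of the center and using Theorem \ref{Thm:KKK2} (\ref{Thm:KKK2:simple}) gives $\mathcal{F}_{Q,\alpha}(S(\alpha)_{\mathrm{af}})/\rr_{\hlam_{\alpha}}\cong\mathcal{F}_{Q,\alpha}(S(\alpha))\cong L(\varpi_{\phi(\alpha)})\cong W(\varpi_{\phi(\alpha)})\cong\hW(\varpi_{\phi(\alpha)})/\rr_{\hlam_{\alpha}}$; lifting an $\ell$-highest weight vector and invoking the universal property of the deformed local Weyl module (Proposition \ref{Prop:defWeyl} (\ref{Prop:defWeyl:univ})) yields a surjection $\hW(\varpi_{\phi(\alpha)})\twoheadrightarrow\mathcal{F}_{Q,\alpha}(S(\alpha)_{\mathrm{af}})$ of free $\hR(\varpi_{\phi(\alpha)})$-modules of the same finite rank, hence an isomorphism. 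Likewise $\mathcal{F}_{Q,\alpha}(\pcstd(\alpha))=\mathcal{F}_{Q,\alpha}(S(\alpha))\cong L(\varpi_{\phi(\alpha)})=W^{\vee}(\varpi_{\phi(\alpha)})$, settling the rank-one case.

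For general $\mathbf{m}$, fix a reduced expression $w_{0}=s_{i_{1}}\cdots s_{i_{r}}$ adapted to $Q$, write $\mathsf{R}^{+}=\{\gamma_{1},\dots,\gamma_{r}\}$ and $\mathbf{m}=(m_{1},\dots,m_{r})$ as in Subsection \ref{Ssec:KLR}, and set $\phi(\gamma_{k})=:(j_{k},p_{k})\in\widehat{I}_{Q}$. Iterating the monoidality of $\mathcal{F}_{Q,\beta}$ (Theorem \ref{Thm:KKK2} (\ref{Thm:KKK2:monoidal})) on the product formula (\ref{Eq:stdH}) and on $\pcstd(\mathbf{m})=S(\gamma_{r})^{\circ m_{r}}\circ\cdots\circ S(\gamma_{1})^{\circ m_{1}}$, and feeding in the rank-one case, yields
$$
\mathcal{F}_{Q,\beta}(\std(\mathbf{m}))^{\oplus m_{1}!\cdots m_{r}!}\cong\hW(\varpi_{\phi(\gamma_{1})})^{\hat{\otimes}m_{1}}\hat{\otimes}\cdots\hat{\otimes}\hW(\varpi_{\phi(\gamma_{r})})^{\hat{\otimes}m_{r}},
$$
$$
\mathcal{F}_{Q,\beta}(\pcstd(\mathbf{m}))\cong L(\varpi_{\phi(\gamma_{r})})^{\otimes m_{r}}\otimes\cdots\otimes L(\varpi_{\phi(\gamma_{1})})^{\otimes m_{1}}
$$
(the completed tensor products collapsing to ordinary ones in the second line, all factors being finite-dimensional with trivial central action). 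Now the ordering of positive roots from an adapted reduced expression is compatible with the Auslander--Reiten quiver $\Gamma_{Q}$: for $a<b$ there is no path in $\Gamma_{Q}$ from $\phi(\gamma_{a})$ to $\phi(\gamma_{b})$, since such a path would give a nonzero morphism $M(\gamma_{a})\to M(\gamma_{b})$ contradicting the $\Hom$-order property of adapted words. By the contrapositive of Lemma \ref{Lem:path} (\ref{Lem:path:AR}) this forces $d_{j_{a},j_{b}}(q^{p_{b}-p_{a}})\neq0$ for all $1\le a<b\le r$, so the denominator hypotheses of Corollary \ref{Cor:hW_tensor} and of Theorem \ref{Thm:AK} hold for the list $\varpi_{\phi(\gamma_{1})},\dots,\varpi_{\phi(\gamma_{r})}$ with multiplicities $m_{k}$. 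Corollary \ref{Cor:hW_tensor} identifies the right-hand side of the first display with $\hW(f(\mathbf{m}))^{\oplus m_{1}!\cdots m_{r}!}$; as $\hW(f(\mathbf{m}))$ is indecomposable and $\hCc_{Q,\beta}$ has the Krull--Schmidt property this descends to $\mathcal{F}_{Q,\beta}(\std(\mathbf{m}))\cong\hW(f(\mathbf{m}))$. And Theorem \ref{Thm:AK} gives $W(f(\mathbf{m}))\cong L(\varpi_{\phi(\gamma_{1})})^{\otimes m_{1}}\otimes\cdots\otimes L(\varpi_{\phi(\gamma_{r})})^{\otimes m_{r}}$, so Proposition \ref{Prop:dualWeyl} identifies the right-hand side of the second display with $W^{\vee}(f(\mathbf{m}))$. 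Theorem \ref{Thm:criterion} then gives the asserted equivalence of affine highest weight categories; when $Q$ is of type $\mathsf{A}_{1}$ the posets are trivial and the statement is the rank-one case. The main obstacle is the combinatorial heart of this last step: verifying that the order on $\mathsf{R}^{+}$ coming from an adapted reduced expression is exactly the one for which the relevant $R$-matrix denominators do not vanish in the forward direction, so that the cyclicity criteria of Theorem \ref{Thm:AK} and Corollary \ref{Cor:hW_tensor} apply --- this is precisely where Lemma \ref{Lem:path}, and hence (for type $\mathsf{E}$) the assumption $(\clubsuit)$ underlying Theorem \ref{Thm:KKK}, does the essential work; a secondary technical point is the rank-one identification, where one must carefully track the center action after applying $\mathcal{F}$ to recognize freeness over the one-dimensional ring $\hR(\varpi_{\phi(\alpha)})$.
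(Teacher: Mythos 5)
Your overall strategy is the same as the paper's: invoke Theorem \ref{Thm:criterion}, verify hypothesis (1) and the order-preserving bijection, reduce the identification of standards and proper costandards to the rank-one case, and then propagate via monoidality together with Corollary \ref{Cor:hW_tensor} and Proposition \ref{Prop:dualWeyl}. The general step (the analogue of Proposition \ref{Prop:check}) matches the paper almost verbatim, and your observation that the denominator nonvanishing for $a<b$ reflects the ordering coming from an adapted reduced expression is exactly the key combinatorial point (the paper makes it precise by citing B\'edard's result rather than an informal ``$\Hom$-order property''; a path in $\Gamma_Q$ is not, in general, obviously a nonzero morphism, so the citation is the clean way to close this).

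Where your argument genuinely diverges, and where it has gaps, is the rank-one identification $\mathcal{F}_{Q,\alpha}(\std(\alpha))\cong\hW(\varpi_{\phi(\alpha)})$ (Lemma \ref{Lem:root_std} in the paper). You assert that Theorem \ref{Thm:KKK2}(\ref{Thm:KKK2:center}) ``combined with Corollary \ref{Cor:rest}'' shows both that the $\hR(\hlam_\alpha)$-action factors through $\theta_{\varpi_{\phi(\alpha)}}$ \emph{and} that $\mathcal{F}_{Q,\alpha}(\std(\alpha))$ is free of rank $\dim L(\varpi_{\phi(\alpha)})$ over $\hR(\varpi_{\phi(\alpha)})$. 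Corollary \ref{Cor:rest} only describes the abstract map $\theta_{\hmu}$; it says nothing about how the center acts on $\mathcal{F}_{Q,\alpha}(\std(\alpha))$, and in particular freeness over a one-variable quotient of $\hR(\hlam_\alpha)$ does not follow from it. (One can recover freeness by transporting, via Theorem \ref{Thm:KKK2}(\ref{Thm:KKK2:center}), the $B_\alpha$-freeness of $\std(\alpha)$, but that requires an argument you do not give.) You then invoke Proposition \ref{Prop:defWeyl}(\ref{Prop:defWeyl:univ}) to produce a surjection $\hW(\varpi_{\phi(\alpha)})\twoheadrightarrow\mathcal{F}_{Q,\alpha}(\std(\alpha))$; but that universal property is stated only for $M\in\Cc_{\g}$, i.e.\ finite-dimensional modules, whereas $\mathcal{F}_{Q,\alpha}(\std(\alpha))$ is infinite-dimensional, and moreover the property gives a priori a $U_q$-map, not automatically an $\hR(\hlam_\alpha)$-bilinear one. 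The paper sidesteps all of this by working internally in the affine highest weight category $\hCc_{Q,\alpha}$: since $\varpi_{\phi(\alpha)}$ is the minimal weight, the Serre subcategory $\hCc(\alpha)$ it generates is equivalent to $\hR(\varpi_{\phi(\alpha)})\modfg$, $\hW(\varpi_{\phi(\alpha)})$ is the projective cover of $L(\varpi_{\phi(\alpha)})$ there, and every proper quotient of $\hW(\varpi_{\phi(\alpha)})$ is finite-dimensional; combined with the observation that $\mathcal{F}_{Q,\alpha}(\std(\alpha))$ is infinite-dimensional (by exactness applied to the filtration of $\std(\alpha)$ by copies of $S(\alpha)$), this forces the covering map to be an isomorphism. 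I would recommend replacing your rank-one step with this argument; the rest of your proof then goes through as the paper's does.
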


We keep the assumption $(\star)$ until the end of this paper.

\begin{Lem}
\label{Lem:root_std}
For any $\alpha \in \mathsf{R}^{+}$, we have
$\mathcal{F}_{Q, \alpha} (\std(\alpha)) \cong 
\hW(\varpi_{\phi(\alpha)}).$
\end{Lem}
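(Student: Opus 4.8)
The plan is to compute $\mathcal{F}_{Q,\alpha}(\std(\alpha))$ by reducing everything to the bottom block of the affine highest weight category $\hCc_{Q,\alpha}$, which is controlled by a single power-series variable. First I would record the structure of $\std(\alpha)$. By the discussion around \eqref{Eq:stdH} we have $\std(\alpha)=S(\alpha)_{\mathrm{af}}=\kk[\![y]\!]\otimes S(\alpha)$; multiplication by $y^{j}$ identifies $y^{j}\std(\alpha)/y^{j+1}\std(\alpha)$ with $S(\alpha)$, so each $\std(\alpha)/y^{k}\std(\alpha)$ has a length-$k$ filtration with all subquotients $\cong S(\alpha)$, and $\std(\alpha)=\varprojlim_{k}\std(\alpha)/y^{k}\std(\alpha)$. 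I would also note that the central subalgebra $\hR(\hlam_{\alpha})\cong\hS_{\alpha}$ acts on $\std(\alpha)$ through a surjection onto $\kk[\![y]\!]=\End_{\hH_{Q}(\alpha)}(\std(\alpha))$: under the identification \eqref{Eq:Hecke_center} the power sum $\sum_{i\in I}\sum_{j}x_{i,j}\in\hS_{\alpha}$ corresponds to the central element $x_{1}+\cdots+x_{d}\in\hH_{Q}(\alpha)$, which acts on $S(\alpha)_{\mathrm{af}}$ as $(\mathsf{ht}\,\alpha)\,y$ (the $x_{k}$ act nilpotently on $S(\alpha)$, so their sum, being central, acts there as $0$); since $\mathsf{ht}\,\alpha$ is invertible in $\kk$, this forces $\rr_{\hlam_{\alpha}}^{k}\std(\alpha)=y^{k}\std(\alpha)$ for all $k$.

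Next, setting $X:=\mathcal{F}_{Q,\alpha}(\std(\alpha))\in\hCc_{Q,\alpha}$, I would observe that $\hK^{\G(\hlam_{\alpha})}(Z^{\bullet}(\hlam_{\alpha}))$ is module-finite over its centre $\hR(\hlam_{\alpha})$ (cf.\ Remark~\ref{Rem:ext}), so $X$ is finitely generated over the complete local ring $\hR(\hlam_{\alpha})$ and $X=\varprojlim_{k}X/\rr_{\hlam_{\alpha}}^{k}X$. Using that $\mathcal{F}_{Q,\alpha}$ is $\hR(\hlam_{\alpha})$-linear (Theorem~\ref{Thm:KKK2}(\ref{Thm:KKK2:center})) and right exact, together with $\rr_{\hlam_{\alpha}}^{k}\std(\alpha)=y^{k}\std(\alpha)$, I get $X/\rr_{\hlam_{\alpha}}^{k}X\cong\mathcal{F}_{Q,\alpha}(\std(\alpha)/y^{k}\std(\alpha))$; by exactness of $\mathcal{F}_{Q,\alpha}$ (Theorem~\ref{Thm:KKK2}(\ref{Thm:KKK2:exact})) and $\mathcal{F}_{Q,\alpha}(S(\alpha))\cong L(\varpi_{\phi(\alpha)})$ (Theorem~\ref{Thm:KKK2}(\ref{Thm:KKK2:simple})), this module has a length-$k$ filtration by copies of $L(\varpi_{\phi(\alpha)})$. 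In particular, by Theorem~\ref{Thm:main}(\ref{Thm:main:HL}) it is a finite-dimensional object of $\Cc_{Q,\alpha}$ all of whose $\ell$-weights are $\le\varpi_{\phi(\alpha)}$; since $\varpi_{\phi(\alpha)}$ is the minimum of $\lP^{+}_{Q,\alpha}$ (proof of Lemma~\ref{Lem:strata}), it equals $\hlam_{1}$ in the ordering used in the proof of Theorem~\ref{Thm:main}(\ref{Thm:main:affhw}), and by the identification of $\hK^{\G(\hlam_{\alpha})}(Z^{\bullet}(\hlam_{\alpha}))/\mathcal{I}_{1}$-modules given there, each $X/\rr_{\hlam_{\alpha}}^{k}X$ is a module over $A_{1}:=\hK^{\G(\hlam_{\alpha})}(Z^{\bullet}(\hlam_{\alpha}))/\mathcal{I}_{1}$. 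Passing to the limit, $\mathcal{I}_{1}X=0$, so $X$ is a finitely generated $A_{1}$-module.

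Finally I would invoke the structure of $A_{1}$. From the proof of Theorem~\ref{Thm:main}(\ref{Thm:main:affhw}) one has $A_{1}\cong\hW(\varpi_{\phi(\alpha)})^{\oplus s_{1}}$ as a left $A_{1}$-module with $s_{1}=\dim W(\varpi_{\phi(\alpha)})=\dim L(\varpi_{\phi(\alpha)})$; since $\End_{A_{1}}(\hW(\varpi_{\phi(\alpha)}))=\End_{U_{q}}(\hW(\varpi_{\phi(\alpha)}))=\hR(\varpi_{\phi(\alpha)})\cong\kk[\![t]\!]$ (Propositions~\ref{Prop:compare} and~\ref{Prop:defWeyl}(\ref{Prop:defWeyl:End})) and $\hW(\varpi_{\phi(\alpha)})$ is free of rank $s_{1}$ over $\kk[\![t]\!]$, a rank count gives $A_{1}\cong M_{s_{1}}(\kk[\![t]\!])$; hence $\hW(\varpi_{\phi(\alpha)})$ is a progenerator of $A_{1}\modfg$, we obtain $A_{1}\modfg\simeq\kk[\![t]\!]\modfg$ via $M\mapsto\Hom_{A_{1}}(\hW(\varpi_{\phi(\alpha)}),M)$ with quasi-inverse $N\mapsto\hW(\varpi_{\phi(\alpha)})\otimes_{\kk[\![t]\!]}N$, and $\rr_{\hlam_{\alpha}}$ acts on every $A_{1}$-module through the induced surjection $\hR(\hlam_{\alpha})\twoheadrightarrow Z(A_{1})=\kk[\![t]\!]$, so that $\rr_{\hlam_{\alpha}}^{k}(-)=t^{k}(-)$. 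Writing $X\cong\hW(\varpi_{\phi(\alpha)})\otimes_{\kk[\![t]\!]}N$ and using that $\hW(\varpi_{\phi(\alpha)})\otimes_{\kk[\![t]\!]}(-)$ is exact and sends $\kk$ to the simple module $L(\varpi_{\phi(\alpha)})$ (hence preserves lengths), I get $\mathrm{length}_{\kk[\![t]\!]}(N/t^{k}N)=\mathrm{length}_{A_{1}}(X/\rr_{\hlam_{\alpha}}^{k}X)=k$ for all $k\ge 1$; so $N/tN\cong\kk$, and combining with $N=\varprojlim_{k}N/t^{k}N$ (transported from $X=\varprojlim_{k}X/\rr_{\hlam_{\alpha}}^{k}X$ through the equivalence) forces $N\cong\kk[\![t]\!]$. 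Therefore $X\cong\hW(\varpi_{\phi(\alpha)})$.

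The main obstacle will be the bookkeeping in the middle two paragraphs: I must make sure that the $\rr_{\hlam_{\alpha}}$-adic truncations of $X$ are genuinely computed by applying the exact functor $\mathcal{F}_{Q,\alpha}$ to the $y$-adic truncations of $\std(\alpha)$ (which is exactly what the power-sum identity $\rr_{\hlam_{\alpha}}^{k}\std(\alpha)=y^{k}\std(\alpha)$ is for), and that the resulting finite-dimensional modules land in the bottom Serre subcategory $A_{1}\modfg$. Once this is in place, the identification of $X$ with the deformed local Weyl module is a short length count performed inside $A_{1}\modfg\simeq\kk[\![t]\!]\modfg$; and since the whole argument takes place inside $\hCc_{Q,\alpha}$, the resulting isomorphism is one of objects of $\hCc_{Q,\alpha}$, not merely of $\tilU$-modules.
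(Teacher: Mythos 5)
Your proof is correct, and it reaches the same key structural facts as the paper's, but the endgame is run differently. The paper argues as follows: $\mathcal{F}_{Q,\alpha}(\std(\alpha))/\rr_{\hlam_\alpha}\cong L(\varpi_{\phi(\alpha)})$ shows the module has simple head $L(\varpi_{\phi(\alpha)})$; it therefore lives in the Serre subcategory $\hCc(\alpha)$ generated by $\hW(\varpi_{\phi(\alpha)})$, where $\hW(\varpi_{\phi(\alpha)})$ is a projective cover of $L(\varpi_{\phi(\alpha)})$, so there is a surjection $\hW(\varpi_{\phi(\alpha)})\twoheadrightarrow\mathcal{F}_{Q,\alpha}(\std(\alpha))$; and since every proper quotient of $\hW(\varpi_{\phi(\alpha)})$ is finite-dimensional (because $\hR(\varpi_{\phi(\alpha)})\cong\kk[\![z]\!]$) while $\mathcal{F}_{Q,\alpha}(\std(\alpha))$ is infinite-dimensional by exactness, the surjection is an isomorphism. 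You instead make explicit two things the paper treats as immediate: that $\rr_{\hlam_\alpha}^{k}\std(\alpha)=y^{k}\std(\alpha)$ (via the power sum $\sum x_{i,j}$ acting on $S(\alpha)_{\mathrm{af}}$ as $(\mathsf{ht}\,\alpha)y$ — the paper only needs and quietly uses the $k=1$ case when it writes $\std(\alpha)/\mathfrak{m}_\alpha\cong S(\alpha)$ "by definition"), and that every $X/\rr_{\hlam_\alpha}^{k}X$ has a length-$k$ filtration by $L(\varpi_{\phi(\alpha)})$, which is precisely what forces $X$ into the bottom block. You then finish by transporting through the Morita equivalence $A_1\modfg\simeq\kk[\![t]\!]\modfg$ and counting lengths, rather than by the projective-cover-plus-finiteness trick. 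What your route buys is a completely explicit identification $X\cong\hW(\varpi_{\phi(\alpha)})\otimes_{\kk[\![t]\!]}N$ with $N$ pinned down by a length computation; what the paper's route buys is brevity, since "infinite-dimensional quotient of $\hW$ is all of $\hW$" does the job in one line. One small imprecision: you call $\hR(\hlam_\alpha)$ "its centre," but a priori it is only a central subalgebra of $\hK^{\G(\hlam_\alpha)}(Z^{\bullet}(\hlam_\alpha))$; module-finiteness over this central subalgebra is what you actually use, and it does hold, so the argument is unaffected.
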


\begin{proof}
Let $\mathfrak{m}_{\alpha} \subset \hS_{\alpha}$
be the maximal ideal of the center 
$\hS_{\alpha} \subset \hH_{Q}(\alpha)$.
By the definition of the affinitzation 
$\std(\alpha) = S(\alpha)_{\mathrm{af}}$,
we have
$\std(\alpha) / \mathfrak{m}_{\alpha} \cong S(\alpha).$
From Theorem~\ref{Thm:KKK2}
(\ref{Thm:KKK2:center}) and (\ref{Thm:KKK2:simple}),
we see 
$$
\mathcal{F}_{Q, \alpha}(\std(\alpha)) / \rr_{\hlam_{\alpha}}
\cong \mathcal{F}_{Q, \alpha}(\std(\alpha)/ \mathfrak{m}_{\alpha})
\cong \mathcal{F}_{Q, \alpha}(S(\alpha))
\cong L(\varpi_{\phi(\alpha)}).
$$
In particular, the module 
$\mathcal{F}_{Q, \alpha}(\std(\alpha))$ 
has a simple head isomorphic to $L(\varpi_{\phi(\alpha)})$.
Let us consider the Serre subcategory
$\hCc(\alpha)$ of $\hCc_{Q, \alpha}$
generated by the standard module $\hW(\varpi_{\phi(\alpha)})$.
Then $\mathcal{F}_{Q, \alpha}(\std(\alpha)) \in
\hCc(\alpha).$
By a standard theory of affine highest weight category
(see \cite[Proposition 5.16]{Kleshchev15}),
this category $\hCc(\alpha)$ is equivalent to
the category of finitely generated 
$\hR(\varpi_{\phi(\alpha)})$-modules
under the functor 
$\Hom_{\hCc_{Q, \alpha}}(\hW(\varpi_{\phi(\alpha)}), -).$
Since $\hW(\varpi_{\phi(\alpha)})$ corresponds to the rank $1$ free
$\hR(\varpi_{\phi(\alpha)})$-module under this equivalence,
it is a projective cover
of $L(\varpi_{\phi(\alpha)})$ in $\hCc(\alpha)$.
Therefore we have a surjective homomorphism 
$\hW(\varpi_{\phi(\alpha)}) \twoheadrightarrow 
\mathcal{F}_{Q, \alpha}(\std(\alpha))$. 
Note that any non-trivial quotient of 
$\hW(\varpi_{\phi(\alpha)})$ in $\hCc(\alpha)$ is finite-dimensional
because $\hR(\varpi_{\phi(\alpha)}) \cong 
\kk[\![ x ]\!]$.
On the other hand, the module 
$\mathcal{F}_{Q, \alpha}(\std(\alpha))$ 
is infinite-dimensional thanks to Theorem~\ref{Thm:KKK2}
 (\ref{Thm:KKK2:exact}) and (\ref{Thm:KKK2:simple}).
Thus the surjective homomorphism 
$\hW(\varpi_{\phi(\alpha)}) \twoheadrightarrow 
\mathcal{F}_{Q, \alpha}(\std(\alpha))$
should be an isomorphism.   
\end{proof}

Recall that we have defined  
the order-preserving bijection $f \colon \KP(\beta) \to \lP^{+}_{Q, \beta}$
in Subsection \ref{Ssec:identify}
by $f(\mathbf{m}) := \sum_{\alpha \in \mathsf{R}^{+}}
m_{\alpha} \varpi_{\phi(\alpha)}$
for $\mathbf{m} = (m_{\alpha})_{\alpha \in \mathsf{R}^{+}}
\in \KP(\beta)$.

\begin{Prop}
\label{Prop:check}
For each $\mathbf{m} \in \KP(\beta)$, 
we have 
$$
\mathcal{F}_{Q, \beta}(\std(\mathbf{m}))
 \cong \hW(f(\mathbf{m})),
\quad
\mathcal{F}_{Q, \beta} 
( \pcstd(\mathbf{m}))\cong 
W^{\vee}(f(\mathbf{m})).
$$
\end{Prop}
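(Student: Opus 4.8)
The plan is to reduce the statement for general Kostant partitions to the rank-one case already handled in Lemma \ref{Lem:root_std}, by exploiting the monoidal compatibility of $\mathcal{F}_{Q, \beta}$ (Theorem \ref{Thm:KKK2} (\ref{Thm:KKK2:monoidal})) together with the explicit product formulas for the (proper) standard modules of $\hMm_{Q, \beta}$ recalled at the end of Subsection \ref{Ssec:KLR} and the tensor product decompositions of deformed local Weyl modules from Subsection \ref{Ssec:Rmatrix}. Fix a reduced expression $w_{0} = s_{i_{1}} \cdots s_{i_{r}}$ adapted to $Q$ and write $\mathsf{R}^{+} = \{\gamma_{1}, \ldots, \gamma_{r}\}$ as there, so that $\mathbf{m} = (m_{1}, \ldots, m_{r})$.

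For the proper costandard module, I would start from $\pcstd(\mathbf{m}) = S(\gamma_{r})^{\circ m_{r}} \circ \cdots \circ S(\gamma_{1})^{\circ m_{1}}$ and apply Theorem \ref{Thm:KKK2} (\ref{Thm:KKK2:monoidal}) repeatedly, together with Theorem \ref{Thm:KKK2} (\ref{Thm:KKK2:simple}), to obtain
$$
\mathcal{F}_{Q, \beta}(\pcstd(\mathbf{m})) \cong
L(\varpi_{\phi(\gamma_{r})})^{\hat{\otimes} m_{r}} \hat{\otimes} \cdots \hat{\otimes}
L(\varpi_{\phi(\gamma_{1})})^{\hat{\otimes} m_{1}},
$$
where the completed tensor product carries the trivial $\hR(\hlam_{\beta})$-action via the standard inclusion (the factors are finite-dimensional, so $\hat{\otimes}$ is the ordinary $\otimes$ here). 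The key point is that the ordering $\gamma_{r}, \gamma_{r-1}, \ldots, \gamma_{1}$ induced by an adapted reduced expression is exactly the one that makes this an $\ell$-highest weight product with $d_{i_{j}, i_{k}}(a_{k}/a_{j}) \neq 0$ for $j < k$ in the relevant order: this is precisely the combinatorial content behind Theorem \ref{Thm:AK} and Lemma \ref{Lem:path}, applied to the $\ell$-weights $\varpi_{\phi(\gamma)}$. Hence by Theorem \ref{Thm:AK} and Proposition \ref{Prop:dualWeyl} the right-hand side is $W^{\vee}(f(\mathbf{m}))$, giving the second isomorphism.

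For the standard module, I would use the decomposition \eqref{Eq:stdH}, namely $\std(\mathbf{m})^{\oplus m_{1}! \cdots m_{r}!} \cong S(\gamma_{1})_{\mathrm{af}}^{\circ m_{1}} \circ \cdots \circ S(\gamma_{r})_{\mathrm{af}}^{\circ m_{r}}$. Applying $\mathcal{F}_{Q, \beta}$ and Theorem \ref{Thm:KKK2} (\ref{Thm:KKK2:monoidal}), and noting that $S(\gamma)_{\mathrm{af}} = \std(\gamma)$ so that Lemma \ref{Lem:root_std} gives $\mathcal{F}_{Q, \gamma}(\std(\gamma)) \cong \hW(\varpi_{\phi(\gamma)})$, I obtain
$$
\mathcal{F}_{Q, \beta}(\std(\mathbf{m}))^{\oplus m_{1}! \cdots m_{r}!}
\cong \hW(\varpi_{\phi(\gamma_{1})})^{\hat{\otimes} m_{1}} \hat{\otimes} \cdots \hat{\otimes}
\hW(\varpi_{\phi(\gamma_{r})})^{\hat{\otimes} m_{r}},
$$
as $(\tilU, \hR(\hlam_{\beta}))$-bimodules, the target carrying the standard-inclusion action of $\hR(\hlam_{\beta})$. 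I then need the denominator non-vanishing condition $d_{i_{j}, i_{k}}(a_{k}/a_{j}) \neq 0$ for $j < k$ to invoke Corollary \ref{Cor:hW_tensor}, which identifies the right-hand side with $\hW(f(\mathbf{m}))^{\oplus m_{1}! \cdots m_{r}!}$. Finally I would use the Krull--Schmidt property (both sides are finitely generated over the complete local ring $\hR(\hlam_{\beta})$, or alternatively one can argue via the center-compatibility of Theorem \ref{Thm:KKK2} (\ref{Thm:KKK2:center}) and the indecomposability of $\std(\mathbf{m})$ together with $\End_{\hMm_{Q,\beta}}(\std(\mathbf{m})) \cong \hS_{\beta} \cong \hR(\hlam_{\beta}) \cong \End_{\hCc_{Q,\beta}}(\hW(f(\mathbf{m})))$) to cancel the multiplicities and conclude $\mathcal{F}_{Q, \beta}(\std(\mathbf{m})) \cong \hW(f(\mathbf{m}))$.

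The main obstacle I anticipate is verifying the denominator condition $d_{i_{j}, i_{k}}(a_{k}/a_{j}) \neq 0$ for the pairs arising in the adapted ordering --- equivalently, checking that the relevant $\ell$-weights are arranged so that the products in question are genuine $\ell$-highest weight (equivalently local Weyl) modules. This is where one must feed in the Auslander--Reiten combinatorics of $\widehat{Q}$ and $\Gamma_{Q}$ via Lemma \ref{Lem:path}: a zero of $d_{i,j}(q^{r-p})$ forces a path in the Auslander--Reiten quiver, and the adaptedness of $w_{0} = s_{i_{1}}\cdots s_{i_{r}}$ to $Q$ means the enumeration $\gamma_{1}, \ldots, \gamma_{r}$ is compatible with the orientation of that quiver, ruling out the "wrong-direction" poles. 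One also has to be careful that the monoidal isomorphism of Theorem \ref{Thm:KKK2} (\ref{Thm:KKK2:monoidal}) is compatible with the $\hR(\hlam)$-actions through exactly the standard inclusions appearing in Corollary \ref{Cor:hW_tensor} and Proposition \ref{Prop:dualWeyl}; this bookkeeping, while routine, is what makes the two sides literally isomorphic as bimodules rather than merely as $\tilU$-modules.
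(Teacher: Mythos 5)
Your proof is correct and follows essentially the same route as the paper: decompose $\std(\mathbf{m})$ and $\pcstd(\mathbf{m})$ via the adapted ordering, push through the monoidal functor using Lemma \ref{Lem:root_std} and Theorem \ref{Thm:KKK2}, and verify the denominator nonvanishing $d_{i_j,i_k}(q^{p_k-p_j})\neq 0$ for $j<k$ --- which the paper does exactly as you anticipate, via Lemma \ref{Lem:path}(\ref{Lem:path:AR}) combined with B\'edard's result that paths in the Auslander--Reiten quiver $\Gamma_Q$ are compatible with the ordering coming from an adapted reduced word. One small caveat: your parenthetical alternative to Krull--Schmidt misidentifies $\End_{\hMm_{Q,\beta}}(\std(\mathbf{m}))$ with $\hS_\beta$; in fact $\End_{\hMm_{Q,\beta}}(\std(\mathbf{m}))$ is a power series ring in $\sum_\alpha m_\alpha$ variables (and similarly $\End_{\hCc_{Q,\beta}}(\hW(f(\mathbf{m})))\cong\hR(f(\mathbf{m}))$, not $\hR(\hlam_\beta)$, by Proposition \ref{Prop:defWeyl}), so these are strictly smaller than the center when $\mathbf{m}$ contains non-simple roots --- but your primary Krull--Schmidt argument over the complete semilocal algebra is sound and is what the paper uses implicitly.
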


\begin{proof}
Fix a reduced expression $w_{0} = s_{j_{1}} \cdots s_{j_{r}}$
of the longest element $w_{0}$ adapted to $Q$.
We set $\gamma_{k} := s_{j_1} \cdots s_{j_{k-1}} (\alpha_{j_k})$
and write 
$(i_k, p_k) := \phi(\gamma_{k}) \in \widehat{I}_{Q}$
for each $k \in \{1, \ldots, r\}$.
Then we can prove that
$d_{i_j, i_k}(q^{p_k - p_j}) \neq 0$ holds
for any $1 \le j < k \le r.$
Indeed, if we suppose 
$d_{i_j, i_k}(q^{p_k - p_j}) = 0$
for some $1 \le j < k \le r$,
there is an oriented path
from $(i_j, p_j)$ to $(i_k, p_k)$ 
in the Auslander-Reiten quiver $\Gamma_{Q}$
by Lemma~\ref{Lem:path} (\ref{Lem:path:AR}).
However, then \cite[Theorem 2.17]{Bedard99} implies
that $j > k$, which is a contradiction.

Therefore, we have
\begin{align*}
\hW(f(\mathbf{m}))^{\oplus m_{1}! \cdots m_{r}!} 
&\cong 
\hW(\varpi_{i_1, p_1})^{\hat{\otimes} m_{1}}
\hat{\otimes} \cdots \hat{\otimes}
\hW(\varpi_{i_r, p_r})^{\hat{\otimes}m_r} \\
&\cong
\mathcal{F}_{Q, \beta}(
\std(\gamma_{1})^{\circ m_1} \circ \cdots \circ
\std(\gamma_{r})^{\circ m_r}) \\
& \cong
\mathcal{F}_{Q, \beta}(\std(\mathbf{m}))^{\oplus m_{1} ! \cdots 
m_{r}!},
\end{align*}
where the first isomorphism is Corollary \ref{Cor:hW_tensor},
the second one is Theorem~\ref{Thm:KKK2} 
(\ref{Thm:KKK2:monoidal}) and 
Lemma~\ref{Lem:root_std}, 
the last one is due to (\ref{Eq:stdH}).
This isomorphism is 
an isomorphism of $(U_{q}, \hR(\hlam))$-bimodules.
In fact, it is easy to check 
from Theorem~\ref{Thm:KKK2} (\ref{Thm:KKK2:center})
that 
the action of $\hR(\hlam)$ on the RHS is also 
given via the homomorphism $\theta_{f(\mathbf{m})}$,
as well as on the LHS.
Thus we obtain 
$\mathcal{F}_{Q, \beta}(\std(\mathbf{m}))
 \cong \hW(f(\mathbf{m}))$.
A proof of 
$\mathcal{F}_{Q, \beta} 
( \pcstd(\mathbf{m}))\cong 
W^{\vee}(f(\mathbf{m}))$ is similar and easier.
\end{proof}

\begin{proof}[Proof of Theorem~\ref{Thm:equiv}]
Both of the affine quasi-hereditary algebras 
$\hK^{\G(\hlam)}(Z^{\bullet}(\hlam))$ and $\hH_{Q}(\beta)$
are finite over their centers by Corollary~\ref{Cor:fg} and Theorem~\ref{Thm:KL} respectively.
Thanks to Proposition~\ref{Prop:check},
we can apply Theorem~\ref{Thm:criterion}
to the functor $\mathcal{F}_{Q, \beta} \colon 
\hMm_{Q, \beta} \to \hCc_{Q, \beta}$, which proves the assertion.
\end{proof}

Finally, we consider the direct sum of the functors 
$\mathcal{F}_{Q, \beta}$
over $\beta \in \cQ^{+}$ and restrict 
it to the full subcategory of finite-dimensional modules to obtain
$$\mathcal{F}_{Q} := 
\bigoplus_{\beta \in \cQ^{+}} \mathcal{F}_{Q, \beta}
\colon \Mm_{Q} = \bigoplus_{\beta \in \cQ^{+}} \Mm_{Q, \beta} 
\to \Cc_{Q} = \bigoplus_{\beta \in \cQ^{+}}\Cc_{Q, \beta},$$
where we recall Lemma~\ref{Lem:block}.
By Theorem~\ref{Thm:KKK2} (\ref{Thm:KKK2:monoidal}),
this functor $\mathcal{F}_{Q}$ is monoidal. 
As a corollary of Theorem~\ref{Thm:equiv},
we have the following. 

\begin{Cor}
\label{Cor:equiv}
Under the assumption $(\star)$,
the Kang-Kashiwara-Kim functor gives an equivalence of 
monoidal categories
$
\mathcal{F}_{Q} \colon \Mm_{Q} \simeq
\Cc_{Q}.  
$
\end{Cor}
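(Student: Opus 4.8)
The plan is to obtain Corollary~\ref{Cor:equiv} by assembling the results proved above: the equivalence of Theorem~\ref{Thm:equiv}, the identification of finite-dimensional objects in Theorem~\ref{Thm:main}~(\ref{Thm:main:HL}), the monoidality and center-compatibility of Theorem~\ref{Thm:KKK2}, and the block decomposition of Lemma~\ref{Lem:block}. Since all the substantial work is already done, what remains is bookkeeping: one must check that the equivalence $\mathcal{F}_{Q,\beta}\colon\hMm_{Q,\beta}\xrightarrow{\simeq}\hCc_{Q,\beta}$ restricts to an equivalence between the full subcategories of finite-dimensional modules, and that the resulting functor $\mathcal{F}_{Q}$ is monoidal.

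First I would fix $\beta\in\cQ^{+}$ and show that $\mathcal{F}_{Q,\beta}$ both preserves and reflects finite-dimensionality. A finitely generated $\hH_{Q}(\beta)$-module is finitely generated over its center $\hS_{\beta}$ by Theorem~\ref{Thm:KL}~(\ref{Thm:KL:center}), hence is finite-dimensional if and only if it is annihilated by some power of the maximal ideal $\mathfrak{m}_{\beta}\subset\hS_{\beta}$; similarly a finitely generated $\hK^{\G(\hlam)}(Z^{\bullet}(\hlam))$-module is finite-dimensional if and only if it is annihilated by some power of $\rr_{\hlam}$, as $\hK^{\G(\hlam)}(Z^{\bullet}(\hlam))$ is module-finite over $\hR(\hlam)$. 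By Theorem~\ref{Thm:KKK2}~(\ref{Thm:KKK2:center}), the $\hS_{\beta}$-action on $M$ corresponds to the $\hR(\hlam)$-action on $\mathcal{F}_{Q,\beta}(M)$ through the isomorphism $\hS_{\beta}\cong\hR(\hlam)$ of (\ref{Eq:centers}); therefore $\mathfrak{m}_{\beta}^{N}M=0$ if and only if $\rr_{\hlam}^{N}\mathcal{F}_{Q,\beta}(M)=0$, so $M$ is finite-dimensional precisely when $\mathcal{F}_{Q,\beta}(M)$ is. Hence $\mathcal{F}_{Q,\beta}$ sends $\Mm_{Q,\beta}=\hH_{Q}(\beta)\modfd$ into $(\hCc_{Q,\beta})_{f}$, and its quasi-inverse (which exists by Theorem~\ref{Thm:equiv}) sends $(\hCc_{Q,\beta})_{f}$ back into $\Mm_{Q,\beta}$, so the restriction $\mathcal{F}_{Q,\beta}\colon\Mm_{Q,\beta}\to(\hCc_{Q,\beta})_{f}$ is an equivalence. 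Composing with the identification $(\hCc_{Q,\beta})_{f}\simeq\Cc_{Q,\beta}$ of Theorem~\ref{Thm:main}~(\ref{Thm:main:HL}) gives an equivalence $\Mm_{Q,\beta}\xrightarrow{\simeq}\Cc_{Q,\beta}$, which is by construction the functor $\mathcal{F}_{Q,\beta}$ of Definition~\ref{Def:functor} followed by the pullback along $\widehat{\Phi}_{\hlam}$.

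Next I would put these together over $\beta$. By Lemma~\ref{Lem:block} we have $\Cc_{Q}\cong\bigoplus_{\beta}\Cc_{Q,\beta}$, and by definition $\Mm_{Q}=\bigoplus_{\beta}\Mm_{Q,\beta}$, so $\mathcal{F}_{Q}=\bigoplus_{\beta}\mathcal{F}_{Q,\beta}$ is an equivalence of abelian categories. For monoidality, note that for finite-dimensional $M\in\Mm_{Q,\beta}$ and $M'\in\Mm_{Q,\beta'}$ the parabolic induction $M\circ M'$ is again finite-dimensional, and Theorem~\ref{Thm:KKK2}~(\ref{Thm:KKK2:monoidal}) provides a natural isomorphism $\mathcal{F}_{Q,\beta+\beta'}(M\circ M')\cong\mathcal{F}_{Q,\beta}(M)\hat{\otimes}\mathcal{F}_{Q,\beta'}(M')$ of $(\tilU,\hR(\hlam_{\beta+\beta'}))$-bimodules; since both tensor factors are finite-dimensional, the completed tensor product $\hat{\otimes}$ is just the ordinary tensor product of $U_{q}$-modules (the completion is vacuous there), so after pullback along the completed Nakajima homomorphisms this becomes an isomorphism $\mathcal{F}_{Q}(M\circ M')\cong\mathcal{F}_{Q}(M)\otimes\mathcal{F}_{Q}(M')$ in $\Cc_{Q}$. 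One then records that these isomorphisms satisfy the associativity and unit coherence axioms (already part of the Kang--Kashiwara--Kim construction) and that $\mathcal{F}_{Q}$ carries the unit object of $\Mm_{Q,0}$ to the trivial $U_{q}$-module, which upgrades the equivalence to a monoidal one.

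The step that needs the most care is the reduction to finite-dimensional modules: one must be certain that the equivalence of Theorem~\ref{Thm:equiv} is genuinely compatible with the two a priori different notions of ``finite-dimensional'' on the quiver Hecke side and on the quantum loop side. This hinges entirely on the center-compatibility statement Theorem~\ref{Thm:KKK2}~(\ref{Thm:KKK2:center}) together with the identification of $(\hCc_{Q,\beta})_{f}$ with $\Cc_{Q,\beta}$ from Theorem~\ref{Thm:main}~(\ref{Thm:main:HL}); granting those, everything else is formal.
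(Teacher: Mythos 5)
Your proposal is correct and follows the same route as the paper: restrict the equivalences $\mathcal{F}_{Q,\beta}$ of Theorem~\ref{Thm:equiv} to the finite-dimensional subcategories, identify $(\hCc_{Q,\beta})_{f}$ with $\Cc_{Q,\beta}$ via Theorem~\ref{Thm:main}~(\ref{Thm:main:HL}), sum over $\beta$ using Lemma~\ref{Lem:block}, and deduce monoidality from Theorem~\ref{Thm:KKK2}~(\ref{Thm:KKK2:monoidal}). You have spelled out the preservation of finite-dimensionality through the center compatibility of Theorem~\ref{Thm:KKK2}~(\ref{Thm:KKK2:center}) — a detail the paper leaves implicit — but the underlying argument is the same.
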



\begin{thebibliography}{10}

\bibitem{AK97}
Tatsuya Akasaka and Masaki Kashiwara.
\newblock Finite-dimensional representations of quantum affine algebras.
\newblock {\em Publ. Res. Inst. Math. Sci.}, 33(5):839--867, 1997.

\bibitem{Beck94}
Jonathan Beck.
\newblock Braid group action and quantum affine algebras.
\newblock {\em Comm. Math. Phys.}, 165(3):555--568, 1994.

\bibitem{BN04}
Jonathan Beck and Hiraku Nakajima.
\newblock Crystal bases and two-sided cells of quantum affine algebras.
\newblock {\em Duke Math. J.}, 123(2):335--402, 2004.

\bibitem{Bedard99}
Robert B{\'e}dard.
\newblock On commutation classes of reduced words in {W}eyl groups.
\newblock {\em European J. Combin.}, 20(6):483--505, 1999.

\bibitem{BKM14}
Jonathan Brundan, Alexander Kleshchev, and Peter~J. McNamara.
\newblock Homological properties of finite-type {K}hovanov-{L}auda-{R}ouquier
  algebras.
\newblock {\em Duke Math. J.}, 163(7):1353--1404, 2014.

\bibitem{CM05}
Vyjayanthi Chari and Adriano~A. Moura.
\newblock Characters and blocks for finite-dimensional representations of
  quantum affine algebras.
\newblock {\em Int. Math. Res. Not.}, (5):257--298, 2005.

\bibitem{CP}
Vyjayanthi Chari and Andrew Pressley.
\newblock {\em A guide to quantum groups}.
\newblock Cambridge University Press, Cambridge, 1994.

\bibitem{CP95}
Vyjayanthi Chari and Andrew Pressley.
\newblock Quantum affine algebras and their representations.
\newblock In {\em Representations of groups ({B}anff, {AB}, 1994)}, volume~16
  of {\em CMS Conf. Proc.}, pages 59--78. Amer. Math. Soc., Providence, RI,
  1995.

\bibitem{CP96}
Vyjayanthi Chari and Andrew Pressley.
\newblock Quantum affine algebras and affine {H}ecke algebras.
\newblock {\em Pacific J. Math.}, 174(2):295--326, 1996.

\bibitem{CP01}
Vyjayanthi Chari and Andrew Pressley.
\newblock Weyl modules for classical and quantum affine algebras.
\newblock {\em Represent. Theory}, 5:191--223, 2001.

\bibitem{CG97}
Neil Chriss and Victor Ginzburg.
\newblock {\em Representation theory and complex geometry}.
\newblock Birkh\"{a}user Boston, Inc., Boston, MA, 1997.

\bibitem{CPS88}
E.~Cline, B.~Parshall, and L.~Scott.
\newblock Finite-dimensional algebras and highest weight categories.
\newblock {\em J. Reine Angew. Math.}, 391:85--99, 1988.

\bibitem{Damiani98}
Ilaria Damiani.
\newblock La {$R$}-matrice pour les alg\`ebres quantiques de type affine non
  tordu.
\newblock {\em Ann. Sci. \'{E}cole Norm. Sup. (4)}, 31(4):493--523, 1998.

\bibitem{DW}
Harm Derksen and Jerzy Weyman.
\newblock {\em An introduction to quiver representations}, volume 184 of {\em
  Graduate Studies in Mathematics}.
\newblock American Mathematical Society, Providence, RI, 2017.

\bibitem{FM01}
Edward Frenkel and Evgeny Mukhin.
\newblock Combinatorics of {$q$}-characters of finite-dimensional
  representations of quantum affine algebras.
\newblock {\em Comm. Math. Phys.}, 216(1):23--57, 2001.

\bibitem{FR99}
Edward Frenkel and Nicolai Reshetikhin.
\newblock The {$q$}-characters of representations of quantum affine algebras
  and deformations of {$\mathscr{W}$}-algebras.
\newblock In {\em Recent developments in quantum affine algebras and related
  topics ({R}aleigh, {NC}, 1998)}, volume 248 of {\em Contemp. Math.}, pages
  163--205. Amer. Math. Soc., Providence, RI, 1999.

\bibitem{Fujita18}
Ryo Fujita.
\newblock Tilting modules of affine quasi-hereditary algebras.
\newblock {\em Adv. Math.}, 324:241--266, 2018.

\bibitem{Fujita20}
Ryo Fujita.
\newblock Geometric realization of {D}ynkin quiver type quantum affine
  {S}chur-{W}eyl duality.
\newblock {\em Int. Math. Res. Not. IMRN}, (22):8353--8386, 2020.

\bibitem{Hernandez10}
David Hernandez.
\newblock Simple tensor products.
\newblock {\em Invent. Math.}, 181(3):649--675, 2010.

\bibitem{HL10}
David Hernandez and Bernard Leclerc.
\newblock Cluster algebras and quantum affine algebras.
\newblock {\em Duke Math. J.}, 154(2):265--341, 2010.

\bibitem{HL15}
David Hernandez and Bernard Leclerc.
\newblock Quantum {G}rothendieck rings and derived {H}all algebras.
\newblock {\em J. Reine Angew. Math.}, 701:77--126, 2015.

\bibitem{KKK15}
Seok-Jin Kang, Masaki Kashiwara, and Myungho Kim.
\newblock Symmetric quiver {H}ecke algebras and {$R$}-matrices of quantum
  affine algebras, {II}.
\newblock {\em Duke Math. J.}, 164(8):1549--1602, 2015.

\bibitem{KKK18}
Seok-Jin Kang, Masaki Kashiwara, and Myungho Kim.
\newblock Symmetric quiver {H}ecke algebras and {$R$}-matrices of quantum
  affine algebras.
\newblock {\em Invent. Math.}, 211(2):591--685, 2018.

\bibitem{Kashiwara94}
Masaki Kashiwara.
\newblock Crystal bases of modified quantized enveloping algebra.
\newblock {\em Duke Math. J.}, 73(2):383--413, 1994.

\bibitem{Kashiwara02}
Masaki Kashiwara.
\newblock On level-zero representations of quantized affine algebras.
\newblock {\em Duke Math. J.}, 112(1):117--175, 2002.

\bibitem{Kato14}
Syu Kato.
\newblock Poincar\'{e}-{B}irkhoff-{W}itt bases and
  {K}hovanov-{L}auda-{R}ouquier algebras.
\newblock {\em Duke Math. J.}, 163(3):619--663, 2014.

\bibitem{Kato17}
Syu Kato.
\newblock An algebraic study of extension algebras.
\newblock {\em Amer. J. Math.}, 139(3):567--615, 2017.

\bibitem{KL87}
David Kazhdan and George Lusztig.
\newblock Proof of the {D}eligne-{L}anglands conjecture for {H}ecke algebras.
\newblock {\em Invent. Math.}, 87(1):153--215, 1987.

\bibitem{KL09}
Mikhail Khovanov and Aaron~D. Lauda.
\newblock A diagrammatic approach to categorification of quantum groups. {I}.
\newblock {\em Represent. Theory}, 13:309--347, 2009.

\bibitem{Kleshchev15}
Alexander~S. Kleshchev.
\newblock Affine highest weight categories and affine quasihereditary algebras.
\newblock {\em Proc. Lond. Math. Soc. (3)}, 110(4):841--882, 2015.

\bibitem{KX12}
Steffen Koenig and Changchang Xi.
\newblock Affine cellular algebras.
\newblock {\em Adv. Math.}, 229(1):139--182, 2012.

\bibitem{McNamara17}
Peter~J. McNamara.
\newblock Representation theory of geometric extension algebras.
\newblock Preprint, \arxiv{1701.07949}, 2017.

\bibitem{Nakajima94}
Hiraku Nakajima.
\newblock Instantons on {ALE} spaces, quiver varieties, and {K}ac-{M}oody
  algebras.
\newblock {\em Duke Math. J.}, 76(2):365--416, 1994.

\bibitem{Nakajima98}
Hiraku Nakajima.
\newblock Quiver varieties and {K}ac-{M}oody algebras.
\newblock {\em Duke Math. J.}, 91(3):515--560, 1998.

\bibitem{Nakajima01}
Hiraku Nakajima.
\newblock Quiver varieties and finite-dimensional representations of quantum
  affine algebras.
\newblock {\em J. Amer. Math. Soc.}, 14(1):145--238, 2001.

\bibitem{Nakajima01t}
Hiraku Nakajima.
\newblock Quiver varieties and tensor products.
\newblock {\em Invent. Math.}, 146(2):399--449, 2001.

\bibitem{Nakajima04}
Hiraku Nakajima.
\newblock Extremal weight modules of quantum affine algebras.
\newblock In {\em Representation theory of algebraic groups and quantum
  groups}, volume~40 of {\em Adv. Stud. Pure Math.}, pages 343--369. Math. Soc.
  Japan, Tokyo, 2004.

\bibitem{Nakajima09}
Hiraku Nakajima.
\newblock Quiver varieties and branching.
\newblock {\em SIGMA Symmetry Integrability Geom. Methods Appl.}, 5:Paper 003,
  37, 2009.

\bibitem{Nakajima15}
Hiraku Nakajima.
\newblock Affine cellularity of quantum affine algebras: an appendix to
  ``{A}ffine cellularity of {BLN}-algebras'' by {W}eideng {C}ui.
\newblock {\em J. Algebra}, 441:601--608, 2015.

\bibitem{OS19}
Se-jin Oh and Travis Scrimshaw.
\newblock Categorical relations between {L}anglands dual quantum affine
  algebras: exceptional cases.
\newblock {\em Comm. Math. Phys.}, 368(1):295--367, 2019.

\bibitem{Rouquier08}
Raphael Rouquier.
\newblock 2-{K}ac-{M}oody algebras.
\newblock Preprint, \arxiv{0812.5023}, 2008.

\bibitem{VV11}
M.~Varagnolo and E.~Vasserot.
\newblock Canonical bases and {KLR}-algebras.
\newblock {\em J. Reine Angew. Math.}, 659:67--100, 2011.

\bibitem{VV02}
Michela Varagnolo and Eric Vasserot.
\newblock Standard modules of quantum affine algebras.
\newblock {\em Duke Math. J.}, 111(3):509--533, 2002.

\end{thebibliography}
\end{document}